\theoremstyle{plain}
\newtheorem{thm}{Theorem}[section]
\newtheorem*{thm*}{Theorem}
\newtheorem{prop}[thm]{Proposition}
\newtheorem{lemma}[thm]{Lemma}
\newtheorem{cor}[thm]{Corollary}
\newtheorem{assum}[thm]{Assumption}
\theoremstyle{definition}
\newtheorem{dfn}[thm]{Definition}
\theoremstyle{remark} 
\numberwithin{equation}{section}
\newcommand{\alpheqn}[1][\relax]{
     \refstepcounter{equation}
     \if#1\relax \relax
       \else \label{#1}
     \fi  
     \setcounter{saveeqn}{\value{equation}}%
    \setcounter{equation}{0}%
    \renewcommand{\theequation}{\thealphequation}}
\newcommand{\reseteqn}{\setcounter{equation}{\value{saveeqn}}%
     \renewcommand{\theequation}{\thearabicequation}}
\providecommand{\mathscr}{\mathcal} 
\newcommand{\Lip}{\operatorname{Lip}}
\newcommand{\Ana}{\operatorname{Ana}}
\newcommand{\vertiii}[1]{{\left\vert\kern-0.25ex\left\vert\kern-0.25ex\left\vert #1 
    \right\vert\kern-0.25ex\right\vert\kern-0.25ex\right\vert}}
\newcommand{\Bvert}[1]{{\Big\vert\kern-0.25ex\Big\vert\kern-0.25ex\Big\vert #1 
    \Big\vert\kern-0.25ex\Big\vert\kern-0.25ex\Big\vert}}
\newcommand{\bvert}[1]{{\big\vert\kern-0.25ex\big\vert\kern-0.25ex\big\vert #1 
    \big\vert\kern-0.25ex\big\vert\kern-0.25ex\big\vert}}
\newcommand{\nvert}[1]{{\vert\kern-0.25ex\vert\kern-0.25ex\vert #1 
    \vert\kern-0.25ex\vert\kern-0.25ex\vert}}
\renewcommand{\leq}{\leqslant}
\renewcommand{\geq}{\geqslant}
\newcommand{\cd}{\cdot}
\newcommand{\clc}{\cdot\ldots\cdot}
\newcommand{\ot}{\otimes}
\newcommand{\hot}{\widehat \otimes}
\newcommand{\op}{\oplus}
\newcommand{\bop}{\bigoplus}
\newcommand{\ci}{\circ}
\newcommand{\ti}{\times}
\newcommand{\cc}{\mathbb{C}}
\newcommand{\al}{\alpha}
\newcommand{\be}{\beta}
\newcommand{\ga}{\gamma}
\newcommand{\Ga}{\Gamma}
\newcommand{\de}{\delta}
\newcommand{\De}{\Delta}
\newcommand{\ep}{\varepsilon}
\newcommand{\la}{\lambda}
\newcommand{\La}{\Lambda}
\newcommand{\Na}{\nabla}
\newcommand{\om}{\omega}
\newcommand{\Om}{\Omega}
\newcommand{\si}{\sigma}
\newcommand{\te}{\theta}
\newcommand{\ze}{\zeta}
\newcommand{\pa}{\partial}
\newcommand{\da}{\dagger}
\newcommand{\ov}{\overline}
\newcommand{\C}[1]{\mathcal{#1}}
\newcommand{\G}[1]{\mathfrak{#1}}
\newcommand{\T}[1]{\textup{#1}}
\newcommand{\B}[1]{\mathbb{#1}}
\newcommand{\fork}[2]{\left\{ \begin{array}{#1} #2 \end{array} \right.}
\newcommand{\ma}[2]{\left(\begin{array}{#1} #2 \end{array} \right)}
\newcommand{\su}{\subseteq}
\newcommand{\q}{\qquad}
\newcommand{\wit}{\widetilde}
\newcommand{\inn}[1]{\langle #1 \rangle}
\newcommand{\binn}[1]{\big\langle #1 \big\rangle}
\newcommand{\sem}{\setminus}
\begin{document}
\title[Noncommutative metric geometry of quantum circle bundles]{Noncommutative metric geometry of quantum circle bundles}


\author{Jens Kaad}

\address{Department of Mathematics and Computer Science,
The University of Southern Denmark,
Campusvej 55, DK-5230 Odense M,
Denmark}

\email{kaad@imada.sdu.dk}


\keywords{Quantum spheres, compact quantum metric spaces, twisted spectral triples, $KK$-theory, quantum principal bundles}
\subjclass[2020]{Primary: 58B34; Secondary: 58B32}




\begin{abstract}
  In this paper we investigate quantum circle bundles from the point of view of compact quantum metric spaces. The raw input data is a circle action on a unital $C^*$-algebra together with a quantum metric of spectral geometric origin on the fixed point algebra. Under a few extra conditions on the spectral subspaces we show that the spectral geometric data on the base algebra can be lifted to the total algebra. Notably, the lifted spectral geometry is independent of the choice of frames and is permitted to interact with the total algebra via a twisted derivation. Under these conditions, it is explained how to assemble our data into a quantum metric on the total algebra in a way which unifies and generalizes a couple of results in the literature relating to crossed products by the integers and to quantum $SU(2)$. We apply our ideas to the higher Vaksman-Soibelman quantum spheres and endow them with quantum metrics arising from $q$-geometric data. In this context, the twist is enforced by the structure of the Drinfeld-Jimbo deformation arising from the Lie algebra of the special unitary group.  
\end{abstract}

\maketitle
\tableofcontents

\section{Introduction}
This paper is part of a larger program aiming to reconcile the theory of compact quantum groups, \cite{Dri:QG,Jim:QYB,Wor:CMP,Wor:CQG}, with noncommutative geometry in the sense of Alain Connes, \cite{Con:NCG}. This problem has been singled out in the seminal paper \cite{CoMo:TST} as one of the most challenging future research directions for advancing the field of noncommutative geometry. 

The primary focus of the present paper is the Vaksman-Soibelman quantum spheres, \cite{VaSo:AFQ}, which are homogeneous spaces for the quantum special unitary group, \cite{Wor:TGN,Wor:TKD}. In the special case where the underlying real dimension is equal to three, we emphasize that the Vaksman-Soibelman quantum sphere $S_q^3$ coincides with quantum $SU(2)$. We are however treating all the odd-dimensional quantum spheres simultaneously. In fact, our main results are formulated in the much more general context of quantum principal bundles where the fiber agrees with the unit circle and our work is therefore underpinned by the algebraic considerations in \cite{BrMa:QGG,Haj:SCQ,BeMa:QRG}.

From the point of view of noncommutative geometry, the Vaksman-Soibelman quantum spheres are difficult to access because of the presence of twists. To understand the reason for this, it suffices to look at the coproduct structure of the quantized enveloping algebra as introduced in \cite{Dri:QG,Jim:QYB}. In the case of the special unitary group the generators $E_1,\ldots,E_r$ (where $r$ is the rank) are subject to the coalgebra rule $\De(E_i) = E_i \ot K_i + K_i^{-1} \ot E_i$ where the elements $K_1,\ldots,K_r$ are group-like elements. This means that the generators $E_i$ act as twisted derivations on the coordinate algebra for quantum $SU(r+1)$ where the twist is described by the automorphism associated to $K_i$. Consequently, any reasonable attempt at describing the noncommutative geometry of the quantum spheres, using the quantized enveloping algebra as the starting point, must in one way or another incorporate twisted commutators instead of straight commutators (because $K_r$ yields a non-trivial automorphism of $\C O(S_q^{2r+1})$). This is therefore strongly suggesting the use of twisted spectral triples to obtain a sensible spectral geometric description of the quantum spheres, \cite{CoMo:TST}.

However, even for quantum $SU(2)$ there are, at least to our knowledge, no examples of (twisted) spectral triples where the Dirac operator can be described in terms of the quantized enveloping algebra, see \cite{Dab:GQS} for an overview. Notably, the approach of the papers \cite{DLSSV:DOS,NeTu:DOC} is detached from the quantized enveloping algebra in so far that the spectra of the corresponding Dirac operators are the same as the spectrum of their classical counterpart and hence remain independent of the deformation parameter. A variant of this comment also applies to the constructions in \cite{ChPa:EST}. We would moreover like to point out that the resolvent fails to be compact (in the usual sense) for the ``twisted spectral triples'' constructed in \cite{KaSe:TST}. 

In the present paper we construct what could be called a ``twisted spectral quadruple'' for each of the higher quantum spheres. The key point is here that most of the quantized enveloping algebra is kept in its deformed state except for the element $K_r$ which is, in some sense, replaced by its classical counterpart. The element $K_r$ plays a special role since it is related to the circle action which describes the quantum circle bundle structure of the quantum sphere, singling out quantum projective space as invariant elements. Replacing $K_r$ with its classical counterpart ensures that the compact resolvent condition is satisfied for our spectral data but the price is relatively high since we are sometimes forced to consider a pair of abstract Dirac operators instead of a single one. This is precisely because of the commutator condition: One of our abstract Dirac operators interacts with the coordinate algebra via straight bounded commutators whereas the other one interacts via twisted bounded commutators. Nonetheless, both of these Dirac operators are needed in order to describe the spectral geometry of the quantum sphere since only their sum has compact resolvent, not the individual factors. Our approach is therefore related to the papers \cite{KRS:RFH,KaKy:SUq2,Cac:GFC} and our work do indeed generalize parts of the results in \cite{KaKy:SUq2}. 

As already mentioned, we are not only focusing on the quantum spheres since our main results are formulated in the generality of quantum circle bundles. In particular, we provide general conditions which allow us to write down ``twisted spectral quadruples'' for quantum circle bundles. A part from the quantum spheres our methods might therefore also apply to the ``quantum Grassmann spheres'' which are quantum circle bundles over the quantum Grassmannians, \cite{TaTo:QDG,Fio:QDG,CMO:BWT}. Treating this case in detail would however lead us too far astray for the time being and we therefore leave it open for future investigations. It should also be emphasized that our results to some extend cover the results of \cite{BMR:DSS,HSWZ:STC,KaKy:DCQ} regarding crossed products by the integers as one may indeed view the corresponding algebraic crossed product as a quantum circle bundle over the algebra which carries the action of the integers.

It should be clarified that our techniques are related to and inspired by the two seminal papers, \cite{AmBa:DON,DaSi:NCB}, on Dirac operators on (noncommutative) circle bundles. There are nonetheless a couple of important differences which should be singled out. Contrary to the situation in \cite{DaSi:NCB} we do not start out with a unital spectral triple on the \emph{total algebra} of a quantum circle bundle (subject to extra conditions). Our starting point is a unital spectral triple on the \emph{base algebra} of a quantum circle bundle (subject to extra conditions) and the constructions carried out in this paper do not yield a unital spectral triple on the total algebra except in the special situation where a certain parameter is equal to one. In the general setting, as alluded to above, we obtain twisted spectral quadruples where the relevant twist is given in terms of an analytic extension of the circle action (at the $C^*$-algebraic level). For the quantum spheres the parameter just mentioned is equal to the inverse of the deformation parameter $q$ and the corresponding analytic extension of the circle action also depends on the deformation parameter $q$.

As an upshot, we apply our constructions to describe the noncommutative metric geometry of quantum circle bundles while paying particular attention to the higher quantum spheres. Let us explain in more details what we mean by this statement.

In a series of papers, \cite{Rie:MSA,Rie:MSS,Rie:GHQ}, Marc Rieffel pioneered the theory of compact quantum metric spaces with a lot of inspiration coming from the metric aspects of noncommutative geometry as described by Alain Connes in \cite{Con:CFH}. Since then, a lot of effort has been devoted to the development of the theory of compact quantum metric spaces. Notably, the work of Fr\'ed\'eric Latr\'emoli\`ere has resulted in increasingly refined methods for measuring the distance between two compact quantum metric spaces, \cite{Lat:QGP,Lat:PMS}. In recent years a highly interesting program has been pioneered by Alain Connes and Walter van Suijlekom aiming at approximating geometric data by finite dimensional spectral truncations in a systematic fashion, \cite{CoSu:STN,CoSu:TRO}.

Another line of research, which is aligned with the present paper, seeks to construct new compact quantum metric spaces, in a functorial way, out of smaller building blocks. To our knowledge, this idea was applied for the first time in \cite{BMR:DSS} and has emerged again on several occasions, see e.g. \cite{HSWZ:STC,KaKy:SUq2,AuKaKy:QMC}. Common for this approach, is an underlying decomposition of the geometry into a vertical and a horizontal direction inspired by the classical theory of fibre bundles, where the vector fields are decomposed into vertical and horizontal vector fields respecting the underlying fibre bundle structure, \cite{BeGeVe:HKD,AmBa:DON,ScWa:LST,KaSu:RSF}. The vertical direction of the geometry then resembles the geometry of the fibre whereas the horizontal direction resembles the geometry of the base space.

In noncommutative geometry, the vertical direction of the geometry is witnessed by an unbounded Kasparov module which connects the ``total algebra'' with the ``base algebra'', \cite{BaJu:TBK}. The horizontal part of the geometry is then, at least at the outset, nothing but a spectral triple on the base algebra. The correct way of assembling this data is dictated by the unbounded Kasparov product which is a refined version of the Kasparov product, \cite{Kas:OFE}, adapted to the setting of unbounded operators, \cite{Mes:UCN,MeRe:NST,KaLe:SFU}. One of the principal features of unbounded $KK$-theory is the systematic use of Hermitian connections to lift geometric data from the base algebra, yielding a description of the horizontal direction of the geometry on the total algebra. In many instances the unbounded Kasparov product provides a spectral triple on the total algebra where the relevant abstract Dirac operator is expressed as the sum of the vertical Dirac operator and the horizontal Dirac operator, see e.g. \cite{Con:GCM,KaLe:SFU,MeGo:STF,BrMeSu:GSU,KaSu:FDT,ScWa:LST}.  

From a practical point of view it is possible to formulate the following principle regarding the unbounded $KK$-theoretic approach to compact quantum metric spaces:
\medskip

\emph{If the horizontal spectral triple turns the base algebra into a compact quantum metric space and there is sufficient control on the metric properties of the vertical unbounded Kasparov module, then the spectral data provided by the unbounded Kasparov product turns the total algebra into a compact quantum metric space.}
\medskip

The wording ``sufficient control'' is on purpose left a bit imprecise because we are currently unaware of the correct notion of a \emph{bivariant} compact quantum metric space even though the paper \cite{AuKaKy:QMC} presents a recent attempt at defining such a concept. Another caveat is covered by the word ``spectral data'' in relationship with the unbounded Kasparov product. Examples pertaining to quantum groups and crossed products by non-isometric actions show that the recipe provided by the unbounded Kasparov product does not always yield a spectral triple on the total algebra, see e.g. \cite{KRS:RFH,BMR:DSS,KaKy:DCQ}. The core problem is that the horizontal Dirac operator might fail to interact with the total algebra in a way which is compatible with the bounded commutator condition of a spectral triple. Instead, it often happens that the commutator between algebra elements and the horizontal Dirac operator is displaying an exponential growth phenomenon in the vertical direction and some dampening mechanism has to be incorporated in order to tame the exponential growth.

This exponential growth phenomenon in the vertical direction is indeed what we are observing in this paper and we present a systematic way of handling it in the setting of quantum circle bundles. In particular, we describe the noncommutative metric geometry of quantum circle bundles in the presence of twists, hence providing compact quantum metric space structures in this setting. These ideas are then applied in detail to the higher Vaksman-Soibelman quantum spheres.

In the setting of the higher quantum spheres the input spectral data is the unital spectral triple on quantum projective space as constructed by Francesco D'Andrea and Ludwik D\k{a}browski in \cite{DaDa:DOQ}, relying on the important earlier paper \cite{DDL:NGQ} including Gianni Landi as a coauthor. In general, the unital spectral triple on quantum projective space is related to the work of several other noncommutative geometers as witnessed by the following list of references: \cite{KrTu:DDO,Mat:DDO,DBS:DDS}. In the lowest possible dimension, where quantum projective space agrees with the famous Podle\'s sphere, the input spectral data agrees with the unital spectral triple introduced by Ludwik D\k{a}browski and Andrzej Sitarz in \cite{DaSi:DSP}, see also \cite{NeTu:LIQ}.

The investigation of the spectral metric properties of quantum projective space was begun in \cite{AgKa:PSM} treating the Podle\'s sphere and continued in \cite{MiKa:SMQ} taking care of the general case. It is an interesting challenge to investigate the continuity properties of these compact quantum metric space structures on quantum projective space under variation of the deformation parameter $q \in (0,1]$. With respect to Marc Rieffel's quantum Gromov-Hausdorff distance, \cite{Rie:GHQ}, this continuity property has been settled for the Podle\'s sphere in \cite{AKK:PSC} but similar questions remain open for the general case. Even for the Podle\'s sphere it is not known whether the corresponding spectral metric spaces (metric spectral triples) depend continuously on $q \in (0,1]$ with respect to the more refined spectral propinquity as introduced by Fr\'ed\'eric Latr\'emoli\`ere in \cite{Lat:PMS}.     
\medskip

Let us briefly go through the structure of the present paper:

In Section \ref{s:tensor} we review some background material regarding tensor products of Hilbert $C^*$-modules with particular focus on the interior tensor product and the corresponding theory of Hermitian connections. In Section \ref{s:cqms} we go through relevant preliminaries from the theory of compact quantum metric spaces emphasizing a result of Hanfeng Li regarding actions of compact groups together with recent insights relating to finitely generated projective modules over compact quantum metric spaces. In Section \ref{s:lipschitz} we describe the various well-known constructions of a Lipschitz algebra associated with a selfadjoint unbounded operator and, because of the focus of the present paper, we also show how to construct a twisted Lipschitz algebra in the presence of an extra strongly continuous one-parameter group of unitaries. In Section \ref{s:first} we prove our first main theorem regarding quantum metrics on noncommutative circle bundles allowing for the presence of a twisted Leibniz inequality (which in practice stems from the horizontal part of the geometry). In Section \ref{s:vertical} we analyze the vertical part of the geometry in more details showing that the seminorm coming from a circle action (and the usual arc length metric) can be quantized, meaning that it can be described by taking operator norms of first order commutators with a single selfadjoint unbounded operator. In Section \ref{s:horizontal} we present a thorough analysis of lifts of selfadjoint unbounded operators (along Hermitian connections) in the setting of quantum circle bundles. In particular, we provide general conditions ensuring that such lifts interact with the total algebra via twisted bounded commutators. In Section \ref{s:second} our second main theorem is stated and proved, showing that the vertical and the horizontal part of the geometry can be combined into a quantum metric on the total algebra. Notably, this is possible in spite of the unequal footing of the two parts of the geometry as witnessed by the discrepancy between bounded commutators and twisted bounded commutators. In Section \ref{s:unbKK} we relate our constructions to considerations in unbounded $KK$-theory emphasizing the geometric ideas behind our approach to noncommutative metric geometry. In the final Section \ref{s:quasph}, our main theorems are applied to the higher Vaksman-Soibelman spheres. In particular, we endow these quantum homogeneous spaces with compact quantum metric space structures which relate directly to the underlying $q$-geometry.

%
%

\subsection{Acknowledgements} 
The author gratefully acknowledge the financial support from the Independent Research Fund Denmark through grant no.~9040-00107B and 1026-00371B. This research is part of the EU Staff Exchange project 101086394 "Operator Algebras That One Can See''.

The author would also like to thank the anonymous referee for pointing out the two important papers \cite{AmBa:DON,DaSi:NCB} which we failed to reference in a first version of this paper. 

\subsection{Statements and Declarations}
\subsubsection{Conflict of interest} The author declares that there is no conflict of interest.
\subsubsection{Data availability} This work uses no additional data.

\subsection{Standing conventions}
For a Hilbert space $H$ we use the notation $\B L(H)$ for the unital $C^*$-algebra of bounded operators on $H$ and the operator norm is denoted by $\| \cd \|_\infty : \B L(H) \to [0,\infty)$. Similarly, for a right Hilbert $C^*$-module $X$ over a $C^*$-algebra $A$, we let $\B L(X)$ refer to the unital $C^*$-algebra of bounded adjointable operators on $X$ and $\| \cd \|_\infty : \B L(X) \to [0,\infty)$ is the operator norm. 

    \section{Preliminaries on tensor products of $C^*$-modules}\label{s:tensor}
An important part of the present text uses the interior tensor product of Hilbert $C^*$-modules and we therefore start out by reviewing the main constructions regarding this tensor product following the standard source \cite{Lan:HCM}. 
    
Let $X$ and $Y$ be right Hilbert $C^*$-modules over $C^*$-algebras $A$ and $B$, respectively. Moreover, let $\rho : A \to \B L(Y)$ be a $*$-homomorphism so that $Y$ becomes a $C^*$-correspondence from $A$ to $B$. We equip the module tensor product $X \ot_A Y$ with the $B$-valued inner product given by the expression
\[
\inn{x_1 \ot y_1, x_2 \ot y_2} := \binn{y_1, \rho( \inn{x_1,x_2}) y_2}
\]
on simple tensors. The corresponding Hilbert $C^*$-module completion of $X \ot_A Y$ is denoted by $X \hot_\rho Y$ and referred to as the \emph{interior tensor product}. We emphasize that the canonical map $X \ot_A Y \to X \hot_\rho Y$ is injective, see \cite[Proposition 4.5]{Lan:HCM}.
%
For each $x \in X$ we have a bounded adjointable operator $\te_x : Y \to X \hot_\rho Y$ given by the formula
$\te_x(y) = x \ot y$ and a straightforward computation yields the explicit formula $\te_x^*(x_1 \ot y_1) = \rho(\inn{x,x_1})y_1$ for the adjoint. Moreover, if $T : X \to X$ is a bounded adjointable operator, then we have an induced bounded adjointable operator $T \hot 1 : X \hot_\rho Y \to X \hot_\rho Y$ which agrees with $T \ot 1$ on the module tensor product $X \ot_A Y \su X \hot_\rho Y$, see \cite[Lemma 4.2]{Lan:HCM}. In particular, if $C$ is a third $C^*$-algebra which acts on $X$ via a $*$-homomorphism $\psi : C \to \B L(X)$, then there is an induced $*$-homomorphism
\[
\psi \hot 1 : C \to \B L(X \hot_\rho Y) \q (\psi \hot 1)(c) := \psi(c) \hot 1 . 
\]
Let us clarify that if $\rho : A \to \B L(Y)$ is injective, then $\| T \hot 1 \|_\infty = \| T \|_\infty$ for all $T \in \B L(X)$. 

On some occasions, we view the bounded adjointable operators $\B L(Y)$ as a $C^*$-correspondence from $A$ to $\B L(Y)$ where the left action comes from the $*$-homomorphism $\rho : A \to \B L(Y)$. Notice in this respect that an arbitrary $C^*$-algebra $C$ can be viewed as a right Hilbert $C^*$-module over itself with inner product $\inn{c_1,c_2} := c_1^* c_2$. For a vector $y \in Y$ we have the evaluation map $\T{ev}_y : X \hot_\rho \B L(Y) \to X \hot_\rho Y$ satisfying that $\T{ev}_y(x \ot T) := x \ot T(y)$. In fact, applying these evaluation maps for various vectors in $Y$ yields a unitary isomorphism of right Hilbert $C^*$-modules
\[
\T{ev} : ( X \hot_\rho \B L(Y)) \hot_{\T{id}} Y \to X \hot_\rho Y ,
\]
where $\T{id} : \B L(Y) \to \B L(Y)$ denotes the identity map. 


Let us fix a norm-dense $*$-subalgebra $\C A \su A$ and a norm-dense vector subspace $\C X \su X$ satisfying that $\C X \cd \C A \su \C X$ and $\inn{\C X,\C X} \su \C A$.

\begin{dfn}\label{d:frame}
  We say that $\C X$ has a \emph{finite frame} if there exist finitely many vectors $\ze_1,\ldots,\ze_n \in \C X$ such that $\sum_{j = 1}^n \ze_j \cd \inn{\ze_j,x} = x$ for all $x \in \C X$.
\end{dfn}

  The existence of a finite frame implies that $\C X$ is finitely generated projective as a right module over $\C A$. In fact, $\C X$ can be identified with a direct summand in the free module $\C A^{\op n}$ via the $\C A$-linear map $\Phi : \C X \to \C A^{\op n}$ given by $\Phi(x) := \{ \inn{\ze_j,x} \}_{j = 1}^n$. The corresponding left inverse $\Phi^* : \C A^{\op n} \to \C X$ is given explicitly by the formula $\Phi^*( \{ a_j\}_{j = 1}^n) = \sum_{j = 1}^n \ze_j \cd a_j$.  

Let $X'$ be an extra right Hilbert $C^*$-module over $A$. The following important result is certainly well-known but for lack of a precise reference we present the short proof here.

\begin{lemma}\label{l:extension}
  Suppose that $\C X \su X$ has a finite frame $\{ \ze_j\}_{j = 1}^n$. It then holds that the inclusion maps induce an isomorphism of right $B$-modules $\C X \ot_{\C A} Y \cong X \hot_\rho Y$. Moreover, if $\C R : \C X \to X' \hot_\rho \B L(Y)$ is a $\cc$-linear map satisfying that
\begin{equation}\label{eq:linear}
\C R(x \cd a) = \C R(x) \cd \rho(a) \q \mbox{for all } x \in \C X \, \, \mbox{and} \, \, \, a \in \C A ,
\end{equation}
then there exists a unique $B$-linear bounded operator $c(R) : X \hot_\rho Y \to X' \hot_\rho Y$ such that $c(R)(x \ot y) = \T{ev}_y( \C R(x) )$ for all $x \in \C X$ and $y \in Y$.
  \end{lemma}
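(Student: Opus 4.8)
The whole argument turns on exploiting the finite frame $\{\ze_j\}_{j=1}^n$ to replace abstract statements about $X \hot_\rho Y$ by explicit formulas; write $\iota \colon \C X \ot_{\C A} Y \to X \hot_\rho Y$ for the map induced by the inclusions $\C X \su X$, $\C A \su A$. I would first record that $\sum_{j=1}^n \te_{\ze_j} \te_{\ze_j}^* = 1$ in $\B L(X \hot_\rho Y)$: on an elementary tensor $x \ot y$ with $x \in \C X$ one has $\te_{\ze_j} \te_{\ze_j}^*(x \ot y) = \ze_j \ot \rho(\inn{\ze_j,x}) y = \ze_j \cd \inn{\ze_j,x} \ot y$, so summing over $j$ and using the frame relation $\sum_j \ze_j \cd \inn{\ze_j,x} = x$ returns $x \ot y$; since $\C X$ is norm-dense in $X$ these elementary tensors span a dense subspace of $X \hot_\rho Y$, and both operators are bounded, so the identity holds everywhere. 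Now define $\Psi \colon X \hot_\rho Y \to \C X \ot_{\C A} Y$ by $\Psi(\xi) := \sum_{j=1}^n \ze_j \ot \te_{\ze_j}^* \xi$ (this lands in $\C X \ot_{\C A} Y$ since $\ze_j \in \C X$ and $\te_{\ze_j}^* \xi \in Y$). Then $\iota \Psi = \sum_j \te_{\ze_j} \te_{\ze_j}^* = 1$, whereas $\Psi \iota = \mathrm{id}$ results from moving each $\inn{\ze_j,x} \in \C A$ back across the balanced tensor product and applying the frame relation inside $\C X$. Both $\iota$ and $\Psi$ are right $B$-linear, so $\iota$ is an isomorphism of right $B$-modules; in particular every element of $X \hot_\rho Y$ is a \emph{finite} sum $\sum_{j=1}^n \ze_j \ot y_j$.

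\textbf{Transporting the candidate.} On $\C X \ot_{\C A} Y$ I would set $\tilde c(x \ot y) := \T{ev}_y(\C R(x))$ and extend $\cc$-bilinearly. This descends to the balanced tensor product because, for $\eta \in X' \hot_\rho \B L(Y)$, $a \in \C A$ and $y \in Y$, one has $\T{ev}_y(\eta \cd \rho(a)) = \T{ev}_{\rho(a)y}(\eta)$ — checked on $\eta = x' \ot T$, where both sides equal $x' \ot T\rho(a)(y)$ — so by \eqref{eq:linear} we get $\tilde c(x \cd a \ot y) = \T{ev}_y(\C R(x) \cd \rho(a)) = \T{ev}_{\rho(a)y}(\C R(x)) = \tilde c(x \ot \rho(a) y)$. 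Put $c(R) := \tilde c \circ \Psi$. It is right $B$-linear, since $\Psi$ is and $\T{ev}_{y \cd b}(\C R(x)) = \T{ev}_y(\C R(x)) \cd b$, and it has the prescribed value $c(R)(x \ot y) = \tilde c(\Psi \iota(x \ot y)) = \T{ev}_y(\C R(x))$ for $x \in \C X$, $y \in Y$. Uniqueness is automatic: since every element of $X \hot_\rho Y$ is a finite sum $\sum_j \ze_j \ot y_j$ with $\ze_j \in \C X$, any other $B$-linear map with the same values on the $x \ot y$ must agree with $c(R)$ on all of $X \hot_\rho Y$.

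\textbf{Boundedness — the main obstacle.} The one step that needs genuine care is that $c(R)$ is bounded, and this is where the frame is used a second time. For $\xi \in X \hot_\rho Y$ set $y_j := \te_{\ze_j}^* \xi$; then, with $\T{ev} \colon (X' \hot_\rho \B L(Y)) \hot_{\T{id}} Y \to X' \hot_\rho Y$ the unitary isomorphism (satisfying $\T{ev}(\eta \ot y) = \T{ev}_y(\eta)$), one unwinds
\[
c(R)(\xi) \;=\; \sum_{j=1}^n \T{ev}_{y_j}(\C R(\ze_j)) \;=\; \T{ev}\Big( \sum_{j=1}^n \C R(\ze_j) \ot y_j \Big) .
\]
As $\T{ev}$ is isometric it is enough to bound $\big\| \sum_j \C R(\ze_j) \ot y_j \big\|^2 = \big\| \sum_{i,j} \binn{ y_i, \inn{ \C R(\ze_i), \C R(\ze_j) } y_j } \big\|$, where $\inn{ \C R(\ze_i), \C R(\ze_j) } \in \B L(Y)$ is the inner product of $X' \hot_\rho \B L(Y)$. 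The Gram matrix $G := \big( \inn{ \C R(\ze_i), \C R(\ze_j) } \big)_{i,j=1}^n$ is a positive element of $M_n(\B L(Y)) = \B L(Y^{\op n})$, hence $\sum_{i,j} \binn{ y_i, G_{ij} y_j } = \inn{ (y_j)_j, G (y_j)_j }_{Y^{\op n}} \leq \| G \|_\infty \sum_j \inn{ y_j, y_j }$ in $B$. Finally $\big\| \sum_j \inn{ y_j, y_j } \big\| = \big\| \binn{ \xi, \big( \sum_j \te_{\ze_j} \te_{\ze_j}^* \big) \xi } \big\| = \| \xi \|^2$ by the identity from the first step, so $\| c(R)(\xi) \|_\infty \leq \| G \|_\infty^{1/2} \| \xi \|$. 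I expect this last estimate — controlling the a priori merely $\cc$-linear map $\C R$ through positivity of the frame Gram matrix together with the relation $\sum_j \te_{\ze_j} \te_{\ze_j}^* = 1$ — to be the only delicate point; everything else is routine manipulation of the defining formulas for $\te_x$, $\T{ev}_y$ and the interior-tensor-product inner products.
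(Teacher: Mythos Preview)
Your proof is correct and follows the same overall plan as the paper: use the frame to write down an explicit inverse $\xi \mapsto \sum_j \ze_j \ot \te_{\ze_j}^*\xi$, check that $\tilde c$ descends to the balanced tensor product via \eqref{eq:linear}, and then use the frame decomposition again to control the norm of $c(R)$.

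The only real difference is in the boundedness step. The paper simply applies the triangle inequality:
\[
\big\| c(R)(\xi) \big\| = \Big\| \sum_{j=1}^n \T{ev}_{\te_{\ze_j}^*\xi}\, \C R(\ze_j) \Big\|
\leq \sum_{j=1}^n \| \te_{\ze_j}^*\xi \|\cdot \| \C R(\ze_j) \|
\leq \|\xi\| \cdot \sum_{j=1}^n \| \ze_j \| \cdot \| \C R(\ze_j) \| ,
\]
whereas you go through the unitary $\T{ev}$ and the positivity of the Gram matrix $G = (\inn{\C R(\ze_i),\C R(\ze_j)})_{i,j}$ to obtain the sharper bound $\| c(R)(\xi) \| \leq \| G \|_\infty^{1/2}\,\|\xi\|$. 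Both arguments are valid; yours gives a tighter operator-norm constant at the cost of a little more machinery, while the paper's is a one-line estimate that already suffices for everything that follows.
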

\begin{proof}
The inverse of the $B$-linear map $\C X \ot_{\C A} Y \to X \hot_\rho Y$ is given explicitly in terms of the finite frame by $z \mapsto \sum_{j = 1}^n \ze_j \ot \te_{\ze_j}^*(z)$.

The condition in \eqref{eq:linear} on the $\cc$-linear map $\C R$ ensures that we have a well-defined $B$-linear map $c(R) : \C X \ot_{\C A} Y \to X' \hot_\rho Y$ given by $c(R)(x \ot y) = \T{ev}_y( \C R(x))$ on simple tensors. Using our finite frame again, we obtain the result of the lemma by letting $z \in \C X \ot_{\C A} Y$ and record the estimates
  \[
  \begin{split}
    \| c(R)(z) \| & = \big\| \sum_{j = 1}^n \T{ev}_{ \te_{\ze_j}^*(z)} \C R(\ze_j) \big\|
    \leq \sum_{j = 1}^n \| \te_{\ze_j}^*(z) \| \cd \| \C R(\ze_j) \| \\
    & \leq \| z \| \cd \sum_{j = 1}^n \| \ze_j \| \cd \| \C R(\ze_j) \| . \qedhere
  \end{split}
  \]
\end{proof}
%
%

  \subsection{Horizontal lifts of selfadjoint unbounded operators}\label{ss:lifts}
We are now going to review how to transport a selfadjoint unbounded operator from one Hilbert space to another via a Hermitian connection. The material presented here form an important part of the core constructions in unbounded $KK$-theory \cite{Mes:UCN,KaLe:SFU,MeRe:NST}, but the main idea can also be found in the work of Alain Connes in the context of finitely generated projective modules, \cite{Con:GCM}.

Let $X$ be a countably generated right Hilbert $C^*$-module over a unital $C^*$-algebra $A$ and let $H$ be a separable Hilbert space. Let moreover $\rho : A \to \B L(H)$ be an injective unital $*$-homomorphism and let $D : \T{Dom}(D) \to H$ be a selfadjoint unbounded operator. Suppose that $\C A \su A$ is a norm-dense unital $*$-subalgebra such that for every $a \in \C A$ it holds that $\rho(a)( \T{Dom}(D) ) \su \T{Dom}(D)$ and the commutator $[D,\rho(a)] : \T{Dom}(D) \to H$ extends to a bounded operator $\de(a) : H \to H$. The associated $\cc$-linear map $\de : \C A \to \B L(H)$ is a $*$-derivation in so far that the two equalities
\[
\de(a \cd b) = \rho(a) \cd \de(b) + \de(a) \cd \rho(b) \, \, \T{ and } \, \, \, \de(a^*) = - \de(a)^*,
\]
are valid for all $a,b \in \C A$. 

Let us specify that our data $(\C A,H,D)$ is a \emph{unital spectral triple} on $A$, if we impose the extra requirement that the resolvent $(i + D)^{-1}$ is a compact operator on $H$. Without this extra compactness conditions we refer to the triple $(\C A,H,D)$ as a \emph{Lipschitz triple} on $A$.

It is sometimes relevant to incorporate a grading operator which is a fixed selfadjoint unitary operator $\ga : H \to H$ which commutes with $\rho(a)$ for all $a \in A$ and anti-commutes with $D : \T{Dom}(D) \to H$. In this situation we say that our Lipschitz triple $(\C A,H,D)$ is \emph{graded}. The grading operator $\ga$ then induces a selfadjoint unitary operator $1 \hot \ga : X \hot_\rho H \to X \hot_\rho H$ which on simple tensors is given by the formula $(1 \hot \ga)(x \ot \xi) = x \ot \ga \xi$.

We emphasize that our conditions imply that the interior tensor product $X \hot_\rho H$ is a separable Hilbert space. In this respect, recall that $X$ is assumed to be countably generated so that there exists a sequence $\{\xi_n\}_{n = 1}^\infty$ in $X$ such that the corresponding $A$-linear span is norm-dense in $X$.


\begin{dfn}\label{d:hermit}
Suppose that $\C X \su X$ is a norm-dense vector subspace satisfying that $\C X \cd \C A \su \C X$ and $\inn{\C X,\C X} \su \C A$. A \emph{Hermitian $\de$-connection} is a $\cc$-linear map $\Na : \C X \to X \hot_\rho \B L(H)$ satisfying that
  \begin{enumerate}
  \item $\Na(x \cd a) = \Na(x) \cd \rho(a) + x \ot \de(a)$ for all $x \in \C X$ and $a \in \C A$;
  \item $\de( \inn{x_1,x_2} ) = \te_{x_1}^* \Na(x_2) - \big( \te_{x_2}^* \Na(x_1) \big)^*$ for all $x_1,x_2 \in \C X$.
  \end{enumerate}
  In the case where the Lipschitz triple $(\C A,H,D)$ is graded with grading operator $\ga : H \to H$, we say that a Hermitian $\de$-connection $\Na : \C X \to X \hot_\rho \B L(H)$ is \emph{odd}, if it holds that
  \[
(1 \hot \ga)\T{ev}_\xi \Na(x) = - \T{ev}_{\ga \xi} \Na(x) \q \mbox{for all } x \in \C X \mbox{ and } \xi \in H .
  \]
\end{dfn}

It is common to put more restrictions on Hermitian $\de$-connections, so let us briefly explain how this works. Consider the $\C A$-$\C A$-bisubmodule $\Om^1_D(\C A)$ of $\B L(H)$ defined by putting
\[
\Om^1_D(\C A) := \T{span}_{\B C} \big\{ \rho(a) \de(b) \mid a \in \C A , b \in \C B \big\} .
\]
It is then often required that a Hermitian $\de$-connection factorizes through the module tensor product $\C X \ot_{\C A} \Om^1_D(\C A)$ (which we may map to $X \hot_\rho \B L(H)$ via the canonical inclusions), see \cite[Chapter VI, Definition 8]{Con:NCG} and \cite{Sui:NCG}. This additional property is indeed satisfied by the concrete Hermitian $\de$-connections we are considering in the present paper.

In the generality of Definition \ref{d:hermit} it can be proved that Hermitian $\de$-connections always exist under very mild conditions on $X$ and the norm-dense unital $*$-subalgebra $\C A \su A$, see \cite[Theorem 3.1 and Theorem 4.1]{Kaa:DAH}. In fact, it suffices to require the existence of a sequence $\{ \xi_n \}_{n = 1}^\infty$ in $X$ satisfying that the $\C A$-submodule $\T{span}_{\B C}\big\{ \xi_n \cd a \mid n \in \B N \, , \, \, a \in \C A \big\} \su X$ is norm-dense and that $\inn{\xi_n,\xi_m} \in \C A$ for all $n,m \in \B N$.
\medskip

Suppose from now on that we have a fixed norm-dense vector subspace $\C X \su X$ satisfying that $\C X \cd \C A \su \C X$ and $\inn{\C X,\C X} \su \C A$. To ease the exposition we moreover suppose that the inclusions induce an injective map from $\C X \ot_{\C A} \T{Dom}(D)$ to the interior tensor product $X \hot_\rho H$.

\begin{dfn}
  Suppose that $\Na : \C X \to X \hot_\rho \B L(H)$ is a Hermitian $\de$-connection. The \emph{horizontal lift} of the selfadjoint unbounded operator $D$ via $\Na$ is the symmetric unbounded operator
  \[
  1 \ot_\Na D : \C X \ot_{\C A} \T{Dom}(D) \to X \hot_\rho H
  \]
  defined by the formula $(1 \ot_\Na D)(x \ot \xi) := \T{ev}_\xi \Na(x) + x \ot D(\xi)$ for all $x \in \C X$ and $\xi \in \T{Dom}(D)$. The closure of the horizontal lift is denoted by $1 \hot_\Na D$. 
\end{dfn}

Let us clarify that if the Lipschitz triple is graded $(\C A,H,D)$ and we have an odd Hermitian $\de$-connection $\Na : \C X \to X \hot_\rho \B L(H)$, then the associated horizontal lift is also odd in the sense that $1 \ot_\Na D$ anti-commutes with $1 \hot \ga$ (the same therefore also holds for the closure $1 \hot_\Na D$).

It is important to realize that the horizontal lift need not be essentially selfadjoint and can in general even fail to have a selfadjoint extension, see \cite[Subsection 5.1]{Kaa:DAH} for a basic example illustrating this phenomenon. For this reason we are in the rest of this section focusing on the simple case where $\C X \su X$ has a finite frame $\{ \ze_j\}_{j = 1}^n$, see Definition \ref{d:frame}. 
%

The finite frame $\{\ze_j\}_{j = 1}^n$ may be applied to write down the \emph{Grassmann connection} $\Na_{\T{Gr}} : \C X \to X \hot_\rho \B L(H)$ given by the formula
\begin{equation}\label{eq:grass}
\Na_{\T{Gr}}(x) := \sum_{j = 1}^n \ze_j \ot \de( \inn{\ze_j,x}) .
\end{equation}
It can be verified that the Grassmann connection is indeed a Hermitian $\de$-connection and the corresponding horizontal lift is determined by the expression
\begin{equation}\label{eq:horilift}
(1 \ot_{\Na_{\T{Gr}}} D)(x \ot \xi) := \sum_{j = 1}^n \ze_j \ot D\big( \rho(\inn{\ze_j,x})(\xi) \big)
\end{equation}
for all $x \in \C X$ and $\xi \in \T{Dom}(D)$. Notice in this respect that the existence of our finite frame implies that the module tensor product $\C X \ot_{\C A} \T{Dom}(D)$ does indeed inject into the interior tensor product $X \hot_\rho H$. In the case where the Lipschitz triple $(\C A,H,D)$ is graded it automatically holds that the Grassmann connection $\Na_{\T{Gr}}$ is odd.

Let us denote the $\C A$-linear endomorphisms of $\C X$ by $\T{End}_{\C A}(\C X)$ and record that the existence of the finite frame for $\C X$ also implies that $\T{End}_{\C A}(\C X)$ can be viewed as a norm-dense unital $*$-subalgebra of $\B L(X)$. In fact, the adjoint of an element $T \in \T{End}_{\C A}(\C X)$ is given explicitly by $T^*(x) = \sum_{i = 1}^n \ze_i \cd \inn{T \ze_i,x}$ for all $x \in X$. Since the unital $*$-homomorphism $\rho : A \to \B L(H)$ is assumed to be injective we get an injective unital $*$-homomorphism $\B L(X) \to \B L(X \hot_\rho H)$ given by $T \mapsto T \hot 1$.
%

The result here below is due to Alain Connes, see \cite{Con:GCM}, but a more detailed proof can be found in \cite[Theorem 6.15]{Sui:NCG}. Regarding selfadjointness of the horizontal lift $1 \ot_\Na D$ it is important to clarify that the domain $\C X \ot_{\C A} \T{Dom}(D)$ can be identified with the norm-dense subspace $P \T{Dom}(D)^{\op n} \su P H^{\op n}$, where $P \in M_n(\B L(H) )$ is the orthogonal projection determined by the entries $P_{ij} := \rho( \inn{\ze_i,\ze_j})$.

\begin{thm}\label{t:esself}
  Suppose that $\{ \ze_j\}_{j = 1}^n$ is a finite frame for $\C X \su X$. If $\Na : \C X \to X \hot_\rho \B L(H)$ is a Hermitian $\de$-connection, then the horizontal lift $1 \ot_\Na D : \C X \ot_{\C A} \T{Dom}(D) \to X \hot_\rho H$ is a selfadjoint unbounded operator.
Moreover, if $(\C A,H,D)$ is a unital spectral triple, then $( \T{End}_{\C A}(\C X), X \hot_\rho H, 1 \ot_\Na D)$ is also a unital spectral triple.
\end{thm}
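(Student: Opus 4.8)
The plan is to prove both assertions by first reducing to the Grassmann connection via a bounded perturbation argument, then handling the Grassmann case directly through the projection picture $PH^{\oplus n}$.

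\textbf{Step 1: Reduce a general Hermitian $\delta$-connection to the Grassmann connection.} Given any Hermitian $\delta$-connection $\Na$, consider the difference $\Na - \Na_{\T{Gr}} : \C X \to X \hot_\rho \B L(H)$. Using property (1) of Definition \ref{d:hermit} for both $\Na$ and $\Na_{\T{Gr}}$, the $x \ot \de(a)$ terms cancel, so $(\Na - \Na_{\T{Gr}})(x \cd a) = (\Na - \Na_{\T{Gr}})(x) \cd \rho(a)$; that is, $\Na - \Na_{\T{Gr}}$ satisfies the hypothesis \eqref{eq:linear} of Lemma \ref{l:extension} (with $X' = X$). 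Hence $c(\Na - \Na_{\T{Gr}})$ extends to a bounded $B$-linear operator on $X \hot_\rho H$, and by construction $1 \ot_\Na D = 1 \ot_{\Na_{\T{Gr}}} D + c(\Na - \Na_{\T{Gr}})$ on the common core $\C X \ot_{\C A} \T{Dom}(D)$. Property (2) of Definition \ref{d:hermit}, applied to both connections, forces the difference to be symmetric: for $x_1, x_2 \in \C X$ one gets $\te_{x_1}^*(\Na - \Na_{\T{Gr}})(x_2) = \big(\te_{x_2}^*(\Na - \Na_{\T{Gr}})(x_1)\big)^*$, which expresses exactly that $c(\Na - \Na_{\T{Gr}})$ is symmetric, hence selfadjoint as it is bounded. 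By the Kato--Rellich theorem, $1 \ot_\Na D$ is selfadjoint on the domain of $1 \hot_{\Na_{\T{Gr}}} D$ provided $1 \ot_{\Na_{\T{Gr}}} D$ is selfadjoint on $\C X \ot_{\C A} \T{Dom}(D)$; moreover a bounded symmetric perturbation preserves the compactness of the resolvent (since $(i + D')^{-1} - (i+D)^{-1}$ is given by a resolvent identity involving the bounded difference and two bounded resolvents, and the resolvent set is nonempty for selfadjoint operators). Thus it suffices to prove the theorem for $\Na = \Na_{\T{Gr}}$.

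\textbf{Step 2: Identify the Grassmann horizontal lift inside $PH^{\oplus n}$.} Under the isometric identification $\Phi : \C X \ot_{\C A} \T{Dom}(D) \to P\,\T{Dom}(D)^{\oplus n} \su PH^{\oplus n}$ coming from the finite frame (with $P_{ij} = \rho(\inn{\ze_i,\ze_j})$), the formula \eqref{eq:horilift} shows that $1 \ot_{\Na_{\T{Gr}}} D$ is carried to the operator $P \big(\bigoplus_{j=1}^n D\big) P$ restricted to $P\,\T{Dom}(D)^{\oplus n}$ — here one uses that $\rho(\inn{\ze_j,\ze_j'})$ preserves $\T{Dom}(D)$ and that $\bigoplus_j D$ on $H^{\oplus n}$ is selfadjoint with core $\T{Dom}(D)^{\oplus n}$. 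Since $P$ is a projection in $M_n(\B L(H))$ (this is the standard frame computation $P^2 = P = P^*$), and since $P$ maps $\T{Dom}(D)^{\oplus n}$ into itself (because each entry $\rho(\inn{\ze_i,\ze_j}) = \de$-bounded element preserves $\T{Dom}(D)$), the compression $P(\bigoplus_j D)P$ with domain $P\,\T{Dom}(D)^{\oplus n}$ is selfadjoint: this is the well-known fact that if $P$ is a projection preserving the domain of a selfadjoint operator $S$, then $PSP|_{P\T{Dom}(S)}$ is selfadjoint on $PH$. I would prove this fact directly by noting that $[P, S]$ extends to a bounded operator — indeed $[P, \bigoplus_j D]$ has entries $[\rho(\inn{\ze_i,\ze_j}), D] = -\de(\inn{\ze_i,\ze_j})$ which are bounded — so $PSP + (1-P)S(1-P) = S - [P,[P,S]]$ differs from the selfadjoint $S$ by a bounded symmetric operator, is therefore selfadjoint on $\T{Dom}(S)$, and respects the decomposition $H^{\oplus n} = PH^{\oplus n} \oplus (1-P)H^{\oplus n}$, whence each block is selfadjoint on its piece of $\T{Dom}(S)$.

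\textbf{Step 3: Compact resolvent and the commutator condition.} When $(\C A, H, D)$ is a unital spectral triple, $(i + \bigoplus_j D)^{-1} = \bigoplus_j (i+D)^{-1}$ is compact on $H^{\oplus n}$; by Step 2 the resolvent of $1 \ot_{\Na_{\T{Gr}}} D \cong P(\bigoplus_j D)P|_{PH^{\oplus n}}$ differs from a compression of this compact operator by a relatively-compact correction (again via the resolvent identity with the bounded commutator $[P, \bigoplus_j D]$), hence is compact; Step 1 then transfers compactness to general $\Na$. Finally, for $T \in \T{End}_{\C A}(\C X)$ one checks that $T \hot 1$ preserves the core $\C X \ot_{\C A} \T{Dom}(D)$ and that the commutator $[1 \ot_\Na D, T \hot 1]$ is bounded: reducing again to the Grassmann connection (the difference with any other $\Na$ contributes $[c(\Na - \Na_{\T{Gr}}), T\hot 1]$, bounded since both factors are bounded), one computes in the $PH^{\oplus n}$ picture, where $T$ becomes multiplication by a matrix $(T_{ij}) \in M_n(\C A)$ commuting with $P$, so $[P(\bigoplus_j D)P, (T_{ij})] = P[\bigoplus_j D, (T_{ij})]P = P(\de(T_{ij}))P$ is bounded. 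This establishes that $(\T{End}_{\C A}(\C X), X \hot_\rho H, 1 \ot_\Na D)$ is a unital spectral triple.

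\textbf{Main obstacle.} The technical heart is Step 2 — proving that compressing a selfadjoint operator by a domain-preserving projection yields a selfadjoint operator, and that this compression is the honest closure $1 \hot_{\Na_{\T{Gr}}} D$ rather than merely a symmetric operator with a selfadjoint extension. The boundedness of $[P, \bigoplus_j D]$ (equivalently, of the $\de(\inn{\ze_i,\ze_j})$, which is exactly where the Lipschitz-triple hypothesis on $\C A$ enters) is what makes everything go through, and keeping careful track of the identification of cores under $\Phi$ is the part that requires the most care; the perturbation arguments in Steps 1 and 3 are then routine applications of Kato--Rellich and resolvent identities.
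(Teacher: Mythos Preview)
Your proof is correct and follows precisely the approach the paper indicates: the paper does not give its own proof but cites Connes and van Suijlekom, while hinting at the key identification of $\C X \ot_{\C A} \T{Dom}(D)$ with $P\,\T{Dom}(D)^{\op n}$ via the projection $P_{ij} = \rho(\inn{\ze_i,\ze_j})$ --- exactly your Step~2. Your reduction to the Grassmann connection (Step~1) is the standard one (and appears, in the paper's unused appendix material, as Lemma~\ref{l:perturbh}), and the double-commutator trick $PSP + (1-P)S(1-P) = S - [P,[P,S]]$ together with boundedness of $\de(\inn{\ze_i,\ze_j})$ is the usual route to selfadjointness of the compression; Step~3 is then routine.
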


\section{Preliminaries on compact quantum metric spaces}\label{s:cqms}
Our aim is now to review the main definitions relating to compact quantum metric spaces, which is a theory initiated and developed by Marc Rieffel in a series of papers, \cite{Rie:MSA,Rie:MSS,Rie:GHQ}. We are here focusing on the version of the theory pertaining to operator systems in line with the recent developments in \cite{CoSu:TRO,CoSu:STN,Sui:GSS}, but see also \cite{Ker:MQG} for earlier work in this direction.

\begin{dfn}
  A concrete \emph{operator system} is a vector subspace $\C E$ of a unital $C^*$-algebra $A$ satisfying that $1 \in \C E$ and that $x^* \in \C E$ for all $x \in \C E$. An operator system $\C E \su A$ is \emph{complete}, if $\C E$ is closed in the $C^*$-norm on $A$.
\end{dfn}

For an operator system $\C E \su A$ we are interested in the state space $S(\C E)$ which consists of all the unital positive functionals $\mu : \C E \to \B C$. Notice in this respect that an element $\xi \in \C E$ is \emph{positive} if it is positive as an element of the unital $C^*$-algebra $A$. The state space $S(\C E)$ is equipped with the weak$^*$-topology and it follows from the Banach-Alaoglu theorem that $S(\C E)$ is a compact Hausdorff space. We equip $\C E$ with the norm-topology coming from the unique $C^*$-norm on $A$.

\begin{dfn}\label{d:slip}
A \emph{slip-norm} on an operator system $\C E \su A$ is a semi-norm $L : \C E \to [0,\infty)$ satisfying that $L(1) = 0$ and $L(x^*) = L(x)$ for all $x \in \C E$. A slip-norm $L : \C E \to [0,\infty)$ is \emph{lower semicontinuous}, if for every $y \geq 0$ the subset $U_y := \big\{ x \in \C E \mid L(x) > y \big\}$ is open in $\C E$. 
\end{dfn}

The theory of compact quantum metric spaces is concerned with metrics on the state space $S(\C E)$ arising from slip-norms in the following sense:

\begin{dfn}
  Let $L : \C E \to [0,\infty)$ be a slip-norm on an operator system $\C E \su A$. The \emph{Monge-Kantorovich (extended) metric} $d_L$ is defined by
    \[
d_L(\mu,\nu) := \sup \big\{ | \mu(x) - \nu(x)| \mid L(x) \leq 1 \big\} 
\]
for all $\mu, \nu \in S(\C E)$.
\end{dfn}

The Monge-Kantorovich extended metric is indeed an extended metric $d_L : S(\C E) \ti S(\C E) \to [0,\infty]$ meaning that it satisfies all the usual properties of a metric except that states are allowed to be infinitely far from one another. In particular, we may define the associated metric topology on $S(\C E)$ with basis consisting of all the metric open balls with finite radius.

\begin{dfn}\label{d:cqms}
We say that a pair $(\C E,L)$, consisting of an operator system $\C E \su A$ and a slip-norm $L : \C E \to [0,\infty)$, is a compact quantum metric space, if the weak$^*$-topology on the state space $S(\C E)$ agrees with the metric topology coming from the Monge-Kantorovich extended metric. \end{dfn}

The following theorem, established by Marc Rieffel as \cite[Theorem 1.8]{Rie:MSA}, provides a characterization of compact quantum metric spaces which allows us to work directly with the pair $(\C E,L)$ instead of the Monge-Kantorovich extended metric on the state space. Let us identify $\B C$ with the closed subspace of $\C E$ spanned by the unit $1 \in \C E$ and consider the quotient vector space $\C E/\B C$ together with the quotient map $[\cd] : \C E \to \C E/\B C$.

\begin{thm}\label{t:charac}
  Let $\C E \su A$ be an operator system equipped with a slip-norm $L : \C E \to [0,\infty)$. It holds that $(\C E,L)$ is a compact quantum metric space if and only if the subset
    \[
\big\{ [x] \in \C E/\B C \mid L(x) \leq 1 \big\} 
    \]
    is totally bounded (in the norm-topology on the quotient space $\C E/ \B C$).
\end{thm}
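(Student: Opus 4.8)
The plan is to prove this characterization by relating total boundedness of the "Lipschitz ball" in $\C E/\B C$ to the compatibility of the weak$^*$-topology with the Monge-Kantorovich metric on $S(\C E)$. First I would observe that $d_L$ factors through the quotient: for $\mu,\nu \in S(\C E)$ we have $d_L(\mu,\nu) = \sup\{ |\mu(x)-\nu(x)| : L(x) \le 1 \}$, and since $\mu(1) = \nu(1) = 1$ the difference $\mu - \nu$ annihilates $\B C \subseteq \C E$, so $d_L(\mu,\nu)$ depends only on the image of the ball $B_L := \{ [x] \in \C E/\B C : L(x) \le 1 \}$ under the induced functionals. Thus $d_L$ is precisely the (pseudo)metric on $S(\C E)$ induced by the seminormed-space structure on the dual pairing with $\overline{B_L}$. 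The weak$^*$-topology on the (norm-bounded, weak$^*$-compact) state space always coincides with the topology of uniform convergence on norm-totally-bounded subsets of $\C E$, and more to the point on any fixed bounded set; the subtlety is that $B_L$ need not be bounded a priori, which is why lower semicontinuity or at least the structure of the argument must be handled with care.

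For the direction ($\Leftarrow$), assume $B_L$ is totally bounded in $\C E/\B C$. Then uniform convergence on $B_L$ is the same as uniform convergence on a finite net of points of $\C E/\B C$, i.e. on finitely many $[x_1],\dots,[x_k]$, which is weak$^*$-continuous; hence the $d_L$-topology is coarser than (contained in) the weak$^*$-topology on $S(\C E)$. Conversely, since $B_L$ is totally bounded it is in particular bounded, say $B_L \subseteq \{ [x] : \|[x]\|_{\C E/\B C} \le C \}$; this forces $L$ to be "nondegenerate enough" that $d_L$ separates states (using that $\C E$ separates states of $A$, and that $L$ vanishes only on scalars — one needs here that $\{x : L(x) = 0\} = \B C$, or more precisely that $\overline{B_L}$ spans, which follows from boundedness of $B_L$ together with $L$ being a genuine seminorm with $L(1)=0$; Rieffel's argument uses that $B_L + \B C$ is then a neighborhood basis modulo scalars). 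From separation plus the Hausdorff compactness of $(S(\C E), \text{weak}^*)$ and the continuity of $d_L$-balls just established, the identity map from the weak$^*$-compact space to the $d_L$-metric space is a continuous bijection onto a Hausdorff space, hence a homeomorphism. So the two topologies agree and $(\C E,L)$ is a compact quantum metric space.

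For the direction ($\Rightarrow$), assume the topologies agree. The main point is to extract total boundedness of $B_L$ from compactness of $S(\C E)$. I would pass through the bidual picture: $\C E/\B C$ embeds isometrically into its bidual, and the natural pairing identifies a suitable affine subspace of $C(S(\C E))$ (affine functions vanishing at a fixed base state, or differences of states) with the predual object. The set $B_L$ is, by definition of $d_L$, contained in the unit ball of the dual of $(S(\C E), d_L)$ — more precisely, $[x] \in B_L$ acts on pairs of states with $|\mu(x) - \nu(x)| \le d_L(\mu,\nu)$, so the affine function $\mu \mapsto \mu(x)$ is $1$-Lipschitz for $d_L$. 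Since $d_L$ induces the weak$^*$-topology and $S(\C E)$ is weak$^*$-compact, $(S(\C E), d_L)$ is a compact metric space, so its space of $1$-Lipschitz functions, modulo constants, is totally bounded by the Arzelà–Ascoli theorem. Therefore the image of $B_L$ inside $C(S(\C E))/\text{constants}$ is totally bounded, and since the norm on $\C E/\B C$ dominates (indeed, by duality and the fact that $\C E$ sits in $A$ with states separating points, is comparable to) the supremum norm on the corresponding affine functions, one concludes $B_L$ is totally bounded in $\C E/\B C$ itself.

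The hard part will be the last comparison of norms in the ($\Rightarrow$) direction: controlling the quotient norm $\|[x]\|_{\C E/\B C}$ from above by (a constant times) $\sup_{\mu,\nu}|\mu(x) - \nu(x)|$, i.e. by the oscillation of $x$ over states. For a unital $C^*$-algebra this is clean because the numerical range determines the norm up to a factor of $2$ and real/imaginary parts are handled separately; for a general operator system $\C E \subseteq A$ one must be a little careful since $\C E$ need not be closed under products and the self-adjoint part must be treated by hand, but the key inequality $\|[x]\|_{\C E/\B C} \le \sup\{ |\mu(x) - \nu(x)| : \mu,\nu \in S(\C E)\}$ up to a universal constant still holds because states of $A$ separate points and the distance to $\B C$ is a supremum over such state-differences. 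I expect the rest — factoring $d_L$ through the quotient, the Arzelà–Ascoli step, and the compact-Hausdorff homeomorphism argument — to be routine, so the proof should be short, essentially reproducing Rieffel's original argument adapted to the operator-system setting.
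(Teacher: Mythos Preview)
The paper does not prove this theorem; it is quoted from Rieffel's work \cite{Rie:MSA} (Theorem~1.8 there) and used as a black box throughout. Your sketch is essentially Rieffel's original argument: the $(\Leftarrow)$ direction via the compact-to-Hausdorff homeomorphism trick (with the observation that total boundedness of $B_L$ forces the kernel of $L$ to be exactly $\B C$, so that $d_L$ is a genuine metric), and the $(\Rightarrow)$ direction via Arzel\`a--Ascoli on the compact metric space $(S(\C E),d_L)$ together with the Kadison functional representation to compare $\|[x]\|_{\C E/\B C}$ with the quotient sup-norm. Your identification of the norm-comparison step as the only delicate point is accurate; for self-adjoint elements the comparison is an equality, and the passage to general elements costs at most a factor of $2$ using $L(x^*)=L(x)$ and real/imaginary parts. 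So your proposal is correct and matches the standard proof.
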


It is also possible to obtain a characterization of compact quantum metric spaces in terms of finite dimensional approximations and the interested reader can consult \cite[Section 3]{Kaa:ExPr} for more details on this result.


\subsection{Li's theorem for circle actions}\label{ss:li}
In this subsection we go through a special version of a theorem due to Hanfeng Li, \cite[Theorem 4.1]{Li:DCM}. Li's theorem treats the general case of compact groups acting on unital $C^*$-algebras but for our purposes it suffices to consider the special setting where the compact group is equal to the unit circle.

Let us fix a unital $C^*$-algebra $A$ and equip $A$ with a strongly continuous action $\si$ of the unit circle $S^1 \su \B C$.

For each $n \in \B Z$, let $A_n \su A$ denote the \emph{spectral subspace} defined by
\[
A_n := \big\{ a \in A \mid \si_\la(a) = \la^n a \, \, \T{for all } \la \in S^1 \big\} .
\]
The corresponding spectral projection $P_n : A \to A$ with image $P_n(A) = A_n$ is given by the Riemann integral
\begin{equation}\label{eq:specproj}
P_n(a) = \frac{1}{2\pi}\int_0^{2 \pi} e^{-itn} \cd \si_{e^{it}}(a) \, dt \q \T{for all } a \in A .
\end{equation}
Notice that the \emph{fixed point algebra} $A_0 \su A$ is a unital $C^*$-subalgebra and that $P_0$ is a faithful conditional expectation.

On the unit circle, we define the continuous function $\ell : S^1 \to [0,\pi]$ by putting $\ell(e^{it}) = |t|$ for all $t \in (-\pi,\pi]$. In this way, the arc length metric is given by the formula $d(z,w) := \ell( z^{-1} w)$ for all $z,w \in S^1$.

    Associated with the above data, we have the unital $*$-subalgebra $\Lip_{S^1}(A) \su A$ such that an element $a \in A$ belongs to $\Lip_{S^1}(A)$ if and only if the subset
    \[
\big\{ (\si_\la(a) - a)/\ell(\la) \mid \la \in S^1 \sem \{1\} \big\} \su A
\]
is bounded in operator norm. It can be verified that $\Lip_{S^1}(A) \su A$ is norm-dense (see e.g. \cite[Proposition 2.5]{Ex:CAP}) and we equip it with the lower semicontinuous slip-norm $L_\ell : \Lip_{S^1}(A) \to [0,\infty)$ defined by
\[
L_\ell(a) := \sup_{\la \in S^1 \sem \{1\}} \| \si_\la(a) - a \|/\ell(\la) .
\]
Remark that $L_\ell( \si_\mu(a)) = L_\ell(a)$ for all $\mu \in S^1$ and $a \in \Lip_{S^1}(A)$. In particular, it holds that $\si_\mu\big( \Lip_{S^1}(A) \big) \su \Lip_{S^1}(A)$. It is also relevant to notice $A_n \su \Lip_{S^1}(A)$ for all $n \in \B Z$ and that
\begin{equation}\label{eq:supspecsub}
L_\ell(a) = \| a \| \cd \sup_{t \in (-\pi,\pi] \sem \{0\}} | e^{itn} - 1|/|t| = \| a \| \cd |n| \q \T{for all } a \in A_n .
\end{equation}

We are now ready to state Li's theorem for circle actions, \cite[Theorem 4.1]{Li:DCM}. Notice in this respect that Marc Rieffel proved a predecessor to Li's theorem focusing on ergodic actions of compact groups, see \cite[Theorem 2.3]{Rie:MSA}. 

\begin{thm}\label{t:licircle}
  Let $\C A \su A$ be a norm-dense unital $*$-subalgebra equipped with a lower semicontinuous slip-norm $L : \C A \to [0,\infty)$. Suppose that the following conditions are satisfied:
    \begin{enumerate}
\item $\C A \su \Lip_{S^1}(A)$ and there exists a constant $C \geq 0$ such that $L_\ell(a) \leq C \cd L(a)$ for all $a \in \C A$;
\item The inclusions $\si_\la(\C A) \su \C A$ and $P_n(\C A) \su \C A$ hold for all $\la \in S^1$ and $n \in \B Z$. Moreover, we require that $L(\si_\la(a)) = L(a)$ for all $\la \in S^1$ and $a \in \C A$; 
\item For every $n \in \B Z \sem \{0\}$, the subset $\big\{ a \in \C A \cap A_n \mid L(a) \leq 1 \big\} \su A$ is totally bounded; 
\item The pair $(\C A \cap A_0, L)$ is a compact quantum metric space.
    \end{enumerate}
Then it holds that $(\C A,L)$ is a compact quantum metric space.    
\end{thm}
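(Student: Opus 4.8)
The plan is to invoke Theorem \ref{t:charac} and show that the set $\big\{ [a] \in \C A/\B C \mid L(a) \leq 1 \big\}$ is totally bounded in the quotient norm. The essential tool is the spectral decomposition provided by the circle action: every $a \in \C A$ is approximated in norm by finite sums of its spectral components $P_n(a)$, so I would try to control the ``low-frequency tail'' $\sum_{|n| \leq N} P_n(a)$ using the hypotheses, and push the ``high-frequency tail'' $\sum_{|n| > N} P_n(a)$ to zero uniformly over the unit ball of $L$. First I would fix $a \in \C A$ with $L(a) \leq 1$. By condition (2) the spectral projections preserve $\C A$ and $L(\si_\la(a)) = L(a)$; combining this with the integral formula \eqref{eq:specproj} for $P_n$ and the fact that $L$ is a seminorm which is invariant under $\si_\la$, one gets $L(P_n(a)) \leq L(a) \leq 1$ for every $n$ (averaging over the circle does not increase $L$). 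So each spectral component $P_n(a)$, for $n \neq 0$, lies in the totally bounded set of condition (3), and the component $P_0(a)$ lies in the compact-quantum-metric-space ball of condition (4).

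The crucial quantitative input is condition (1): since $L_\ell(a) \leq C \cd L(a) \leq C$, and by \eqref{eq:supspecsub} we have $L_\ell(P_n(a)) = |n| \cd \| P_n(a) \|$ for $a \in \C A$ (using $P_n(a) \in A_n$), I want to argue that $L_\ell(P_n(a))$ is controlled by $L_\ell(a)$ up to a universal constant — this is a standard Fourier-theoretic estimate showing the Fourier coefficients of a Lipschitz function on the circle decay like $1/|n|$, with an explicit constant independent of $a$. Hence $\| P_n(a) \| \leq C'/|n|$ for all $a$ in our ball, with $C'$ depending only on $C$. Consequently the high-frequency tail satisfies $\big\| \sum_{|n| > N} P_n(a) \big\|$ — more carefully, one should work with Ces\`aro/Fej\'er means rather than crude partial sums to get genuine norm convergence — is at most some $\varepsilon(N) \to 0$ uniformly over the ball $\{ L(a) \leq 1\}$. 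This reduces the problem to the finitely many frequencies $|n| \leq N$.

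For the low-frequency part, I would note that the truncated sum $a \mapsto \sum_{|n| \leq N} P_n(a)$ (or its Fej\'er-smoothed version) maps the ball $\{ a \in \C A \mid L(a) \leq 1\}$ into a sum of finitely many sets, each of which is totally bounded: the $n = 0$ piece because $(\C A \cap A_0, L)$ is a compact quantum metric space (apply Theorem \ref{t:charac} in the base, remembering to pass to the quotient by $\B C$ appropriately), and each $n \neq 0$ piece because of condition (3). A finite sum (and Lipschitz-linear combination with bounded Fej\'er coefficients) of totally bounded sets is totally bounded. Passing to the quotient $\C A/\B C$ only shrinks distances, so the image there is totally bounded. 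Combining the uniform high-frequency estimate with the total boundedness of the low-frequency image via a standard $\varepsilon/2$ argument (any set that is within $\varepsilon$ of a totally bounded set is itself totally bounded) yields that $\{ [a] \mid L(a) \leq 1\}$ is totally bounded, and Theorem \ref{t:charac} finishes the proof.

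\textbf{Main obstacle.} The delicate point is the interplay between norm convergence of the Fourier expansion and the $L$-seminorm: crude partial sums $\sum_{|n| \leq N} P_n(a)$ need not converge in norm, so one must use Fej\'er means, and then verify that (i) these still have $L$-seminorm bounded by a constant times $L(a)$ — which uses $\si_\la$-invariance of $L$ together with the positivity of the Fej\'er kernel — and (ii) the individual spectral components still obey the decay estimate $\| P_n(a) \| \lesssim L(a)/|n|$ derived from condition (1). Getting the uniform-in-$a$ tail estimate with the right constants, and making sure the quotient-by-$\B C$ bookkeeping in condition (4) is handled correctly, is where the real work lies; the rest is soft functional analysis.
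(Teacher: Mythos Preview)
The paper does not prove this theorem; it is quoted as Li's result and cited as \cite[Theorem 4.1]{Li:DCM}, with only the accompanying Lemma~\ref{l:estimate} recorded separately. Your outline is essentially Li's argument and is correct: Fej\'er means $\sigma_N(a) = \int K_N(\la)\,\si_\la(a)\,d\la$ satisfy $L(\sigma_N(a)) \leq L(a)$ by invariance and lower semicontinuity, land in a finite sum of the totally bounded sets from (3) and (4) via Lemma~\ref{l:estimate}, and converge to $a$ uniformly over the $L$-unit ball because $\|a - \sigma_N(a)\| \leq L_\ell(a)\int K_N(\la)\,\ell(\la)\,d\la \leq C \int K_N(\la)\,\ell(\la)\,d\la \to 0$.

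Two small simplifications to what you wrote. First, the decay $\|P_n(a)\| \leq C/|n|$ is more direct than you indicate: apply the averaging inequality of Lemma~\ref{l:estimate} to $L_\ell$ itself (which is $\si$-invariant and lower semicontinuous) to get $L_\ell(P_n(a)) \leq L_\ell(a) \leq C$, then invoke \eqref{eq:supspecsub}. Second, you do not actually need this pointwise decay for the tail estimate --- the uniform Fej\'er approximation follows straight from the Lipschitz bound $\|\si_\la(a)-a\| \leq C\,\ell(\la)$ and the concentration of the Fej\'er kernel, as in the displayed inequality above. The quotient-by-$\B C$ bookkeeping is harmless since $\B C \su A_0$ and the quotient map is a contraction.
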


On top of the above theorem, we record the following convenient result, see \cite[Lemma 4.3]{Li:DCM}.

\begin{lemma}\label{l:estimate}
  Let $\C A \su A$ be a norm-dense unital $*$-subalgebra equipped with a lower semicontinuous slip-norm $L : \C A \to [0,\infty)$. If condition $(2)$ from Theorem \ref{t:licircle} is satisfied, then we have the estimate
    \[
L( P_n(a)) \leq L(a) \q \mbox{for all } n \in \B Z \, \, \mbox{ and } \, \, \, a \in \C A.
    \]
\end{lemma}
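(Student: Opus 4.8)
The plan is to express the spectral projection $P_n$ as an average of the automorphisms $\si_\la$ against a fixed character of the circle, and then exploit the two properties of $L$ furnished by condition $(2)$: invariance under $\si_\la$ and the fact that $\si_\la(\C A) \su \C A$. The key structural input is that $L$, being a seminorm, is subadditive and absolutely homogeneous, so it interacts well with Riemann sums, and that $L$ is lower semicontinuous, so it passes to limits of such sums.

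First I would fix $a \in \C A$ and $n \in \B Z$ and recall from \eqref{eq:specproj} that $P_n(a) = \frac{1}{2\pi}\int_0^{2\pi} e^{-itn}\si_{e^{it}}(a)\,dt$, a limit in norm of Riemann sums of the form $S_N = \sum_{k} c_k \si_{\la_k}(a)$ where $\la_k \in S^1$ and $\sum_k |c_k| \to \frac{1}{2\pi}\int_0^{2\pi} |e^{-itn}|\,dt = 1$ as the mesh goes to zero. For each such sum, subadditivity and homogeneity of the seminorm give $L(S_N) \leq \sum_k |c_k|\, L(\si_{\la_k}(a)) = \big(\sum_k |c_k|\big) L(a)$, using the invariance $L(\si_{\la_k}(a)) = L(a)$ from condition $(2)$; note each $\si_{\la_k}(a)$ lies in $\C A$, so the right-hand side makes sense. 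Hence $\limsup_N L(S_N) \leq L(a)$.

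Next I would invoke lower semicontinuity of $L$ on $\C A$ in the following form: if $x_N \to x$ in the $C^*$-norm with all $x_N, x \in \C A$, then $L(x) \leq \liminf_N L(x_N)$. This is the standard reformulation of the openness condition defining lower semicontinuity (the sets $U_y = \{x : L(x) > y\}$ being open), and it applies here because $S_N \to P_n(a)$ in norm with $S_N \in \C A$ and $P_n(a) \in \C A$ by condition $(2)$. Combining with the previous step yields $L(P_n(a)) \leq \liminf_N L(S_N) \leq \limsup_N L(S_N) \leq L(a)$, which is the desired estimate.

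The main obstacle is bookkeeping rather than conceptual: one must choose the Riemann-sum discretization so that both the norm convergence $S_N \to P_n(a)$ and the control $\sum_k |c_k| \to 1$ hold simultaneously, which is routine since the integrand $t \mapsto e^{-itn}\si_{e^{it}}(a)$ is norm-continuous on $[0,2\pi]$ and the weights $|e^{-itn}| \equiv 1$ integrate to $2\pi$. A small point worth stating explicitly is that all the intermediate sums $S_N$ and the limit $P_n(a)$ genuinely lie in $\C A$, so that $L$ is defined on them; this is exactly what the inclusions $\si_\la(\C A)\su\C A$ and $P_n(\C A)\su\C A$ in condition $(2)$ guarantee. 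No appeal to conditions $(1)$, $(3)$, or $(4)$ is needed.
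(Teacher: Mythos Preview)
Your proof is correct and follows the standard averaging argument: approximate the integral defining $P_n(a)$ by Riemann sums, bound each sum using subadditivity, homogeneity, and the invariance $L(\si_\la(a)) = L(a)$, then pass to the limit via lower semicontinuity. The paper does not supply its own proof of this lemma but simply cites \cite[Lemma 4.3]{Li:DCM}, where precisely this argument appears; your write-up is therefore a faithful reconstruction of the intended proof.
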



\subsection{Finitely generated projective modules}
In order to apply Li's theorem for circle actions, the difficult part is often to verify condition $(3)$ and $(4)$. We are therefore interested in finding manageable criteria which make condition $(3)$ a consequence of condition $(4)$. In many cases of interest, the spectral subspaces $A_n$ (for $n \in \B Z$) are finitely generated projective modules over the fixed point algebra $A_0$ and we therefore study this setting in more details from the point of view of compact quantum metric spaces. The discussion here below is however not limited to the case of circle actions considered in Subsection \ref{ss:li}.

Let us fix a unital $C^*$-algebra $A_0$ equipped with a norm-dense unital $*$-subalgebra $\C A_0 \su A_0$. Let us moreover consider a right Hilbert $C^*$-module $X$ over $A_0$ together with a norm-dense vector subspace $\C X$ satisfying that $\C X \cd \C A_0 \su \C X$ and $\inn{\C X,\C X} \su \C A_0$.

We let $L : \C X \to [0,\infty)$ be a seminorm and $L_0 : \C A_0 \to [0,\infty)$ be a slip-norm. The norm on $X$ coming from the $A_0$-valued inner product is denoted by $\| \cd \| : X \to [0,\infty)$. The following theorem is a variation over \cite[Theorem 2.20]{KaKy:SUq2}. 

\begin{thm}\label{t:finproj}
   Suppose that $(\C A_0,L_0)$ is a compact quantum metric space and that $\C X$ has a finite frame $\{ \ze_j\}_{j = 1}^n$ satisfying the conditions:
    \begin{enumerate}
    \item There exists a constant $C \geq 0$ such that $\| x \| \leq C \cd L(x)$ for all $x \in \C X$;
    \item There exists a constant $C_0 \geq 0$ such that $L_0( \inn{\ze_j,x} ) \leq C_0 \cd \big(L(x) + \| x \|\big)$ for all $x \in \C X$ and $j \in \{1,\ldots,n\}$. 
    \end{enumerate}
    Then the subset $\big\{ x \in \C X \mid L(x) \leq 1 \big\} \su X$ is totally bounded. 
\end{thm}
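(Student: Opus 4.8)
The plan is to use the finite frame $\{\ze_j\}_{j=1}^n$ to squeeze the Lipschitz ball $\C S := \{ x \in \C X \mid L(x) \leq 1 \}$ between a totally bounded set and its continuous image, reducing total boundedness of $\C S$ in $X$ to total boundedness of norm-and-Lipschitz-bounded balls in $\C A_0$. The key auxiliary fact is the following: for a compact quantum metric space $(\C A_0, L_0)$ and constants $R, R' \geq 0$, the set $\C B(R,R') := \{ a \in \C A_0 \mid L_0(a) \leq R, \ \| a \| \leq R' \}$ is totally bounded in $A_0$. To prove it, fix a state $\mu$ on $\C A_0$ and decompose $a = (a - \mu(a)1) + \mu(a)1$. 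Since $L_0(1) = 0$ we have $L_0(a - \mu(a)1) = L_0(a)$, and the identity $a - \mu(a)1 = (a - \la 1) - \mu(a - \la 1)1$, valid for every $\la \in \B C$, yields $\| a - \mu(a)1\| \leq 2 \| a - \la 1\|$ and hence $\| a - \mu(a)1 \| \leq 2\| [a] \|$ in the quotient norm on $\C A_0 / \B C$. Thus Theorem \ref{t:charac} together with homogeneity of $L_0$ shows that $\{ a - \mu(a)1 \mid L_0(a) \leq R\}$ is totally bounded in $A_0$, while $| \mu(a)| \leq \| a \| \leq R'$ on $\C B(R,R')$; so $\C B(R,R')$ sits inside a sum of two totally bounded subsets of $A_0$ and is itself totally bounded.

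Next I would feed in the two hypotheses. If $x \in \C S$, then condition $(1)$ gives $\| x \| \leq C$, and then condition $(2)$ gives $L_0( \inn{\ze_j,x}) \leq C_0(1 + C)$ together with $\| \inn{\ze_j,x}\| \leq C \| \ze_j \|$, for each $j \in \{1,\ldots,n\}$. Hence the map $\Phi : \C X \to \C A_0^{\op n}$, $\Phi(x) := \{ \inn{\ze_j,x}\}_{j=1}^n$, sends $\C S$ into the product $\prod_{j=1}^n \C B\big(C_0(1+C), C\| \ze_j\|\big)$, which by the first paragraph is a totally bounded subset of $A_0^{\op n}$ — here I use that on a finite direct sum the Hilbert $C^*$-module norm is equivalent to the maximum of the coordinate norms, so a finite product of coordinatewise totally bounded sets is totally bounded.

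Finally, the finite frame identity $\sum_{j=1}^n \ze_j \cd \inn{\ze_j,x} = x$ says precisely that the bounded operator $\Phi^* : A_0^{\op n} \to X$, $\Phi^*(\{a_j\}) := \sum_{j=1}^n \ze_j \cd a_j$ (of norm at most $\sum_j \| \ze_j\|$), satisfies $\Phi^* \Phi = \T{id}$ on $\C X$. Consequently $\C S = \Phi^*( \Phi(\C S))$ is the image under a bounded operator of a totally bounded set, hence totally bounded in $X$. The only step that is not pure bookkeeping is the auxiliary fact in the first paragraph, which upgrades the quotient-space characterization of Theorem \ref{t:charac} to control over genuine norm-bounded Lipschitz balls in $\C A_0$; I expect this to be the main point, with the finite frame doing the rest mechanically.
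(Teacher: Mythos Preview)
Your proof is correct and follows essentially the same route as the paper's own argument: both embed $\C S$ into $A_0^{\op n}$ via $\Phi(x) = \{\inn{\ze_j,x}\}_j$, use the hypotheses to land $\Phi(\C S)$ in a norm-and-Lipschitz-bounded ball, and pull back via $\Phi^*$. You are in fact more careful than the paper, which simply asserts ``we obtain from Theorem~\ref{t:charac}'' that the set $D_r = \{a : \wit L_0(a)\le r,\ \|a\|\le r\}$ is totally bounded, whereas you spell out the standard $a = (a-\mu(a)1)+\mu(a)1$ decomposition that bridges the quotient-space formulation of Theorem~\ref{t:charac} and the genuine bounded Lipschitz ball.
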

\begin{proof}
  We view the direct sum $A_0^{\op n}$ as a right Hilbert $C^*$-module over $A_0$ with inner product $\binn{ \{a_j\}_{j = 1}^n , \{b_j \}_{j = 1}^n} := \sum_{j = 1}^n a_j^* b_j$. The $\C A_0$-linear map $\Phi : \C X \to \C A_0^{\op n}$ given by $\Phi(x) := \{ \inn{\ze_j,x} \}_{j = 1}^n$ then extends to an bounded adjointable isometry $\Phi : X \to A_0^{\op n}$ and the adjoint $\Phi^* : A_0^{\op n} \to X$ restricts to an $\C A_0$-linear map $\Phi^* : \C A_0^{\op n} \to \C X$ given explicitly by the formula $\Phi^*( \{a_j \}_{j = 1}^n) = \sum_{j = 1}^n \ze_j \cd a_j$.

  We equip the direct sum $\C A_0^{\op n}$ with the seminorm $\wit{L}_0( \{a_j\}_{j = 1}^n) := \max\big\{ L_0(a_j) \mid j = 1,\ldots,n\big\}$ and record that condition $(1)$ and $(2)$ imply the estimates
  \begin{equation}\label{eq:tilde}
\wit{L}_0( \Phi(x) ) \leq C_0 \cd \big( L(x) + \| x \| \big) \leq C_0 (1 + C) \cd L(x) \q \T{for all } x \in \C X .
\end{equation}

Since $(\C A_0,L_0)$ is assumed to be a compact quantum metric space we obtain from Theorem \ref{t:charac} that the subset
  \[
D_r := \big\{ a \in \C A_0^{\op n} \mid \wit{L}_0(a) \leq r \, \, \T{and} \, \, \, \| a \| \leq r \big\} \su A_0^{\op n}
  \]
  is totally bounded for all $r \geq 0$ (with respect to the Hilbert $C^*$-module norm $\| \cd \|$ on $A_0^{\op n}$).

  Let us choose $r_0 := \max\{ C_0(1 + C), C \}$ and observe that \eqref{eq:tilde}, condition $(1)$ and the fact that $\Phi : X \to A_0^{\op n}$ is an isometry imply that
  \[
\Phi\big( \big\{ x \in \C X \mid L(x) \leq 1 \big\} \big) \su D_{r_0} .
\]
Since $\Phi^* \Phi(x) = x$ for all $x \in \C X$ we obtain that $\big\{ x \in \C X \mid L(x) \leq 1 \big\} \su \Phi^*( D_{r_0})$. Observing that $\Phi^* : X \to A_0^{\op n}$ is a norm-contraction, we conclude that $\big\{ x \in \C X \mid L(x) \leq 1 \big\} \su X$ is totally bounded. 
\end{proof}


\section{Preliminaries on Lipschitz operators}\label{s:lipschitz}
In this section, we study Lipschitz operators associated to a selfadjoint unbounded operator and we shall see how to equip the corresponding Lipschitz algebra with a slip-norm measuring the size of first-order derivatives. In fact, we are going to transgress a bit beyond the standard material and study a notion of twisted Lipschitz operators where the twist is given by a strongly continuous one-parameter group of unitaries. 
\medskip

Let us fix a selfadjoint unbounded operator $D : \T{Dom}(D) \to H$ acting on a separable Hilbert space $H$. 

\begin{dfn}
A bounded operator $T : H \to H$ is a \emph{Lipschitz operator} (with respect to $D$), if $T\big( \T{Dom}(D) \big) \su \T{Dom}(D)$ and the commutator $[D,T] : \T{Dom}(D) \to H$ extends to a bounded operator on $H$. 
\end{dfn}

The set of Lipschitz operators form a unital $*$-subalgebra of $\B L(H)$ which we denote by $\Lip_D(H)$ and refer to as the \emph{Lipschitz algebra}. For an element $T \in \Lip_D(H)$, we apply the notation $\de(T) \in \B L(H)$ for the closure of the commutator $[D,T] : \T{Dom}(D) \to H$. In this fashion, we obtain a closed $*$-derivation $\de : \Lip_D(H) \to \B L(H)$. In particular, we obtain a slip-norm
\[
L_D : \Lip_D(H) \to [0,\infty) \q L_D(T) := \| \de(T) \|_\infty ,
  \]
  which turns out to be lower semicontinuous by \cite[Proposition 3.7]{Rie:MSS}.

Associated with $D$, we also have a strongly continuous one-parameter group of unitaries $\{U_t\}_{t \in \B R}$ given by $U_t := e^{it D}$ for all $t \in \B R$. The family $\{U_t\}_{t \in \B R}$ then yields an action $\al$ of the real line on $\B L(H)$. For each $t \in \B R$, the corresponding $*$-automorphism $\al_t$ is given by $\al_t(T) = U_t T U_t^*$. Notice that, fixing a $T \in \B L(H)$, the map $t \mapsto \al_t(T)$ is in general only continuous with respect to the $\si$-weak operator topology on $\B L(H)$. 

The following characterization of the elements in the Lipschitz algebra $\Lip_D(H)$ has been detailed out by Eric Christensen in \cite[Theorem 3.8]{Chr:WDO} (even though a few of the equivalent conditions are left out since they are less relevant for the present investigations). For clarity we apply the notation $\| \cd \|_H$ for the norm on $H$ (coming from the inner product).

\begin{thm}\label{t:weak}
  Let $T$ be a bounded operator on $H$. The following conditions are equivalent:
  \begin{enumerate}
  \item $T$ is a Lipschitz operator with respect to $D$;
  \item The subset here below is bounded in operator norm:
    \[
\big\{ (\al_t(T) - T)/t \mid t \in \B R \sem \{0\} \big\} \su \B L(H) .
\]
\item The subset here below is bounded in modulus:
  \[
\big\{ \inn{D\xi, T \eta} - \inn{\xi, T D \eta}  \mid \xi,\eta \in \T{Dom}(D) \, , \, \, \|\xi\|_H , \| \eta\|_H \leq 1 \big\} \su \B C
\]
 \end{enumerate}
  In this case, we have the identity $\| \de(T) \|_\infty = \sup_{t \in \B R \sem \{0\}} \| \al_t(T) - T\|_\infty/|t|$.
%
\end{thm}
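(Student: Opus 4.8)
The plan is to establish the three equivalences by proving the cycle $(1) \Rightarrow (3) \Rightarrow (2) \Rightarrow (1)$, with the final norm identity falling out of the $(2) \Leftrightarrow (1)$ part. The implication $(1) \Rightarrow (3)$ is immediate: if $T$ is a Lipschitz operator then for $\xi, \eta \in \T{Dom}(D)$ we have $\inn{D\xi, T\eta} - \inn{\xi, TD\eta} = \inn{\xi, \de(T) \eta}$ by selfadjointness of $D$ and the definition of $\de(T)$ as the bounded closure of $[D,T]$, so the modulus is bounded by $\| \de(T) \|_\infty$ on the unit ball. For $(3) \Rightarrow (1)$, assume the sesquilinear form $(\xi,\eta) \mapsto \inn{D\xi, T\eta} - \inn{\xi, TD\eta}$ is bounded on $\T{Dom}(D) \times \T{Dom}(D)$; the point is that for fixed $\eta \in \T{Dom}(D)$ the functional $\xi \mapsto \inn{D\xi, T\eta}$ is then bounded on $\T{Dom}(D)$, which by selfadjointness of $D$ means $T\eta \in \T{Dom}(D^*) = \T{Dom}(D)$, and then the form equals $\inn{\xi, [D,T]\eta}$, giving boundedness of $[D,T]$ and hence $T \in \Lip_D(H)$ with $\| \de(T) \|_\infty$ equal to the supremum in $(3)$.

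For the link with condition $(2)$ I would work with the one-parameter group. Given $T \in \Lip_D(H)$, one computes for $\xi \in \T{Dom}(D)$ that $\frac{d}{dt}\big( U_t^* T U_t \xi \big) = U_t^* [-iD, T] U_t \xi \cdot (-i)^{-1}$ — more carefully, $\frac{d}{dt} U_t^* T U_t \xi = U_t^*(iD \cdot T - T \cdot iD) U_t \xi = i U_t^* \de(T) U_t \xi$ — valid because $U_t$ preserves $\T{Dom}(D)$. Integrating, $\al_t(T)\xi - T\xi = i\int_0^t U_{-s} \de(T) U_s \xi \, ds$ on the dense domain $\T{Dom}(D)$, and since the right-hand side has norm $\leq |t| \cdot \| \de(T) \|_\infty \cdot \|\xi\|_H$ and both sides are bounded operators, we get $\| \al_t(T) - T \|_\infty \leq |t| \cdot \| \de(T) \|_\infty$, so $(2)$ holds with the stated bound. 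Conversely, assuming $(2)$, for $\xi, \eta \in \T{Dom}(D)$ the expression $\inn{D\xi, T\eta} - \inn{\xi, TD\eta}$ is recovered as $\lim_{t \to 0} \inn{\xi, (\al_t(T) - T)\eta}/(it) \cdot (\pm 1)$: indeed $\frac{1}{it}\inn{\xi, (U_{-t} T U_t - T)\eta} = \frac{1}{it}\big( \inn{U_t \xi, T U_t \eta} - \inn{\xi, T\eta}\big)$, and differentiating the inner products using $\frac{d}{dt}U_t\xi = iDU_t\xi$ yields exactly $\inn{D\xi, T\eta} - \inn{\xi, TD\eta}$ in the limit. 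The difference quotients are bounded in modulus by $\sup_{t}\|\al_t(T) - T\|_\infty/|t| \cdot \|\xi\|_H\|\eta\|_H$, so $(3)$ holds and the supremum in $(3)$ is dominated by the supremum in $(2)$. Combined with the reverse inequality from the previous paragraph and the $(1) \Leftrightarrow (3)$ identification, this pins down $\| \de(T) \|_\infty = \sup_{t \in \B R \sem \{0\}} \| \al_t(T) - T \|_\infty/|t|$.

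The main obstacle I anticipate is the care needed in the $(2) \Rightarrow (3)$ step to justify differentiating the inner products and passing the limit, since $t \mapsto \al_t(T)$ is only $\si$-weakly continuous in general — but this is precisely why one tests against the dense domain $\T{Dom}(D)$ where $t \mapsto U_t\xi$ is genuinely norm-differentiable, so weak continuity of $\al_t(T)$ suffices to make sense of $\inn{\xi, \al_t(T)\eta}$ and its difference quotient. A secondary technical point is the domain argument in $(3) \Rightarrow (1)$: one must invoke selfadjointness $D = D^*$ to conclude $T\eta \in \T{Dom}(D)$ from boundedness of $\xi \mapsto \inn{D\xi, T\eta}$, which is the standard characterization of the adjoint's domain. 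Everything else is routine, and I would cite \cite[Theorem 3.8]{Chr:WDO} for the statement while noting that the proof is included here for completeness.
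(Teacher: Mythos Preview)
The paper does not supply its own proof of this theorem; it simply cites \cite[Theorem 3.8]{Chr:WDO} and states the result. So there is no paper-proof to compare your proposal against.

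Your argument is essentially correct and follows the standard route one would expect (and which is, in substance, what Christensen does). A couple of minor points: you have a harmless sign/direction slip in that the paper defines $\al_t(T) = U_t T U_t^*$ whereas you compute with $U_t^* T U_t = \al_{-t}(T)$, but since the supremum over $t \neq 0$ and the norm identity are symmetric under $t \mapsto -t$ this changes nothing. The $(3) \Rightarrow (1)$ step is the key one and you handle it correctly: for fixed $\eta \in \T{Dom}(D)$ the functional $\xi \mapsto \inn{\xi, TD\eta}$ is already bounded, so boundedness of the difference forces $\xi \mapsto \inn{D\xi, T\eta}$ to be bounded on $\T{Dom}(D)$, whence $T\eta \in \T{Dom}(D^*) = \T{Dom}(D)$ by selfadjointness, and then the sesquilinear form equals $\inn{\xi, [D,T]\eta}$, giving the bounded extension $\de(T)$ with $\|\de(T)\|_\infty$ equal to the supremum in $(3)$. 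Your integral formula for $(1) \Rightarrow (2)$ and the weak differentiation for $(2) \Rightarrow (3)$ are both fine, and together they pin down the final norm identity.
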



\subsection{Analytic operators}
In order to describe the twisted Lipschitz operators we need to talk a bit about bounded operators which are analytic with respect to a strongly continuous one-parameter group of unitaries. For a detailed treatment of analytic elements in the general context where a Banach algebra is equipped with a strongly continuous isometric action of $\B R^n$ (for some $n \in \B N$) we refer the reader to \cite[Section 3]{Bos:POK}. Relevant information can also be found in \cite[Appendix A.1]{Tak:OAII}. 

Let us fix a strongly continuous one-parameter group of unitaries $\{ V_s\}_{s \in \B R}$ acting on a separable Hilbert space $H$. The action of the real line on $\B L(H)$ associated with the family $\{ V_s\}_{s \in \B R}$ is denoted by $\be$ so that $\be_s(T) = V_s T V_s^*$ for all $s \in \B R$ and $T \in \B L(H)$. As we remarked in Section \ref{s:lipschitz}, this action is in general only continuous with respect to the $\si$-weak operator topology on $\B L(H)$. 

For each $r \in [0,\infty)$, introduce the closed strips
  \[
  \begin{split}
  I_r := \big\{ z \in \B C \mid \C{I}(z) \in [0,r] \big\} \, \, \T{ and } \, \, \, 
  I_{-r} := \big\{ z \in \B C \mid \C{I}(z) \in [-r,0] \big\},
  \end{split}
\]
where $\C{I}(z)$ denotes the imaginary part of a complex number $z$. 

\begin{dfn}\label{d:analytic}
  Let $r \in \B R$. We say that a bounded operator $T$ is \emph{analytic of order $r$}, if there exists a norm continuous map
    $f_T : I_r \to \B L(H)$ such that
    \begin{enumerate}
    \item The restriction of $f_T$ to the interior $I_r^\ci \su I_r$ is holomorphic;
    \item For each $s \in \B R$, it holds that $f_T(s) = \be_s(T)$.
    \end{enumerate}
  The subset of analytic operators of order $r$ is denoted by $\Ana_\be^r(H) \su \B L(H)$. 
\end{dfn}

Let us fix an $r \in \B R$. For $T \in \Ana_\be^r(H)$ it holds that the norm continuous function $f_T : I_r \to \B L(H)$ from Definition \ref{d:analytic} is unique and we define $\be_z(T) := f_T(z)$ for all $z \in I_r$. Using basic properties of operator valued holomorphic maps, it can be verified that $\Ana_\be^r(H) \su \B L(H)$ is a unital $\B C$-subalgebra and the associated map
  \[
\be_z : \Ana_\be^r(H) \to \B L(H)
\]
is a unital algebra homomorphism for all $z \in I_r$. It does not hold that $\be_z$ preserves the adjoint operation instead we have that $T \in \Ana_\be^r(H)$ if and only if $T^* \in \Ana_\be^{-r}(H)$ and in this case $\be_z(T^*) = \be_{\ov{z}}(T)^*$ for all $z \in I_{-r}$. 

An application of Stone's theorem tells us that there exists a selfadjoint unbounded operator $E : \T{Dom}(E) \to H$ such that $V_s = e^{is E}$ for all $s \in \B R$. For each $z \in \B C$, the functional calculus for selfadjoint unbounded operators then yields an invertible normal unbounded operator $V_z := e^{izE}$ with inverse $V_{-z} = e^{-izE}$. In the special case where $z = ir$ for some $r \in \B R$ it holds that $V_{ir} = e^{-rE}$ is an invertible positive unbounded operator. The following lemma can be found as \cite[Lemma 3.2]{PeTa:RNT}. 

\begin{lemma}\label{l:domchar}
  Let $\xi \in H$ and let $z$ be a complex number with imaginary part $r \in \B R$. It holds that $\xi \in \T{Dom}(V_z)$ if and only if there exists a norm continuous map $f_\xi : I_r \to H$ such that
  \begin{enumerate}
  \item The restriction of $f_\xi$ to the interior $I_r^\ci \su I_r$ is holomorphic;
  \item For each $s \in \B R$, it holds that $f_\xi(s) = V_s(\xi)$.
  \end{enumerate}
In this case, we have the identity $f_\xi(z) = V_z(\xi)$.
\end{lemma}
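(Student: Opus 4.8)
The plan is to reduce the statement to the spectral theorem for the selfadjoint generator of $\{V_s\}_{s \in \B R}$. By Stone's theorem $V_s = e^{isE}$ for a selfadjoint operator $E : \T{Dom}(E) \to H$; for a Borel set $\Om \su \B R$ I write $\chi_\Om(E) \in \B L(H)$ for the associated spectral projection and $\mu_\eta(\Om) := \| \chi_\Om(E)\eta\|_H^2$ for the spectral measure of $\eta \in H$. Functional calculus gives $V_w = e^{iwE}$ for every $w \in \B C$ with
\[
\T{Dom}(V_w) = \big\{ \eta \in H \mid \int_{\B R} e^{-2 \C{I}(w) \la}\, d\mu_\eta(\la) < \infty \big\} ,
\]
which depends on $w$ only through $\C{I}(w)$. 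After replacing $E$ by $-E$ if necessary I may assume $r \geq 0$, and the case $r = 0$ is immediate since then $I_r = \B R$, $V_z$ is unitary, and condition $(2)$ already forces $f_\xi(z) = V_z(\xi)$. So assume $r > 0$ from now on.

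For the forward implication I would assume $\xi \in \T{Dom}(V_z)$, so that $\la \mapsto e^{-2r\la}$ is $\mu_\xi$-integrable, and simply set $f_\xi(w) := V_w(\xi)$ for $w \in I_r$. This is well defined because $e^{-2s\la} \leq 1 + e^{-2r\la}$ for all $\la \in \B R$ and $s \in [0,r]$, so $\la \mapsto e^{-2\C{I}(w)\la}$ stays $\mu_\xi$-integrable throughout the strip. Norm continuity on $I_r$ follows from dominated convergence applied to $\| f_\xi(w) - f_\xi(w')\|_H^2 = \int_{\B R} |e^{iw\la} - e^{iw'\la}|^2\, d\mu_\xi(\la)$, with the $w,w'$-independent majorant $4(1 + e^{-2r\la})$. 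For holomorphy on $I_r^\ci$ I would check weak holomorphy: for each $\eta \in H$ the scalar function $w \mapsto \inn{\eta, V_w(\xi)} = \int_{\B R} e^{iw\la}\, d\inn{\chi_\bullet(E)\eta, \xi}$ satisfies Morera's criterion on any compact triangle in $I_r^\ci$, the use of Fubini being justified by the Cauchy--Schwarz bound $\int_{\B R} e^{-\C{I}(w)\la}\, d|\inn{\chi_\bullet(E)\eta,\xi}| \leq \|\eta\|_H \cdot \big(\int_{\B R} e^{-2\C{I}(w)\la}\, d\mu_\xi(\la)\big)^{1/2}$; since $f_\xi$ is norm continuous, hence locally bounded, weak holomorphy upgrades to genuine holomorphy. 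Conditions $(1)$ and $(2)$ then hold by construction, and $f_\xi(z) = V_z(\xi)$ is exactly the claimed identity.

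For the reverse implication I would take a norm continuous $f_\xi : I_r \to H$ as in $(1)$--$(2)$ and cut down by the spectral projections $\chi_{[-N,N]}(E)$. Writing $\xi_N := \chi_{[-N,N]}(E)\xi$, the map $w \mapsto V_w(\xi_N)$ is entire and satisfies $\| V_w(\xi_N)\|_H \leq e^{Nr}\|\xi\|_H$ on $I_r$. The key step is to show that $\chi_{[-N,N]}(E) f_\xi(w) = V_w(\xi_N)$ for all $w \in I_r$: both sides are norm continuous on the closed strip $I_r$, holomorphic on $I_r^\ci$, and agree on $\B R$ (by condition $(2)$ and the fact that $\chi_{[-N,N]}(E)$ commutes with each $V_s$). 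Here one cannot invoke the identity theorem directly because $I_r^\ci$ does not meet $\B R$; instead, for fixed $\eta \in H$ the scalar difference $w \mapsto \inn{\eta, \chi_{[-N,N]}(E)f_\xi(w) - V_w(\xi_N)}$ is holomorphic on $I_r^\ci$, continuous up to $\B R$, and vanishes on $\B R$, so by the Schwarz reflection principle it extends holomorphically across $\B R$ and therefore vanishes identically, which yields the operator identity on all of $I_r$. Evaluating at $w = z$ gives $\chi_{[-N,N]}(E) f_\xi(z) = V_z(\xi_N)$, hence
\[
\int_{[-N,N]} e^{-2r\la}\, d\mu_\xi(\la) = \| V_z(\xi_N)\|_H^2 = \| \chi_{[-N,N]}(E) f_\xi(z)\|_H^2 \leq \| f_\xi(z)\|_H^2
\]
for every $N$; letting $N \to \infty$ and using monotone convergence shows $\int_{\B R} e^{-2r\la}\, d\mu_\xi(\la) \leq \| f_\xi(z)\|_H^2 < \infty$, i.e. $\xi \in \T{Dom}(V_z)$. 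Finally $\xi_N \to \xi$ in the graph norm of $V_z$, so $V_z(\xi_N) \to V_z(\xi)$, while $\chi_{[-N,N]}(E)f_\xi(z) \to f_\xi(z)$; comparing limits gives $f_\xi(z) = V_z(\xi)$.

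I expect the main obstacle to be precisely this boundary-uniqueness issue in the reverse direction: two $H$-valued functions that are holomorphic on the open strip $I_r^\ci$ and continuous up to the real axis need not agree just because they agree on $\B R$, and one has to route around this via Schwarz reflection (extending across $\B R$, then applying the identity theorem on the doubled strip). Everything else --- the integrability bounds, the Fubini/Morera verification of holomorphy, and the limit $N \to \infty$ --- is routine once the spectral theorem is in place.
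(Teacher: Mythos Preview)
The paper does not give its own proof of this lemma; it simply quotes it as \cite[Lemma 3.2]{PeTa:RNT}. Your argument is correct and is essentially the standard proof one finds in that reference: spectral representation of the generator, the dominated-convergence/Morera verification in the forward direction, and the spectral cut-off combined with a boundary-uniqueness argument (Schwarz reflection) in the reverse direction. One tiny remark: weak holomorphy of a Banach-space valued function already implies norm holomorphy without any separate local boundedness hypothesis, so that aside is unnecessary, though harmless.
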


The next proposition is certainly well-known but we have been unable to find a reference for this result. 

\begin{prop}\label{p:anazet}
  Let $r \in \B R$ and let $z \in I_r$. If $T \in \Ana_\be^r(H)$, then it holds that
  \begin{enumerate}
  \item $T$ preserves the domain of $V_z$ and;
  \item We have the identity $V_z(T\xi) = \be_z(T) V_z(\xi)$ for all $\xi \in \T{Dom}(V_z)$.
  \end{enumerate}
  In particular, it holds that $V_z T V_{-z}(\eta) = \be_z(T)(\eta)$ for all $\eta \in \T{Dom}(V_{-z})$.
\end{prop}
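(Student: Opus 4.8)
The plan is to reduce everything to the scalar-valued statement of Lemma \ref{l:domchar} by pairing against vectors in $H$. Fix $r \in \mathbb{R}$, $z \in I_r$, and $T \in \Ana_\be^r(H)$. To prove (1) and (2) simultaneously, let $\xi \in \T{Dom}(V_z)$; by Lemma \ref{l:domchar} there is a norm-continuous map $f_\xi : I_r \to H$, holomorphic in the interior, with $f_\xi(s) = V_s(\xi)$ for all $s \in \mathbb{R}$ and $f_\xi(z) = V_z(\xi)$. First I would form the candidate function $g : I_r \to H$ defined by $g(w) := \be_w(T) f_\xi(w)$, using that $\be_w(T) = f_T(w)$ is defined for all $w \in I_r$ since $T \in \Ana_\be^r(H)$. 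This $g$ is norm-continuous on $I_r$ (product of a norm-continuous $\B L(H)$-valued map with a norm-continuous $H$-valued map) and holomorphic in the interior $I_r^\circ$ (product of two holomorphic maps, which one checks via the usual difference-quotient argument using local boundedness). On the real axis $g(s) = \be_s(T) V_s(\xi) = V_s T V_s^* V_s(\xi) = V_s(T\xi)$.

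The key step is then to apply Lemma \ref{l:domchar} in the reverse direction to the vector $T\xi$: since $g : I_r \to H$ is norm-continuous, holomorphic in the interior, and satisfies $g(s) = V_s(T\xi)$ for all $s \in \mathbb{R}$, the lemma gives that $T\xi \in \T{Dom}(V_z)$ and $V_z(T\xi) = g(z) = \be_z(T) f_\xi(z) = \be_z(T) V_z(\xi)$. This establishes both (1) and (2) at once. The final assertion is then immediate: for $\eta \in \T{Dom}(V_{-z})$, the operator $V_{-z}$ maps $\eta$ into $\T{Dom}(V_z)$ (since $V_z V_{-z} = \T{id}$ on $\T{Dom}(V_{-z})$), so $\xi := V_{-z}(\eta) \in \T{Dom}(V_z)$ and applying (2) gives $V_z T V_{-z}(\eta) = V_z(T\xi) = \be_z(T) V_z(\xi) = \be_z(T) V_z V_{-z}(\eta) = \be_z(T)(\eta)$.

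The main obstacle I anticipate is the verification that the product map $g(w) = \be_w(T) f_\xi(w)$ is holomorphic on the open strip $I_r^\circ$. Operator-valued times vector-valued products are not completely formal: one needs that both factors are holomorphic and that the multiplication $\B L(H) \times H \to H$ is jointly continuous and bilinear, which it is, but to run the difference-quotient argument cleanly one uses that $w \mapsto \be_w(T)$ is locally norm-bounded on $I_r^\circ$ (true since it is norm-continuous on the compact substrips). Writing $g(w) - g(w_0) = (\be_w(T) - \be_{w_0}(T)) f_\xi(w) + \be_{w_0}(T)(f_\xi(w) - f_\xi(w_0))$ and dividing by $w - w_0$, both terms converge in norm as $w \to w_0$ by holomorphy of the two factors together with norm-continuity of $f_\xi$ and local boundedness of $\be_{(\cdot)}(T)$. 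A secondary minor point is making sure $\be_w(T)$ is genuinely defined and norm-continuous up to the boundary of $I_r$, but this is exactly the content of $T \in \Ana_\be^r(H)$ via Definition \ref{d:analytic}, so no extra work is needed there.
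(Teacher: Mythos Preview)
Your proposal is correct and follows essentially the same route as the paper's proof: form the product $g(w)=\be_w(T)f_\xi(w)$, check it is continuous on the strip and holomorphic in the interior, verify $g(s)=V_s(T\xi)$ on the real line, and invoke Lemma~\ref{l:domchar}. One small slip: Lemma~\ref{l:domchar} only furnishes $f_\xi$ on the strip $I_t$ with $t=\C I(z)$, not on all of $I_r$; since $I_t\subseteq I_r$ and $\be_w(T)$ is defined on $I_r$, your product $g$ lives on $I_t$, which is exactly what is needed to apply Lemma~\ref{l:domchar} at $z$ --- the paper is explicit about this point.
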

\begin{proof}
  Suppose that $T \in \B L(H)$ is analytic of order $r$ and let $\xi \in \T{Dom}(V_z)$ be given. We put $t := \C{I}(z)$ and consider the norm continuous map $f_\xi : I_t \to H$ from Lemma \ref{l:domchar}. Since $T \in \Ana_\be^r(H)$ and $I_t \su I_r$, we know that the norm continuous map $f_{T\xi} : I_t \to H$ given by $f_{T\xi}(w) := \be_w(T) f_\xi(w)$ for all $w \in I_t$ is holomorphic on the interior $I_t^\ci \su I_t$ and agrees with $\be_s(T) V_s(\xi) = V_s(T\xi)$ for all $s \in \B R$. By Lemma \ref{l:domchar} we now get that $T(\xi) \in \T{Dom}(V_z)$ and that
  \[
V_z(T\xi) = f_{T\xi}(z) = \be_z(T) V_z \xi . 
\]
The final claim of the proposition follows by noting that $V_{-z}$ is the inverse of $V_z$ implying that the image of $V_{-z}$ agrees with the domain of $V_z$. 
\end{proof}

\subsection{Twisted Lipschitz operators and twisted Lipschitz triples}\label{ss:twistedlip}
We are now going to introduce the notion of a twisted Lipschitz operator where the commutator condition appearing in the definition of a Lipschitz operator is twisted by two particular analytic extensions of the automorphism group associated with a strongly continuous one-parameter group of unitaries. This twisting of commutators is in line with ideas appearing in \cite{CoMo:TST} even though we are here insisting on a slightly more symmetric kind of twist in order to have a better relationship with the adjoint operation, see the second identity in \eqref{eq:twistleib} here below. Indeed, the symmetric kind of twist applied here is necessary for obtaining $*$-invariance of the associated semi-norm. This is a key condition for the theory of compact quantum metric spaces, see Definition \ref{d:cqms} and the corresponding notion of a slip-norm from Definition \ref{d:slip}. 

Let us fix a selfadjoint unbounded operator $D : \T{Dom}(D) \to H$ acting on a separable Hilbert space $H$. We are moreover considering a strongly continuous one-parameter group of unitaries $\{ V_s\}_{s \in \B R}$ and denote the corresponding $\si$-weakly continuous one-parameter group of automorphisms of $\B L(H)$ by $\be$. 

\begin{dfn}\label{d:twistop}
  A bounded operator $T : H \to H$ is a \emph{twisted Lipschitz operator} (with respect to $D$ and $\be$), if the following conditions hold:
  \begin{enumerate}
  \item $T, T^* \in \Ana_\be^1(H)$;
  \item $\be_i(T)$ preserves the domain of $D$ and;
  \item The twisted commutator $D \be_i(T) - \be_{-i}(T) D : \T{Dom}(D) \to H$ extends to a bounded operator on $H$.
  \end{enumerate}
\end{dfn}

The set of twisted Lipschitz operators is a unital $*$-subalgebra of $\B L(H)$ which we denote by $\Lip_D^\be(H)$. For an element $T \in \Lip_D^\be(H)$ we apply the notation $\de_\be(T) : H \to H$ for the bounded extension of the twisted commutator in part $(3)$ of Definition \ref{d:twistop}. The associated map $\de_\be : \Lip_D^\be(H) \to \B L(H)$ is then a \emph{twisted $*$-derivation} in so far that
\begin{equation}\label{eq:twistleib}
\de_\be(T S) = \de_\be(T) \be_i(S) + \be_{-i}(T) \de_\be(S) \, \, \T{ and } \, \, \, \de_\be(T^*) = - \de_\be(T)^* 
\end{equation}
for all $T,S \in \Lip_D^\be(H)$. 
\medskip

In the next definition we incorporate an extra unital $C^*$-algebra $A$ equipped with an injective unital $*$-homomorphism $\rho : A \to \B L(H)$ and a fixed norm-dense unital $*$-subalgebra $\C A \su A$. 

\begin{dfn}\label{d:twistlip}
  We say that $(\C A,H,D)$ is a \emph{twisted Lipschitz triple} on $A$, if the injective unital $*$-homomorphism $\rho : A \to \B L(H)$ satisfies that $\rho(a) \in \Lip_D^\be(H)$ for all $a \in \C A$. 
\end{dfn}

For the rest of this subsection we assume that $(\C A,H,D)$ is a twisted Lipschitz triple. Our twisted Lipschitz triple is called \emph{graded}, if the separable Hilbert space $H$ is also equipped with a selfadjoint unitary operator $\ga$ such that
  \[
\ga \rho(a) = \rho(a) \ga \, \, , \, \, \, \ga V_s = V_s \ga \, \, \mbox{ and } \, \, \, \ga D = - D \ga
\]
for all $a \in A$ and $s \in \B R$. The selfadjoint unitary operator $\ga$ is then called the \emph{grading operator}.

  In the special situation, where the unitary operator $V_s$ agrees with the identity operator for all $s \in \B R$ we simply say that $(\C A,H,D)$ is a \emph{Lipschitz triple} (in line with the preliminary discussions in Subsection \ref{ss:lifts}). It is important to remark that, if we furthermore know that the resolvent $(i + D)^{-1}$ is compact, then the Lipschitz triple $(\C A,H,D)$ is a \emph{unital spectral triple}.

  We continue in the general setting where $(\C A,H,D)$ is a twisted Lipschitz triple. It is common to suppress the injective unital $*$-homomorphism $\rho$ and identify $A$ with a unital $C^*$-subalgebra of $\B L(H)$. In this fashion, the twisted $*$-derivation $\de_\be : \Lip_D^\be(H) \to \B L(H)$ restricts to a twisted $*$-derivation $\de_\be : \C A \to \B L(H)$. In particular, we may define the slip-norm
 \[
L_D^\be : \C A \to [0,\infty) \q L_D^\be(a) := \| \de_\be(a) \|_\infty , 
  \]
  which satisfies the \emph{twisted Leibniz inequality}:
  \[
L_D^\be(a b) \leq L_D^\be(a) \cd \| \be_i(b) \| + \| \be_{-i}(a) \| \cd L_D^\be(b) \q \T{for all } a,b \in \C A .
\]
Remark here that $\| \be_{-i}(a) \| = \| \be_i(a^*)^* \|$. It therefore makes sense to introduce the following notion, which is partially in line with terminology introduced by Fr\'ed\'eric Latr\'emoli\`ere, see e.g. \cite[Definition 1.8]{Lat:PMS}. The use of the terminology \emph{spectral metric space} goes back to the paper \cite{BMR:DSS} following a suggestion by Marc Rieffel.

  \begin{dfn}\label{d:spemet}
   The twisted Lipschitz triple $(\C A,H,D)$ is called \emph{metric}, if the pair $(\C A,L_D^\be)$ is a compact quantum metric space. In the special case where $(\C A,H,D)$ is a unital spectral triple we say that $(\C A,H,D)$ is a \emph{spectral metric space}, if the underlying Lipschitz triple is metric. 
  \end{dfn}


 \begin{lemma}\label{l:sliplower}
 If the resolvent $(i + D)^{-1}$ belongs to the intersection $\Ana_\be^1(H) \cap \Ana_\be^{-1}(H)$, then the slip-norm $L_D^\be : \C A \to [0,\infty)$ is lower semicontinuous. 
  \end{lemma}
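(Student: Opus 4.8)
The plan is to reduce lower semicontinuity of $L_D^\be$ to an intrinsic characterization in the spirit of Theorem \ref{t:weak}(3), expressing $\| \de_\be(a) \|_\infty$ as a supremum of a family of continuous functions of $a$, so that lower semicontinuity follows because a supremum of continuous functions is always lower semicontinuous. Concretely, for $a \in \C A$ we want to write
\[
L_D^\be(a) = \sup \big\{ | \inn{D\xi, \be_i(a) \eta} - \inn{\xi, \be_{-i}(a) D\eta} | \mid \xi, \eta \in \T{Dom}(D) , \, \| \xi\|_H , \| \eta \|_H \leq 1 \big\} ,
\]
since, unlike the operator norm of $\de_\be(a)$, each function $a \mapsto \inn{D\xi, \be_i(a)\eta} - \inn{\xi, \be_{-i}(a) D\eta}$ (for fixed $\xi,\eta$) is norm-continuous on $\C A$. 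Granting such an expression, if $\{a_k\}$ converges to $a$ in $\C A$ then $\liminf_k L_D^\be(a_k) \geq | \inn{D\xi,\be_i(a)\eta} - \inn{\xi,\be_{-i}(a)D\eta}|$ for every admissible pair $(\xi,\eta)$, and taking the supremum over such pairs gives $\liminf_k L_D^\be(a_k) \geq L_D^\be(a)$, which is exactly the lower semicontinuity stated (in the formulation that each $U_y = \{ a \mid L_D^\be(a) > y\}$ is open). So the whole content is establishing the displayed formula.

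The first step is to verify the displayed identity assuming only that $a$ is a twisted Lipschitz operator: the quantity $\de_\be(a) = D\be_i(a) - \be_{-i}(a) D$ is a bounded operator defined a priori only on $\T{Dom}(D)$, but by definition it extends boundedly to all of $H$; pairing with $\eta \in \T{Dom}(D)$ on the right and $\xi \in \T{Dom}(D)$ on the left and using that $D$ is symmetric gives $\inn{\xi, \de_\be(a)\eta} = \inn{D\xi, \be_i(a)\eta} - \inn{\xi, \be_{-i}(a)D\eta}$ for $\xi,\eta \in \T{Dom}(D)$; since $\T{Dom}(D)$ is dense and $\de_\be(a)$ is bounded, the supremum of the right-hand side over unit vectors in $\T{Dom}(D)$ equals $\| \de_\be(a)\|_\infty$. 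This part is essentially the twisted analogue of Christensen's equivalence (1)$\Leftrightarrow$(3) in Theorem \ref{t:weak} and requires no hypothesis on the resolvent.

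The role of the hypothesis that $(i+D)^{-1} \in \Ana_\be^1(H) \cap \Ana_\be^{-1}(H)$ is to guarantee that the \emph{set of $a \in \C A$ whose operator norm of the twisted commutator is finite} — i.e. the domain on which $L_D^\be$ is being evaluated — behaves well, and more precisely to ensure that for $a \in \C A$ the element $\be_i(a)$ genuinely preserves $\T{Dom}(D)$ and the twisted commutator is the right closed object; this condition lets one conjugate $D$ by $\be_i$ and $\be_{-i}$ (via Proposition \ref{p:anazet}, applied to the resolvent rather than to $D$ directly) so that the formal manipulations above with unbounded operators are legitimate and the supremum on the right is attained in the limit over genuine domain vectors. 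The main obstacle I expect is precisely this bookkeeping with unbounded operators: one must check that $\be_{\pm i}$ applied to $(i+D)^{-1}$ lands in $\B L(H)$ (immediate from the analyticity hypothesis) and then deduce that $\be_{\pm i}$ intertwines $D$ appropriately on the relevant domains, so that the inner-product identity $\inn{\xi,\de_\be(a)\eta} = \inn{D\xi,\be_i(a)\eta} - \inn{\xi,\be_{-i}(a)D\eta}$ holds for all $\xi,\eta \in \T{Dom}(D)$ and not merely on a smaller core. Once that density-and-symmetry argument is in place the lower semicontinuity is formal.
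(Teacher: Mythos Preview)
Your overall strategy — expressing $L_D^\be(a)$ as a supremum of continuous functions of $a$ — is exactly right, but there is a genuine gap in the execution. You claim that for fixed $\xi,\eta \in \T{Dom}(D)$ the function
\[
a \mapsto \inn{D\xi,\be_i(a)\eta} - \inn{\xi,\be_{-i}(a)D\eta}
\]
is norm-continuous on $\C A$. This is not justified: the maps $\be_{\pm i} : \C A \to \B L(H)$ are analytic continuations of an automorphism group and are typically \emph{unbounded} in the $C^*$-norm (think of $\be_i(a) = \mu^{-n/2} a$ for $a \in \C A_n$ in the examples). So the displayed expression involves $a$ only through $\be_{\pm i}(a)$, and you have not explained why these depend continuously on $a$. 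Your identification of the role of the resolvent hypothesis as mere ``bookkeeping'' to enlarge the domain from a core to all of $\T{Dom}(D)$ is therefore off target: the identity $\inn{\xi,\de_\be(a)\eta} = \inn{D\xi,\be_i(a)\eta} - \inn{\xi,\be_{-i}(a)D\eta}$ already holds on $\T{Dom}(D)$ by the definition of a twisted Lipschitz operator, without any hypothesis on the resolvent.

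The paper's proof uses the hypothesis in a different and essential way: writing $\Ga = V_{-i}$, it constructs a dense subspace $\C H := (i+D)^{-1}\big(\T{Dom}(\Ga) \cap \T{Dom}(\Ga^{-1})\big)$ and, using Proposition \ref{p:anazet} applied to the analytic resolvent, shows that $\C H \su \T{Dom}(\Ga^{-1}D) \cap \T{Dom}(\Ga)$. Then Proposition \ref{p:anazet} applied to $a$ gives, for $\xi,\eta \in \C H$,
\[
\inn{\xi,\de_\be(a)\eta} = \inn{\Ga^{-1}D\xi,\, a\,\Ga\eta} - \inn{a^*\Ga\xi,\, \Ga^{-1}D\eta}.
\]
The point is that the right-hand side now involves $a$ and $a^*$ \emph{directly}, not through $\be_{\pm i}$, so each such functional is manifestly norm-continuous in $a$. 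The analyticity assumption on the resolvent is precisely what makes the vectors $\Ga^{-1}D\xi$ and $\Ga\eta$ (etc.) well-defined for $\xi,\eta$ in a dense subspace, allowing this rewriting.
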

  \begin{proof}
    To ease the notation, we put $V_{-i} := \Ga$ so that $V_i = \Ga^{-1}$. Observe that the subspace $\T{Dom}(\Ga) \cap \T{Dom}(\Ga^{-1}) \su H$ is norm-dense and define the norm-dense subspace $\C H := (i + D)^{-1}\big( \T{Dom}(\Ga) \cap \T{Dom}(\Ga^{-1}) \big) \su H$. Since the resolvent $(i + D)^{-1} : H \to H$ is analytic of order $1$ and analytic of order $-1$, Proposition \ref{p:anazet} entails that $\C H \su \T{Dom}(\Ga^{-1} D) \cap \T{Dom}(\Ga)$. Indeed, letting $\xi \in \T{Dom}(\Ga) \cap \T{Dom}(\Ga^{-1})$ we get that $(i + D)^{-1}(\xi) \in \T{Dom}(\Ga)$ from Proposition \ref{p:anazet} so we just need to argue that $(i + D)^{-1}(\xi)$ belongs to $\T{Dom}(\Ga^{-1} D)$. Clearly, $(i + D)^{-1}(\xi)$ belongs to $\T{Dom}(D)$ and we also have that $D(i + D)^{-1}(\xi) = \xi - i(i + D)^{-1}(\xi)$ which belongs to $\T{Dom}(\Ga^{-1})$ by one more application of Proposition \ref{p:anazet}. This shows that $(i + D)^{-1}(\xi) \in \T{Dom}(\Ga^{-1}D)$. Let now $a \in \C A$ and apply Proposition \ref{p:anazet} to obtain the identities 
    \[
    \binn{\xi, \de_\be(a) \eta} = \binn{D \xi, \be_i(a) \eta} - \binn{\be_i(a^*)\xi, D \eta} 
    = \binn{\Ga^{-1} D \xi,  a \Ga \eta} - \binn{a^* \Ga \xi, \Ga^{-1} D \eta}
    \]
    for all $\xi, \eta \in \C H$. Since $\C H \su H$ is norm-dense, our computation shows that 
    \[
    L_D^\be(a) = \sup_{\xi, \eta \in \C H \, , \, \, \| \xi \|_H , \| \eta \|_H \leq 1}
    \big| \binn{\Ga^{-1} D \xi,  a \Ga \eta} - \binn{a^* \Ga \xi, \Ga^{-1} D \eta} \big| 
    \]
    and hence that $L_D^\be : \C A \to [0,\infty)$ can be computed as a supremum of non-negative continuous functions on $\C A$. We conclude that $L_D^\be$ is lower semicontinuous. 
%
  \end{proof}
 
  It is always possible to enlarge the unital $*$-algebra $\C A$ in a substantial way, replacing it with the intersection $\Lip_D^\be(A) := A \cap \Lip_D^\be(H)$. We refer to the norm-dense unital $*$-subalgebra $\Lip_D^\be(A) \su A$ as the \emph{twisted Lipschitz algebra} of our twisted Lipschitz triple $(\C A,H,D)$. It clearly holds that $(\Lip_D^\be(A),H,D)$ is again a twisted Lipschitz triple and we therefore have the slip-norm
\[
L_D^\be : \Lip_D^\be(A) \to [0,\infty) \q L_D^\be(a) := \| \de_\be(a) \|_\infty .
\]
This slip-norm is sometimes called the \emph{maximal} slip-norm of the twisted Lipschitz triple $(\C A,H,D)$.

In the special case where the unitary operator $V_s$ agrees with the identity operator for all $s \in \B R$ we apply the notation $\Lip_D(A) := A \cap \Lip_D(H)$ and refer to this norm-dense unital $*$-subalgebra of $A$ as the \emph{Lipschitz algebra}.


\section{First main theorem}\label{s:first}
The purpose of this section is to state and prove our first main theorem which combines the result of Li's theorem for circle actions, Theorem \ref{t:licircle}, with our result on finitely generated projective modules, Theorem \ref{t:finproj}. We are to some extent using the setting of twisted Lipschitz triples presented in Subsection \ref{ss:twistedlip} but we do not need the full setup described there.  
\medskip

Let $A$ be a unital $C^*$-algebra which is equipped with a strongly continuous action $\si$ of the unit circle $S^1 \su \B C$. Let moreover $\C A \su A$ be a norm-dense unital $*$-subalgebra and let $\be_i : \C A \to B$ be a unital algebra homomorphism with values in a unital $C^*$-algebra $B$.

We do not assume that $\be_i$ is compatible with the involutions so that $\be_{-i} : \C A \to B$ defined by $\be_{-i}(a) := \be_i(a^*)^*$ is a unital algebra homomorphism which could be different from $\be_i$.

In many cases, the unital $C^*$-algebra $B$ will just agree with $A$ and $\be_i$ (and $\be_{-i}$) are obtained via analytic extension of the circle action $\si$ on $A$. This does for example happen for the quantum spheres which we are going to analyze in details later on in Section \ref{s:quasph}. However, in other cases it is necessary to work with a different choice of $B$. This situation occurs for crossed products of the form $C_r^*( \B Z, C(M))$ coming from a non-isometric diffeomorphism $\psi$ of a compact Riemannian manifold $M$, see \cite[Section 4]{KaKy:DCQ} for more details on this. But instead of diving too deep into these matters, let us continue with our exposition.


Recall the definitions from Subsection \ref{ss:li} of the lower semicontinuous slip-norm $L_\ell : \Lip_{S^1}(A) \to [0,\infty)$ and the spectral projections $P_n : A \to A$ for $n \in \B Z$.

\begin{thm}\label{t:main}
  Let $L$ and $L_\be : \C A \to [0,\infty)$ be slip-norms. Suppose that the following conditions are satisfied:
    \begin{enumerate}
    \item $\si_\la(\C A) \su \C A$ for all $\la \in S^1$ and $\C A \su \Lip_{S^1}(A)$;
    \item $P_0(\C A) \su \C A$ and there exist finitely many elements $\ze_1^R,\ldots,\ze_k^R \in \C A \cap A_1$ and $\ze_1^L,\ldots,\ze_m^L \in \C A \cap A_1$ such that
  \[
  \sum_{j = 1}^k \ze_j^R (\ze_j^R)^* = 1 = \sum_{j = 1}^m (\ze_j^L)^* \ze_j^L ;
  \]
\item There exists a constant $C \geq 0$ such that $L_\ell(a), L_\be(a) \leq C \cd L(a)$ for all $a \in \C A$; 
\item $L_\be : \C A \to [0,\infty)$ satisfies the twisted Leibniz inequality
  \[
L_\be(a \cd b) \leq L_\be(a) \cd \| \be_i(b) \| + \| \be_{-i}(a) \| \cd L_\be(b) \q \mbox{for all } a,b \in \C A
\]
and the restriction $\be_i : \C A \cap A_0 \to B$ extends to a unital $*$-homomorphism $\be_i : A_0 \to B$;
\item $L_\be : \C A \to [0,\infty)$ is lower semicontinuous and $L_\be(\si_\la(a)) = L_\be(a)$ for all $a \in \C A$ and $\la \in S^1$;
\item The pair $(\C A \cap A_0, L_\be)$ is a compact quantum metric space.
    \end{enumerate}
Then it holds that $(\C A,L)$ is a compact quantum metric space.
\end{thm}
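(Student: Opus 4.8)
The plan is to reduce Theorem \ref{t:main} to an application of Li's theorem for circle actions (Theorem \ref{t:licircle}), using the finitely generated projective modules result (Theorem \ref{t:finproj}) to establish the hardest hypothesis. I would verify the four conditions of Theorem \ref{t:licircle} for the pair $(\C A, L)$ in turn. Condition $(1)$ of Theorem \ref{t:licircle}, namely $\C A \su \Lip_{S^1}(A)$ with $L_\ell(a) \leq C \cd L(a)$, is immediate from conditions $(1)$ and $(3)$ of Theorem \ref{t:main}. For condition $(2)$ of Theorem \ref{t:licircle}, invariance $\si_\la(\C A) \su \C A$ and $L(\si_\la(a)) = L(a)$ would need to follow from the hypotheses; here one has to be slightly careful. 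We are given $\si_\la(\C A) \su \C A$ directly, and $P_0(\C A) \su \C A$; to get $P_n(\C A) \su \C A$ for all $n$ one uses the frame elements $\ze_j^R, \ze_j^L \in \C A \cap A_1$ together with $P_0(\C A) \su \C A$ via the identity $P_n(a) = \sum_j \ze_j^R \cd P_0\big( (\ze_j^R)^* \cd P_{n-1}(a) \big)$, inducting on $|n|$ (and using the adjoints $\ze_j^L$ for negative $n$). The $\si$-invariance of $L$ is where I expect I may actually need to \emph{replace} $L$ by the related seminorm $L' := \sup_{\la} L(\si_\la(\cd))$ or argue that $L$ may be assumed invariant without loss; alternatively the theorem's statement about the circle action may force invariance through condition $(5)$ applied to $L_\be$ combined with a separate argument, so this is a point to watch.

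Next, condition $(4)$ of Theorem \ref{t:licircle}, that $(\C A \cap A_0, L)$ is a compact quantum metric space: this is where condition $(6)$ of Theorem \ref{t:main} — that $(\C A \cap A_0, L_\be)$ is a compact quantum metric space — and condition $(3)$ — the domination $L_\be \leq C \cd L$ — come together. Since $L_\be \leq C \cd L$ on $\C A$, the set $\{ [x] \in (\C A \cap A_0)/\B C : L(x) \leq 1 \}$ is contained in $\{ [x] : L_\be(x) \leq C \}$, which is totally bounded by Theorem \ref{t:charac} applied to the compact quantum metric space $(\C A \cap A_0, L_\be)$; hence by Theorem \ref{t:charac} again $(\C A \cap A_0, L)$ is a compact quantum metric space. (One also needs $L$ restricted to $\C A \cap A_0$ to be a genuine slip-norm and, for Theorem \ref{t:charac}, lower semicontinuity — lower semicontinuity of $L$ should either be among the standing assumptions or derivable; if only $L_\be$ is assumed lower semicontinuous one argues with the totally-bounded characterization directly, which does not require lower semicontinuity of $L$ itself.)

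The main obstacle is condition $(3)$ of Theorem \ref{t:licircle}: for every $n \in \B Z \sem \{0\}$ the set $\{ a \in \C A \cap A_n : L(a) \leq 1 \}$ must be totally bounded in $A$. This is precisely what Theorem \ref{t:finproj} is designed to deliver, applied with $A_0$ in place of the base $C^*$-algebra, $X = A_n$ (or its closure) viewed as a right Hilbert $C^*$-module over $A_0$ with inner product $\inn{a,b} = a^* b$, $\C X = \C A \cap A_n$, $L_0 = L_\be|_{\C A \cap A_0}$, and the seminorm on $\C X$ taken to be $L|_{\C A \cap A_n}$. The frame for $\C A \cap A_n$ over $\C A \cap A_0$ is built from the given $\ze_j^R$ (for $n > 0$, taking $n$-fold products $\ze_{j_1}^R \cdots \ze_{j_n}^R$, which lie in $\C A \cap A_n$ and still satisfy the frame identity since $\sum_j \ze_j^R (\ze_j^R)^* = 1$) and from the $\ze_j^L$ for $n < 0$. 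Condition $(1)$ of Theorem \ref{t:finproj} — that $\| a \| \leq C' \cd L(a)$ on $\C A \cap A_n$ — follows from $L_\ell \leq C \cd L$ together with \eqref{eq:supspecsub}, which gives $\| a \| = L_\ell(a)/|n| \leq (C/|n|) \cd L(a)$ for $a \in A_n$. Condition $(2)$ of Theorem \ref{t:finproj} — that $L_\be(\inn{\ze_j, a}) \leq C_0 \cd (L(a) + \| a \|)$ — is where I expect the real work: one needs that the frame inner products $\ze \mapsto \inn{\ze, a}$ map $L$-bounded sets in $\C A \cap A_n$ into $L_\be$-bounded sets in $\C A \cap A_0$. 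This should follow from the twisted Leibniz inequality for $L_\be$ (condition $(4)$ of Theorem \ref{t:main}) applied to the product $(\ze_j^R)^* \cdots a$, using $L_\be \leq C \cd L$ to bound the $L_\be$-contribution of $a$, boundedness of $\be_i$ and $\be_{-i}$ on the fixed-point algebra (which extends to a $*$-homomorphism, hence is contractive), and the norm estimate from condition $(1)$ of Theorem \ref{t:finproj} to absorb the terms where $L_\be$ falls on the frame elements. Once Theorem \ref{t:finproj} gives total boundedness of $\{ a \in \C A \cap A_n : L(a) \leq 1 \}$ for each $n \neq 0$, all four hypotheses of Theorem \ref{t:licircle} hold and we conclude that $(\C A, L)$ is a compact quantum metric space.
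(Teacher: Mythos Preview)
Your overall architecture matches the paper's: reduce to Li's theorem (Theorem \ref{t:licircle}) and use Theorem \ref{t:finproj} for the spectral-subspace condition. However, you have correctly flagged, but not resolved, the two obstacles to applying Li's theorem directly to $L$: that theorem requires its slip-norm to be both lower semicontinuous and $\si$-invariant, and neither is assumed of $L$. Your suggested fixes do not close the gap. Replacing $L$ by $L' := \sup_\la L(\si_\la(\cd))$ gives invariance but not lower semicontinuity; and the remark that Theorem \ref{t:charac} ``does not require lower semicontinuity'' is beside the point, since it is Li's theorem you want to invoke, and the inequality $L(P_n(a)) \leq L(a)$ of Lemma \ref{l:estimate} --- which is what makes Li's argument run --- genuinely uses both invariance and lower semicontinuity in the averaging step.

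The paper's resolution is a single clean reduction you are missing: by condition $(3)$ one has $L_\ell + L_\be \leq 2C \cd L$, so by Theorem \ref{t:charac} it suffices to show that $(\C A, L_\ell + L_\be)$ is a compact quantum metric space, and one applies Li's theorem to $L_\ell + L_\be$ rather than to $L$. This auxiliary slip-norm \emph{is} lower semicontinuous (both summands are) and $\si$-invariant (condition $(5)$ for $L_\be$, automatic for $L_\ell$), so the structural hypotheses of Li's theorem are now available. With this replacement your remaining outline goes through essentially as written; in particular, the bound $\| \be_i(x) \| \leq K_n \| x \|$ for $x \in \C A_n$, which you need in verifying condition $(2)$ of Theorem \ref{t:finproj}, is obtained by writing $\be_i(x) = \sum_j \be_i(\ze_j)\, \be_i(\ze_j^* x)$ with $\ze_j^* x \in \C A_0$, where $\be_i$ is a $*$-homomorphism and hence contractive (this is Lemma \ref{l:twistest}).
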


In order to apply Theorem \ref{t:finproj}, we consider the spectral subspace $A_n$ (for $n \in \B Z$) as a right Hilbert $C^*$-module over the fixed point algebra $A_0$. The inner product is given by $\inn{a,b} := a^* b$ for all $a,b \in A_n$ and the corresponding Hilbert $C^*$-module norm therefore agrees with the norm inherited from the unital $C^*$-algebra $A$. We put
\[
\C A_n := \C A \cap A_n
\]
and record that $\C A_n \cd \C A_0 \su \C A_n$ and $\inn{\C A_n,\C A_n} \su \C A_0$.

%
%

In our proof of Theorem \ref{t:main} we need the following lemmas which we single out as independent results:

\begin{lemma}\label{l:convenient}
  If condition $(2)$ in Theorem \ref{t:main} is satisfied, then for every $n \in \B Z$ the following holds: 
  \begin{itemize}
  \item $P_n(\C A) = \C A_n$ and there exist finitely many elements $\ze_{(n,1)},\ldots,\ze_{(n,m_n)} \in \C A_n$ such that $\sum_{j = 1}^{m_n} \ze_{(n,j)} \ze_{(n,j)}^* = 1$.
  \end{itemize}
  In particular, we get that $\C A_n \su A_n$ is norm-dense and has a finite frame (see Definition \ref{d:frame}).
\end{lemma}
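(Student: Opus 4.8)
The strategy is to build the claimed statement for general $n \in \B Z$ out of the given data for $n = 1$, i.e. the two one-sided frame identities $\sum_j \ze_j^R (\ze_j^R)^* = 1$ and $\sum_j (\ze_j^L)^* \ze_j^L = 1$ with $\ze_j^R, \ze_j^L \in \C A \cap A_1$, together with the hypothesis $P_0(\C A) \su \C A$. First I would observe that, because the spectral subspaces multiply as $A_p \cd A_q \su A_{p+q}$, products of the generators $\ze^R$ (for $n \geq 0$) land in $A_n$, and products of the generators $(\ze^L)^*$ (for $n \leq 0$) land in $A_n$; the key point is that all of this stays inside $\C A$, so we should only multiply elements we already know lie in $\C A$. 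Since $\ze_j^R, \ze_j^L \in \C A$ by assumption (condition $(2)$), all finite products of them are in $\C A$ as well because $\C A$ is an algebra.

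Next I would produce, for each $n \in \B Z$, a \emph{tight} one-sided frame in $\C A_n := \C A \cap A_n$ of the form $\sum_j \ze_{(n,j)} \ze_{(n,j)}^* = 1$. For $n \geq 0$ take $n$-fold products of the $\ze^R_j$: the multi-indexed family $\ze_{(n,\underline j)} := \ze^R_{j_1} \cdots \ze^R_{j_n}$, indexed by $\underline j \in \{1,\dots,k\}^n$, lies in $\C A \cap A_n$, and iterating the identity $\sum_j \ze^R_j (\ze^R_j)^* = 1$ (inserting it between successive factors) gives $\sum_{\underline j} \ze_{(n,\underline j)} \ze_{(n,\underline j)}^* = 1$. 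For $n < 0$ do the symmetric thing: set $\ze_{(n,\underline j)} := (\ze^L_{j_{|n|}})^* \cdots (\ze^L_{j_1})^*$, which lies in $\C A \cap A_n$ since each $(\ze^L_j)^* \in \C A \cap A_{-1}$, and iterate $\sum_j (\ze^L_j)^* \ze^L_j = 1$ to get $\sum_{\underline j} \ze_{(n,\underline j)} \ze_{(n,\underline j)}^* = 1$. For $n = 0$ simply take the single element $1 \in \C A \cap A_0$. This gives the finitely many elements $\ze_{(n,1)}, \dots, \ze_{(n,m_n)}$ with $\sum_{j=1}^{m_n} \ze_{(n,j)} \ze_{(n,j)}^* = 1$ claimed in the lemma, after relabelling the multi-index set $\{1,\dots,k\}^n$ (resp. $\{1,\dots,m\}^{|n|}$) as $\{1,\dots,m_n\}$.

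Then I would deduce the identity $P_n(\C A) = \C A_n$. The inclusion $P_n(\C A) \subseteq \C A_n$ needs $P_n(\C A) \subseteq \C A$; this is where the tight frame just constructed is used, via the standard argument that $P_n(a) = \sum_j \ze_{(n,j)} \cd P_0\big( \ze_{(n,j)}^* \, a \big)$ for $a \in A$ — indeed $\ze_{(n,j)}^* \, a \in A_{-n} \cd A$ has $P_0$-component lying in $A_{-n+0}$ only after multiplying back by $\ze_{(n,j)} \in A_n$, and summing over $j$ against $\sum_j \ze_{(n,j)}\ze_{(n,j)}^* = 1$ recovers exactly $P_n(a)$; one checks this by expanding $a$ into its spectral components and using that $P_0$ kills everything except the degree-$0$ part. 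Since $\ze_{(n,j)} \in \C A$, $\ze_{(n,j)}^* a \in \C A$ for $a \in \C A$, and $P_0(\C A) \subseteq \C A$ by hypothesis, the right-hand side lies in $\C A$; it also lies in $A_n$, hence in $\C A_n$. Conversely $\C A_n \subseteq P_n(\C A)$ is immediate since $P_n$ fixes $A_n$ pointwise and $\C A_n = \C A \cap A_n$. Finally, $\C A_n \su A_n$ is norm-dense because $\C A \su A$ is norm-dense, $P_n : A \to A_n$ is a norm-continuous surjection, and $\C A_n = P_n(\C A)$; and the tight frame identity shows $\C A_n$ has a finite frame in the sense of Definition \ref{d:frame} (from $\sum_j \ze_{(n,j)}\ze_{(n,j)}^* = 1$ one gets $\sum_j \ze_{(n,j)} \cd \inn{\ze_{(n,j)}, x} = \sum_j \ze_{(n,j)} \ze_{(n,j)}^* x = x$ for all $x \in \C A_n$, using $\inn{a,b} = a^*b$).

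The only mildly delicate point — the ``main obstacle'', though it is more bookkeeping than difficulty — is checking the formula $P_n(a) = \sum_j \ze_{(n,j)} P_0(\ze_{(n,j)}^* a)$ cleanly, in particular being careful that the spectral-subspace degrees add correctly (the right-hand side genuinely equals $P_n(a)$ and not some other component) and that for $n = 0$ one really can take the single element $1$, so that the displayed bullet holds uniformly in $n \in \B Z$. Everything else is a routine iteration of the two given degree-one identities inside the algebra $\C A$.
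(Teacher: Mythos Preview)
Your proposal is correct and follows essentially the same approach as the paper: build the frame elements $\ze_{(n,j)}$ for general $n$ as iterated products of the given $\ze_j^R$ (for $n>0$) or $(\ze_j^L)^*$ (for $n<0$), then use the identity $P_n(a) = \sum_j \ze_{(n,j)} P_0(\ze_{(n,j)}^* a)$ together with $P_0(\C A) \su \C A$ to conclude $P_n(\C A) = \C A_n$. The paper phrases the frame construction as an induction and justifies the key identity via $\ze_{(n,j)}^* P_n(a) = P_0(\ze_{(n,j)}^* a)$ (since $\ze_{(n,j)}^* \in A_{-n}$), but this is the same argument in slightly different packaging.
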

\begin{proof}
For $n = 0$ it suffices to remark that $P_0(\C A) \su \C A$ which implies that $P_0(\C A) = \C A_0$ (notice also that the unit $1$ belongs to $\C A_0$). We may thus focus on the situation where $n \in \B Z \sem \{0\}$. The existence of the elements $\ze_{(n,1)},\ldots, \ze_{(n,m_n)} \in \C A_n$ follows by induction, splitting in cases according to whether $n > 0$ or $n < 0$. Remark in this respect that $\C A_n \cd \C A_1 \su \C A_{n+1}$ and $\C A_n \cd \C A_{-1} \su \C A_{n-1}$. Furthermore, it holds that $x^* \in \C A_{-n}$ whenever $x \in \C A_n$. Consider now an element $a \in \C A$. We then have that
      \[
      P_n(a) = \sum_{j = 1}^{m_n} \ze_{(n,j)} \ze_{(n,j)}^* P_n(a) = \sum_{j = 1}^{m_n} \ze_{(n,j)} P_0( \ze_{(n,j)}^* a) .
      \]
      Since $P_0(\C A) \su \C A$ and $\C A$ is a $*$-algebra we may conclude that $P_n(a) \in \C A$. This shows that $P_n(\C A) \su \C A_n$. The reverse inclusion follows trivially by observing that if $a \in \C A_n$, then $a = P_n(a)$ and therefore, since $\C A_n \su \C A$ we get that $a \in P_n(\C A)$. 
\end{proof}

\begin{lemma}\label{l:twistest}
  If condition $(2)$ and $(4)$ in Theorem \ref{t:main} are satisfied, then for every $n \in \B Z$ and every $y \in \C A_n$ there exists a constant $C_y \geq 0$ such that $L_\be( y^* x) \leq C_y \cd (L_\be(x) + \| x \|)$ for all $x \in \C A_n$.
\end{lemma}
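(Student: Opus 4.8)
The plan is to combine the twisted Leibniz inequality from condition $(4)$ with the finite frame for $\C A_n$ supplied by Lemma \ref{l:convenient}, using the frame to reduce an a priori uncontrolled expression involving $\be_i$ to the fixed-point algebra $A_0$, where $\be_i$ is contractive. So fix $n \in \B Z$ and $y \in \C A_n$, and let $x \in \C A_n$ be arbitrary. Since $\C A$ is a $*$-algebra we have $y^* \in \C A$, and the twisted Leibniz inequality applied with $a = y^*$ and $b = x$ gives
\[
L_\be(y^* x) \leq L_\be(y^*) \cd \| \be_i(x) \| + \| \be_{-i}(y^*) \| \cd L_\be(x) .
\]
Now $L_\be(y^*)$ is a constant depending only on $y$, and since $\be_{-i}(y^*) = \be_i(y)^*$ in the $C^*$-algebra $B$ we also have $\| \be_{-i}(y^*) \| = \| \be_i(y) \|$, again a constant depending only on $y$. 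Hence the whole problem reduces to producing a constant $K_n \geq 0$ with $\| \be_i(x) \| \leq K_n \cd \| x \|$ for every $x \in \C A_n$.

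This last estimate is the crux, and it is where condition $(2)$ enters through Lemma \ref{l:convenient}. That lemma provides finitely many $\ze_{(n,1)}, \ldots, \ze_{(n,m_n)} \in \C A_n$ with $\sum_{j=1}^{m_n} \ze_{(n,j)} \ze_{(n,j)}^* = 1$. I would then write $x = \sum_{j=1}^{m_n} \ze_{(n,j)} \cd (\ze_{(n,j)}^* x)$, observe that each $\ze_{(n,j)}^* x$ lies in $\C A_{-n} \cd \C A_n \su \C A_0$, and apply the unital algebra homomorphism $\be_i : \C A \to B$ to obtain $\be_i(x) = \sum_{j=1}^{m_n} \be_i(\ze_{(n,j)}) \cd \be_i(\ze_{(n,j)}^* x)$. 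By condition $(4)$ the restriction of $\be_i$ to $A_0$ is a unital $*$-homomorphism of $C^*$-algebras, hence a contraction, so $\| \be_i(\ze_{(n,j)}^* x) \| \leq \| \ze_{(n,j)}^* x \| \leq \| \ze_{(n,j)} \| \cd \| x \|$. Summing over $j$ then yields
\[
\| \be_i(x) \| \leq \Big( \sum_{j=1}^{m_n} \| \be_i(\ze_{(n,j)}) \| \cd \| \ze_{(n,j)} \| \Big) \cd \| x \| =: K_n \cd \| x \| ,
\]
with $K_n$ a constant depending only on $n$.

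Feeding this back into the first display produces $L_\be(y^* x) \leq L_\be(y^*) K_n \cd \| x \| + \| \be_i(y) \| \cd L_\be(x)$, so the lemma holds with $C_y := \max\{ L_\be(y^*) K_n, \| \be_i(y) \| \}$. The one genuine obstacle to keep in mind is that $\be_i$ is \emph{not} assumed bounded on all of $\C A$ — it need not extend continuously to $A$ — so one cannot naively estimate $\| \be_i(x) \|$ by $\| x \|$; the frame decomposition $x = \sum_j \ze_{(n,j)} (\ze_{(n,j)}^* x)$ is precisely what is needed to push the estimate down to $A_0$, where $\be_i$ is a genuine $*$-homomorphism and hence contractive. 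Everything else is routine bookkeeping.
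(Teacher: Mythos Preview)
Your proof is correct and follows essentially the same approach as the paper: apply the twisted Leibniz inequality and then use the finite frame for $\C A_n$ (from Lemma \ref{l:convenient}) together with the contractivity of $\be_i$ on $A_0$ to bound $\| \be_i(x) \|$ by a constant times $\| x \|$. The paper states this boundedness of $\be_i : \C A_n \to B$ more tersely, while you spell out the frame decomposition explicitly, but the argument and the resulting constant $C_y$ are the same.
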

\begin{proof}
  Let $n \in \B Z$ be given. Since $\be_i : \C A_0 \to B$ extends to a unital $*$-homomorphism $\be_i : A_0 \to B$ and $\C A_n \su A_n$ admits a finite frame (by Lemma \ref{l:convenient}), we get that $\be_i : \C A_n \to B$ extends to a bounded operator $\be_i : A_n \to B$ with operator norm say $K_n \geq 0$. Let now $y \in \C A_n$ be given. Putting $C_y := \max\big\{ L_\be(y) \cd K_n, \| \be_i(y) \| \big\}$, the twisted Leibniz inequality from condition $(4)$ now yields the relevant estimate for all $x \in \C A_n$:
  \[
  L_\be(y^* x) \leq L_\be(y^*) \cd \| \be_i(x) \| + \| \be_{-i}(y^*) \| \cd L_\be(x) \leq C_y \cd ( \| x \| + L_\be(x) ) . 
  \]
  Remark here that $L_\be(y^*) = L_\be(y)$ and $\| \be_{-i}(y^*) \| = \| \be_i(y) \|$.
\end{proof}

At this point we are ready to prove the first main result of this paper:

\begin{proof}[Proof of Theorem \ref{t:main}]
  Because of assumption $(1)$ and $(3)$ we may assume without loss of generality that $L = L_\ell + L_\be : \C A \to [0,\infty)$, see Theorem \ref{t:charac}.

  We are going to apply Li's theorem for circle actions, see Theorem \ref{t:licircle}, so we need to verify the conditions in there. First of all, we notice that assumption $(1)$ and $(5)$ imply that $L_\ell + L_\be : \C A \to [0,\infty)$ is indeed lower semicontinuous. Assumption $(1)$ implies that condition $(1)$ in Theorem \ref{t:licircle} is satisfied with $C = 1$ (remember that $L = L_\ell + L_\be$) and condition $(2)$ in Theorem \ref{t:licircle} holds because of assumption $(1)$, $(2)$ and $(5)$ and Lemma \ref{l:convenient}. Condition $(4)$ in Theorem \ref{t:licircle} is a consequence of assumption $(6)$ upon noting that $L_\ell(a) = 0$ for all $a \in \C A_0$.
%
%
     
      In order to establish the remaining condition $(3)$ in Theorem \ref{t:licircle} we fix an $n \in \B Z \sem \{0\}$ and focus on applying Theorem \ref{t:finproj} for $X := A_n$ and $\C X := \C A_n$. We already know from Lemma \ref{l:convenient} that $\C X$ has a finite frame and assumption $(6)$ tells us that $(\C A_0,L_\be)$ is a compact quantum metric space. Observe now that condition $(1)$ in Theorem \ref{t:finproj} holds since $|n| \| x\| = L_\ell(x) \leq L(x)$ for all $x \in \C A_n$, see \eqref{eq:supspecsub} and remember that $n \neq 0$. The remaining condition $(2)$ in Theorem \ref{t:finproj} follows immediately from Lemma \ref{l:twistest} (recalling that $L_\be(a) \leq L(a)$ for all $a \in \C A$). This ends the proof of the present theorem.
\end{proof}

\section{The vertical geometric data}\label{s:vertical}
Our aim is now to analyze the setting from Subsection \ref{ss:li} in more details. More precisely, we are going to construct a Lipschitz triple such that the Lipschitz algebra agrees with $\Lip_{S^1}(A)$ and the associated slip-norm agrees with $L_\ell$. This Lipschitz triple captures the vertical geometry of our data.

Let $A$ be a unital $C^*$-algebra equipped with a strongly continuous action $\si$ of the unit circle $S^1 \su \B C$.

As in Section \ref{s:first}, for each $n \in \B Z$, we view the spectral subspace $A_n$ as a right Hilbert $C^*$-module over the fixed point algebra $A_0$. The relevant inner product is given by the formula $\inn{a,b} = a^* b$. We may also consider $A$ as a right module over $A_0$ and define the $A_0$-valued inner product given by
$\inn{a,b} := P_0(a^* b)$, where we recall from Subsection \ref{ss:li} that $P_0 : A \to A$ is a faithful conditional expectation. The completion of $A$ with respect to this inner product is a right Hilbert $C^*$-module over $A_0$ which is denoted by $X$. We apply the notation $\La : A \to X$ for the corresponding inclusion of $A$ into $X$. It then holds that $X$ becomes a $C^*$-correspondence from $A$ to $A_0$ with left action given by the injective unital $*$-homomorphism 
\[
\psi : A \to \B L(X) \q \psi(a)( \La(b)) := \La(a \cd b) .
\]
For each $n \in \B Z$ we identify $A_n$ with a Hilbert $C^*$-submodule of $X$ (via the inclusion $A_n \su A$). In this respect we note that the corresponding injective $A_0$-linear map $A_n \to X$ is an adjointable isometry with adjoint induced by the spectral projection $P_n : A \to A$.

Let us also consider the infinite direct sum $\widehat{\bop}_{n \in \B Z} A_n$ of right Hilbert $C^*$-modules over $A_0$. This Hilbert $C^*$-module is obtained as the completion of the algebraic direct sum $\bop_{n \in \B Z} A_n$ with respect to the inner product $\binn{ \{a_n\}_{n \in \B Z}, \{b_n\}_{n \in \B Z}} := \sum_{n \in \B Z} a_n^* b_n$.

The structure of $X$ as a right Hilbert $C^*$-module over $A_0$ is illustrated in the next lemma:

\begin{lemma}\label{l:strucX}
  The $A_0$-linear map $\bop_{n \in \B Z} A_n \to X$ given by the formula $\{a_n\}_{n \in \B Z} \mapsto \sum_{n \in \B Z} \La(a_n)$ extends to a unitary isomorphism of right Hilbert $C^*$-modules $W : \widehat{\bop}_{n \in \B Z} A_n \to X$.
\end{lemma}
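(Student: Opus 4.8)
The plan is to exhibit $W$ as the natural inverse of a map built from the spectral projections $P_n$, and to verify that it extends to a unitary by checking on the dense algebraic direct sum. First I would observe that on a finite sum $\sum_{n} \La(a_n)$ with $a_n \in A_n$, the $A_0$-valued inner product computes as
\[
\binn{\textstyle\sum_{n} \La(a_n), \sum_{m} \La(b_m)} = \sum_{n,m} P_0(a_n^* b_m) = \sum_{n} a_n^* b_n ,
\]
because $a_n^* b_m \in A_{m-n}$ and $P_0$ annihilates $A_k$ for $k \neq 0$. This is precisely the inner product on $\widehat{\bop}_{n \in \B Z} A_n$, so the given $A_0$-linear map is isometric on the algebraic direct sum $\bop_{n \in \B Z} A_n$ and hence extends to an isometric $A_0$-linear map $W$ on the completion.

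Next I would argue that $W$ has dense range. The range contains all finite sums $\sum_{n} \La(a_n)$, and in particular every $\La(a)$ with $a$ in the norm-dense subspace $\T{span}_{\B C}\{A_n \mid n \in \B Z\} \su A$ (this span is norm-dense in $A$ since the partial sums of the Fourier-type series built from the $P_n$ converge, or more simply because $\Lip_{S^1}(A)$ is norm-dense and every element of a spectral subspace lies in $\Lip_{S^1}(A)$; alternatively one can invoke a Cesàro/Fejér argument for the circle action). Since $\La : A \to X$ has dense range by construction of $X$ as the completion of $A$, and $\La$ is norm-contractive ($\| \La(a) \|^2 = \| P_0(a^*a)\| \leq \|a\|^2$ as $P_0$ is a contractive conditional expectation), the range of $W$ is dense in $X$. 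An isometry between Hilbert $C^*$-modules with dense range is automatically adjointable with $W^* = W^{-1}$, hence a unitary isomorphism; concretely the inverse sends $\La(a) \mapsto \{ P_n(a) \}_{n \in \B Z}$, which one checks is well-defined and isometric by the same computation $\sum_n \| P_n(a) \|$-type identity, namely $\binn{\{P_n(a)\}, \{P_n(a)\}} = \sum_n P_n(a)^* P_n(a) = P_0(a^* a) = \binn{\La(a),\La(a)}$, using $P_n(a)^* P_m(a) \in A_{m-n}$ and $P_0$-invariance.

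The main obstacle — really the only non-formal point — is the density of $\T{span}_{\B C}\{A_n \mid n \in \B Z\}$ in $A$, equivalently the surjectivity-up-to-closure of $W$. I would handle this by noting that for $a \in \Lip_{S^1}(A)$ the Fejér sums $\sum_{|n| \leq N}(1 - |n|/(N+1)) P_n(a)$ converge in norm to $a$ (this is the standard Fejér theorem applied to the strongly continuous circle action, and these sums lie in $\bop_n A_n$), so $\Lip_{S^1}(A) \su \ov{\T{span}_{\B C}\{A_n\}}$; since $\Lip_{S^1}(A)$ is norm-dense in $A$ (as recalled in Subsection \ref{ss:li}), the span is dense in $A$, hence $\La\big(\T{span}_{\B C}\{A_n\}\big)$ is dense in $\La(A)$, which is dense in $X$. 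Everything else is a routine verification that the inner products match, so the bulk of the write-up is the two-line inner-product computation above plus the density remark.
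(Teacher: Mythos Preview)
Your proof is correct and follows exactly the standard route: verify that the map preserves the $A_0$-valued inner product on the algebraic direct sum (using that $P_0$ kills $A_k$ for $k\neq 0$), then check dense range via density of $\T{span}_{\B C}\{A_n\}$ in $A$. The paper itself states this lemma without proof, treating it as a structural observation; immediately afterwards it records the one nontrivial input you identified, namely that $\C O_{S^1}(A)=\T{span}_{\B C}\{A_n\mid n\in\B Z\}$ is norm-dense in $A$, with a reference to \cite[Proposition 2.5]{Ex:CAP} rather than the Fej\'er-type argument you sketch.
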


We let $\C O_{S^1}(A) \su A$ denote the smallest unital $*$-subalgebra which contains the spectral subspace $A_n$ for all $n \in \B Z$. It then holds that $\C O_{S^1}(A) \su A$ is norm-dense, see e.g. \cite[Proposition 2.5]{Ex:CAP}. Similarly, we define the norm-dense $A_0$-submodule $\C O_{S^1}(X) \su X$ as the image of $\C O_{S^1}(A)$ under the inclusion $\La : A \to X$. Comparing with Lemma \ref{l:strucX} it then holds that $\C O_{S^1}(X) = W\big( \bop_{n \in \B Z} A_n \big)$. 

Let us introduce the symmetric unbounded operator $\C N : \C O_{S^1}(X) \to X$ by putting
\[
\C N( \La(a)) = n \cd \La(a) \q \T{for all } n \in \B Z \T{ and } a \in A_n . 
\]
The closure of $\C N$ is denoted by $N : \T{Dom}(N) \to X$ and it is not hard to check that both of the unbounded operators $i \pm N : \T{Dom}(N) \to X$ are surjective (using that the unbounded operators $i \pm \C N$ have norm-dense images). In other words, we get that the closed symmetric unbounded operator $N : \T{Dom}(N) \to X$ is selfadjoint and regular, see \cite[Proposition 4.1]{KaLe:LGR} and \cite[Lemma 9.7 and Lemma 9.8]{Lan:HCM}.

The next theorem is due to Carey, Neshveyev, Nest, and Rennie, \cite[Proposition 2.9]{CNNR:TEK}. Notice however that the result in \cite[Proposition 2.9]{CNNR:TEK} is more general than the statement presented here (using a more general spectral subspace assumption instead of fullness). Recall that a right Hilbert $C^*$-module $Y$ over a $C^*$-algebra $B$ is said to be \emph{full}, if the $*$-subalgebra 
$\inn{Y,Y} := \T{span}_{\B C}\big\{ \inn{\xi,\eta} \mid \xi, \eta \in Y \big\} \su B$ is norm-dense.

We remind the reader that unbounded Kasparov modules, as introduced in \cite{BaJu:TBK} and further developped in numerous places (see e.g. \cite{Kuc:PUM,Bla:KOA,Hil:BIK,DGM:BGU,Kaa:UPK,DuMe:HEU}), form the correct bivariant analogue of spectral triples.

\begin{thm}\label{t:unbkas}
Suppose that $X$ is countably generated as a right Hilbert $C^*$-module over $A_0$. If $A_1$ and $A_{-1}$ are full, then the triple $(\C O_{S^1}(A),X,N)$ is an unbounded Kasparov module.
\end{thm}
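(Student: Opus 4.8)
The plan is to verify the three defining properties of an unbounded Kasparov module for the triple $(\C O_{S^1}(A), X, N)$: namely that $N$ is a selfadjoint regular operator on $X$ (already noted in the text preceding the statement), that the resolvent $(i + N)^{-1}$ is a compact operator on $X$ in the sense of Hilbert $C^*$-modules, and that for each $a \in \C O_{S^1}(A)$ the operator $\psi(a)$ preserves $\T{Dom}(N)$ with $[N, \psi(a)]$ extending to a bounded adjointable operator on $X$. Since the first point is established, the proof really has two halves: the bounded-commutator condition and the compact-resolvent condition.

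For the commutator condition, I would work on the core $\C O_{S^1}(X) = W\big(\bop_{n \in \B Z} A_n\big)$. It suffices to treat homogeneous $a \in A_m$ and $b \in A_n$, for which $\psi(a)(\La(b)) = \La(ab)$ with $ab \in A_{m+n}$; hence $[N, \psi(a)](\La(b)) = (m+n) \La(ab) - n \La(ab) = m \La(ab) = m \cdot \psi(a)(\La(b))$. Thus on homogeneous elements the commutator $[N, \psi(a)]$ acts as multiplication by the scalar $m$, so it extends to the bounded adjointable operator $m \cdot \psi(a)$; by linearity, for general $a = \sum_m a_m \in \C O_{S^1}(A)$ (a finite sum) the commutator $[N, \psi(a)]$ extends to $\sum_m m \cdot \psi(a_m) \in \B L(X)$. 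This part is essentially a direct computation on the algebraic direct sum and then a density argument.

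The compact-resolvent condition is where I expect the real work to lie, and this is exactly the place where the fullness hypothesis on $A_1$ and $A_{-1}$ enters. Using the identification $W : \widehat{\bop}_{n \in \B Z} A_n \xrightarrow{\sim} X$ from Lemma \ref{l:strucX}, the operator $(i + N)^{-1}$ becomes $\{a_n\}_{n \in \B Z} \mapsto \{ (i + n)^{-1} a_n \}_{n \in \B Z}$. One needs to show this lies in the norm-closure of the finite-rank adjointable operators $\C K(X)$. The standard approach is to approximate by the truncations $Q_N$ which project onto $\bop_{|n| \leq N} A_n$ and act by the same scalars: since $\|(i+N)^{-1} - Q_N (i+N)^{-1}\|_\infty \leq \sup_{|n| > N} |i + n|^{-1} \to 0$, it is enough to prove that each truncated operator, hence each $A_0$-module $A_n$ viewed inside $X$ (equivalently, the projection onto the $n$-th spectral subspace), is a compact operator on $X$. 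For $n = 0$ this projection is $P_0$, which is compact because $A_0$ is full over itself (it contains its own unit). For $n \neq 0$ fullness of $A_1$ and $A_{-1}$ lets one build, inductively, finite families in $A_n$ whose associated rank-one operators sum to the projection onto $A_n$ — concretely, fullness of $A_1$ gives approximate units for $A_1 A_{-1} \ni 1$, hence finite frames for $A_{\pm 1}$ up to arbitrarily small error, and these propagate to frames for $A_n$ exactly as in the proof of Lemma \ref{l:convenient} but now only approximately (since one does not assume an algebraic subalgebra is preserved). The countable generation of $X$ ensures one stays within the separable/countably generated framework. Assembling: $(i+N)^{-1}$ is a norm-limit of finite sums of compact projections composed with bounded operators, hence compact. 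The main obstacle is making the frame-approximation argument for $A_n$ rigorous from fullness alone — one must be careful that fullness of $A_1$ and $A_{-1}$ (rather than of all $A_n$) genuinely suffices, which is precisely the content borrowed from \cite[Proposition 2.9]{CNNR:TEK}, and the cleanest route is to cite that result for the compactness of the spectral projections and then run the truncation estimate above.
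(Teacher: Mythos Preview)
The paper does not give its own proof of this theorem; it attributes the result to Carey, Neshveyev, Nest and Rennie and cites \cite[Proposition 2.9]{CNNR:TEK} directly. Your outline is correct and tracks what that reference does: the commutator computation on homogeneous elements is immediate, and the compact-resolvent condition reduces (via the truncation estimate you wrote) to showing each spectral projection onto $A_n$ is compact, which follows because $\overline{A_n A_n^*} = A_0$ can be obtained inductively from $\overline{A_1 A_1^*} = \overline{A_{-1} A_{-1}^*} = A_0$ and the unitality of $A_0$. One small labeling slip: it is fullness of $A_{-1}$ (not $A_1$) that gives $\overline{A_1 A_{-1}} = \overline{A_{-1}^* A_{-1}} = A_0$; but since both hypotheses are present this is harmless. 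Your closing remark --- that the cleanest route is simply to invoke \cite[Proposition 2.9]{CNNR:TEK} --- is exactly the route the paper takes.
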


%
%

Let us from now on suppose that the right Hilbert $C^*$-module $X$ is countably generated and that $\rho : A_0 \to \B L(H_0)$ is an injective unital $*$-homomorphism with values in the bounded operators on a separable Hilbert space $H_0$. We may then consider the interior tensor product $G := X \hot_\rho H_0$ which we equip with the left action of $A$ given by the injective unital $*$-homomorphism $\psi \hot 1 : A \to \B L(G)$. Notice that our assumptions ensure that $G$ is again a separable Hilbert space. 

The selfadjoint and regular unbounded operator $N : \T{Dom}(N) \to X$ induces a selfadjoint unbounded operator $N \hot 1 : \T{Dom}(N \hot 1) \to G$, which is defined as the closure of the symmetric unbounded operator $\C N \ot 1 : \C O_{S^1}(X) \ot_{A_0} H_0 \to X \hot_\rho H_0$. We record that the triple $( \C O_{S^1}(A),G, N \hot 1)$ is a Lipschitz triple, see the discussion after Definition \ref{d:twistlip}. This Lipschitz triple is however not a unital spectral triple in general. In fact, it holds that the resolvent $(i + N \hot 1)^{-1}$ is compact if and only if $H_0$ is finite dimensional. 

Associated with the Lipschitz triple $(\C O_{S^1}(A),G, N \hot 1)$, we have the Lipschitz algebra $\Lip_{\T{ver}}(A) := \Lip_{N \hot 1}(A) \su A$ and we refer to this norm-dense unital $*$-subalgebra as the \emph{vertical Lipschitz algebra}. The corresponding closed $*$-derivation is denoted by
$\de_{\T{ver}} : \Lip_{\T{ver}}(A) \to \B L(H)$ and the notation $L_{\T{ver}} := L_{N \hot 1} : \Lip_{\T{ver}}(A) \to [0,\infty)$ refers to the lower semicontinuous slip-norm given by
  \[
L_{\T{ver}}(a) := \big\| \de_{\T{ver}}(a) \big\|_\infty := \big\| \T{cl}( [N \hot 1, \psi(a) \hot 1]) \big\|_\infty .
\]
Notice here that the extra operation ``$\T{cl}$'' means that we take the closure of the commutator appearing (which is otherwise an unbounded operator defined on the domain of $N \hot 1$). 

  Let us put $U_t := e^{it(N \hot 1)}$ for all $t \in \B R$ and consider the strongly continuous one-parameter group of unitaries $\{ U_t \}_{t \in \B R}$ acting on $G = X \hot_\rho H_0$. The family $\{ U_t\}_{t \in \B R}$ then implements the circle action $\si$ on $A$ in so far that
  \begin{equation}\label{eq:implement}
e^{it(N \hot 1)} (\psi(a) \hot 1) e^{-it(N \hot 1)} = \psi( \si_{e^{it}}(a)) \hot 1 \q \T{for all } a \in A \, , \, \, t \in \B R .
\end{equation}

The next theorem is now an immediate consequence of Theorem \ref{t:weak}, the injectivity of the unital $*$-homomorphism $\psi \hot 1 : A \to \B L(G)$ and the identity in \eqref{eq:implement}.

  \begin{thm}\label{t:vertical}
We have the identities $\Lip_{S^1}(A) = \Lip_{\T{ver}}(A)$ and $L_\ell = L_{\T{ver}}$. In particular, we get that the vertical Lipschitz algebra $\Lip_{\T{ver}}(A)$ and the slip-norm $L_{\T{ver}}$ are independent of the injective unital $*$-homomorphism $\rho : A_0 \to \B L(H_0)$.
  \end{thm}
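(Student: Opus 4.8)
The plan is to show the two identities $\Lip_{S^1}(A) = \Lip_{\T{ver}}(A)$ and $L_\ell = L_{\T{ver}}$ by comparing the two ``weak'' characterisations of Lipschitz operators that are already on the table, namely condition $(2)$ of Theorem \ref{t:weak} applied to $D := N \hot 1$ acting on $G = X \hot_\rho H_0$. Concretely, by Theorem \ref{t:weak}, for $a \in A$ the element $\psi(a) \hot 1$ lies in $\Lip_{N \hot 1}(G)$ if and only if the set $\{ (\al_t(\psi(a) \hot 1) - \psi(a)\hot 1)/t \mid t \in \B R \sem \{0\} \}$ is bounded in operator norm, where $\al_t(T) = U_t T U_t^*$ and $U_t = e^{it(N \hot 1)}$, and in that case $L_{\T{ver}}(a) = \sup_{t \neq 0} \| \al_t(\psi(a) \hot 1) - \psi(a) \hot 1 \|_\infty/|t|$. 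On the other side, by definition $a \in \Lip_{S^1}(A)$ iff $\{ (\si_\la(a) - a)/\ell(\la) \mid \la \in S^1 \sem \{1\} \}$ is bounded in operator norm, with $L_\ell(a) = \sup_{\la \neq 1} \| \si_\la(a) - a \|/\ell(\la)$.

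The bridge between the two is precisely the implementation identity \eqref{eq:implement}: $e^{it(N \hot 1)}(\psi(a) \hot 1) e^{-it(N \hot 1)} = \psi(\si_{e^{it}}(a)) \hot 1$ for all $a \in A$ and $t \in \B R$. Using this and the fact that $\psi \hot 1 : A \to \B L(G)$ is an injective $*$-homomorphism, hence isometric, I get
\[
\big\| \al_t(\psi(a) \hot 1) - \psi(a) \hot 1 \big\|_\infty = \big\| \psi(\si_{e^{it}}(a) - a) \hot 1 \big\|_\infty = \| \si_{e^{it}}(a) - a \|
\]
for every $t \in \B R$. The map $t \mapsto e^{it}$ is a $2\pi$-periodic surjection $\B R \to S^1$, and for $t \in (-\pi,\pi] \sem \{0\}$ we have $\ell(e^{it}) = |t|$, so the scaling factors match on a fundamental domain. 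I would then argue that boundedness of the two families is equivalent: for $|t| \leq \pi$ the quotients $\| \si_{e^{it}}(a) - a \|/|t|$ coincide exactly with the $\ell$-quotients over $S^1 \sem \{1\}$, while for $|t| > \pi$ the numerator is bounded by $2\|a\|$ and the denominator is bounded below by $\pi$, so those terms never affect boundedness and, since $|t| \geq \ell(e^{it})$ always, never increase the supremum. Hence the set in Theorem \ref{t:weak}(2) is bounded iff the set defining $\Lip_{S^1}(A)$ is, giving $\psi(a) \hot 1 \in \Lip_{N \hot 1}(G) \Leftrightarrow a \in \Lip_{S^1}(A)$, i.e. (after identifying $A$ with its image under $\psi \hot 1$ and intersecting with $A$) $\Lip_{\T{ver}}(A) = \Lip_{S^1}(A)$.

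For the equality of slip-norms I take suprema. Writing $S := \sup_{\la \in S^1 \sem \{1\}} \| \si_\la(a) - a \|/\ell(\la) = L_\ell(a)$, the computation above shows $L_{\T{ver}}(a) = \sup_{t \neq 0} \| \si_{e^{it}}(a) - a \|/|t|$. Restricting to $t \in (-\pi,\pi] \sem \{0\}$ reproduces exactly the supremum $S$; for $|t| > \pi$ one has $\| \si_{e^{it}}(a) - a \|/|t| = \| \si_{e^{is}}(a) - a \|/|t| \leq \| \si_{e^{is}}(a) - a \|/\ell(e^{is}) \leq S$ where $s \in (-\pi,\pi]$ satisfies $e^{is} = e^{it}$ (here using $|t| > \pi \geq \ell(e^{it})$ together with $\ell(e^{it}) = \ell(e^{is})$). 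Thus the larger-$t$ terms never exceed $S$, so $L_{\T{ver}}(a) = S = L_\ell(a)$ for all $a$ in the common Lipschitz algebra. The final ``in particular'' is then immediate: the right-hand sides $\Lip_{S^1}(A)$ and $L_\ell$ make no reference to $\rho$ or $H_0$, so neither do $\Lip_{\T{ver}}(A)$ and $L_{\T{ver}}$. The only place requiring genuine care — the ``main obstacle'', though it is minor — is the bookkeeping around the periodicity of $t \mapsto e^{it}$ and the inequality $\ell(e^{it}) \leq |t|$, to be sure that enlarging the parameter range from $S^1 \sem \{1\}$ to all of $\B R \sem \{0\}$ changes neither the boundedness criterion nor the value of the supremum; everything else is a direct invocation of Theorem \ref{t:weak} and \eqref{eq:implement}.
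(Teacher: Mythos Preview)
Your proof is correct and follows essentially the same approach as the paper, which states the theorem as an immediate consequence of Theorem \ref{t:weak}, the injectivity of $\psi \hot 1$, and the implementation identity \eqref{eq:implement}. Your additional bookkeeping around the periodicity of $t \mapsto e^{it}$ and the inequality $\ell(e^{it}) \leq |t|$ simply makes explicit what the paper leaves to the reader; one tiny edge case you might mention for completeness is $t \in 2\pi\B Z \sem \{0\}$, where the numerator vanishes so the quotient is zero and contributes nothing to the supremum.
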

%

\section{The horizontal geometric data}\label{s:horizontal} 
Let us continue in the setting of Subsection \ref{ss:li} where we have a unital $C^*$-algebra $A$ equipped with a strongly continuous action $\si$ of the unit circle. We are now interested in a more refined algebraic setup which is in line with the theory of quantum principal bundles, see e.g. \cite{Sch:PHS,BrMa:QGG,Haj:SCQ} and \cite[Chapter 5]{BeMa:QRG} for a more recent exposition. This extra algebraic data is ultimately going to help us in describing the horizontal part of the geometry we are investigating.
\medskip

Let us fix a norm-dense unital $*$-subalgebra $\C A \su A$. At the algebraic level we impose two extra conditions on our data:
%
%
%

\begin{assum}\label{a:specsub}
  Suppose that the following is satisfied:
  \begin{enumerate}
  \item $\C A$ is generated as a $*$-algebra by the subspace $\C A \cap A_1 \su \C A$; 
  \item There exist finitely many elements $\ze_1^R,\ldots,\ze^R_k \in \C A \cap A_1$ and $\ze_1^L,\ldots,\ze^L_m \in \C A \cap A_1$ such that
    $\sum_{j = 1}^k \ze_j^R (\ze_j^R)^* = 1 = \sum_{j = 1}^m (\ze_j^L)^* \ze_j^L$.
  \end{enumerate}
\end{assum}

A couple of elementary observations are collected in the next lemma (notice that the injectivity of the map in $(3)$ is a consequence of \cite[Proposition 2.5]{Ex:CAP}):

\begin{lemma}\label{l:algebra}
  If the first condition in Assumption \ref{a:specsub} is satisfied, then the following holds:
  \begin{enumerate}
  \item $\si_\la(\C A) = \C A$ for all $\la \in S^1$ and $\C A \su \Lip_{S^1}(A)$;
  \item $P_n(\C A) = \C A \cap A_n$ for all $n \in \B Z$;
  \item The map $a \mapsto \{ P_n(a) \}_{n \in \B Z}$ yields an isomorphism of vector spaces $\C A \cong \bop_{n \in \B Z} (\C A \cap A_n)$. 
    \end{enumerate}
\end{lemma}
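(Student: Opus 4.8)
The plan is to exploit the spectral decomposition machinery from Subsection \ref{ss:li} together with the first condition in Assumption \ref{a:specsub}, which says $\C A$ is generated as a $*$-algebra by $\C A \cap A_1$. First I would establish the multiplicative grading estimate $(\C A \cap A_n)\cd(\C A \cap A_m) \su \C A \cap A_{n+m}$, which is immediate from the defining property $\si_\la(a) = \la^n a$ on spectral subspaces, together with $(\C A \cap A_n)^* \su \C A \cap A_{-n}$. Since every element of $\C A$ is a finite sum of products of elements of $\C A \cap A_1$ and their adjoints (which lie in $\C A \cap A_{-1}$), it follows that $\C A \su \OO{span}_{\B C}\bigcup_{n \in \B Z}(\C A \cap A_n)$, and in particular $\C A = \bigoplus_{n}(\C A \cap A_n)$ as vector spaces since the spectral subspaces $A_n$ are linearly independent (they are eigenspaces for distinct characters of $S^1$). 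This already gives part $(3)$: the map $a \mapsto \{P_n(a)\}_{n}$ is the projection onto each graded piece, it is injective by \cite[Proposition 2.5]{Ex:CAP} as noted, and surjects onto $\bigoplus_n(\C A \cap A_n)$ by the decomposition just obtained.

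For part $(2)$, the inclusion $P_n(\C A) \su \C A \cap A_n$ needs $P_n(\C A) \su \C A$; but for $a = \sum_m a_m$ with $a_m \in \C A \cap A_m$ (finite sum, by the decomposition above), the spectral projection formula \eqref{eq:specproj} gives $P_n(a) = a_n \in \C A \cap A_n$ directly, since $P_n$ kills each $a_m$ with $m \neq n$ and fixes $a_n$. Conversely $\C A \cap A_n \su P_n(\C A)$ since $P_n$ restricts to the identity on $A_n$. Hence $P_n(\C A) = \C A \cap A_n$.

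For part $(1)$, the inclusion $\si_\la(\C A) \su \C A$ follows because $\si_\la$ acts on each graded piece $\C A \cap A_n$ by the scalar $\la^n$, hence preserves $\C A$ by the vector-space decomposition; applying this to $\la^{-1}$ gives the reverse inclusion, so $\si_\la(\C A) = \C A$. The inclusion $\C A \su \Lip_{S^1}(A)$ follows from \eqref{eq:supspecsub}: each $a_n \in \C A \cap A_n$ satisfies $L_\ell(a_n) = |n|\,\|a_n\| < \infty$, so each homogeneous component lies in $\Lip_{S^1}(A)$, and $\Lip_{S^1}(A)$ is a vector subspace, so the finite sum $a = \sum_m a_m$ lies in it as well. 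No step here is a genuine obstacle — the only point requiring mild care is confirming that the sums appearing are finite, which is exactly the content of the vector-space decomposition established at the outset, and that the spectral subspaces are linearly independent, which is a standard consequence of Fourier analysis on $S^1$ (or can be read off from \cite[Proposition 2.5]{Ex:CAP}).
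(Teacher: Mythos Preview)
Your proposal is correct and proceeds exactly along the elementary lines the paper has in mind: the paper omits a proof entirely, labeling the lemma ``a couple of elementary observations'' and noting only that injectivity in part~(3) follows from \cite[Proposition 2.5]{Ex:CAP}. Your argument --- building the $\B Z$-grading from the generating hypothesis and then reading off (1)--(3) --- is precisely the intended verification, and there is nothing to add.
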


Apply the notation $C(S^1)$ for the unital $C^*$-algebra of continuous functions on the circle and let $\C O(S^1) \su C(S^1)$ denote the smallest unital $*$-subalgebra containing the inclusion $z : S^1 \to \B C$. Provided that the first condition in Assumption \ref{a:specsub} is satisfied, we define the \emph{canonical map} $\T{can} : \C A \ot_{\C A \cap A_0} \C A \to \C A \ot \C O(S^1)$ by the formula $\T{can}(x \ot y) := \sum_{n \in \B Z} x \cd P_n(y) \ot z^n$. Remark that the first condition in Assumption \ref{a:specsub} does indeed imply that the sum appearing in the definition of the canonical map only has finitely many non-zero terms. The result formulated in the next proposition is a consequence of \cite[Theorem 4.3]{AKL:PAG}. Notice that, in the statement, we are really referring to a quantum principal $S^1$-bundle for the \emph{universal differential calculus}, see \cite[Proposition 1.6]{Haj:SCQ}. 

\begin{prop}\label{p:quaprincipal}
  Suppose that the conditions in Assumption \ref{a:specsub} are satisfied. The $\B Z$-graded algebra $\C A \cong \bop_{n \in \B Z} (\C A \cap A_n)$ forms a quantum principal $S^1$-bundle in the sense that the canonical map
  $\T{can} : \C A \ot_{\C A \cap A_0} \C A \to \C A \ot \C O(S^1)$ is an isomorphism. 
\end{prop}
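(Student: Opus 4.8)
The plan is to deduce Proposition \ref{p:quaprincipal} from \cite[Theorem 4.3]{AKL:PAG} by checking that the $\ZZ$-graded algebra structure on $\C A \cong \bop_{n \in \ZZ}(\C A \cap A_n)$ satisfies the hypotheses of that theorem, namely that the degree-one and degree-minus-one components contain elements whose products reproduce the unit. Concretely, Assumption \ref{a:specsub}(2) supplies $\ze_1^R,\ldots,\ze_k^R, \ze_1^L,\ldots,\ze_m^L \in \C A \cap A_1$ with $\sum_j \ze_j^R(\ze_j^R)^* = 1$ and $\sum_j (\ze_j^L)^*\ze_j^L = 1$. Since $x \in \C A \cap A_1$ implies $x^* \in \C A \cap A_{-1}$, these relations say precisely that the multiplication maps $(\C A \cap A_1)\ot(\C A\cap A_{-1}) \to \C A \cap A_0$ and $(\C A \cap A_{-1})\ot(\C A\cap A_1)\to \C A\cap A_0$ both hit the unit, which is the standard (Hopf--Galois / ``principal comodule algebra'') criterion for a $\ZZ$-graded algebra over the Hopf algebra $\C O(S^1) = \B C[z,z^{-1}]$.

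First I would record, using Lemma \ref{l:algebra}(3), the identification $\C A \cong \bop_{n \in \ZZ}(\C A \cap A_n)$ as a $\ZZ$-graded unital $*$-algebra, so that $\C A$ carries the $\C O(S^1)$-comodule structure $a \mapsto \sum_n P_n(a)\ot z^n$ and its coinvariant subalgebra is exactly $\C A \cap A_0$. Next I would verify that $\T{can}$ as defined in the statement is well defined and $\C A\cap A_0$-balanced — this is immediate from $P_n(P_0(\cdot)\cdot) $-type identities and the fact that $P_n(xy) = x P_n(y)$ when $x \in \C A \cap A_0$. Then the core step: I would invoke \cite[Theorem 4.3]{AKL:PAG}, whose conclusion is that $\T{can}$ is bijective precisely when there exist finite families realizing $1$ as above in the extremal spectral subspaces (a strong-connectedness/principality condition). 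Alternatively, if one wants a self-contained argument, one constructs the inverse of $\T{can}$ explicitly by the translation map: set $\tau(z^n) \in \C A \ot_{\C A\cap A_0}\C A$ using iterated products of the $\ze_j^R$'s and $(\ze_j^L)^*$'s (building up from $\tau(z) = \sum_j \ze_j^R \ot (\ze_j^R)^*$ and $\tau(z^{-1}) = \sum_j (\ze_j^L)^* \ot \ze_j^L$, extended multiplicatively), and then define the inverse on $a \ot z^n$ by $a \cdot \tau(z^n)$; checking that this is a two-sided inverse of $\T{can}$ is a direct computation using $\sum_j \ze_j^R(\ze_j^R)^* = 1 = \sum_j (\ze_j^L)^*\ze_j^L$ together with the grading.

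The main obstacle I expect is purely bookkeeping rather than conceptual: one must be careful that the translation map $\tau$ is well defined on the tensor product over $\C A \cap A_0$ (not just over $\B C$), i.e. that the putative inverse respects the balancing, and that it is independent of the way one expresses a given homogeneous element as a product of generators — in the $\ZZ$-graded case this reduces to the identity $\sum_{i} \ze_i^R(\ze_i^R)^* = 1$ guaranteeing that $\sum_i \ze_i^R \ot (\ze_i^R)^* x = \sum_i \ze_i^R \ot (\ze_i^R)^* y$ in $\C A \ot_{\C A \cap A_0}\C A$ whenever $x,y \in \C A\cap A_1$ with $(\ze_i^R)^*x = (\ze_i^R)^* y$ for all $i$, which is where Assumption \ref{a:specsub}(2) is genuinely used. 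Since the paper is content to cite \cite[Theorem 4.3]{AKL:PAG}, the cleanest write-up is simply to match the hypotheses of that theorem with Assumption \ref{a:specsub} and Lemma \ref{l:algebra}, noting additionally that one only needs the ``universal differential calculus'' version of a quantum principal bundle as in \cite[Proposition 1.6]{Haj:SCQ}, for which bijectivity of $\T{can}$ is exactly the defining condition.
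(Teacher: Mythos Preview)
Your proposal is correct and matches the paper's approach exactly: the paper simply states that the result is a consequence of \cite[Theorem 4.3]{AKL:PAG}, and you have spelled out precisely why Assumption \ref{a:specsub} and Lemma \ref{l:algebra} supply the hypotheses of that theorem. Your additional sketch of the explicit translation-map inverse goes beyond what the paper records but is a standard and correct way to see the content of the cited result.
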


We are from now on assuming that the two conditions in Assumption \ref{a:specsub} are satisfied. 
\medskip

For each $n \in \B Z$ we view $A_n$ as a right Hilbert $C^*$-module over $A_0$ and put $\C A_n := \C A \cap A_n = P_n(\C A)$. Notice that Lemma \ref{l:convenient} shows that $\C A_n \su A_n$ is norm-dense and has a finite frame. Referring to the constructions from Section \ref{s:vertical} we also introduce the vector subspace $\C X := \La(\C A) \su X$ and record that $\C X \su X$ is a norm-dense right $\C A_0$-submodule satisfying that $\inn{ \C X, \C X} \su \C A_0$. We record that our assumptions entail that $X$ is countably generated as a right Hilbert $C^*$-module over $A_0$.
\medskip

Suppose that we are given a Lipschitz triple $(\C A_0,H_0,D_0)$ on the fixed point algebra $A_0 \su A$. The corresponding injective $*$-homomorphism is denoted by $\rho : A_0 \to \B L(H_0)$ and the associated $*$-derivation is denoted by $\de_0 : \C A_0 \to \B L(H_0)$. We are interested in transferring this geometric data to the norm-dense unital $*$-subalgebra $\C A \su A$ and in this fashion access the horizontal part of our geometry.
\medskip

For each $n \in \B Z$, define the separable Hilbert space $G_n := A_n \hot_\rho H_0$ and identify $G_n$ with a closed subspace of $G := X \hot_\rho H_0$ via the inclusion of $A_n$ into $X$. Applying Lemma \ref{l:strucX}, we obtain a unitary isomorphism identifying $G$ with the Hilbert space direct sum $\widehat{\bop}_{n \in \B Z} G_n$.
%
%

Suppose that we have a fixed Hermitian $\de_0$-connection $\Na_n : \C A_n \to A_n \hot_\rho \B L(H_0)$ for every $n \in \B Z$ and consider the associated horizontal lift $1 \ot_{\Na_n} D_0 : \C A_n \ot_{\C A_0} \T{Dom}(D_0) \to A_n \hot_\rho H_0$ given by the expression
\[
(1 \ot_{\Na_n} D_0)(x \ot \xi) := \T{ev}_{\xi}( \Na_n(x) ) + x \ot D_0(\xi)
\]
on simple tensors. Recall from Theorem \ref{t:esself} that the horizontal lift $1 \ot_{\Na_n} D_0$ is a selfadjoint unbounded operator.

We define the Hermitian $\de_0$-connection
\begin{equation}\label{eq:direct}
\Na : \C X \to X \hot_\rho \B L(H_0) \q \Na( \La(a)) = \sum_{n \in \B Z} \Na_n( P_n(a)) ,
\end{equation}
where we are suppressing the inclusion $A_n \hot_\rho \B L(H_0) \to X \hot_\rho \B L(H_0)$ for every $n \in \B Z$. Let us now argue that the associated horizontal lift
\[
1 \ot_\Na D_0 : \C X \ot_{\C A_0} \T{Dom}(D_0) \to X \hot_\rho H_0
\]
is essentially selfadjoint. First of all, upon suppressing the unitary isomorphism $G \cong \widehat{\bop}_{n \in \B Z} G_n$ we get the formula
\[
(1 \ot_\Na D_0)( \{ \xi_n\}_{n \in \B Z}) = \big\{ (1 \ot_{\Na_n} D_0)(\xi_n) \big\}_{n \in \B Z}
\]
for all vectors $\{ \xi_n\}_{n \in \B Z}$ in the algebraic direct sum $\bop_{n \in \B Z} \T{Dom}(1 \ot_{\Na_n} D_0)$, which is now equal to $\T{Dom}(1 \ot_\Na D_0)$. So, we may view $1 \ot_\Na D_0$ as an infinite algebraic direct sum of the selfadjoint unbounded operators $1 \ot_{\Na_n} D_0$ for $n \in \B Z$. An application of \cite[Proposition 3.8]{Sch:USO} now shows that it suffices to verify that $1 \ot_\Na D_0 - i$ and $1 \ot_\Na D_0 + i$ both have dense images. But this is immediate since $1 \ot_{\Na_n} D_0$ is selfadjoint for every $n \in \B Z$ implying that $1 \ot_{\Na_n} D_0 - i$ and $1 \ot_{\Na_n} D_0 + i$ are in fact surjective as operators from $\C A_n \ot_{\C A_0} \T{Dom}(D_0)$ to $A_n \hot_\rho H_0 = G_n$.

Recall from Section \ref{s:tensor} that the interior tensor product $X \hot_\rho H_0$ can be viewed as a left module over $A$ where the left action is provided by the injective unital $*$-homomorphism $\psi \hot 1 : A \to \B L(X \hot_\rho H_0)$. During the proof of the next lemma, we also consider the interior tensor product $X \hot_\rho \B L(H_0)$ as a left module over $A$ (in the the same fashion). Both of the associated injective unital $*$-homomorphisms are being suppressed from the notation.
%
%

\begin{lemma}\label{l:bounded}
  Suppose that the conditions in Assumption \ref{a:specsub} are satisfied and let $\Na_n : \C A_n \to A_n \hot_\rho \B L(H_0)$ be a Hermitian $\de_0$-connection for every $n \in \B Z$. Consider the Hermitian $\de_0$-connection $\Na : \C X \to X \hot_\rho \B L(H_0)$ from \eqref{eq:direct}. For each $m \in \B Z$ and each $a \in \C A$, there exists a unique bounded operator
  $d_m(a) : G_m \to G$ satisfying that
  \[
\big[ 1 \ot_\Na D_0, a \big](\eta) = d_m(a)(\eta) \q \mbox{for all } \eta \in \T{Dom}(1 \ot_{\Na_m} D_0) .
\]
\end{lemma}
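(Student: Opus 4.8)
The plan is to reduce the assertion to a bounded-operator statement on each of the pieces $G_m$ by using that the horizontal lift is a \emph{graded} direct sum of the horizontal lifts $1 \ot_{\Na_n} D_0$ along the $\B Z$-grading of $X$. Since $a \in \C A \cong \bop_{n \in \B Z} \C A_n$ (Lemma \ref{l:algebra}), it suffices by linearity to treat $a \in \C A_\ell$ for a single $\ell \in \B Z$; the action of such $a$ shifts $G_m$ into $G_{m + \ell}$, so the commutator $[1 \ot_\Na D_0, a]$ maps $G_m \cap \T{Dom}(1 \ot_{\Na_m}D_0)$ into $G_{m + \ell} + G_m$ and the problem becomes local in the grading. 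The operator $d_m(a)$ is then forced to be the restriction of this commutator to $G_m$, so uniqueness is immediate from the density of $\T{Dom}(1 \ot_{\Na_m} D_0)$ in $G_m$; the real content is that the commutator, a priori only densely defined, actually extends to a bounded operator $G_m \to G$.

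The key computational step is to expand the commutator using the finite frame $\{\ze_{(m,j)}\}_{j=1}^{m_m}$ for $\C A_m$ (Lemma \ref{l:convenient}) together with the product structure of the connection. Concretely, on a simple tensor $x \ot \xi$ with $x \in \C A_m$, $\xi \in \T{Dom}(D_0)$, one has $a \cd (x \ot \xi) = (ax) \ot \xi$ with $ax \in \C A_{m + \ell}$, and $(1 \ot_{\Na_{m+\ell}} D_0)(ax \ot \xi) = \T{ev}_\xi(\Na_{m+\ell}(ax)) + ax \ot D_0(\xi)$, whereas $a \cdot (1 \ot_{\Na_m} D_0)(x \ot \xi) = a \cdot \T{ev}_\xi(\Na_m(x)) + (ax) \ot D_0(\xi)$. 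The unbounded pieces $ax \ot D_0(\xi)$ cancel, and what survives is a difference of two connection terms. Using the Leibniz rule for the connections (part (1) of Definition \ref{d:hermit}) applied after writing $x = \sum_j \ze_{(m,j)} \cdot \inn{\ze_{(m,j)},x}$ and $\Na_{m+\ell}(a \ze_{(m,j)} \cdot b) = \Na_{m+\ell}(a\ze_{(m,j)}) \cdot \rho(b) + a\ze_{(m,j)} \ot \de_0(b)$, the $\de_0$-terms recombine into something bounded (the $\de_0(\inn{\ze_{(m,j)},x})$ are bounded operators on $H_0$), and one is left with a finite sum of terms of the shape $\te_{a\ze_{(m,j)}} \circ (\text{bounded}) - a \cdot \te_{\ze_{(m,j)}} \circ (\text{bounded})$, each visibly extending to a bounded operator $G_m \to G$. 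This mirrors the estimate in the proof of Lemma \ref{l:extension}. One then defines $d_m(a)$ to be this bounded operator.

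The main obstacle, and the place requiring care, is the manipulation of the connection terms: the connections $\Na_n$ for different $n$ need not be compatible with multiplication by $a$ (which crosses between gradings), so there is a genuine ``connection mismatch'' contribution, and one must check that it is bounded rather than merely densely defined. The resolution is that this mismatch is exactly a finite sum controlled by $\Na_{m+\ell}$ evaluated on the finitely many frame-shifted vectors $a \ze_{(m,j)} \in \C A_{m+\ell}$ and by $\Na_m$ evaluated on $\ze_{(m,j)}$, each of which lands in $A_{m+\ell} \hot_\rho \B L(H_0)$ respectively $A_m \hot_\rho \B L(H_0)$ and hence defines, via the evaluation maps $\T{ev}_\xi$, bounded operators. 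A secondary point to verify is that $d_m(a)$ is independent of the chosen frame; but since it is characterized by agreeing with the commutator on the dense domain, independence is automatic. Finally, $a$ preserving $\T{Dom}(1 \ot_{\Na_m} D_0)$ — needed for the commutator to make sense on that domain in the first place — follows because multiplication by $a \in \C A_\ell$ sends $\C A_m \ot_{\C A_0} \T{Dom}(D_0)$ into $\C A_{m+\ell} \ot_{\C A_0} \T{Dom}(D_0) \subseteq \T{Dom}(1 \ot_{\Na_{m+\ell}} D_0)$, which is a core.
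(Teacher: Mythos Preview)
Your argument is correct and follows essentially the same route as the paper's proof. Both compute the commutator on simple tensors $x \ot \xi$, observe that the $D_0$-terms cancel so that $[1 \ot_\Na D_0, a](x \ot \xi) = \T{ev}_\xi\big(\Na(ax) - a \cdot \Na(x)\big)$, and then use the finite frame for $\C A_m$ to show this extends boundedly. The only difference is packaging: the paper notes directly that the map $\C R_a := [\Na, a] : \C A_m \to X \hot_\rho \B L(H_0)$ is right $\C A_0$-linear (an immediate consequence of the Leibniz rule, since the $\de_0$-terms cancel exactly rather than merely ``recombining into something bounded'') and then invokes Lemma~\ref{l:extension} as a black box, whereas you unpack that lemma inline via the frame expansion. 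Your slight overstatement that the commutator lands in $G_{m+\ell} + G_m$ can be sharpened to $G_{m+\ell}$, but this is harmless.
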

\begin{proof}
  Consider elements $x \in \C X$ and $\xi \in \T{Dom}(D_0)$. For every $a \in \C A$ it then holds that $a \cd (x \ot \xi) \in \C X \ot_{\C A_0} \T{Dom}(D_0)$ and the commutator is given by
  \[
  \big[ 1 \ot_\Na D_0, a \big](x \ot \xi) = \T{ev}_\xi \Na( a \cd x) - a \cd \T{ev}_\xi \Na(x)
  = \T{ev}_\xi \big( \Na( a \cd x) - a \cd \Na(x) \big) .
  \]
  Let now $m \in \B Z$ be given and recall that $\C A_m \su A_m$ has a finite frame. By Lemma \ref{l:extension} it therefore suffices to show that the commutator 
  \[
\C R_a := [\Na, a] : \C A_m \to X \hot_\rho \B L(H_0)
\]
satisfies that $\C R_a(x \cd b) = \C R_a(x) \cd \rho(b)$ for all $x \in \C A_m$ and $b \in \C A_0$. But this follows from the fact that the Hermitian $\de_0$-connection $\Na$ obeys the Leibniz rule, see Definition \ref{d:hermit} $(1)$. Notice that, in the notation of Lemma \ref{l:extension}, we get that $d_m(a) = c(R_a) : A_m \hot_\rho H_0 \to X \hot_\rho H_0$.
\end{proof}

\subsection{Commutators with the horizontal lift}\label{ss:commutator}
At this point of our exposition we are going to present a more careful investigation of the result of Lemma \ref{l:bounded}. In the context of this lemma, for each $a \in \C A$ it would of course be desirable to have an upper bound for the operator norm of the operators $d_m(a) : G_m \to G$ (independent of $m \in \B Z$). However, in the applications we are primarily interested in, such a condition is way too restrictive and instead we are allowing these operator norms to satisfy an exponential growth condition. 

Let $A$ be a unital $C^*$-algebra equipped with a strongly continuous action $\si$ of the unit circle $S^1 \su \B C$ and let $\C A \su A$ be a fixed norm-dense unital $*$-subalgebra satisfying that $P_0(\C A) \su \C A$. Suppose that $(\C A \cap A_0,H_0,D_0)$ is a Lipschitz triple on the fixed point algebra $A_0$. We let $\rho : A_0 \to \B L(H_0)$ and $\de_0 : \C A \cap A_0 \to \B L(H_0)$ denote the associated injective unital $*$-homomorphism and the $*$-derivation, respectively.

%
%
%

In order to treat a number of interesting examples from the point of view of compact quantum metric spaces (and noncommutative geometry in general) we impose the following extra conditions on our data. Notice that these constraints imply that the conditions in Assumption \ref{a:specsub} are satisfied as well. 

\begin{assum}\label{a:exponential}
  Suppose that the norm-dense unital $*$-subalgebra $\C A \su A$ is generated as a $*$-algebra by the subspace $\C A \cap A_1 \su \C A$. Suppose moreover that we are given
  \begin{itemize}
  \item A separable Hilbert space $H$ containing $H_0$ as a closed subspace;
  \item A unital $*$-homomorphism $\phi : A \to \B L(H)$ and a $\cc$-linear map $\de : \C A \to \B L(H)$,
  \end{itemize}
  such that the following holds:
  \begin{enumerate}
  \item $\de(a)(\xi) = \de_0(a)(\xi)$ and $\phi(a)(\xi) = \rho(a)(\xi)$ for all $a \in \C A \cap A_0$ and $\xi \in H_0$;
  \item There exists a constant $\mu > 0$ such that $\de(a b) = \de(a) \phi(b) + \mu^n \phi(a) \de(b)$ whenever $a \in \C A \cap A_n$ for some $n \in \B Z$ and $b \in \C A$;
  \item There exist finitely many elements $\ze_1^R,\ldots,\ze^R_k \in \C A \cap A_1$ and $\ze_1^L,\ldots,\ze^L_m \in \C A \cap A_1$ such that
    \[
    \begin{split}
    & \sum_{j = 1}^k \ze_j^R (\ze_j^R)^* = 1 = \sum_{j = 1}^m (\ze_j^L)^* \ze_j^L \q \mbox{and} \\ 
    & \sum_{j = 1}^k \phi( \ze_j^R) \de((\ze_j^R)^*) = 0 = \sum_{j = 1}^m \phi(\ze_j^L)^* \de( \ze_j^L) .
    \end{split}
    \]
  \end{enumerate}
\end{assum}

Let us present some further discussion of the conditions in Assumption \ref{a:exponential} (which are thus in effect here below).

For every $n \in \B Z$ we put $\C A_n := \C A \cap A_n = P_n(\C A)$ so that $\C A_n \su A_n$ is norm-dense and has a finite frame, see the discussion after Proposition \ref{p:quaprincipal}. Recall also that the separable Hilbert space $G_n = A_n \hot_\rho H_0$ may be viewed as a closed subspace of the separable Hilbert space
\[
G = X \hot_\rho H_0 \cong \widehat{\bop}_{n \in \B Z} G_n .
\]
%

For every $n \in \B Z$, we may identify $G_n = A_n \hot_\rho H_0$ with a closed subspace $H_n \su H$ via the isometry
\begin{equation}\label{eq:isometry}
S_n : A_n \hot_\rho H_0 \to H \q S_n(x \ot \xi) := \phi(x)(\xi) .
\end{equation}
Let us apply the notation $U_n : G_n \to H_n$ for the associated unitary isomorphism. Defining the separable Hilbert space $K := \widehat{\bop}_{n \in \B Z} H_n$, we thus obtain a unitary isomorphism of Hilbert spaces $U : G \to K$ which, for each $n \in \B Z$, identifies the closed subspaces $G_n \su G$ and $H_n \su K$ via $U_n$.

In a similar fashion, for every $n \in \B Z$, we introduce the $\B L(H_0)$-linear map
\begin{equation}\label{eq:defphi}
\Phi_n : A_n \hot_\rho \B L(H_0) \to \B L(H_0,H_n) \q \Phi_n(R)(\xi) := U_n( \T{ev}_\xi R) 
\end{equation}
and record that $\Phi_n(R)^* \Phi_n(S) = \inn{R,S}$ for all $R,S \in A_n \hot_\rho \B L(H_0)$ so that $\Phi_n$ becomes an isometry. We may in fact collect these isometries (for different values of $n \in \B Z$) into a single $\B L(H_0)$-linear isometry $\Phi : X \hot_\rho \B L(H_0) \to \B L(H_0,K)$ satisfying that
\[
\Phi(R)(\xi) := U( \T{ev}_\xi R) \q \T{for all } R \in X \hot_\rho \B L(H_0) \, \, \T{ and } \, \, \, \xi \in H_0 .
\]


Fixing the constant $\mu > 0$ appearing in the second condition in Assumption \ref{a:exponential}, we also consider the strongly continuous one-parameter group of unitary operators $\{V_s^\mu\}_{s \in \B R}$ on the separable Hilbert space $G = X \hot_\rho H_0$ defined by the formula
\begin{equation}\label{eq:grpuni}
V_s(\mu)(\ze) = \mu^{is \frac{n}{2}} \cd \ze 
\end{equation}
whenever $\ze$ belongs to the subspace $G_n = A_n \hot_\rho H_0$ for some $n \in \B Z$. Comparing with the strongly continuous one-parameter group of unitaries $\{U_t\}_{t \in \B R} = \{ e^{it(N \hot 1)} \}_{t \in \B R}$ introduced in Section \ref{s:vertical} we have that
$V_s(\mu) = U_{\log(\mu) s/2}$ for all $s \in \B R$.

Let us denote the corresponding $\si$-weakly continuous action of $\B R$ on $\B L(G)$ by $\be := \be(\mu)$ so that $\be_s(T) = V_s(\mu) T V_{-s}(\mu)$ for all $s \in \B R$. Suppressing the injective unital $*$-homomorphism $\psi \hot 1 : A \to \B L(G)$, for every $n \in \B Z$ and $a \in A_n$ it then holds that $a \in \Ana^r_{\be}(G)$ for all $r \in \B R$ and we have the formula 
\begin{equation}\label{eq:beta}
\be_z(a) = \mu^{i z \frac{n}{2} } \cd a \q \T{for all } z \in \B C .
\end{equation}
In particular, we get an induced algebra automorphism $\be_z : \C A \to \C A$ for every $z \in \B C$. The second condition in Assumption \ref{a:exponential} can now be restated as a \emph{twisted Leibniz rule}:
\[
\de(ab) = \de(a) \phi(b) + \phi( \be_{-2i}(a) ) \de(b) \q a,b \in \C A .
\]

\begin{lemma}\label{l:framevanish}
  If the conditions in Assumption \ref{a:exponential} are satisfied, then for every $n \in \B Z$ there exists a finite frame $\{\ze_{(n,j)} \}_{j = 1}^{m_n}$ for $\C A_n \su A_n$ such that
  \begin{equation}\label{eq:framevanish}
  \sum_{j = 1}^{m_n} \phi( \ze_{(n,j)} ) \de(\ze_{(n,j)}^*) = 0 .
  \end{equation}
\end{lemma}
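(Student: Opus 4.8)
The plan is to build the desired frame for $\C A_n$ by induction on $n \in \B Z$, starting from the distinguished generators $\ze_1^R,\ldots,\ze_k^R$ and $\ze_1^L,\ldots,\ze_m^L$ supplied by condition $(3)$ of Assumption~\ref{a:exponential}. For $n = 0$ one simply takes the single-element frame $\{1\}$, which trivially satisfies \eqref{eq:framevanish} since $\de(1) = 0$. For $n = 1$ one uses $\{\ze_j^R\}_{j=1}^k$, whose frame property $\sum_j \ze_j^R (\ze_j^R)^* = 1$ follows from $\C A_1 \cd \C A_0 \su \C A_1$ and the computation $x = \sum_j \ze_j^R (\ze_j^R)^* x = \sum_j \ze_j^R \cd \inn{\ze_j^R,x}$ valid for $x \in \C A_1$ (where $\inn{\ze_j^R,x} = (\ze_j^R)^* x \in \C A_0$); the vanishing relation $\sum_j \phi(\ze_j^R) \de((\ze_j^R)^*) = 0$ is exactly the second half of condition $(3)$. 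The case $n = -1$ is symmetric, using $\{(\ze_j^L)^*\}_{j=1}^m$ together with the relation $\de(a^*) = -\de(a)^*$ on $\C A$ (when this is available) or directly with the second vanishing identity $\sum_j \phi(\ze_j^L)^* \de(\ze_j^L) = 0$.

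Next I would carry out the inductive step. Suppose $n \geq 1$ and that $\{\eta_i\}_{i=1}^{p}$ is a finite frame for $\C A_n$ satisfying $\sum_i \phi(\eta_i) \de(\eta_i^*) = 0$. I claim $\{\eta_i \ze_j^R\}_{i,j}$ is a frame for $\C A_{n+1}$ with the analogous vanishing property. The frame property is the standard ``composition of frames'' argument: for $x \in \C A_{n+1}$ we have $(\eta_i)^* x \in \C A_1$, so
\[
\sum_{i,j} \eta_i \ze_j^R (\ze_j^R)^* \eta_i^* x = \sum_i \eta_i \Big( \sum_j \ze_j^R (\ze_j^R)^* \Big) \eta_i^* x = \sum_i \eta_i \eta_i^* x = x .
\]
For the vanishing relation, apply the twisted Leibniz rule of condition~$(2)$, restated as $\de(ab) = \de(a)\phi(b) + \phi(\be_{-2i}(a))\de(b)$, to $\de\big( \eta_i \ze_j^R (\ze_j^R)^* \eta_i^* \big)$, or more efficiently compute $\sum_{i,j} \phi(\eta_i \ze_j^R) \de\big( (\eta_i \ze_j^R)^* \big)$ directly. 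Expanding $\de\big( (\ze_j^R)^* \eta_i^* \big) = \de((\ze_j^R)^*)\phi(\eta_i^*) + \phi(\be_{2i}((\ze_j^R)^*)) \de(\eta_i^*)$ (note $(\ze_j^R)^* \in \C A_{-1}$, so the twist exponent has the appropriate sign), one gets
\[
\sum_{i,j} \phi(\eta_i)\phi(\ze_j^R)\Big[ \de((\ze_j^R)^*)\phi(\eta_i^*) + \mu^{-1}\phi((\ze_j^R)^*)\de(\eta_i^*) \Big] ,
\]
and the first sum vanishes because $\sum_j \phi(\ze_j^R)\de((\ze_j^R)^*) = 0$, while the second sum equals $\mu^{-1} \sum_i \phi(\eta_i) \big( \sum_j \phi(\ze_j^R)\phi((\ze_j^R)^*)\big) \de(\eta_i^*) = \mu^{-1} \sum_i \phi(\eta_i)\de(\eta_i^*) = 0$ by the inductive hypothesis (using that $\phi$ is an algebra homomorphism and $\sum_j \ze_j^R (\ze_j^R)^* = 1$). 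The case $n \leq -1$ is handled symmetrically using the left frame $\{(\ze_j^L)^*\}$ and multiplying on the appropriate side, with the twist exponents tracked accordingly.

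The main obstacle is bookkeeping rather than conceptual: one must keep careful track of which $\C A_m$ each factor lies in so that the exponents $\mu^{n}$ in the twisted Leibniz rule come out correctly, and one must be slightly careful about the order of multiplication ($\ze_j^R$ raises degree by $1$, so it should be appended on the appropriate side to land in $\C A_{n+1}$ while keeping the frame telescoping identity $\sum \ze_j^R(\ze_j^R)^* = 1$ usable). A secondary subtlety is that for negative $n$ one needs the adjoint relations — either invoking $\de(a^*) = -\de(a)^*$ together with the first vanishing identity, or working directly with the second vanishing identity $\sum_j \phi(\ze_j^L)^*\de(\ze_j^L) = 0$ and building frames for $\C A_{-1}$ from $\{(\ze_j^L)^*\}$; I would use whichever is cleaner and note the choice explicitly. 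Once the exponent accounting is set up correctly, the induction closes immediately, since both the frame identity and the vanishing identity are preserved at each step by the computations above.
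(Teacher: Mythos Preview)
Your proposal is correct and follows essentially the same inductive scheme as the paper: base case $n=0$ with the frame $\{1\}$, induction start at $n=\pm 1$ using the elements from Assumption~\ref{a:exponential}(3), and inductive step building the frame for $\C A_{n+1}$ as $\{\eta_i \ze_j^R\}_{i,j}$, with the vanishing identity verified by expanding via the twisted Leibniz rule and using both the inductive hypothesis and $\sum_j \phi(\ze_j^R)\de((\ze_j^R)^*)=0$. One small slip: in your expansion you write $\phi(\be_{2i}((\ze_j^R)^*))$ where the paper's convention gives $\phi(\be_{-2i}((\ze_j^R)^*))$; your subsequent coefficient $\mu^{-1}$ is nonetheless the correct one, so the argument goes through.
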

\begin{proof}
  For $n = 0$ our finite frame simply consists of one element namely the unit $1$ and the relevant identity follows by noting that $\de(1) = 0$.

  The proof now runs by induction according to the cases $n > 0$ or $n < 0$. We focus on the case where $n > 0$ (since the remaining case is similar). The induction start follows immediately from Assumption \ref{a:exponential} with $m_1 = k$ and $\ze_{(1,j)} := \ze_j^R$ for all $j \in \{1,\ldots,k\}$. Let thus $n \in \B N$ and suppose that we have our finite frame $\{ \ze_{(n,i)}) \}_{i = 1}^{m_n}$ for $\C A_n \su A_n$ satisfying \eqref{eq:framevanish}. We put $m_{n + 1} := k \cd m_n$ and consider the finite index set $I_{n+1} := \{1,\ldots,m_n\} \ti \{1,\ldots,k\}$ (with $m_{n+1}$ elements). The relevant frame for $\C A_{n+1}$ is then given by $\{ \ze_{(n,i)} \ze_j^R \}_{(i,j) \in I_{n+1}}$. Indeed, suppressing $\phi : A \to \B L(H)$, we get from the twisted Leibniz rule that
  \[
  \begin{split}
  & \sum_{i = 1}^{m_n} \sum_{j = 1}^k \ze_{(n,i)} \ze_j^R \de( (\ze_j^R)^* \ze_{(n,i)}^*) \\
  & \q = \sum_{i = 1}^{m_n} \sum_{j = 1}^k \ze_{(n,i)} \ze_j^R \de( (\ze_j^R)^*) \ze_{(n,i)}^* + \sum_{i = 1}^{m_n} \sum_{j = 1}^k \mu^{-1} \ze_{(n,i)} \ze_j^R (\ze_j^R)^* \de(\ze_{(n,i)}^*) = 0 . \qedhere
  \end{split}
  \]
\end{proof}

In the next proposition we give an explicit computation of Grassmann connections (see \eqref{eq:grass}) associated with finite frames subject to the vanishing condition in \eqref{eq:framevanish}. 

\begin{prop}\label{p:grasstwist}
  Suppose that the conditions in Assumption \ref{a:exponential} are satisfied. Let $n \in \B Z$ and let $\{\ze_j\}_{j = 1}^{m_n}$ be a finite frame for $\C A_n \su A_n$ such that $\sum_{j = 1}^{m_n} \phi(\ze_j)\de(\ze_j^*) = 0$. Then the corresponding Grassmann connection $\Na_{\T{Gr}} : \C A_n \to A_n \hot_\rho \B L(H_0)$ satisfies that
  \[
\Phi_n\big( \Na_{\T{Gr}}(x ) \big)\xi = \mu^{-n} \de(x) \xi  \q \mbox{for all } x \in \C A_n \mbox{ and } \xi \in H_0 .
\]
\end{prop}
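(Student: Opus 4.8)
The plan is a direct computation resting on the twisted Leibniz rule in Assumption~\ref{a:exponential}~$(2)$. First I would unwind the definitions. By \eqref{eq:grass} the Grassmann connection attached to the frame $\{\ze_j\}_{j = 1}^{m_n}$ is
\[
\Na_{\T{Gr}}(x) = \sum_{j = 1}^{m_n} \ze_j \ot \de_0( \inn{\ze_j,x}) = \sum_{j = 1}^{m_n} \ze_j \ot \de_0( \ze_j^* x) ,
\]
where I use that the $A_0$-valued inner product on $A_n$ is $\inn{\ze_j,x} = \ze_j^* x$ and that $\ze_j^* x \in \C A \cap A_0 = \C A_0$ since $\C A$ is a $*$-subalgebra and $\ze_j^* x \in A_0$. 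Using the description $U_n(a \ot \eta) = \phi(a)\eta$ (for $a \in A_n$, $\eta \in H_0$) of the unitary associated with the isometry $S_n$ from \eqref{eq:isometry}, together with the definitions of $\Phi_n$ in \eqref{eq:defphi} and of the evaluation maps from Section~\ref{s:tensor}, I then obtain for $x \in \C A_n$ and $\xi \in H_0$ that
\[
\Phi_n\big( \Na_{\T{Gr}}(x) \big)\xi = U_n\big( \T{ev}_\xi \Na_{\T{Gr}}(x) \big) = \sum_{j = 1}^{m_n} \phi(\ze_j)\, \de_0(\ze_j^* x)\, \xi = \sum_{j = 1}^{m_n} \phi(\ze_j)\, \de(\ze_j^* x)\,\xi ,
\]
the last equality being Assumption~\ref{a:exponential}~$(1)$ applied to $\ze_j^* x \in \C A_0$ and $\xi \in H_0$.

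Next I would feed in the twisted Leibniz rule. Since $\ze_j \in \C A \cap A_n$ we have $\ze_j^* \in \C A \cap A_{-n}$, so Assumption~\ref{a:exponential}~$(2)$ gives $\de(\ze_j^* x) = \de(\ze_j^*)\phi(x) + \mu^{-n}\phi(\ze_j^*)\de(x)$. Summing over $j$ and using that $\phi$ is an algebra homomorphism,
\[
\sum_{j = 1}^{m_n} \phi(\ze_j)\, \de(\ze_j^* x)\,\xi = \Big( \sum_{j = 1}^{m_n} \phi(\ze_j)\de(\ze_j^*) \Big)\phi(x)\xi + \mu^{-n}\, \phi\Big( \sum_{j = 1}^{m_n} \ze_j \ze_j^* \Big)\de(x)\xi ,
\]
and the first term on the right vanishes by the standing hypothesis $\sum_{j = 1}^{m_n} \phi(\ze_j)\de(\ze_j^*) = 0$.

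It then remains only to handle the second term, and the one point requiring a short argument is the normalization $\sum_{j = 1}^{m_n} \ze_j \ze_j^* = 1$ in $A$. For this I would use Lemma~\ref{l:convenient} to fix a further finite frame $\{\eta_l\}_l$ for $\C A_n \su A_n$ with $\sum_l \eta_l \eta_l^* = 1$; applying the frame property of $\{\ze_j\}_j$, namely $\sum_j \ze_j \ze_j^* y = y$ for every $y \in \C A_n$, to $y = \eta_l$ then gives $\sum_j \ze_j \ze_j^* = \big( \sum_j \ze_j \ze_j^* \big)\sum_l \eta_l \eta_l^* = \sum_l \eta_l \eta_l^* = 1$, whence $\phi\big( \sum_j \ze_j \ze_j^* \big) = \phi(1) = 1$. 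Combining the three displayed identities yields $\Phi_n( \Na_{\T{Gr}}(x))\xi = \mu^{-n}\de(x)\xi$ for all $x \in \C A_n$ and $\xi \in H_0$, which is the claim. The computation is otherwise routine; the honest obstacle, such as it is, is the bookkeeping of which of $\de$ versus $\de_0$ and $\phi$ versus $\rho$ is acting, and on which of $H_0$ and $H$, as one passes through the isometry $S_n$.
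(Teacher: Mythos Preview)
Your proof is correct and follows essentially the same computation as the paper's own proof: unwind the Grassmann connection, pass through $\Phi_n$ and the isometry $U_n$, replace $\de_0$ by $\de$ via Assumption~\ref{a:exponential}~(1), apply the twisted Leibniz rule, and use the vanishing hypothesis. The only difference is that the paper writes the twisted Leibniz rule in the equivalent form $\de(\ze_j^* x) = \de(\ze_j^*)\phi(x) + \phi(\be_{-2i}(\ze_j^*))\de(x)$ and silently uses $\sum_j \ze_j \ze_j^* = 1$ in the final step, whereas you supply a short argument for this normalization via Lemma~\ref{l:convenient}; that extra care is a genuine improvement, since the frame hypothesis alone only gives $\sum_j \ze_j \ze_j^* y = y$ for $y \in \C A_n$.
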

\begin{proof}
  Let $x \in \C A_n$ and let $\xi \in H_0$. The result of the proposition follows from the computation 
  \[
  \begin{split}
  \Phi_n\big( \Na_{\T{Gr}}( x)\big) \xi & = \sum_{j = 1}^{m_n} \Phi_n\big( \ze_j \ot \de_0( \ze_j^* \cd x) \big) \xi 
 = \sum_{j = 1}^{m_n} \phi(\ze_j) \de( \ze_j^* \cd x) \xi \\
  & = \sum_{j = 1}^{m_n} \phi(\ze_j) \big( \de( \ze_j^*) \cd \phi(x) + \phi( \be_{-2i}(\ze_j^*) ) \de(x) \big) \xi
    = \mu^{-n} \de(x) \xi . \qedhere
  \end{split}
  \]
\end{proof}


For each $n \in \B Z$, let us choose a finite frame $\{ \ze_{(n,j)} \}_{j = 1}^{m_n}$ for $\C A_n \su A_n$ subject to the vanishing condition in \eqref{eq:framevanish}. Since the result of Proposition \ref{p:grasstwist} in particular shows that the corresponding Grassmann connection does not depend on the specific frame, we denote this Grassmann connection by $\Na_n : \C A_n \to A_n \hot_\rho \B L(H_0)$. Accordingly, upon suppressing the inclusion $\La : A \to X$, we obtain the Hermitian $\de_0$-connection $\Na : \C A \to X \hot_\rho \B L(H_0)$ satisfying that $\Na(a) = \Na_n(a)$ as soon as $a \in \C A_n$ for some $n \in \B Z$, see \eqref{eq:direct}. 

For $a \in \C A$, we are now going to analyze the bounded operators $d_m(a) : G_m \to G$ appearing in Lemma \ref{l:bounded} in our present more specific setting. Notice that $d_m(a)$ factorizes through $G_{n + m} \su G$ in the case where $a \in \C A_n$ for some $n \in \B Z$.


\begin{lemma}\label{l:deedel}
  Suppose that the conditions in Assumption \ref{a:exponential} are satisfied. Let $n,m \in \B Z$ and $a \in \C A_n$ be given. For each $\eta \in G_m$, we have the identity
  \[
U_{n + m} d_m(a) (\eta) = \mu^{-n - m} \cd \de( a ) U_m(\eta) .
\]
In particular, it holds that $\de(a) U_m(\eta) \in H_{n + m}$.
\end{lemma}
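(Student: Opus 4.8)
The plan is to verify the identity on the norm-dense set of simple tensors $x \ot \xi$ with $x \in \C A_m$ and $\xi \in H_0$, and then to extend by boundedness. That such tensors are dense in $G_m = A_m \hot_\rho H_0$ follows by approximating $a \in A_m$ by elements of $\C A_m$ and using $\| (a - a') \ot \xi \| \leq \| a - a' \| \cd \| \xi \|$; since $d_m(a)$, $U_{n+m}$, $U_m$ and $\de(a)$ are all bounded, it suffices to treat such tensors. For these I would first recall from the proof of Lemma~\ref{l:bounded} that $d_m(a) = c(R_a)$ in the notation of Lemma~\ref{l:extension}, with $\C R_a = [\Na,a] : \C A_m \to X \hot_\rho \B L(H_0)$, so that
\[
d_m(a)(x \ot \xi) = \T{ev}_\xi\big( \Na(a \cd x) - a \cd \Na(x) \big) .
\]
Because $a \cd x \in \C A_{n+m}$ and $a \cd \Na_m(x) \in A_{n+m} \hot_\rho \B L(H_0)$ (here ``$a \cd (-)$'' denotes $\psi(a) \hot 1$, and $\psi(a)$ sends $A_m$ into $A_{n+m}$ inside $X$), this vector lies in $G_{n+m} = A_{n+m} \hot_\rho H_0$. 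Applying $U_{n+m}$ and using $\Phi_{n+m}(R)(\xi) = U_{n+m}(\T{ev}_\xi R)$ together with $\Phi_{n+m}(y \ot T)(\xi) = \phi(y) T \xi$ from \eqref{eq:defphi}, I obtain
\[
U_{n+m} d_m(a)(x \ot \xi) = \Phi_{n+m}\big( \Na_{n+m}(ax) \big)\xi - \Phi_{n+m}\big( a \cd \Na_m(x) \big)\xi .
\]

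I would then handle the two terms separately. For the first, Proposition~\ref{p:grasstwist} applied to $ax \in \C A_{n+m}$ gives $\Phi_{n+m}( \Na_{n+m}(ax))\xi = \mu^{-n-m} \de(ax)\xi$. For the second, I would expand $\Na_m = \Na_{\T{Gr}}$ along a finite frame $\{ \ze_{(m,j)} \}_j$ for $\C A_m \su A_m$ satisfying the vanishing condition \eqref{eq:framevanish} and the normalization $\sum_j \ze_{(m,j)} \ze_{(m,j)}^* = 1$ (Lemma~\ref{l:framevanish} and Lemma~\ref{l:convenient}); using $\phi(a \ze_{(m,j)}) = \phi(a)\phi(\ze_{(m,j)})$ and the agreement $\de_0 = \de$, $\rho = \phi$ on the fixed-point data (Assumption~\ref{a:exponential}(1)), this yields
\[
\Phi_{n+m}\big( a \cd \Na_m(x) \big)\xi = \phi(a) \sum_j \phi( \ze_{(m,j)}) \de( \ze_{(m,j)}^* \cd x)\xi .
\]
Applying the twisted Leibniz rule $\de(bc) = \de(b) \phi(c) + \phi( \be_{-2i}(b)) \de(c)$ with $b = \ze_{(m,j)}^* \in A_{-m}$, together with $\be_{-2i}( \ze_{(m,j)}^*) = \mu^{-m} \ze_{(m,j)}^*$ from \eqref{eq:beta}, rewrites the sum as $\big( \sum_j \phi(\ze_{(m,j)}) \de(\ze_{(m,j)}^*) \big)\phi(x)\xi + \mu^{-m} \phi\big( \sum_j \ze_{(m,j)} \ze_{(m,j)}^* \big) \de(x)\xi$, whose first part vanishes by \eqref{eq:framevanish} and whose second part equals $\mu^{-m} \de(x)\xi$; hence $\Phi_{n+m}( a \cd \Na_m(x))\xi = \mu^{-m} \phi(a) \de(x)\xi$.

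Finally I would combine the two terms and invoke the twisted Leibniz rule once more, now as $\de(ax) = \de(a)\phi(x) + \mu^n \phi(a) \de(x)$ (Assumption~\ref{a:exponential}(2) with $a \in \C A_n$), so that
\[
U_{n+m} d_m(a)(x \ot \xi) = \mu^{-n-m}\big( \de(a)\phi(x) + \mu^n \phi(a)\de(x) \big)\xi - \mu^{-m} \phi(a)\de(x)\xi = \mu^{-n-m} \de(a)\phi(x)\xi .
\]
Since $\phi(x)\xi = S_m(x \ot \xi) = U_m(x \ot \xi)$, this is precisely $\mu^{-n-m} \de(a) U_m(x \ot \xi)$, which proves the identity on simple tensors; boundedness then gives it on all of $G_m$. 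The final assertion $\de(a) U_m(\eta) \in H_{n+m}$ is immediate, since $U_{n+m} d_m(a)(\eta)$ lies in $H_{n+m}$ because the image of $d_m(a)$ sits in $G_{n+m}$, which $U_{n+m}$ identifies with $H_{n+m}$.

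I expect the only genuinely delicate point to be the bookkeeping with the left $A$-action and the isometries $\Phi_\bullet$: one must confirm that ``$a \cd \Na_m(x)$'', a priori only an element of $X \hot_\rho \B L(H_0)$, actually lies in $A_{n+m} \hot_\rho \B L(H_0)$ so that $\Phi_{n+m}$ applies, and that under $\Phi_{n+m}$ this left action becomes ordinary left multiplication by $\phi(a)$ on $H_{n+m}$; both facts rest on $\psi(a)$ mapping $A_m$ into $A_{n+m}$ inside $X$ and on $\phi$ being multiplicative with $\phi|_{A_0}$ restricting to $\rho$ on $H_0$. Everything else is the twisted-Leibniz bookkeeping above, powered by Proposition~\ref{p:grasstwist} and the vanishing identity \eqref{eq:framevanish}.
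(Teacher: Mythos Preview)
Your proof is correct and follows essentially the same route as the paper. The paper's argument is just a condensed version of yours: it writes $\Phi_{n+m}\big(\Na_{n+m}(ax) - a\Na_m(x)\big)(\xi) = \mu^{-n-m}\de(ax)\xi - \mu^{-m}\phi(a)\de(x)\xi$ in one step by invoking Proposition~\ref{p:grasstwist} for both terms (using the intertwining $\Phi_{n+m}(a\cdot R)=\phi(a)\Phi_m(R)$ implicitly for the second), whereas you unpack the second term by expanding the Grassmann connection and reapplying the twisted Leibniz rule and the vanishing identity~\eqref{eq:framevanish}---effectively re-deriving that instance of Proposition~\ref{p:grasstwist} inline.
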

\begin{proof}
  Let $x \in \C A_m$ and let $\xi \in H_0$. It follows from Proposition \ref{p:grasstwist} and the twisted Leibniz rule that
\[
\begin{split}
\Phi_{n + m}\big( \Na_{n + m} ( a x) - a \Na_m(x) \big)(\xi)
& = \mu^{-n-m} \de(a \cd x) \xi - \mu^{-m} \phi(a) \de(x) \xi \\
& = \mu^{-n-m} \de(a) \phi(x) \xi .
\end{split}
\]
Consulting the proof of Lemma \ref{l:bounded}, we then get that 
\[
\begin{split}
U_{n + m} d_m(a)(x \ot \xi) & = U_{n + m} \T{ev}_\xi( [\Na,a](x) ) =  \Phi_{n + m}( [\Na,a](x) )(\xi) \\
& = \mu^{-n-m} \de(a) \phi(x)(\xi) = \mu^{-n-m} \de(a) U_m(x \ot \xi) . \qedhere
\end{split}
\]
\end{proof}

\subsection{Twisted Lipschitz triples and the modular lift}\label{ss:modular}
We continue our investigations in the setting explained in Subsection \ref{ss:commutator} and our goal is to obtain a description of the horizontal geometry of our data. In order to achieve this, for each $a \in \C A$, we need to tame the exponential growth of the operators $d_m(a) : G_m \to G$ (for $m \in \B Z$) expressed in Lemma \ref{l:deedel}. The core idea is to replace the horizontal lift $1 \hot_\Na D_0$ with the \emph{modular lift} obtained by appropriately rescaling the horizontal lift on each spectral subspace (in a way which depend on the parameter $\mu > 0$). The price to pay for applying this rescaling technique is that we obtain a twisted Lipschitz triple on $A$ instead of an ordinary Lipschitz triple. The ideas presented here are thus in line with the perturbed spectral triples appearing in \cite{CoMo:TST} and further developped in e.g. \cite{Mo:LIF,LaMa:TRS,GMR:UTS,PoWa:NGC,MaYu:RTS,CoTr:GBT,FhKh:SCN,Kaa:UKP}. Contrary to the situation described in \cite{CoMo:TST,PoWa:NGC} (for example) we are not perturbating our data by a positive invertible bounded operator but rather with a positive invertible \emph{unbounded} operator, see also \cite{Kaa:UKP,KaKy:DCQ}. The choice of this positive invertible unbounded operator is in fact dictated by the twisted Leibniz rule described in condition $(2)$ of Assumption \ref{a:exponential}.

As usual, we consider a unital $C^*$-algebra $A$ equipped with a strongly continuous action $\si$ of the unit circle. We let $\C A \su A$ be a norm-dense unital $*$-subalgebra such that $P_0(\C A) \su \C A$ and consider a Lipschitz triple $(\C A \cap A_0,H_0,D_0)$ on the fixed point algebra $A_0 \su A$. Our data is subject to the conditions stated in Assumption \ref{a:exponential}. We put $\C A_n := \C A \cap A_n = P_n(\C A)$ for all $n \in \B Z$.

Let us fix all of the extra data appearing in Assumption \ref{a:exponential}. In particular, we have the strongly continuous one-parameter group of unitaries $\{ V_s(\mu) \}_{s \in \B R}$ acting on the separable Hilbert space $G = X \hot_\rho H_0$, see \eqref{eq:grpuni}. Let us apply the notation $\Ga := V_{-i}(\mu)$ and record that this positive invertible unbounded operator satisfies that
\[
\Ga(\eta) = \mu^{\frac{n}{2}} \cd \eta
\]
whenever $\eta$ belongs to $G_n = A_n \hot_\rho H_0$ for some $n \in \B Z$. As described in Proposition \ref{p:grasstwist} (and the succeeding discussion), for every $n \in \B Z$ we have the Grassmann connection $\Na_n : \C A_n \to A_n \hot_\rho \B L(H_0)$ satisfying that $\Phi_n( \Na_n( x))(\xi) = \mu^{-n} \de(x) \xi$ for all $x \in \C A_n$ and $\xi \in H_0$. These different Grassmann connections can be put together into a single Hermitian $\de_0$-connection
\[
\Na : \C A \to X \hot_\rho \B L(H_0)
\]
satisfying that $\Na(x) = \Na_n(x)$ as soon as $x$ belongs to $\C A_n$ for some $n \in \B Z$. We refer to the discussion near \eqref{eq:direct} for more information on the associated horizontal lift $1 \ot_\Na D : \C A \ot_{\C A_0} \T{Dom}(D_0) \to X \hot_\rho H_0$.

\begin{dfn}\label{d:modular}
  The \emph{modular lift} $D_\Ga : \T{Dom}(D_\Ga) \to X \hot_\rho H_0$ is the selfadjoint unbounded operator defined as the closure of the essentially selfadjoint unbounded operator
  \[
\Ga^2(1 \ot_{\Na} D_0) : \C A \ot_{\C A_0} \T{Dom}(D_0) \to X \hot_\rho H_0 .
  \]
\end{dfn}

Let us spend a bit of extra time on understanding the modular lift $D_\Ga$. Recall from the discussion around \eqref{eq:isometry} that we have a unitary isomorphism of Hilbert spaces $U : G \to K$, where $K = \widehat{\bop}_{n \in \B Z} H_n$. This unitary isomorphism identifies $G_n = A_n \hot_\rho H_0$ with $H_n \su H$, for every $n \in \B Z$, via the unitary isomorphism $U_n : G_n \to H_n$. For each $n \in \B Z$ we put 
\[
\C H_n := U_n( \C A_n \ot_{\C A_0} \T{Dom}(D_0)) \su H_n
\]
and record that the algebraic direct sum $\C K := \bop_{n \in \B Z} \C H_n$ agrees with the norm-dense subspace $U( \C A \ot_{\C A_0} \T{Dom}(D_0)) \su K$.

\begin{lemma}\label{l:modular}
The selfadjoint unitary operator $U D_\Ga U^*$ is given explicitly by the formula  
  \[
(U D_\Ga U^*)( \phi(a) \xi) = \de(a) \xi + \mu^n \phi(a) D_0 \xi
  \]
 whenever $\xi \in \T{Dom}(D_0)$ and $a \in \C A_n$ for some $n \in \B Z$. 
\end{lemma}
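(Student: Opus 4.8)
The plan is to prove the formula by unwinding the definition of the modular lift $D_\Ga$ together with the unitary identifications $U : G \to K$ and $U_n : G_n \to H_n$ set up before the lemma, the only non-formal ingredient being Proposition \ref{p:grasstwist}. By Definition \ref{d:modular}, the operator $D_\Ga$ is the closure of $\Ga^2(1 \ot_\Na D_0)$ on the core $\C A \ot_{\C A_0} \T{Dom}(D_0)$; hence $U D_\Ga U^*$ is determined by its restriction to the dense subspace $\C K = U(\C A \ot_{\C A_0} \T{Dom}(D_0)) = \bop_{n \in \B Z} \C H_n$, and, by $\B C$-linearity, it suffices to evaluate it on an element of the form $\phi(a)\xi = U_n(a \ot \xi)$ with $a \in \C A_n$ and $\xi \in \T{Dom}(D_0)$.

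First I would observe that, for $a \in \C A_n$, the Hermitian connection $\Na$ restricts to $\Na_n$, which takes values in $A_n \hot_\rho \B L(H_0)$, so that
\[
(1 \ot_\Na D_0)(a \ot \xi) = \T{ev}_\xi \Na_n(a) + a \ot D_0 \xi
\]
already lies in $G_n = A_n \hot_\rho H_0$. Since $\Ga$ acts on $G_n$ as multiplication by the scalar $\mu^{n/2}$, the operator $\Ga^2$ acts on $G_n$ as multiplication by $\mu^n$, whence $D_\Ga(a \ot \xi) = \mu^n\big( \T{ev}_\xi \Na_n(a) + a \ot D_0 \xi \big) \in G_n$. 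Applying $U_n$ and recalling the definitions of $\Phi_n$ in \eqref{eq:defphi} and of $S_n$ in \eqref{eq:isometry}, together with the identity $\Phi_n(\Na_n(a))\xi = \mu^{-n} \de(a)\xi$ from Proposition \ref{p:grasstwist}, I would compute
\[
U_n D_\Ga(a \ot \xi) = \mu^n\big( \Phi_n(\Na_n(a))\xi + \phi(a) D_0 \xi \big) = \de(a)\xi + \mu^n \phi(a) D_0 \xi .
\]
Since $U$ restricts on $G_n$ to $U_n$ and $U_n(a \ot \xi) = S_n(a \ot \xi) = \phi(a)\xi$, this gives $(U D_\Ga U^*)(\phi(a)\xi) = \de(a)\xi + \mu^n \phi(a) D_0 \xi$, as claimed. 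As a consistency check, the right-hand side lies in $H_n \su K$: the term $\de(a)\xi$ by Lemma \ref{l:deedel} applied with $m = 0$, and the term $\phi(a) D_0 \xi = S_n(a \ot D_0 \xi)$ by construction.

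I do not expect any real obstacle: the proof is entirely a matter of bookkeeping with the identifications $G \cong \widehat{\bop}_{n} G_n$, $U_n$, $S_n$, $\Phi_n$ and $\Ga$, while the analytic content is already carried by Theorem \ref{t:esself} (selfadjointness of the horizontal lifts, so that $D_\Ga$ is a sensible selfadjoint operator with $\C K$ as a core) and by Proposition \ref{p:grasstwist} (the twist $\mu^{-n}$ produced by the Grassmann connection). The only point that warrants a word of care is that the asserted formula be well defined as a function of $\phi(a)\xi$ on $\C H_n$, which follows from the injectivity of the isometry $S_n$ and from the fact that $\C A_n \ot_{\C A_0} \T{Dom}(D_0)$ injects into $G_n$ because $\C A_n$ admits a finite frame (cf.\ Lemma \ref{l:extension} and Theorem \ref{t:esself}).
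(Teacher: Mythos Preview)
Your proof is correct and follows essentially the same route as the paper's own proof: both unwind $D_\Ga = \Ga^2(1 \ot_\Na D_0)$ on the core, use that $\Ga^2$ acts as $\mu^n$ on $G_n$, and then invoke Proposition~\ref{p:grasstwist} together with the identifications $U_n(a \ot \xi) = \phi(a)\xi$ and $U_n(\T{ev}_\xi \Na_n(a)) = \Phi_n(\Na_n(a))\xi$. Your additional remarks on well-definedness and the consistency check are sound but not needed for the argument.
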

\begin{proof}
  Let $a \in \C A_n$ and $\xi \in \T{Dom}(D_0)$ be given. Using Proposition \ref{p:grasstwist} (and the definition of $\Phi_n$ from \eqref{eq:defphi}) we get that
  \[
  \begin{split}
    (U D_\Ga U^*)(\phi(a) \xi)
    & = \mu^n U_n (1 \ot_{\Na_n} D_0)(a \ot \xi) = \mu^n U_n ( \T{ev}_\xi \Na_n(a) + a \ot D_0(\xi) ) \\
    & = \mu^n \Phi_n ( \Na_n(a))(\xi) + \mu^n \phi(a) D_0(\xi) = \de(a) \xi + \mu^n \phi(a) D_0 \xi . \qedhere
    \end{split}
  \]
\end{proof}

We may now apply the modular lift to capture the horizontal part of our geometric data in a single twisted Lipschitz triple. As usual, we suppress the injective unital $*$-homomorphism $\psi \hot 1 : A \to \B L(G)$.

In the situation where the Lipschitz triple $(\C A_0,H_0,D_0)$ is graded we denote the grading operator by $\ga_0 : H_0 \to H_0$. As explained in the beginning of Subsection \ref{ss:lifts}, this yields the selfadjoint unitary operator $1 \hot \ga_0$ on the interior tensor product $G = X \hot_\rho H_0$. 

\begin{thm}\label{t:twistlip}
  If the conditions in Assumption \ref{a:exponential} are satisfied, then it holds that $(\C A,G,D_\Ga)$ is a twisted Lipschitz triple on $A$ (with respect to $\{ V_s(\mu) \}_{s \in \B R}$). Moreover, letting $\de_\Ga : \C A \to \B L(G)$ denote the associated twisted $*$-derivation, for each $n,m \in \B Z$ we have the explicit formula
  \begin{equation}\label{eq:twistdelta}
U_{n + m} \de_\Ga(a)(\eta) = \mu^{-n/2} \de(a) U_m(\eta) \q \mbox{for all } a \in \C A_n \, \, \mbox{ and } \, \, \, \eta \in G_m \su G. 
  \end{equation}
  If the Lipschitz triple $(\C A_0,H_0,D_0)$ is graded, then the twisted Lipschitz triple $(\C A,G,D_\Ga)$ is also graded with grading operator $1 \hot \ga_0$. 
\end{thm}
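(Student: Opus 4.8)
The plan is to verify the three conditions in Definition \ref{d:twistop} for $a = \psi(a) \hot 1$ (suppressed) with respect to the one-parameter group $\{V_s(\mu)\}_{s \in \B R}$, and then to establish the formula \eqref{eq:twistdelta} by computing the twisted commutator on the appropriate domain. First I would record that, by \eqref{eq:beta}, for every $a \in \C A_n$ the map $z \mapsto \be_z(a) = \mu^{iz n/2} a$ extends holomorphically to all of $\B C$, so $a \in \Ana_\be^r(G)$ for all $r$; since $\C A$ is spanned by such homogeneous elements and $\Ana_\be^1(G)$ is a subalgebra, condition $(1)$ of Definition \ref{d:twistop} holds for every $a \in \C A$ (and $a^* \in \C A$ too). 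For condition $(2)$ and $(3)$ I would use Lemma \ref{l:modular}: since $\be_i(a) = \mu^{-n/2} a$ for $a \in \C A_n$, the operator $\be_i(a)$ is just a scalar multiple of $\psi(a) \hot 1$, so checking that it preserves $\T{Dom}(D_\Ga)$ reduces to checking that $\psi(a) \hot 1$ does. Via the unitary $U$ and the explicit formula of Lemma \ref{l:modular}, the core domain of $U D_\Ga U^*$ is $\C K = \bop_{n} \C H_n$, and for $a \in \C A_n$ the operator $\phi(a)$ maps $\C H_m \su H_m$ into $\C H_{n+m} \su H_{n+m}$ (using $\phi(a)\phi(x)\xi = \phi(ax)\xi$ with $ax \in \C A_{n+m}$, together with $\de(a)\phi(x)\xi \in H_{n+m}$ from Lemma \ref{l:deedel}); thus $\be_i(a)$ preserves the core, which is enough once we check boundedness of the twisted commutator.

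The heart of the argument is the computation of $D_\Ga \be_i(a) - \be_{-i}(a) D_\Ga$ on the core $\C A \ot_{\C A_0} \T{Dom}(D_0)$, from which \eqref{eq:twistdelta} and the boundedness in condition $(3)$ both follow. Working through $U$ and Lemma \ref{l:modular}, for $a \in \C A_n$ and a homogeneous vector $\eta = \phi(x)\xi$ with $x \in \C A_m$, $\xi \in \T{Dom}(D_0)$, I would expand
\[
(U D_\Ga U^*)\big(\be_i(a) U \eta\big) = \mu^{-n/2}(U D_\Ga U^*)(\phi(a)\phi(x)\xi) = \mu^{-n/2}\big(\de(ax)\xi + \mu^{n+m}\phi(ax) D_0\xi\big),
\]
using that $ax \in \C A_{n+m}$, and similarly
\[
\be_{-i}(a)(U D_\Ga U^*)(U\eta) = \mu^{n/2}\phi(a)\big(\de(x)\xi + \mu^m \phi(x) D_0\xi\big).
\]
Subtracting, the two $D_0$-terms cancel precisely because $\mu^{-n/2}\mu^{n+m} = \mu^{n/2}\mu^m$, and the twisted Leibniz rule $\de(ax) = \de(a)\phi(x) + \phi(\be_{-2i}(a))\de(x) = \de(a)\phi(x) + \mu^n\phi(a)\de(x)$ makes the remaining $\de(x)$-terms cancel as well, leaving $\mu^{-n/2}\de(a)\phi(x)\xi = \mu^{-n/2}\de(a) U_m(x \ot \xi)$. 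This is exactly \eqref{eq:twistdelta}, and since $\de(a) \in \B L(H)$ is bounded with norm independent of $m$, the twisted commutator extends to a bounded operator on $G$ of norm at most $\mu^{-n/2}\|\de(a)\|_\infty$; by density and linearity over the homogeneous decomposition $\C A = \bop_n \C A_n$ this gives boundedness for all $a \in \C A$, completing the verification that $(\C A,G,D_\Ga)$ is a twisted Lipschitz triple.

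For the graded statement I would check the three relations in the definition of a graded twisted Lipschitz triple for the operator $1 \hot \ga_0$ on $G = X \hot_\rho H_0$: it commutes with $\psi(a)\hot 1$ for $a \in A$ because $\ga_0$ commutes with $\rho(A_0)$ and the left action $\psi$ acts on the $X$-factor; it commutes with each $V_s(\mu)$ because $V_s(\mu)$ acts by a scalar on each $G_n$ and therefore commutes with any operator of the form $1 \hot \ga_0$; and it anti-commutes with $D_\Ga$ because $\Na$ is an odd Hermitian $\de_0$-connection (being built from Grassmann connections of a graded Lipschitz triple, which are automatically odd, cf.\ the remark after \eqref{eq:horilift}), so $1 \ot_\Na D_0$ anti-commutes with $1 \hot \ga_0$, and $\Ga^2$ preserves each $G_n$ hence commutes with $1 \hot \ga_0$. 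The main obstacle I anticipate is purely bookkeeping: keeping the unitaries $U$, $U_n$, the isometries $S_n$, $\Phi_n$, and the identifications $G_n \cong H_n$ straight throughout the twisted-commutator computation, and making sure the domain statements (that $\be_i(a)$ preserves $\T{Dom}(D_\Ga)$, not merely the core $\C K$) are handled correctly — though the latter follows formally since $\be_i(a)$ is, up to a scalar, the bounded operator $\psi(a)\hot 1$ which maps the closure of the core into itself once we know the commutator is bounded.
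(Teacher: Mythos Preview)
Your proposal is correct and arrives at the same result as the paper, but the route is organized a bit differently. The paper works directly with the factorization $D_\Ga = \T{cl}\big(\Ga^2(1 \ot_\Na D_0)\big)$: for $a \in \C A_n$ and $\eta \in \C A_m \ot_{\C A_0} \T{Dom}(D_0)$ it uses Proposition~\ref{p:anazet} to commute $\Ga$ past $\be_i(a)$, obtaining
\[
D_\Ga \be_i(a)(\eta) = \mu^{m + n/2} d_m(a)(\eta) + \be_{-i}(a) D_\Ga(\eta),
\]
and then invokes Lemma~\ref{l:deedel} to identify $\mu^{m+n/2} U_{n+m} d_m(a)(\eta)$ with $\mu^{-n/2}\de(a) U_m(\eta)$, which simultaneously gives the uniform bound $\|\de_\Ga(a)\|_\infty \le \mu^{-n/2}\|\de(a)\|_\infty$ and the formula~\eqref{eq:twistdelta}. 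You instead transport everything through $U$ and use the explicit formula of Lemma~\ref{l:modular} together with the twisted Leibniz rule from Assumption~\ref{a:exponential}\,(2) to cancel terms directly. This is the same computation with the layers peeled back: your expansion of $\de(ax)$ is precisely what underlies the proof of Lemma~\ref{l:deedel}, so you are effectively inlining that lemma. Your version is slightly more self-contained; the paper's version makes the role of the connection $\Na$ and the operators $d_m(a)$ more visible, which pays off elsewhere (e.g.\ in Section~\ref{s:unbKK}). The graded argument is identical in both.
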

\begin{proof}
Let $a \in \C A$. In order to get a twisted Lipschitz triple we only need to show that $a$ is a twisted Lipschitz operator with respect to $D_\Ga$ and $\be = \be(\mu)$. To this end, it suffices to take care of the case where $a \in \C A_n$ for some $n \in \B Z$.

We already observed that $a$ (and hence also $a^*$) is analytic of order $r$ for all $r \in \B R$ and that we have the formula $\be_z(a) = \mu^{i z \frac{n}{2}} \cd a$ for all $z \in \B C$. Let now $m \in \B Z$ and $\eta \in \C A_m \ot_{\C A_0} \T{Dom}(D_0)$ be given. Using Proposition \ref{p:anazet} we compute as follows (see also Lemma \ref{l:bounded}):
  \begin{equation}\label{eq:modtwist}
  \begin{split}
  D_\Ga \be_i(a)(\eta) & = \Ga (1 \ot_{\Na} D_0) \Ga \be_i(a) (\eta) = \Ga (1 \ot_\Na D_0) a \Ga(\eta) \\
  & =  \Ga d_m(a) \Ga(\eta) + \Ga a (1 \ot_\Na D_0) \Ga (\eta) \\
  & = \mu^{m + \frac{n}{2}} d_m(a)(\eta) + \be_{-i}(a) D_\Ga(\eta) .
  \end{split}
  \end{equation}

 For each $m \in \B Z$, we know from Lemma \ref{l:deedel} that
\begin{equation}\label{eq:deedel}
\mu^{m + \frac{n}{2}} U_{n + m} d_m(a)(\eta) = \mu^{-n/2} \de(a) U_m(\eta) \q \T{for all } \eta \in G_m
\end{equation}
In particular, we get the following operator norm estimate
\[
\mu^{m + \frac{n}{2}} \| d_m(a) \|_\infty \leq \mu^{-n/2} \| \de(a) \|_\infty ,
\]
where the right hand side is independent of $m \in \B Z$. Since we also know that $d_m(a) : G_m \to G$ factorizes through $G_{n + m}$ we get a well-defined bounded operator $\de_\Ga(a) : G \to G$ satisfying that $\de_\Ga(a)(\eta) = \mu^{m + \frac{n}{2}} d_m(a)(\eta)$ whenever $\eta \in G_m$ for some $m \in \B Z$. Indeed, the operator norm of $\de_\Ga(a)$ can be estimated as follows:
\[
\| \de_\Ga(a) \|_\infty \leq \mu^{-n/2} \| \de(a) \|_\infty .
\]

Since we know from \eqref{eq:modtwist} that $\de_\Ga(a)(\eta) = ( D_\Ga \be_i(a) - \be_{-i}(a) D_\Ga)(\eta)$ for all $\eta$ belonging to the core $\C A \ot_{\C A_0} \T{Dom}(D_0)$ for the modular lift $D_\Ga$, we have now established that $(\C A, G, D_\Ga)$ is a twisted Lipschitz triple.

The identity in \eqref{eq:twistdelta} is an immediate consequence of \eqref{eq:deedel} and the fact that the restriction of $\de_\Ga(a)$ to $G_m$ agrees with $\mu^{m + \frac{n}{2}} d_m(a)$ for all $m \in \B Z$.  


Suppose now that $\ga_0 : H_0 \to H_0$ is a grading operator for the Lipschitz triple $(\C A_0,H_0,D_0)$. To establish that $1 \hot \ga_0 : G \to G$ is a grading operator for the twisted Lipschitz triple $(\C A,G,D_\Ga)$, the only non-trivial thing to prove is that $1 \hot \ga_0$ anti-commutes with the modular lift $D_\Ga$. To this end, we recall that our Hermitian $\de_0$-connection $\Na : \C A \to X \hot_\rho \B L(H_0)$ is formed out of Grassmann connections and it therefore follows from the discussions in Subsection \ref{ss:lifts} that the horizontal lift $1 \ot_\Na D_0$ anti-commutes with $1 \hot \ga_0$. Since the positive invertible unbounded operator $\Ga$ clearly commutes with $1 \hot \ga_0$ we conclude that the modular lift $D_\Ga$ indeed anti-commutes with $1 \hot \ga_0$.
\end{proof}

As described after Lemma \ref{l:sliplower}, the twisted Lipschitz triple $(\C A,G, D_\Ga)$ gives rise to the twisted Lipschitz algebra $\Lip_{\T{hor}}(A) := \Lip_{D_\Ga}^\be(A) \su A$ and we refer to this norm-dense unital $*$-subalgebra as the \emph{horizontal Lipschitz algebra}. Likewise, we apply the notation $L_{\T{hor}} : \Lip_{\T{hor}}(A) \to [0,\infty)$ for the slip-norm defined by 
\[
L_{\T{hor}}(a) := L_{D_\Ga}^\be(a) := \| \de_\Ga(a) \|_\infty .
\]
It is not hard to see that the modular lift $D_\Ga$ commutes with the unitary operator $V_s(\mu)$ for all $s \in \B R$. It therefore follows from Lemma \ref{l:sliplower} that the slip-norm $L_{\T{hor}}$ is lower semicontinuous.

In the next lemma, we are relating the slip-norm $L_{D_0}  : \Lip_{D_0}(A_0) \to [0,\infty)$ coming from the Lipschitz triple $(\C A_0, H_0,D_0)$ to the horizontal slip-norm $L_{\T{hor}} : \Lip_{\T{hor}}(A) \to [0,\infty)$. 

  \begin{lemma}\label{l:inequality}
If $a \in \Lip_{\T{hor}}(A) \cap A_0$, then it holds that $a \in \Lip_{D_0}(A_0)$ and we have the inequality $L_{D_0}(a) \leq L_{\T{hor}}(a)$.
  \end{lemma}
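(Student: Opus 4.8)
\emph{Proof proposal.} The plan is to exploit the fact that on the fixed point algebra the modular twist is trivial, and that the modular lift splits as an orthogonal direct sum over the spectral subspaces $G = \widehat{\bop}_{n\in\B Z} G_n$, so that restricting everything to the single summand $G_0 = A_0 \hot_\rho H_0$ recovers the original Lipschitz triple $(\C A \cap A_0, H_0, D_0)$ and the asserted inequality falls out of the trivial norm bound $\| T|_{G_0} \|_\infty \leq \| T \|_\infty$.

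First I would set up two identifications of $G_0$. The isometry $S_0 : A_0 \hot_\rho H_0 \to H$ of \eqref{eq:isometry} has range equal to $H_0$, because $\phi(b)$ restricts to $\rho(b)$ on $H_0$ for all $b \in A_0$ (Assumption \ref{a:exponential}$(1)$, extended from $\C A \cap A_0$ to all of $A_0$ by continuity) and $\rho$ is unital; hence $U_0 : G_0 \to H_0$ is a unitary isomorphism with $U_0(b \ot \xi) = \rho(b)\xi$ for $b \in A_0$ and $\xi \in H_0$. A computation on simple tensors then shows that for $a \in A_0$ the operator $\psi(a) \hot 1$ maps $G_0$ into $G_0$ and satisfies $(\psi(a)\hot 1)|_{G_0} = U_0^* \rho(a) U_0$. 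Moreover $D_\Ga$ is by construction the closure of the orthogonal direct sum $\bop_{n\in\B Z} \Ga^2(1\ot_{\Na_n} D_0)$ relative to the decomposition $G = \widehat{\bop}_n G_n$, so $G_0$ reduces the selfadjoint operator $D_\Ga$ and $D_\Ga|_{G_0}$ is selfadjoint on $G_0$. On $G_0$ one has $\Ga^2 = 1$, and the finite frame for $\C A \cap A_0$ consists of the single element $1$, so $\Na_0(1) = 1 \ot \de_0(1) = 0$ and hence $D_\Ga(1 \ot \xi) = 1 \ot D_0 \xi$ for every $\xi \in \T{Dom}(D_0)$ (equivalently, apply Lemma \ref{l:modular} with $n = 0$ and $a = 1$). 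Therefore $U_0 (D_\Ga|_{G_0}) U_0^*$ is a selfadjoint operator on $H_0$ extending the selfadjoint operator $D_0$, whence $U_0 (D_\Ga|_{G_0}) U_0^* = D_0$ and $U_0(\T{Dom}(D_\Ga)\cap G_0) = \T{Dom}(D_0)$.

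With this in hand the proof concludes quickly. Let $a \in \Lip_{\T{hor}}(A) \cap A_0$. By \eqref{eq:beta} we have $\be_z(\psi(a)\hot 1) = \psi(a)\hot 1$ for all $z \in \B C$, so Definition \ref{d:twistop} of a twisted Lipschitz operator reduces here to the statement that $\psi(a)\hot 1$ preserves $\T{Dom}(D_\Ga)$ and $[D_\Ga, \psi(a)\hot 1]$ extends to the bounded operator $\de_\Ga(a)$, with $L_{\T{hor}}(a) = \| \de_\Ga(a) \|_\infty$. Since $a$ and $a^*$ lie in $A_0$, the subspace $G_0$ reduces $\psi(a)\hot 1$ and, together with $D_\Ga$, also the commutator $[D_\Ga, \psi(a)\hot 1]$ and hence its bounded extension $\de_\Ga(a)$. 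Restricting to $G_0$ and transporting by $U_0$ via the identities of the previous paragraph, we obtain that $\rho(a)$ preserves $\T{Dom}(D_0)$ and that the closure of $[D_0, \rho(a)]$ equals $U_0(\de_\Ga(a)|_{G_0})U_0^*$. Consequently $a \in \Lip_{D_0}(A_0)$ and
\[
L_{D_0}(a) = \big\| U_0\big( \de_\Ga(a)|_{G_0} \big) U_0^* \big\|_\infty = \big\| \de_\Ga(a)|_{G_0} \big\|_\infty \leq \| \de_\Ga(a) \|_\infty = L_{\T{hor}}(a) .
\]

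The only point needing care — and it is bookkeeping rather than a genuine obstacle — is the claim that the restriction of $D_\Ga$ to the non-closed subspace $\T{Dom}(D_\Ga) \cap G_0$ is exactly $D_0$ after transport by $U_0$, and not merely some selfadjoint extension of $D_0|_{\T{Dom}(D_0)}$ on a strictly larger domain. This is settled precisely because $G_0$ is a reducing subspace of the selfadjoint operator $D_\Ga$, so the restriction is automatically selfadjoint on $G_0$, and a selfadjoint operator admits no proper symmetric extension.
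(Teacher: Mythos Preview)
Your proof is correct and follows essentially the same strategy as the paper: both identify the summand $G_0$ with $H_0$ via $U_0$, observe that $\be_{\pm i}(a)=a$ for $a\in A_0$, and compare $D_\Ga$ restricted to $G_0$ with $D_0$ via Lemma~\ref{l:modular}. The one genuine difference is the tool used to conclude. The paper computes the sesquilinear form
\[
\inn{D_0\eta,\rho(a)\xi}-\inn{\eta,\rho(a)D_0\xi}=\inn{\eta,U\de_\Ga(a)U^*\xi}
\]
for $\xi,\eta\in\T{Dom}(D_0)$ and then invokes the weak characterization of Lipschitz operators from Theorem~\ref{t:weak}(3), which hands back both the domain preservation and the norm bound at once. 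You instead argue directly: since $G_0$ reduces $D_\Ga$ and $\psi(a)\hot 1$, domain preservation for $\rho(a)$ follows from domain preservation for $\be_i(a)=a$ with respect to $D_\Ga$, and the norm bound is the trivial restriction inequality. Your route is slightly more elementary in that it avoids Theorem~\ref{t:weak}, at the cost of the extra domain bookkeeping you flag at the end; the paper's route is shorter because the weak characterization absorbs that bookkeeping.
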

  \begin{proof}
    Let $a \in \Lip_{\T{hor}}(A) \cap A_0$ and $\xi,\eta \in \T{Dom}(D_0)$ be given. Suppressing the injective unital $*$-homomorphism $\psi \hot 1 : A \to \B L(G)$ we first observe that $\be_i(a) = a = \be_{-i}(a)$. If we moreover suppress the inclusion of $H_0$ into $K$ we may apply Lemma \ref{l:modular} to compute as follows:
    \[
    \begin{split}
      & \inn{D_0 \eta, \rho(a) \xi} - \inn{ \eta, \rho(a) D_0 \xi}
    = \inn{U D_\Ga U^* \eta, \rho(a) \xi} - \inn{\eta, \rho(a) U D_\Ga U^* \xi} \\
    & \q = \inn{ D_\Ga U^* \eta, \be_i(a) U^* \xi} - \inn{\eta, U \be_{-i}(a) D_\Ga U^* \xi} 
    = \inn{\eta, U \de_\Ga(a) U^* \xi} .
    \end{split}
    \]
An application of Theorem \ref{t:weak} now proves the lemma.
 \end{proof}

  In the final lemma of this section we study the invariance properties of the horizontal slip-norm $L_{\T{hor}} : \Lip_{\T{hor}}(A) \to [0,\infty)$ with respect to the circle action $\si : S^1 \ti A \to A$.
  
  \begin{lemma}\label{l:invariance}
If $a \in \Lip_{\T{hor}}(A)$, then $\si_\la(a) \in \Lip_{\T{hor}}(A)$ and $L_{\T{hor}}\big( \si_\la(a)\big) = L_{\T{hor}}(a)$ for all $\la \in S^1$.
  \end{lemma}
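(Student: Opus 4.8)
The plan is to use that, writing $\la = e^{it}$, the strongly continuous group $\{U_t = e^{it(N \hot 1)}\}_{t \in \B R}$ from Section~\ref{s:vertical} implements the circle action via~\eqref{eq:implement}, so that $(\psi \hot 1)(\si_\la(a)) = U_t\,(\psi \hot 1)(a)\,U_t^*$, and then to observe that conjugation by $U_t$ is compatible with everything entering the definition of $L_{\T{hor}}$. Two preliminary facts do the work. First, $U_t$ and $V_s(\mu)$ are both continuous functions of the selfadjoint operator $N \hot 1$ and therefore commute, which gives $\be_s(U_t T U_t^*) = U_t \be_s(T) U_t^*$ for all $T \in \B L(G)$ and $s \in \B R$. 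Second, I would show that $U_t$ commutes with the modular lift: the unitary $U_t$ acts as the scalar $e^{itn}$ on each summand $G_n$ of $G \cong \widehat{\bop}_{n \in \B Z} G_n$, whereas $D_\Ga = \T{cl}(\Ga^2(1 \ot_\Na D_0))$ is block diagonal for this decomposition (since $\Ga^2$ is the scalar $\mu^n$ on $G_n$ and $1 \ot_\Na D_0$ acts block-diagonally as the $(1 \ot_{\Na_n} D_0)$'s, see the discussion around~\eqref{eq:direct}); hence $U_t$ and $U_t^*$ preserve the core $\C A \ot_{\C A_0} \T{Dom}(D_0)$ and $U_t$ commutes with $\Ga^2(1 \ot_\Na D_0)$ there, so that passing to closures, and using that conjugation by a unitary is a homeomorphism of operator graphs, one gets $U_t D_\Ga U_t^* = D_\Ga$. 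In particular $U_t$ and $U_t^*$ preserve $\T{Dom}(D_\Ga)$.

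Granting this, the verification is a routine conjugation argument. From the first fact, if $f_a \colon I_1 \to \B L(G)$ witnesses analyticity of $a$ (resp.\ $a^*$) of order $1$, then $z \mapsto U_t f_a(z) U_t^*$ witnesses the same for $\si_\la(a)$ (resp.\ $\si_\la(a)^* = \si_\la(a^*)$), so $\si_\la(a), \si_\la(a)^* \in \Ana_\be^1(G)$ with $\be_z(\si_\la(a)) = U_t \be_z(a) U_t^*$ for $z \in I_1$. Since $\be_i(a)$ and the unitaries $U_t, U_t^*$ all preserve $\T{Dom}(D_\Ga)$, so does $\be_i(\si_\la(a)) = U_t \be_i(a) U_t^*$. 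Finally, using $U_t D_\Ga = D_\Ga U_t$ on $\T{Dom}(D_\Ga)$, for $\eta \in \T{Dom}(D_\Ga)$ one computes
\[
\big( D_\Ga \be_i(\si_\la(a)) - \be_{-i}(\si_\la(a)) D_\Ga \big)(\eta) = U_t \big( D_\Ga \be_i(a) - \be_{-i}(a) D_\Ga \big)(U_t^* \eta) = U_t\, \de_\Ga(a)\, U_t^*(\eta) ,
\]
so the twisted commutator of $\si_\la(a)$ extends to the bounded operator $U_t \de_\Ga(a) U_t^*$. Thus $\si_\la(a) \in \Lip_{D_\Ga}^\be(G)$, i.e.\ $\si_\la(a) \in \Lip_{\T{hor}}(A)$, with $\de_\Ga(\si_\la(a)) = U_t \de_\Ga(a) U_t^*$; taking operator norms and using that $U_t$ is unitary yields $L_{\T{hor}}(\si_\la(a)) = \| \de_\Ga(\si_\la(a)) \|_\infty = \| \de_\Ga(a) \|_\infty = L_{\T{hor}}(a)$.

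I expect the only delicate point to be the commutation relation $U_t D_\Ga U_t^* = D_\Ga$: one has to first exhibit a common core for $U_t$ and $D_\Ga$ on which they visibly commute---for which the splitting $G \cong \widehat{\bop}_n G_n$ together with the explicit block-diagonal form of the horizontal lift recalled after~\eqref{eq:direct} is exactly what is needed---before invoking that taking operator closures commutes with unitary conjugation. Everything else is bookkeeping with the analytic extensions $\be_z$ and their behaviour under conjugation by $U_t$.
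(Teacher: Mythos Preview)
Your proposal is correct and follows essentially the same approach as the paper: both proofs hinge on the observations that $U_t = e^{it(N \hot 1)}$ implements $\si_{e^{it}}$ by conjugation, commutes with each $V_s(\mu)$, and commutes with the modular lift $D_\Ga$, from which the identity $\de_\Ga(\si_{e^{it}}(a)) = U_t\,\de_\Ga(a)\,U_t^*$ and the equality of slip-norms follow. You have simply spelled out in full the verifications (analyticity transfer, domain preservation, the block-diagonal reason for $U_t D_\Ga = D_\Ga U_t$) that the paper records in a single sentence.
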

  \begin{proof}
    Let $t \in \B R$. As in Section \ref{s:vertical} we put $U_t := e^{it(N \hot 1)} : G \to G$ and record that $U_t(\ze) = e^{itn} \cd \ze$ whenever $\ze \in G_n$ for some $n \in \B Z$. It then holds that $U_t$ commutes with the modular lift $D_\Ga$ and with $V_s(\mu)$ for all $s \in \B R$. Since $U_t$ moreover implements the $*$-automorphism $\si_{e^{it}} : A \to A$ via conjugation, see \eqref{eq:implement}, we obtain the result of the lemma. Notice in this respect that 
\[
\de_\Ga( \si_{e^{it}}(a)) = U_t \de_\Ga(a) U_t^* \q \T{for all } a \in \Lip_{\T{hor}}(A) . \qedhere
\]
\end{proof}

  \section{Second main theorem}\label{s:second}
We are now ready to prove our second main theorem establishing that the vertical and the horizontal geometric input from Section \ref{s:vertical} and Section \ref{s:horizontal} can be combined into a compact quantum metric space structure on $A$. This theorem works under the extra condition that the input for the horizontal part of the geometry turns the fixed point algebra $A_0$ into a compact quantum metric space. 

Let us consider a strongly continuous action $\si$ of the unit circle on a unital $C^*$-algebra $A$ together with a norm-dense unital $*$-subalgebra $\C A \su A$. We suppose that $P_0(\C A) \su \C A$ and that $(\C A \cap A_0, H_0, D_0)$ is a Lipschitz triple on the fixed point algebra $A_0$. As usual, denote the corresponding injective unital $*$-homomorphism and the $*$-derivation by $\rho : A_0 \to \B L(H_0)$ and $\de_0 : \C A \cap A_0 \to \B L(H_0)$, respectively.

Suppose that the conditions in Assumption \ref{a:exponential} are satisfied and fix all of the data appearing in there. Defining the separable Hilbert space $G := X \hot_\rho H_0$, we recall from Theorem \ref{t:twistlip} that the modular lift $D_\Ga : \T{Dom}(D_\Ga) \to G$ fits in a twisted Lipschitz triple $(\C A, G, D_\Ga)$, where the twist comes from the strongly continuous one-parameter group of unitaries $\{ V_s(\mu) \}_{s \in \B R}$ defined in \eqref{eq:grpuni}.

Recall from Section \ref{s:vertical} and Subsection \ref{ss:modular} that the notations
\[
\Lip_{\T{ver}}(A) = \Lip_{S^1}(A) = \Lip_{N \hot 1}(A) \, \, \T{ and } \, \, \, \Lip_{\T{hor}}(A) = \Lip_{D_\Ga}^\be(A) .
\]
refer to the vertical Lipschitz algebra and the horizontal Lipschitz algebras, respectively. See also Theorem \ref{t:vertical}. 

\begin{dfn}
We define the \emph{total Lipschitz algebra} $\Lip_{\T{tot}}(A) := \Lip_{\T{ver}}(A) \cap \Lip_{\T{hor}}(A)$ as the intersection of the vertical and the horizontal Lipschitz algebras. 
\end{dfn}

The total Lipschitz algebra $\Lip_{\T{tot}}(A)$ is a norm-dense unital $*$-subalgebra of $A$ and it comes equipped with the $*$-derivation
\[
\de_{\T{ver}} : \Lip_{\T{tot}}(A) \to \B L(G) \q \de_{\T{ver}}(a) := \T{cl}\big( [ N \hot 1, a] \big)
\]
and the twisted $*$-derivation
\[
\de_{\T{hor}} := \de_\Ga : \Lip_{\T{tot}}(A) \to \B L(G) \q \de_{\T{hor}}(a) := \T{cl}\big( D_\Ga \be_i(a) - \be_{-i}(a) D_\Ga \big) .
\]
The associated lower semicontinuous slip-norms on $\Lip_{\T{tot}}(A)$ are denoted by $L_{\T{ver}}$ and $L_{\T{hor}}$, respectively.

Let us explain how to assemble our data into an interesting slip-norm on the total Lipschitz algebra $\Lip_{\T{tot}}(A)$. There are two situations to take care of depending on whether the Lipschitz triple $(\C A \cap A_0,H_0,D_0)$ is graded or ungraded.

In the graded case, we denote the grading operator by $\ga_0 : H_0 \to H_0$ and define the slip-norm
\begin{equation}\label{eq:totgra}
L_{\T{tot}} : \Lip_{\T{tot}}(A) \to [0,\infty) \q L_{\T{tot}}(a) := \big\| (1 \hot \ga_0)\de_{\T{ver}}(a) + \de_{\T{hor}}(a) \big\|_\infty .
\end{equation}
In the ungraded case, the slip-norm $L_{\T{tot}} : \Lip_{\T{tot}}(A) \to [0,\infty)$ is defined by the formula
\begin{equation}\label{eq:totungra}
L_{\T{tot}}(a) := \max\big\{ \|\de_{\T{ver}}(a) + i \de_{\T{hor}}(a) \|_\infty, \| \de_{\T{ver}}(a) - i \de_{\T{hor}}(a) \|_\infty \big\} .
\end{equation}

As we shall see in Section \ref{s:unbKK}, the construction of the slip-norm $L_{\T{tot}}$ is heavily inspired by considerations coming from unbounded $KK$-theory.

%

\begin{thm}\label{t:mainII}
  Suppose that the conditions in Assumption \ref{a:exponential} are satisfied. Let $\C L \su \Lip_{\T{tot}}(A)$ be a unital $*$-subalgebra with $\si_\la(\C L) \su \C L$ for all $\la \in S^1$ and satisfying that
  \[
\C A \su \C L \, \, \mbox{ and } \, \, \, P_0(\C L) \su \C L .
\]
If the Lipschitz triple $(\C L \cap A_0, H_0, D_0)$ is metric, then it holds that $(\C L,L_{\T{tot}})$ is a compact quantum metric space.
\end{thm}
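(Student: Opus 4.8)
The natural strategy is to verify the hypotheses of Li's theorem for circle actions (Theorem~\ref{t:licircle}) applied to the norm-dense unital $*$-subalgebra $\C L \su A$ equipped with the slip-norm $L_{\T{tot}}$. First I would observe that $L_{\T{tot}}$ is a lower semicontinuous slip-norm: lower semicontinuity follows because $L_{\T{ver}}$ is lower semicontinuous (Theorem~\ref{t:vertical}) and $L_{\T{hor}}$ is lower semicontinuous (established after Lemma~\ref{l:invariance} via Lemma~\ref{l:sliplower}, noting $D_\Ga$ commutes with the $V_s(\mu)$), and in both the graded and ungraded formulas $L_{\T{tot}}$ is a maximum / operator-norm combination of such seminorms, hence still lower semicontinuous; the slip-norm axioms $L_{\T{tot}}(1)=0$ and $L_{\T{tot}}(a^*)=L_{\T{tot}}(a)$ are routine from the (twisted) $*$-derivation properties together with the fact that $1 \hot \ga_0$ is a grading (in the graded case) and the symmetric combination $\pm i$ in the ungraded case. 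Next, condition~(1) of Theorem~\ref{t:licircle}: by Theorem~\ref{t:vertical} we have $L_\ell(a) = L_{\T{ver}}(a)$, and one reads off directly from \eqref{eq:totgra} or \eqref{eq:totungra} that $L_{\T{ver}}(a) \leq L_{\T{tot}}(a)$, so we may take $C = 1$ there (using $\C L \su \Lip_{\T{ver}}(A) = \Lip_{S^1}(A)$).

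For condition~(2) of Theorem~\ref{t:licircle}: the inclusions $\si_\la(\C L) \su \C L$ and $P_0(\C L)\su\C L$ are assumed, and $P_n(\C L)\su\C L$ for all $n\in\B Z$ follows by the same induction-with-frames argument as in Lemma~\ref{l:convenient}, using Assumption~\ref{a:exponential}~(3) to express $P_n$ via multiplication by frame elements in $\C A \cap A_1 \su \C A \su \C L$. The invariance $L_{\T{tot}}(\si_\la(a)) = L_{\T{tot}}(a)$ is the heart of condition~(2): here I would invoke the conjugation relations $\de_{\T{ver}}(\si_{e^{it}}(a)) = U_t \de_{\T{ver}}(a) U_t^*$ and $\de_{\T{hor}}(\si_{e^{it}}(a)) = U_t \de_{\T{hor}}(a) U_t^*$ (the latter from Lemma~\ref{l:invariance}), together with the fact that $U_t = e^{it(N\hot 1)}$ commutes with the grading $1\hot\ga_0$ and with $V_s(\mu)$, so conjugation by the unitary $U_t$ leaves both defining formulas \eqref{eq:totgra}, \eqref{eq:totungra} invariant. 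For condition~(4), we must show $(\C L \cap A_0, L_{\T{tot}})$ is a compact quantum metric space; on $A_0$ one has $\be_i = \id$ so $\de_{\T{hor}}$ restricts to an ordinary commutator, and by Lemma~\ref{l:inequality}, $L_{D_0}(a) \leq L_{\T{hor}}(a) \leq L_{\T{tot}}(a)$ for $a \in \C L\cap A_0$ (and conversely one checks $L_{\T{tot}}$ restricted to $A_0$ is comparable to $L_{D_0}$, since $L_{\T{ver}}$ vanishes on $A_0$), so up to Lipschitz equivalence of slip-norms (Theorem~\ref{t:charac}) this is exactly the hypothesis that $(\C L \cap A_0, H_0, D_0)$ is metric.

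The remaining and most substantial point is condition~(3) of Theorem~\ref{t:licircle}: for each $n \in \B Z\sem\{0\}$ the set $\{ a \in \C L\cap A_n \mid L_{\T{tot}}(a)\leq 1\}$ must be totally bounded. For this I would apply Theorem~\ref{t:finproj} with $A_0$ in the role of the base, $X := A_n$ as a Hilbert $C^*$-module over $A_0$, $\C X := \C L \cap A_n = P_n(\C L)$ (which has a finite frame $\{\ze_{(n,j)}\}$ in $\C A \cap A_n \su \C L \cap A_n$ by the Lemma~\ref{l:convenient}-type argument), the slip-norm $L := L_{\T{tot}}|_{\C L\cap A_n}$, and $L_0 := L_{\T{tot}}|_{\C L\cap A_0}$ which is a compact-quantum-metric slip-norm by the previous paragraph. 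Condition~(1) of Theorem~\ref{t:finproj} is immediate from \eqref{eq:supspecsub}: $\|x\| = L_\ell(x)/|n| = L_{\T{ver}}(x)/|n| \leq L_{\T{tot}}(x)$ for $x \in A_n$, $n\neq 0$. Condition~(2) of Theorem~\ref{t:finproj} — the estimate $L_{\T{tot}}(\inn{\ze_{(n,j)},x}) = L_{\T{tot}}(\ze_{(n,j)}^* x) \leq C_0(L_{\T{tot}}(x) + \|x\|)$ for $x \in \C L\cap A_n$ — is where the twisted Leibniz structure does its work: note $\ze_{(n,j)}^* x \in \C L \cap A_0$, so on the left one has only the ordinary commutator $\de_{\T{hor}}$ and the ordinary $\de_{\T{ver}}$; expanding $\de_{\T{hor}}(\ze_{(n,j)}^* x) = \de_{\T{hor}}(\ze_{(n,j)}^*)\be_i(x) + \be_{-i}(\ze_{(n,j)}^*)\de_{\T{hor}}(x)$ (twisted Leibniz, \eqref{eq:twistleib}) and the analogous Leibniz expansion for $\de_{\T{ver}}$, and using that on $A_n$ the automorphism $\be_{\pm i}$ is just scalar multiplication by $\mu^{\mp n/2}$ (see \eqref{eq:beta}) with $\|\be_{\pm i}(x)\| = \|x\|$, one bounds everything by a constant (depending on $n$ and on $L_{\T{ver}}(\ze_{(n,j)})$, $L_{\T{hor}}(\ze_{(n,j)})$, $\|\ze_{(n,j)}\|$, all finite) times $\big(L_{\T{hor}}(x) + L_{\T{ver}}(x) + \|x\|\big) \leq C_0'(L_{\T{tot}}(x) + \|x\|)$. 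Theorem~\ref{t:finproj} then gives total boundedness of $\{x \in \C L\cap A_n \mid L_{\T{tot}}(x)\leq 1\}$, which is exactly condition~(3). Having verified (1)--(4) of Theorem~\ref{t:licircle}, we conclude that $(\C L, L_{\T{tot}})$ is a compact quantum metric space. The main obstacle, as indicated, is managing the mismatch between the (straight) commutator $\de_{\T{ver}}$ and the twisted commutator $\de_{\T{hor}}$ when expanding a product of spectral-subspace elements in conditions (3) and (4) — but this is precisely tamed by the scalar nature of $\be_{\pm i}$ on each $A_n$ and the frame identities of Assumption~\ref{a:exponential}, so no exponential growth survives once one has passed to the modular lift.
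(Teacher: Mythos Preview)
Your proof is correct and follows essentially the same route as the paper, which packages the reduction to Li's theorem plus Theorem~\ref{t:finproj} as the intermediate Theorem~\ref{t:main} (applied with $L = L_{\T{tot}}$ and $L_\be = L_{\T{hor}}$); you have simply inlined that argument. Two small imprecisions are worth flagging. First, lower semicontinuity of $L_{\T{tot}}$ is not quite a formal consequence of that of $L_{\T{ver}}$ and $L_{\T{hor}}$, since $L_{\T{tot}}$ is the operator norm of a \emph{sum} of derivations, not a function of the two seminorms separately; the paper sidesteps this by first establishing $L_\ell, L_{\T{hor}} \leq L_{\T{tot}}$ (via the grading trick $L_{\T{tot}}(a) = \|(1\hot\ga_0)\de_{\T{ver}}(a) - \de_{\T{hor}}(a)\|_\infty$ in the graded case) and then replacing $L_{\T{tot}}$ by $L_\ell + L_{\T{hor}}$, which is manifestly lower semicontinuous and suffices by Theorem~\ref{t:charac}. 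Second, for $x \in A_n$ one has $\|\be_{\pm i}(x)\| = \mu^{\mp n/2}\|x\|$ rather than $\|x\|$, though this is harmless since the factor is absorbed into the $n$-dependent constant. Your ``converse comparability'' of $L_{\T{tot}}|_{A_0}$ with $L_{D_0}$ is also not needed (and not proved in the paper): the one inequality $L_{D_0} \leq L_{\T{hor}} = L_{\T{tot}}|_{A_0}$ from Lemma~\ref{l:inequality} already gives the inclusion of unit balls required for Theorem~\ref{t:charac}.
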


Before embarking on the proof, notice that the conditions of the theorem together with Lemma \ref{l:inequality} imply that the norm-dense unital $*$-subalgebra $P_0(\C L) = \C L \cap A_0 \su A_0$ is contained in $\Lip_{D_0}(A_0)$. The triple $(\C L \cap A_0, H_0,D_0)$ is thus indeed a Lipschitz triple.


\begin{proof}
  We are going to apply Theorem \ref{t:main} for the two slip-norms $L_{\T{tot}}$ and $L_{\T{hor}} : \C L \to [0,\infty)$ together with the algebra homomorphism $\be_i := \be_i(\mu) : \C L \to \B L(G)$ coming from the family $\{ V_s(\mu) \}_{s \in \B R}$. Let us therefore go through the six conditions appearing in the statement of Theorem \ref{t:main}.

  We know by assumption on $\C L$ that condition $(1)$ and the first half of condition $(2)$ are satisfied. Moreover, Assumption \ref{a:exponential} $(3)$ and the fact that $\C A \su \C L$ ensure that the second half of condition $(2)$ holds as well. 

   Our next step is to show that condition $(3)$ is satisfied for the constant $C = 1$. To this end, we first observe that the slip-norms $L_\ell$ and $L_{\T{ver}}$ agree on $\Lip_{\T{tot}}(A) \su \Lip_{\T{ver}}(A)$ by Theorem \ref{t:vertical}. This identity of slip-norms immediately entails our claim in the ungraded case. In the graded case we also need to observe the identities 
      \[
      \begin{split}
      L_{\T{tot}}(a) & = \big\| (1 \hot \ga_0) \de_{\T{ver}}(a) + \de_{\T{hor}}(a) \big\|_\infty
      = \big\| \de_{\T{ver}}(a) (1 \hot \ga_0) + (1 \hot \ga_0) \de_{\T{hor}}(a) (1 \hot \ga_0) \big\|_\infty \\
      & = \big\| (1 \hot \ga_0) \de_{\T{ver}}(a) - \de_{\T{hor}}(a) \big\|_\infty,
      \end{split}
      \]
      which are valid for all $a \in \Lip_{\T{tot}}(A)$. Remark here that the selfadjoint unitary operator $1 \hot \ga_0$ commutes with $\de_{\T{ver}}(a)$ and anti-commutes with $\de_{\T{hor}}(a)$, see Theorem \ref{t:twistlip}. 

      The fact that $L_{\T{hor}} : \C L \to [0,\infty)$ satisfies the twisted Leibniz inequality in condition $(4)$ follows since $(\C L, G,D_\Ga)$ is a twisted Lipschitz triple with respect to the family $\{ V_s(\mu)\}_{s \in \B R}$, see Theorem \ref{t:twistlip} and the general discussion in Subsection \ref{ss:twistedlip}. Since $\be_i(a) = a$ for all $a \in A_0$ (see \eqref{eq:beta}) we also clearly have that $\be_i : \C L \cap A_0 \to \B L(G)$ extends to a unital $*$-homomorphism (namely $\psi \hot 1 : A_0 \to \B L(G)$). 

Regarding condition $(5)$, we already know from Lemma \ref{l:sliplower} that $L_{\T{hor}}$ is lower semicontinuous (recalling that the modular lift $D_\Ga$ commutes with $V_s(\mu)$ for all $s \in \B R$) and the remaining invariance property was already proved in Lemma \ref{l:invariance}.

To verify condition $(6)$, recall that $(\C L \cap A_0, L_{D_0})$ is a compact quantum metric space by assumption and the inequality in Lemma \ref{l:inequality} therefore entails that $(\C L \cap A_0, L_{\T{hor}})$ is a compact quantum metric space as well, see Theorem \ref{t:charac}.
\end{proof}

Let us single out two important corollaries of our second main theorem. The result of the first corollary follows from Theorem \ref{t:mainII} in combination with Lemma \ref{l:algebra}. Recall also that the inclusion $\C A \su \Lip_{\T{tot}}(A)$ is a consequence of Theorem \ref{t:vertical} and Theorem \ref{t:twistlip}. The proof of the second corollary requires a bit more work.

\begin{cor}\label{c:coordcqms}
Suppose that the conditions in Assumption \ref{a:exponential} are satisfied. If the Lipschitz triple $(\C A \cap A_0, H_0, D_0)$ is metric, then it holds that $(\C A,L_{\T{tot}})$ is a compact quantum metric space.
\end{cor}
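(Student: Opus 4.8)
The plan is to obtain this corollary as a direct specialization of Theorem \ref{t:mainII}, taking $\C L := \C A$ and checking that the structural hypotheses imposed on $\C L$ in that theorem are automatic here. First I would note that, under Assumption \ref{a:exponential}, $\C A$ is a norm-dense unital $*$-subalgebra of $A$ generated as a $*$-algebra by $\C A \cap A_1$, so Lemma \ref{l:algebra} applies: parts $(1)$ and $(2)$ give precisely $\si_\la(\C A) \su \C A$ for all $\la \in S^1$ and $P_0(\C A) = \C A \cap A_0 \su \C A$, which are the two stability conditions required of $\C L$ in Theorem \ref{t:mainII}, while the inclusion $\C A \su \C L$ is trivial since $\C L = \C A$.

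Next I would verify $\C A \su \Lip_{\T{tot}}(A) = \Lip_{\T{ver}}(A) \cap \Lip_{\T{hor}}(A)$. For the vertical factor, Lemma \ref{l:algebra}$(1)$ gives $\C A \su \Lip_{S^1}(A)$ and Theorem \ref{t:vertical} identifies $\Lip_{S^1}(A) = \Lip_{\T{ver}}(A)$. For the horizontal factor, Theorem \ref{t:twistlip} asserts that $(\C A, G, D_\Ga)$ is a twisted Lipschitz triple on $A$ with respect to $\{ V_s(\mu)\}_{s \in \B R}$, which by Definition \ref{d:twistlip} means exactly $\C A \su \Lip_{D_\Ga}^\be(A) = \Lip_{\T{hor}}(A)$. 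Hence $\C A \su \Lip_{\T{tot}}(A)$, and all hypotheses on $\C L$ in Theorem \ref{t:mainII} are in place.

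Finally, the metricity requirement of Theorem \ref{t:mainII} asks that $(\C L \cap A_0, H_0, D_0)$ be metric; since $\C L \cap A_0 = P_0(\C A) = \C A \cap A_0$, this is literally the standing hypothesis of the corollary. Applying Theorem \ref{t:mainII} with $\C L = \C A$ then yields that $(\C A, L_{\T{tot}})$ is a compact quantum metric space, as claimed. In this argument all the genuine analytic content — that $\C A$ sits inside the horizontal Lipschitz algebra, that $L_{\T{hor}}$ is lower semicontinuous, $\si$-invariant and satisfies the twisted Leibniz inequality, and that it dominates $L_{D_0}$ on the fixed-point algebra — has already been discharged in Theorem \ref{t:twistlip}, Lemma \ref{l:inequality} and Lemma \ref{l:invariance}, so I do not expect any real obstacle: the proof is a short assembly of Lemma \ref{l:algebra}, Theorem \ref{t:vertical}, Theorem \ref{t:twistlip} and Theorem \ref{t:mainII}.
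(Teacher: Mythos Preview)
Your proposal is correct and follows essentially the same approach as the paper, which states that the corollary follows from Theorem \ref{t:mainII} in combination with Lemma \ref{l:algebra}, noting that $\C A \su \Lip_{\T{tot}}(A)$ is a consequence of Theorem \ref{t:vertical} and Theorem \ref{t:twistlip}. Your write-up simply makes these verifications explicit.
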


\begin{cor}\label{c:lipcqms}
Suppose that the conditions in Assumption \ref{a:exponential} are satisfied. If the Lipschitz triple $( \Lip_{D_0}(A_0), H_0, D_0)$ is metric, then it holds that $( \Lip_{\T{tot}}(A),L_{\T{tot}})$ is a compact quantum metric space.
\end{cor}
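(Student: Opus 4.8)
The plan is to deduce Corollary \ref{c:lipcqms} directly from Theorem \ref{t:mainII}, taking $\C L := \Lip_{\T{tot}}(A)$. Under this choice the task splits into checking the four structural requirements that Theorem \ref{t:mainII} imposes on $\C L$ — that $\C L$ is a unital $*$-subalgebra of $\Lip_{\T{tot}}(A)$ with $\si_\la(\C L) \su \C L$ for all $\la \in S^1$, that $\C A \su \C L$, and that $P_0(\C L) \su \C L$ — together with the hypothesis that the Lipschitz triple $(\C L \cap A_0, H_0, D_0)$ be metric. The conclusion of the theorem is then exactly the statement to be proved.

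Three of the structural requirements are essentially formal. By its very definition $\Lip_{\T{tot}}(A)$ is a norm-dense unital $*$-subalgebra of itself. For $\C A \su \C L$ I would combine Theorem \ref{t:vertical} (which, via Lemma \ref{l:algebra}, gives $\C A \su \Lip_{S^1}(A) = \Lip_{\T{ver}}(A)$) with Theorem \ref{t:twistlip} (which says $(\C A,G,D_\Ga)$ is a twisted Lipschitz triple, hence $\C A \su \Lip_{D_\Ga}^\be(A) = \Lip_{\T{hor}}(A)$). For the circle invariance I would treat the two intersectands of $\Lip_{\T{tot}}(A) = \Lip_{\T{ver}}(A) \cap \Lip_{\T{hor}}(A)$ separately: $\si_\la\big(\Lip_{\T{ver}}(A)\big) = \si_\la\big(\Lip_{S^1}(A)\big) \su \Lip_{S^1}(A)$ was recorded in Subsection \ref{ss:li}, while $\si_\la\big(\Lip_{\T{hor}}(A)\big) \su \Lip_{\T{hor}}(A)$ is precisely Lemma \ref{l:invariance}.

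Next I would dispose of the metric requirement. Since each spectral subspace $A_n$ lies in $\Lip_{S^1}(A) = \Lip_{\T{ver}}(A)$, we have $\C L \cap A_0 = \Lip_{\T{hor}}(A) \cap A_0$; this is a norm-dense unital $*$-subalgebra of $A_0$ because it contains $\C A \cap A_0 = P_0(\C A)$, which is dense in $A_0 = P_0(A)$, and by Lemma \ref{l:inequality} it is contained in $\Lip_{D_0}(A_0)$ with $L_{D_0} \leq L_{\T{hor}}$ there. Since $(\Lip_{D_0}(A_0), L_{D_0})$ is a compact quantum metric space by hypothesis, Theorem \ref{t:charac} makes the set $\big\{ [x] \mid x \in \Lip_{D_0}(A_0), \, L_{D_0}(x) \leq 1 \big\}$ totally bounded; passing to the sub-operator-system $\C L \cap A_0$ (whose quotient norm is the restriction) preserves total boundedness, so Theorem \ref{t:charac} applied a second time shows that $(\C L \cap A_0, L_{D_0})$ is a compact quantum metric space, which is exactly the assertion that $(\C L \cap A_0, H_0, D_0)$ is metric.

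The one genuinely substantial step, and the one I expect to be the main obstacle, is the invariance $P_0(\C L) \su \C L$. The vertical half is immediate, since $P_0(a) \in A_0 \su \Lip_{\T{ver}}(A)$ for every $a \in A$, so everything reduces to $P_0\big(\Lip_{\T{hor}}(A)\big) \su \Lip_{\T{hor}}(A)$, which I would establish by a circle-averaging argument. Given $a \in \Lip_{\T{hor}}(A)$, write $P_0(a) = \frac{1}{2\pi}\int_0^{2\pi} \si_{e^{it}}(a)\, dt$ as a norm-convergent Riemann integral; by Lemma \ref{l:invariance} each $\si_{e^{it}}(a)$ lies in $\Lip_{\T{hor}}(A)$ with $L_{\T{hor}}\big(\si_{e^{it}}(a)\big) = L_{\T{hor}}(a)$. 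Because $D_\Ga$ commutes with the unitaries $V_s(\mu)$ from \eqref{eq:grpuni}, the proof of Lemma \ref{l:sliplower} exhibits $L_{\T{hor}}$ as a supremum of continuous functions of the underlying algebra element, hence as the restriction of a lower semicontinuous, subadditive, absolutely homogeneous function $\wit{L} : A \to [0,\infty]$; subadditivity and homogeneity bound $\wit{L}$ of each Riemann sum approximating $P_0(a)$ by $L_{\T{hor}}(a)$, and lower semicontinuity then yields $\wit{L}(P_0(a)) \leq L_{\T{hor}}(a) < \infty$. Finally, as $P_0(a) \in A_0$ is fixed by every $V_s(\mu)$, it is analytic of all orders with $\be_{\pm i}(P_0(a)) = P_0(a)$, so the twisted commutator of $P_0(a)$ with $D_\Ga$ is the ordinary one; combining the finiteness of $\wit{L}(P_0(a))$ with the characterisation of Lipschitz operators in Theorem \ref{t:weak}, applied to the bounded operator $P_0(a)$ and the selfadjoint operator $D_\Ga$, upgrades this to the statement that $P_0(a)$ is a twisted Lipschitz operator, i.e. $P_0(a) \in \Lip_{D_\Ga}^\be(A) = \Lip_{\T{hor}}(A)$. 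Thus $P_0(\C L) \su \Lip_{\T{ver}}(A) \cap \Lip_{\T{hor}}(A) = \C L$, all hypotheses of Theorem \ref{t:mainII} are met, and the corollary follows. The delicate point is that the averaging must be compatible with the analyticity built into the notion of a twisted Lipschitz operator, which is precisely why it is important that $P_0$ lands in the $V_s(\mu)$-fixed subalgebra $A_0$.
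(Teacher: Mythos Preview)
Your overall strategy coincides with the paper's: both apply Theorem~\ref{t:mainII} with $\C L = \Lip_{\T{tot}}(A)$, and the structural checks (circle invariance via Lemma~\ref{l:invariance} and Theorem~\ref{t:vertical}, $\C A \su \C L$, and the metric property of $\C L \cap A_0$ via Lemma~\ref{l:inequality} and Theorem~\ref{t:charac}) are handled identically.

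The only genuine difference is in the proof of $P_0\big(\Lip_{\T{hor}}(A)\big) \su \Lip_{\T{hor}}(A)$. The paper argues directly with the orthogonal projections $Q_n : G \to G_n$: using that $Q_n\be_{\pm i}(a)Q_n = P_0(a)Q_n$ and that $D_\Ga$ commutes with each $Q_n$, one computes on the core $\C A_n \ot_{\C A_0}\T{Dom}(D_0)$ that $D_\Ga P_0(a)\eta = Q_n\de_\Ga(a)Q_n\eta + P_0(a)D_\Ga\eta$, yielding the explicit formula $\de_{\T{hor}}(P_0(a))Q_n = Q_n\de_{\T{hor}}(a)Q_n$. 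Your route instead averages over the circle, bounds the extended seminorm $\wit L$ on Riemann sums by convexity, passes to the limit via lower semicontinuity, and then invokes Theorem~\ref{t:weak}. This is correct: since $P_0(a)\in A_0$ commutes with $\Ga$ and $D_\Ga$ commutes with $\Ga$, the quantity $\wit L(P_0(a))$ collapses to the ordinary weak-commutator bound of Theorem~\ref{t:weak}(3) on the core $\C H$ for $D_\Ga$, whence $P_0(a)$ is Lipschitz (hence twisted Lipschitz, as $\be_{\pm i}(P_0(a))=P_0(a)$). The paper's projection argument is shorter and yields the explicit formula for $\de_{\T{hor}}(P_0(a))$; your averaging argument is more portable and transparently explains the bound $L_{\T{hor}}(P_0(a))\le L_{\T{hor}}(a)$ of Lemma~\ref{l:estimate} as a byproduct.
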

\begin{proof}
  We are going to apply Theorem \ref{t:mainII}. We already know from Lemma \ref{l:invariance} that $\si_\la\big( \Lip_{\T{hor}}(A) \big) \su \Lip_{\T{hor}}(A)$ and it follows from Theorem \ref{t:vertical} that $\si_\la\big( \Lip_{\T{ver}}(A) \big) \su \Lip_{\T{ver}}(A)$ for all $\la \in S^1$, see also Subsection \ref{ss:li}. Moreover, we already observed that $\C A \su \Lip_{\T{tot}}(A)$. The fact that the Lipschitz triple $( \Lip_{\T{tot}}(A) \cap A_0, H_0, D_0)$ is metric follows from our assumptions in combination with Lemma \ref{l:inequality} and Theorem \ref{t:charac}. We are therefore left with proving that $P_0\big( \Lip_{\T{tot}}(A) \big) \su \Lip_{\T{tot}}(A)$. To this end, remark that $P_0(A) = A_0 \su \Lip_{\T{ver}}(A)$ so it suffices to show that $P_0\big( \Lip_{\T{hor}}(A) \big) \su \Lip_{\T{hor}}(A)$.

Let $a \in \Lip_{\T{hor}}(A)$ be given. For each $n \in \B Z$, let $Q_n : G \to G$ denote the orthogonal projection with image $G_n \su G$. Notice then that we have the identities
\[
Q_n \be_i(a) Q_n =  P_0(a) Q_n = Q_n a Q_n = Q_n \be_{-i}(a) Q_n
\]
and that the modular lift $D_\Ga$ commutes with $Q_n$. It also clearly holds that $P_0(a)$ belongs to $\Ana_\be^r(G)$ for all $r \in \B R$ and satisfies that $\be_z( P_0(a)) = P_0(a)$ for all $z \in \B C$. 
%
%

Let now $n \in \B Z$ and suppose that $\eta$ belongs to $\C A_n \ot_{\C A_0} \T{Dom}(D_0)$. Our observations yield that
$\be_i( P_0(a) ) \eta = P_0(a) \eta = Q_n \be_i(a) Q_n \eta$ belongs to $\T{Dom}(D_\Ga)$ and that we may compute as follows:
\[
\begin{split}
D_\Ga  \be_i( P_0(a) )  \eta 
& = Q_n D_\Ga \be_i(a) Q_n \eta = Q_n \de_\Ga(a) Q_n \eta
+ Q_n \be_{-i}(a) Q_n D_\Ga \eta \\
& = Q_n \de_\Ga(a) Q_n \eta + \be_{-i}(P_0(a)) D_\Ga \eta .
\end{split}
\]

This entails that $P_0(a) \in \Lip_{\T{hor}}(A)$ and that we have the formula
\[
\de_{\T{hor}}( P_0(a)) Q_n = Q_n \de_{\T{hor}}(a) Q_n \q \T{for all } n \in \B Z . \qedhere
\]
\end{proof}

\section{Relationship with unbounded $KK$-theory}\label{s:unbKK} 
In this section we explain how our constructions relate to unbounded $KK$-theory and the unbounded Kasparov product as developed in \cite{Mes:UCN,KaLe:SFU,MeRe:NST}, relying on earlier work in \cite{BaJu:TBK,Con:GCM,Kuc:PUM}. For more details on $KK$-theory and the internal Kasparov product we refer the reader to the seminal papers \cite{Kas:OFE,CoSk:LIF} as well as the book by Bruce Blackadar, \cite{Bla:KOA}. 

As usual we consider a unital $C^*$-algebra $A$ equipped with a strongly continuous action $\si$ of the unit circle and we let $\C A \su A$ be a norm-dense unital $*$-subalgebra satisfying that $P_0(\C A) \su \C A$. Let us moreover assume that $(\C A \cap A_0,H_0,D_0)$ is a unital spectral triple on the fixed point algebra $A_0$. Our data is required to satisfy the conditions appearing in Assumption \ref{a:exponential}. Recall that $\C A_n := \C A \cap A_n$ for all $n \in \B Z$. 

An application of Theorem \ref{t:unbkas} shows that the triple $(\C A,X,N)$ is an unbounded Kasparov module from $A$ to $A_0$ and we therefore have a class in odd $KK$-theory $[X,F_N] \in KK_1(A,A_0)$ obtained by applying the Baaj-Julg bounded transform, see \cite[Proposition 17.11.3]{Bla:KOA}. Likewise, our unital spectral triple $(\C A_0,H_0,D_0)$ yields a class in $K$-homology $[H_0,F_{D_0}] \in K^p(A_0)$, where $p = 0$ in the graded case and $p = 1$ in the ungraded case. Assuming that $A_0$ is separable (entailing that $A$ is also separable) we therefore have the class $[X,F_N] \hot_{A_0} [H_0,F_{D_0}]$ in the $K$-homology of $A$, obtained by taking the internal Kasparov product of our two classes. The parity $q \in \{0,1\}$ of this class is equal to $p + 1$ modulo $2$. 

One of the tasks of unbounded $KK$-theory is to represent the class $[X,F_N] \hot_{A_0} [H_0,F_{D_0}] \in K^q(A)$ by an explicit unital spectral triple. This can be done in the special case where the constant $\mu > 0$ from Assumption \ref{a:exponential} is exactly equal to one. We describe the main construction without this assumption and indicate exactly where the condition $\mu = 1$ is crucial.

Let us emphasize that for $\mu \neq 1$ our data does not yield a unital spectral triple over $\C A$ (nor a twisted unital spectral triple for that matter). In fact, it appears that there is no single selfadjoint unbounded operator which adequately describes the spectral geometry of $\C A$ in this situation. Instead, we have to work with a pair of selfadjoint unbounded operators where one of them comes from the circle action and the other one comes from the unital spectral triple over the corresponding fixed point algebra. It is therefore also unclear how to obtain a class in the $K$-homology of $A$ by taking a bounded transform since we do not know which selfadjoint unbounded operator to start out with. 
\medskip

Recall from the discussion near \eqref{eq:isometry} that the separable Hilbert space $G := X \hot_\rho H_0$ is unitarily isomorphic to the Hilbert space direct sum $K = \widehat{\bop}_{n \in \B Z} H_n$, where the component $H_n$ is defined as the closure of $\phi(A_n) H_0$ inside $H$. In the following, we are suppressing this unitary isomorphism $U$ but notice that $U( x \ot \xi) = \phi(x) \xi$ whenever $x \in A_n$ for some $n \in \B Z$ and $\xi \in H_0$. 

We also remind the reader (see the discussion before Lemma \ref{l:modular}) that the modular lift $D_\Ga$ has the algebraic direct sum $\C K = \op_{n \in \B Z} \C H_n$ as a core, where the component $\C H_n$ is defined by
\[
\C H_n := \phi(\C A_n) \T{Dom}(D_0) = U_n( \C A_n \ot_{\C A_0} \T{Dom}(D_0)) .
\]
The modular lift is determined by the explicit formula
\[
D_\Ga( \phi(x) \xi) = \de(x) \xi + \mu^n \phi(x) D_0 \xi
\]
whenever $x \in \C A_n$ for some $n \in \B Z$ and $\xi \in \T{Dom}(D_0)$, see Lemma \ref{l:modular}. We apply the notation $\C D_\Ga$ for the restriction of the modular lift $D_\Ga$ to the core $\C K$.
%
%
In this picture, the selfadjoint unbounded operator $N$ agrees with the closure of the essentially selfadjoint unbounded operator $\C N : \op_{n \in \B Z} H_n \to K$ satisfying that $\C N(\eta) = n \cd \eta$ whenever $\eta \in H_n$ for some $n \in \B Z$. Clearly, $\C K \su \T{Dom}(N)$ is a core for $N$.
\medskip

There are now two cases to consider depending on whether the unital spectral triple $(\C A_0,H_0,D_0)$ is graded or ungraded.

In the graded case (where $p = 0$), we consider the Hilbert space $K_0 := K = \widehat{\bop}_{n \in \B Z} H_n$ and lift the grading operator $\ga_0 : H_0 \to H_0$ to the selfadjoint unitary operator $\ga := 1 \hot \ga_0 : K \to K$ satisfying that $\ga( \phi(x) \xi) = \phi(x) \ga_0 \xi$ whenever $x \in A_n$ for some $n \in \B Z$ and $\xi \in H_0$. The \emph{unbounded product operator} $N \ti_{\Na} D_\Ga$ is defined as the closure of the symmetric unbounded operator
$\C N \ga + \C D_\Ga : \C K \to K$. We specify that the unbounded product operator is given by the formula
\[
(N \ti_{\Na} D_\Ga)(\phi(x) \xi) = n \phi(x) \ga_0 \xi + \de(x) \xi + \mu^n \phi(x) D_0 \xi
\]
whenever $x \in \C A_n$ for some $n \in \B Z$ and $\xi \in \T{Dom}(D_0)$.

In the ungraded case (where $p = 1$), we consider the Hilbert space $K_1 := K \op K$ which is graded by the selfadjoint unitary operator $\ga := \ma{cc}{1 & 0 \\ 0 & - 1}$. The \emph{unbounded product operator} $N \ti_{\Na} D_\Ga$ is defined as the closure of the symmetric unbounded operator
\[
\ma{cc}{ 0 & \C N +  i \C D_\Ga \\ \C N - i \C D_\Ga & 0}
: \C K \op \C K \to K \op K .
\]

In the next proposition we investigate the properties of the unbounded product operators. It is this respect important to clarify that the part of the proposition concerning selfadjointness is in fact a consequence of much more general results regarding weakly anti-commuting operators, see e.g. \cite[Theorem 2.6]{LeMe:SSH}. We provide an elementary proof based on Theorem \ref{t:esself}.

\begin{prop}\label{p:almostspec}
  Suppose that the conditions in Assumption \ref{a:exponential} are satisfied and that $(\C A_0,H_0,D_0)$ is a unital spectral triple on the fixed point algebra $A_0$. It then holds that the unbounded product operator $N \ti_{\Na} D_\Ga$ is selfadjoint and the resolvent $(i + N \ti_\Na D_\Ga)^{-1}$ is a compact operator. 
\end{prop}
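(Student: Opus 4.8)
The plan is to reduce the statement, block by block, to Connes' essential selfadjointness theorem, Theorem~\ref{t:esself}. The starting point is the observation that $N \ti_\Na D_\Ga$ is block diagonal with respect to the orthogonal Hilbert space decomposition $K \cong \widehat{\bop}_{n \in \B Z} H_n$ (resp.\ $K_1 = \widehat{\bop}_{n \in \B Z}( H_n \op H_n)$ in the ungraded case). Writing $Q_n$ for the orthogonal projection onto $H_n$, I would first record that $\C N$ acts by the scalar $n$ on $H_n$, that the grading $1 \hot \ga_0$ restricts to $H_n$, and --- crucially --- that the modular lift $D_\Ga$ is reduced by each $H_n$. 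The latter follows because $D_\Ga$ commutes with $U_t = e^{it(N \hot 1)}$ for all $t \in \B R$ (as used in the proof of Lemma~\ref{l:invariance}) and hence with the spectral projection $Q_n = \frac{1}{2\pi}\int_0^{2\pi} e^{-itn} U_t \, dt$. Since the core $\C K = \bop_{n \in \B Z} \C H_n$ for $D_\Ga$ respects this decomposition, $\C H_n$ is a core for the selfadjoint operator $D_\Ga|_{H_n}$, and $N \ti_\Na D_\Ga$ is the Hilbert space direct sum $\widehat{\bop}_{n \in \B Z} T_n$ of the closures $T_n$ of the restrictions of $\C N (1 \hot \ga_0) + \C D_\Ga$ (resp.\ of its ungraded matrix analogue) to $\C H_n$ (resp.\ $\C H_n \op \C H_n$). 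It then suffices to show that each $T_n$ is selfadjoint with compact resolvent and that $\| (i + T_n)^{-1} \|_\infty \to 0$ as $| n | \to \infty$.

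On the $n$-th block I would argue as follows. By Definition~\ref{d:modular} and the identity $\Ga(\eta) = \mu^{n/2}\eta$ for $\eta \in G_n$, the restriction of $D_\Ga$ to $H_n$ agrees, after transporting along the unitary $U_n$, with $\mu^n (1 \ot_{\Na_n} D_0)$ on the core $\C A_n \ot_{\C A_0} \T{Dom}(D_0)$. Since $\C A_n \su A_n$ carries a finite frame and $(\C A \cap A_0,H_0,D_0)$ is a unital spectral triple, Theorem~\ref{t:esself} shows that $1 \ot_{\Na_n} D_0$ is selfadjoint with compact resolvent; multiplying by the positive scalar $\mu^n$ preserves both features, so $D_\Ga|_{H_n}$ is selfadjoint with compact resolvent. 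In the graded case $T_n = n (1 \hot \ga_0)|_{H_n} + D_\Ga|_{H_n}$ is a bounded selfadjoint perturbation of $D_\Ga|_{H_n}$, hence selfadjoint with compact resolvent; in the ungraded case the same applies to $T_n = \ma{cc}{0 & n + i D_\Ga|_{H_n} \\ n - i D_\Ga|_{H_n} & 0}$, which is a bounded selfadjoint perturbation of the off-diagonal operator built from $\pm i D_\Ga|_{H_n}$.

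For the uniform resolvent bound I would use a squaring identity. In the graded case $1 \hot \ga_0$ anti-commutes with $D_\Ga$ by Theorem~\ref{t:twistlip}, so $T_n^2 = n^2 + (D_\Ga|_{H_n})^2 \geq n^2$; in the ungraded case the scalar $n$ commutes with $D_\Ga|_{H_n}$, whence $T_n^2 = \big(n^2 + (D_\Ga|_{H_n})^2\big) \op \big(n^2 + (D_\Ga|_{H_n})^2\big) \geq n^2$. Either way $\| (i + T_n)^{-1} \|_\infty \leq (1 + n^2)^{-1/2}$. Truncating the direct sum at $|n| \leq m$ produces a compact operator (a finite orthogonal sum of the compacts from the previous step) at operator-norm distance at most $(1 + m^2)^{-1/2}$ from $(i + N \ti_\Na D_\Ga)^{-1}$; letting $m \to \infty$ yields compactness of the resolvent, while selfadjointness of $N \ti_\Na D_\Ga$ is immediate since a Hilbert space direct sum of selfadjoint operators is selfadjoint.

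The only genuinely delicate point is the bookkeeping in the first step --- verifying that $D_\Ga$ is reduced by the spectral subspaces $H_n$ and that $\C H_n$ remains a core for $D_\Ga|_{H_n}$, so that the closure of the algebraic direct sum really is the direct sum of the closures of the blocks. This is routine once one notes that $D_\Ga$ commutes with the one-parameter group $\{ e^{it(N \hot 1)}\}_{t \in \B R}$. After that, everything reduces, block by block, to Theorem~\ref{t:esself} together with elementary stability of selfadjointness and of the compact resolvent condition under bounded selfadjoint perturbations.
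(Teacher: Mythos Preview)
Your proof is correct and follows essentially the same route as the paper: both reduce to the block decomposition along $\widehat{\bop}_{n \in \B Z} H_n$, identify the $n$-th block of the modular lift with $\mu^n(1 \ot_{\Na_n} D_0)$ and invoke Theorem~\ref{t:esself}, treat the $n\ga_n$-term (resp.\ the scalar $n$ in the ungraded matrix) as a bounded selfadjoint perturbation, and use the squaring identity $T_n^2 = n^2 + (D_\Ga|_{H_n})^2$ to get $\|(i+T_n)^{-1}\|_\infty \leq (1+n^2)^{-1/2}$. Your explicit justification that $D_\Ga$ is reduced by each $H_n$ via commutation with $e^{it(N\hot 1)}$ is a welcome bit of extra care that the paper leaves implicit.
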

\begin{proof}
  Let us focus on the graded case (since similar methods apply in the ungraded case).

  For $n \in \B Z$ we apply the notation $(D_\Ga)_n$ for the restriction of the modular lift $D_\Ga$ to the subspace $\C H_n \su \T{Dom}(D_\Ga)$. It then holds that $(D_\Ga)_n = \mu^n (1 \ot_{\Na_n} D_0)$ where $\Na_n : \C A_n \to A_n \hot_\rho \B L(H_0)$ is a Grassmann connection satisfying that $\Phi_n \Na_n(x)(\xi) = \mu^{-n} \de(x) \xi$ for all $x \in \C A_n$, see Proposition \ref{p:grasstwist} and Definition \ref{d:modular}. It therefore follows from Theorem \ref{t:esself} that $(D_\Ga)_n : \C H_n \to H_n$ is selfadjoint and has compact resolvent. We also observe that the selfadjoint unitary operator $\ga = 1 \hot \ga_0$ induces a selfadjoint unitary operator $\ga_n : H_n \to H_n$. The restriction of the unbounded product operator to $\C H_n$ then agrees with $n \ga_n + (D_\Ga)_n : \C H_n \to H_n$ and we may thus conclude that this restriction is selfadjoint and has compact resolvent.

  Since an (infinite) algebraic direct sum of selfadjoint unbounded operators is essentially selfadjoint we get that the unbounded product operator is selfadjoint.

  In order to show that the resolvent of the unbounded product operator is compact it now suffices to establish that
  \begin{equation}\label{eq:limit}
\lim_{n \to \pm \infty} \big\| (i + n\ga_n + (D_\Ga)_n)^{-1} \big\|_\infty = 0 .
\end{equation}
Let us again fix an $n \in \B Z$. Using that $(D_\Ga)_n$ anti-commutes with the selfadjoint unitary operator $\ga_n : H_n \to H_n$ we obtain that $\big( n \ga_n + (D_\Ga)_n \big)^2 = n^2 + (D_\Ga)_n^2$. This entails the estimate 
\[
\big\| (i + n\ga + (D_\Ga)_n)^{-1} \big\|_\infty^2 = \big\| (1 + n^2 + (D_\Ga)_n^2)^{-1} \big\|_\infty \leq (1 + n^2)^{-1} ,
\]
which in turn implies the identity in \eqref{eq:limit}. 
\end{proof}

The above result indicates that the unbounded product operator could be part of a unital spectral triple with coordinate algebra $\C A$. The main problem is however that the commutator condition fails (in general) for $\mu \neq 1$. This happens because the vertical leg, represented by the selfadjoint unbounded operator $N$, has bounded commutators with elements in $\C A$ whereas the horizontal leg, represented by the modular lift $D_\Ga$, only has twisted bounded commutators.

For the rest of this section we are therefore focusing on the special case where $\mu = 1$. In this situation, we apply the notation $N \ti_\Na D_0$ for the unbounded product operator. Our next theorem is in fact a consequence of the work of Bram Mesland and Adam Rennie in \cite[Theorem 4.4]{MeRe:NST}. However, in order to keep the current text independent of the operator space techniques applied in \cite{MeRe:NST} we present a short argument referring back to results of Dan Kucerovsky, \cite{Kuc:PUM}.

\begin{thm}\label{t:unbddkasp}
  Suppose that the conditions in Assumption \ref{a:exponential} are satisfied for the constant $\mu = 1$ and that $(\C A_0,H_0,D_0)$ is a unital spectral triple on the fixed point algebra $A_0$. Letting $p \in \{0,1\}$ reflect the grading, it holds that $(\C A,K_p, N \ti_\Na D_0)$ is a unital spectral triple and for $A_0$ separable we have the identity $[K, F_{N \ti_\Na D_0}] = [X,F_N] \hot_{A_0} [H_0, F_{D_0}]$ inside the $K$-homology group $K^q(A)$ (where $q = 1$ for $p = 0$ and $q = 0$ for $p = 1$). 
\end{thm}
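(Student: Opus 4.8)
The plan is to proceed in two steps. \emph{First}, I would check that $(\C A, K_p, N\ti_\Na D_0)$ is a unital spectral triple. Selfadjointness of $N\ti_\Na D_0$ and compactness of its resolvent are already contained in Proposition \ref{p:almostspec}, applied with the parameter equal to $\mu = 1$: for $\mu = 1$ the positive invertible operator $\Ga$ is the identity, so the modular lift $D_\Ga$ is simply the horizontal lift $1\hot_\Na D_0$ and the twist $\be = \be(1)$ is trivial. For the bounded commutator condition I would first observe that every $a\in\C A$ preserves the core $\C K = \op_{n\in\B Z}\C H_n$ of $N\ti_\Na D_0$, since $\C A$ is $\B Z$-graded with $\C A_k\cdot\C A_n\su\C A_{k+n}$; writing $a = \sum_n P_n(a)$ as a finite sum via Lemma \ref{l:algebra}, it then suffices to treat $a\in\C A_n$. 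On $\C K$ the commutator $[N\ti_\Na D_0,a]$ decomposes into a vertical contribution, which extends to the bounded operator $\de_{\T{ver}}(a)(1\hot\ga_0)$ by Theorem \ref{t:vertical}, plus a horizontal contribution, which extends to the bounded operator $\de_{\T{hor}}(a) = \de_\Ga(a)$ by Theorem \ref{t:twistlip} --- here the point is precisely that for $\mu = 1$ the twisted $*$-derivation $\de_\Ga$ is an honest $*$-derivation; the ungraded case is the evident two-by-two analogue. Since $\C K$ is a core, it follows that $a$ preserves $\T{Dom}(N\ti_\Na D_0)$ and the commutator extends boundedly.

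\emph{Second}, assuming $A_0$ (hence $A$) separable so that the classes $[X,F_N]\in KK_1(A,A_0)$ and $[H_0,F_{D_0}]\in KK_p(A_0,\B C)$ and their internal product are defined, I would verify Kucerovsky's sufficient conditions \cite{Kuc:PUM} for $(K_p, N\ti_\Na D_0)$ to represent $[X,F_N]\hot_{A_0}[H_0,F_{D_0}]$. The \emph{domain condition} $\T{Dom}(N\ti_\Na D_0)\su\T{Dom}(N\hot 1)$ (with $N\hot 1$ replaced by its graded lift $(N\hot 1)(1\hot\ga_0)$, resp. by the evident doubling, in the two parity cases) is immediate from the identity $(N\ti_\Na D_0)^2 = (N\hot 1)^2 + (1\hot_\Na D_0)^2$ on $\C K$ recorded in the proof of Proposition \ref{p:almostspec}, both summands being non-negative operators. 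For the \emph{semiboundedness condition}, the decisive point is that $1\hot_\Na D_0$ preserves each spectral subspace $G_n$, being assembled from the connections $\Na_n$, and hence commutes with $N\hot 1$; therefore $(N\hot 1)(1\hot\ga_0)$ genuinely anticommutes with $1\hot_\Na D_0$, the relevant anticommutator equals $2(N\hot 1)^2\geq 0$, and the condition holds with constant zero (and likewise in the ungraded case). For the \emph{connection condition} one works with the operators $\te_x\colon H_0\to X\hot_\rho H_0$ for $x$ ranging over the dense submodule $\La(\C A)\su X$: for $x = \La(a)$ with $a\in\C A_n$, Lemma \ref{l:modular} together with Proposition \ref{p:grasstwist} yield $(N\ti_\Na D_0)\te_x\xi - \te_x D_0\xi = n\cdot a\ga_0\xi + \de(a)\xi$ on $\T{Dom}(D_0)$, which is a bounded operator in $\xi$; the adjoint commutator is handled symmetrically and the graded two-by-two bookkeeping is then routine. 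With all three conditions in hand, Kucerovsky's theorem gives $[K_p,F_{N\ti_\Na D_0}] = [X,F_N]\hot_{A_0}[H_0,F_{D_0}]$ in $K^q(A)$, where $q$ equals $p + 1$ modulo $2$.

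The hardest part, I expect, will be the connection condition: it requires fitting Kucerovsky's hypothesis into exactly the shape needed, making precise the graded doubling of the maps $\te_x$ in the two parity cases, and confirming that the off-diagonal commutator entries really are bounded. By contrast, the remaining ingredients --- selfadjointness and compact resolvent, bounded commutators, and the domain and semiboundedness conditions --- should come fairly mechanically from the structural results already proved, the essential simplification throughout being that $\mu = 1$ converts the twisted horizontal data into genuine spectral data while the horizontal lift continues to respect the $\B Z$-grading.
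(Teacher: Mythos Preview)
Your proposal is correct and follows essentially the same route as the paper: Proposition~\ref{p:almostspec} for selfadjointness and compact resolvent, boundedness of commutators by splitting into vertical and horizontal parts (using that $\mu=1$ removes the twist), and then Kucerovsky's three conditions for the Kasparov product. The only cosmetic differences are that the paper handles the connection condition slightly more abstractly via $(1\ot_\Na D_0)\te_x(\xi)-\te_x D_0(\xi)=\T{ev}_\xi(\Na(x))$ (bounded by $\|\Na(x)\|$) rather than invoking Lemma~\ref{l:modular} and Proposition~\ref{p:grasstwist}, and that for semiboundedness the paper computes the quadratic form directly rather than phrasing it through the anticommutator; your versions are equivalent.
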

\begin{proof}
  Let us focus on the graded case (where $p = 0$). The ungraded case follows by similar arguments. We already know from Proposition \ref{p:almostspec} that the unbounded product operator $N \ti_\Na D_0$ is selfadjoint and has compact resolvent. Since $\mu = 1$ we moreover get from Theorem \ref{t:unbkas} and Theorem \ref{t:twistlip} that $\C A$ is contained in the intersection of the two Lipschitz algebras $\Lip_{N \ga}(K) \cap \Lip_{1 \hot_\Na D_0}(K)$. Since $\C A$ preserves the core $\C K$ for the unbounded product operator $N \ti_\Na D_0$ we get that $\C A$ is in fact contained in the Lipschitz algebra $\Lip_{N \ti_\Na D_0}(K)$ so that $(\C A,K, N \ti_\Na D_0)$ is indeed a unital spectral triple.

  To continue, we verify the three conditions appearing in \cite[Theorem 13]{Kuc:PUM} using the interior tensor product description of the Hilbert space $K \cong X \hot_\rho H_0$ when convenient.

  For the first condition, we use the norm-dense subspace $\C X = \La(\C A) \su X$ and for each $x \in \C X$ observe that $(1 \hot_\Na D_0) \te_x(\xi) - \te_x D_0(\xi) =  \T{ev}_\xi( \Na(x))$ for all $\xi \in \T{Dom}(D_0)$. The point is now that the operator from $H_0$ to $X \hot_\rho H_0$ sending $\xi$ to $\T{ev}_\xi( \Na(x))$ is bounded with operator norm dominated by $\| \Na(x) \|$.

  For the second condition, it suffices to show that $\T{Dom}(N \ti_\Na D_0)$ is contained in $\T{Dom}(N \ga)$. Let $\{\eta_n\}_{n \in \B Z}$ be a vector in $K = \widehat{\bop}_{n \in \B Z} H_n$. It can then be verified that the vector $\{\eta_n\}_{n \in \B Z}$ belongs to $\T{Dom}(N \ti_\Na D_0)$ if and only if $\eta_n \in \C H_n$ for all $n \in \B Z$ and the two series here below converges:
  \[
\sum_{n = -\infty}^\infty n^2 \| \eta_n \|^2 \, \, \T{ and } \, \, \, \sum_{n = -\infty}^\infty \| (1 \ot_{\Na} D_0) \eta_n \|^2 .
\]
On the other hand, it holds that $\{\eta_n\}_{n \in \B Z}$ belongs to $\T{Dom}(N \ga)$ if and only if the series $\sum_{n = -\infty}^\infty n^2 \|\eta_n\|^2$ converges. 

For the third condition, we consider an arbitrary vector $\xi$ in $\T{Dom}(N \ti_\Na D_0)$ and observe that
\[
\begin{split}
\binn{ (N \ti_\Na D_0)\xi, N \ga \xi} + \binn{N \ga \xi, (N \ti_\Na D_0)\xi} 
= \binn{N \ga \xi, N \ga \xi} \geq 0 . \qedhere
\end{split}
\]
\end{proof}

The next result follows immediately from the above Theorem \ref{t:unbddkasp} in combination with Corollary \ref{c:coordcqms}. Recall the notion of a spectral metric space from Definition \ref{d:spemet} and the definition of the slip-norm $L_{\T{tot}} : \C A \to [0,\infty)$ from \eqref{eq:totgra} and \eqref{eq:totungra}. 

\begin{thm}
Suppose that the conditions in Assumption \ref{a:exponential} are satisfied for the constant $\mu = 1$ and that $(\C A_0,H_0,D_0)$ is a spectral metric space. It then holds that $(\C A,K_p, N \ti_\Na D_0)$ is a spectral metric space and the two slip-norms $L_{N \ti_\Na D_0} : \C A \to [0,\infty)$ and $L_{\T{tot}} : \C A \to [0,\infty)$ agree. 
\end{thm}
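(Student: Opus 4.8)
The plan is to reduce the entire statement to the single identity $L_{N\ti_\Na D_0}(a)=L_{\T{tot}}(a)$ for all $a\in\C A$. Granting this identity, the spectral metric space assertion is purely formal: Theorem~\ref{t:unbddkasp} already gives that $(\C A,K_p,N\ti_\Na D_0)$ is a unital spectral triple; since $(\C A_0,H_0,D_0)$ is a spectral metric space its underlying Lipschitz triple is in particular metric, so Corollary~\ref{c:coordcqms} shows that $(\C A,L_{\T{tot}})$ is a compact quantum metric space; rewriting $L_{\T{tot}}$ as $L_{N\ti_\Na D_0}$ then says exactly that the Lipschitz triple underlying $(\C A,K_p,N\ti_\Na D_0)$ is metric, i.e.\ that $(\C A,K_p,N\ti_\Na D_0)$ is a spectral metric space in the sense of Definition~\ref{d:spemet}.

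To prove the slip-norm identity I would first note the simplifications forced by $\mu=1$: the positive invertible operator $\Ga$ collapses to the identity, so the modular lift $D_\Ga$ is just the closure of the horizontal lift $1\ot_\Na D_0$; the one-parameter group $\{V_s(1)\}_{s\in\B R}$ and hence the automorphism family $\be=\be(1)$ are trivial; and consequently the twisted $*$-derivation $\de_{\T{hor}}=\de_\Ga$ is merely the closure of the ordinary commutator $[1\ot_\Na D_0,\cd]$ (cf.\ Lemma~\ref{l:bounded}, Lemma~\ref{l:modular} and \eqref{eq:twistdelta}). The core of the argument is then a direct computation of $[N\ti_\Na D_0,a]$ for $a\in\C A$ on the common core $\C K=\bop_{n\in\B Z}\C H_n$, which is invariant under $a$ because $\C A_n\cd\C A_m\su\C A_{n+m}$. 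In the graded case the grading operator $\ga=1\hot\ga_0$ preserves each $\C H_n$ and commutes both with the number operator $\C N$ and with the action of $a$ (via $\psi\hot 1$), so a one-line computation on $\C H_m$ yields $[\C N\ga,a]=\ga\,[\C N,a]=(1\hot\ga_0)\de_{\T{ver}}(a)$ on $\C K$; adding the horizontal contribution $[1\ot_\Na D_0,a]=\de_{\T{hor}}(a)$ gives
\[
 [N\ti_\Na D_0,a]=(1\hot\ga_0)\de_{\T{ver}}(a)+\de_{\T{hor}}(a)
\]
as operators on $\C K$, whose closure therefore has operator norm $L_{\T{tot}}(a)$ by \eqref{eq:totgra}. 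In the ungraded case the same computation on $\C K\op\C K$ produces the off-diagonal operator matrix with entries $\de_{\T{ver}}(a)\pm i\de_{\T{hor}}(a)$, whose operator norm is $\max\{\|\de_{\T{ver}}(a)+i\de_{\T{hor}}(a)\|_\infty,\|\de_{\T{ver}}(a)-i\de_{\T{hor}}(a)\|_\infty\}=L_{\T{tot}}(a)$ by \eqref{eq:totungra}. In both cases $L_{N\ti_\Na D_0}(a)=L_{\T{tot}}(a)$.

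I expect the only point needing care to be the passage from the commutator computed on the core $\C K$ to the slip-norm $L_{N\ti_\Na D_0}(a)$, which is by definition the operator norm of the closure of $[N\ti_\Na D_0,a]$ taken over the \emph{full} domain of $N\ti_\Na D_0$; here one invokes the standard fact that if a bounded operator leaves a core of a closed operator invariant, then the closure of the commutator on that core coincides with its closure on the full domain. The only other ingredient, used in the ungraded case, is the elementary fact that the operator norm of an off-diagonal $2\times 2$ operator matrix is the maximum of the operator norms of its two entries. Beyond this bookkeeping no genuine obstacle is anticipated, since the substantial work --- selfadjointness and compactness of the resolvent of $N\ti_\Na D_0$, its identification with the unbounded Kasparov product, and the compact quantum metric space property of $L_{\T{tot}}$ --- has already been done in Theorem~\ref{t:unbddkasp} and Corollary~\ref{c:coordcqms}.
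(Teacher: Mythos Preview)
Your proposal is correct and follows exactly the route the paper takes: the paper states that the result ``follows immediately from the above Theorem~\ref{t:unbddkasp} in combination with Corollary~\ref{c:coordcqms}'' without further argument, and what you have written is precisely the unpacking of that sentence, including the slip-norm identity $L_{N\ti_\Na D_0}=L_{\T{tot}}$ that the paper leaves implicit.
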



\section{The quantum spheres and the quantum projective spaces}\label{s:quasph}
In this last section, we shall see how our framework applies to the higher Vaksman-Soibelman quantum spheres introduced in \cite{VaSo:AFQ}. In particular, our constructions provide each of the higher $q$-deformed spheres with the structure of a compact quantum metric space arising from spectral geometric data.

We let $r \in \B N$ be fixed and the deformation parameter $q$ is a fixed element in the open interval $(0,1)$. 

The \emph{$C^*$-algebraic quantum sphere} $C(S_q^{2r + 1})$ is defined as the universal unital $C^*$-algebra with generators $z_1,\ldots,z_{r + 1}$ subject to the relations here below:
\begin{equation}\label{eq:qsphere}
\begin{split}
  z_i z_j  = q z_j z_i  \q i < j \, \, & , \, \, \, z_i^* z_j = q z_j z_i^* \q i \neq j \\
  [z_i^*,z_i] = (1 - q^2) & \sum_{j = 1}^{i-1} z_j z_j^* \q i > 1 \\
  [z_1^*,z_1] = 0 \, \, & , \, \, \, \sum_{j = 1}^{r + 1} z_j z_j^* = 1  .
\end{split}
\end{equation}
The $C^*$-algebraic quantum sphere $C(S_q^{2r+1})$ is equipped with the strongly continuous action $\si$ of the unit circle $S^1 \su \B C$ defined by putting $\si_\la(z_i) := \la z_i$ for all $i \in \{1,\ldots,r+1\}$. 

For each $n \in \B Z$, apply the notation $A_n \su C(S_q^{2r+1})$ for the corresponding spectral subspace associated with our circle action so that  
\[
A_n = \big\{ a \in C(S_q^{2r+1}) \mid \si_\la(a) = \la^n a \, \, \T{for all } \la \in S^1 \big\}  .
\]
Moreover, we put $A := C(S_q^{2r+1})$.

The \emph{$C^*$-algebraic quantum projective space} $C(\B CP_q^r)$ is defined as the fixed point algebra of the circle action $\si$ meaning that
\[
C(\B CP_q^r) := A_0 = \big\{ a \in C(S_q^{2r+1}) \mid \si_\la(a) = a \, \, \T{for all } \la \in S^1 \big\}  .
\]
It can be verified that $C(\B CP_q^r)$ agrees with the smallest unital $C^*$-subalgebra of $C(S_q^{2r+1})$ containing the elements $z_i z_j^*$ for all $i,j \in \{1,\ldots,r+1\}$.
\medskip

We shall now see how this specific setting fits with the algebraic framework from Section \ref{s:horizontal}. Our norm-dense unital $*$-subalgebra $\C A \su A$ is defined as the smallest unital $*$-subalgebra containing $z_i$ for all $i \in \{1,\ldots,r+1\}$. In other words, it holds that $\C A$ agrees with the \emph{coordinate algebra for the quantum sphere} which is often denoted by $\C O(S_q^{2r+1})$. Since $z_i \in \C A \cap A_1$ for all $i \in \{1,\ldots,r+1\}$ we obtain by definition that the norm-dense unital $*$-subalgebra $\C O(S_q^{2r+1}) \su C(S_q^{2r+1})$ satisfies condition $(1)$ in Assumption \ref{a:specsub}. The fact that our data also satisfies condition $(2)$ in Assumption \ref{a:specsub} follows by noting that the defining relations in \eqref{eq:qsphere} entail that
\begin{equation}\label{eq:framesphere}
\sum_{j = 1}^{r+1} q^{2(r+1-j)} z_j^* z_j = 1 = \sum_{j = 1}^{r+1} z_j z_j^*.
\end{equation}

The \emph{coordinate algebra for quantum projective space} $\C O(\B CP_q^r)$ is the smallest unital $*$-subalgebra of $C(S_q^{2r+1})$ satisfying that $z_i z_j^* \in \C O(\B CP_q^r)$ for all $i,j \in \{1,\ldots,r+1\}$. We record that $\C O(\B CP_q^r)$ agrees with the intersection $\C O(S_q^{2r+1}) \cap C(\B CP_q^r)$ and we are therefore justified in using the notation $\C A_0 = \C O(\B CP_q^r)$.
\medskip

We are also interested in the $q$-deformed version of the special unitary group $C(SU_q(r + 1))$ which we refer to as the \emph{$C^*$-algebraic quantum special unitary group}. This compact quantum group was introduced by Stanis\l aw Woronowicz in \cite{Wor:TGN,Wor:CMP,Wor:TKD} and it contains the $C^*$-algebraic quantum sphere $C(S_q^{2r+1})$ as a unital $C^*$-subalgebra. Using the description of $C(SU_q(r + 1))$ from \cite[Chapter 9.2]{KlSc:QGR} the relevant map is given by $z_i \mapsto u_{r+1,i}$ for all $i \in \{1,\ldots,r+1\}$. The injectivity of this unital $*$-homomorphism is described in \cite{VaSo:AFQ,Sh:QSG,HoSz:QSP} and a detailed argument can also be found in the review \cite[Section 3.3]{Da:QSG} following the scheme of \cite{MiKa:HSV}.

The \emph{coordinate algebra for the quantum special unitary group} $\C O(SU_q(r+1))$ which agrees with the smallest unital $*$-subalgebra of $C(SU_q(r+1))$ such that $u_{ij} \in \C O(SU_q(r+1))$ for all $i,j \in \{1,\ldots,r+1\}$, see \cite[Chapter 9.2]{KlSc:QGR}.

The coordinate algebra $\C O(SU_q(r+1))$ is deeply intertwined with the \emph{quantized enveloping algebra} $\C U_q(\G{su}(r+1))$ which was introduced in \cite{Jim:QYB,Dri:QG}. It is important to realize that the conventions applied in the present text appear to be less standard and we therefore clarify that $\C U_q(\G{su}(r+1))$ is a Hopf $*$-algebra generated by elements $E_i$, $K_i$ and $K_i^{-1}$, for $i \in \{1,\ldots,r\}$, which are subject to the relations:
\[
\begin{split}
& K_i K_j = K_j K_i \, \, , \, \, \, K_i K_i^{-1} = 1 = K_i^{-1} K_i \, \, , \, \, \, K_i = K_i^* \\
& K_i E_j = q^{\de_{ij} - \frac{1}{2} \de_{i,j-1} - \frac{1}{2} \de_{i,j+1}} E_j K_i \\
  & [E_i,E_j^*] = \de_{ij} \cd \frac{K_i^2 - K_i^{-2}}{q - q^{-1}} \\
  & [E_i,E_j] = 0 \quad |i-j| > 1 \\
& E_i^2 E_j - (q + q^{-1}) E_i E_j E_i +  E_j E_i^2 = 0 \quad |i - j | = 1 .
\end{split}
\]
We put $F_i := E_i^*$. The coproduct $\De : \C U_q(\G{su}(r+1)) \to \C U_q(\G{su}(r+1)) \ot \C U_q(\G{su}(r+1))$, the antipode $S : \C U_q(\G{su}(r+1)) \to \C U_q(\G{su}(r+1))$ and the counit $\epsilon : \C U_q(\G{su}(r+1)) \to \B C$ are determined by the formulae:
\[
\begin{split}
& \De(K_i) := K_i \ot K_i \, \, , \, \,  \, \De(E_i) := E_i \ot K_i + K_i^{-1} \ot E_i \\
& \epsilon(K_i) := 1 \, \,  , \, \, \, \epsilon(E_i) := 0 \\ 
& S(K_i) := K_i^{-1} \, \, , \, \, \, S(E_i) := -q E_i . 
\end{split}
\]
Remark that $S(F_i) = S(E_i^*) = S^{-1}(E_i)^* = - q^{-1} F_i$. For more details we refer the reader to \cite[Chapter 6.1.2]{KlSc:QGR} -- notice here that Klimyk and Schm\"udgen apply the notation $\breve{U}_q(\T{su}_{r+1})$ for the Hopf $*$-algebra we are denoting by $\C U_q(\G{su}(r+1))$.  

\subsection{Spectral metrics on quantum projective space}\label{ss:specmet}
%
We are now going to review the construction, due to Francesco D'Andrea and Ludwik D\k{a}browski, of a unital spectral triple on quantum projective space, \cite{DaDa:DOQ}. The work of D'Andrea and D\k{a}browski builds on earlier work including Gianni Landi as a coauthor, \cite{DDL:NGQ}. In this earlier paper, the authors treat the important case where $r = 2$. In the special case where $r = 1$, the constructions of D'Andrea and D\k{a}browski recover the unital spectral triple over the Podle\'s sphere which was introduced by Andrzej Sitarz and Ludwik D\k{a}browski in \cite{DaSi:DSP}, see also \cite{NeTu:LIQ}. The unital spectral triple under review is related to the work of several other authors, see e.g. \cite{KrTu:DDO,Mat:DDO,DBS:DDS}, but we believe that the precise relationship still has to be clarified.

In the present text, a lot of details are omitted and we refer the reader to \cite{DaDa:DOQ} and \cite{MiKa:SMQ} for more information.
\medskip

The notation $\La(\B C^r) = \op_{k = 0}^r \La^k(\B C^r)$ refers to the exterior algebra which we view as a finite dimensional Hilbert space with orthonormal basis $\{ e_I \}_{I \su \{1,\ldots,r\}}$. The factors in the direct sum decomposition of the exterior algebra are mutually orthogonal and for $k \in \{0,1,\ldots,r\}$, the corresponding exterior power has the orthonormal basis $\{ e_I\}_{I \su \{1,\ldots,r\} \, , \, \, |I| = k}$ indexed by subsets of $\{1,\ldots,r\}$ with exactly $k$ elements. 

For each $j \in \{1,\ldots,r\}$, define the \emph{$q$-deformed exterior multiplication operator}
\[
\ep^q_j : \La(\B C^r) \to \La(\B C^r) \q
\ep^q_j(e_I) = \fork{ccc}{0 & \T{for} & j \in I \\
  (-q)^{- |I \cap \{1,\ldots,j\}| } \cd e_{I \cup \{j\}} & \T{for} & j \notin I} .
\]
Notice that $\ep^q_j$ maps $\La^k(\B C^r)$ to $\La^{k+1}(\B C^r)$ for all $k \in \{0,1,\ldots,r-1\}$. We define the \emph{$q$-deformed interior multiplication operator} as the adjoint $(\ep^q_j)^*$ of the $q$-deformed exterior multiplication operator. The $q$-deformed exterior multiplication operators satisfy the commutation rule
\[
\ep^q_i \ep^q_j = -q \cd \ep_j^q \ep_i^q \q \T{for } 1 \leq i < j \leq r . 
\]

The constructions of D'Andrea and D\k{a}browski depend on a particular left action of the quantized enveloping algebra $\C U_q(\G{su}(r + 1))$ on the coordinate algebra $\C O(SU_q(r + 1))$. For an element $\eta \in \C U_q(\G{su}(r + 1))$ we denote the corresponding endomorphism of the coordinate algebra by
\[
d_\eta : \C O(SU_q(r + 1)) \to \C O(SU_q(r + 1)) .
\]
Using Sweedler notation $\De(\eta) = \eta_{(1)} \ot \eta_{(2)}$ for coproducts, we record the behaviour
\begin{equation}\label{eq:derprod} 
d_\eta(xy) = d_{\eta_{(2)}}(x) d_{\eta_{(1)}}(y) \, \, \T{ and } \, \, \, d_\eta(x^*) = d_{S(\eta^*)}(x)^*
\end{equation}
for all $x,y \in \C O(SU_q(r+1))$. The left action is then determined by the formulae
\begin{equation}\label{eq:dergen}
d_{K_s}(u_{ij}) = q^{\frac{1}{2} (\de_{is} - \de_{i,s+1})} \cd u_{ij} \, \, , \, \, \,  d_{E_s}(u_{s+1,j}) = -q^{-1} \cd u_{sj}
\, \, , \, \, \, d_{F_s}(u_{sj}) = - q \cd u_{s+1,j} ,
\end{equation}
where we are only listing the non-trivial results and the indices are subject to the constraints $s \in \{1,\ldots,r\}$ and $i,j \in \{1,\ldots,r+1\}$.

It is relevant to clarify that, upon putting $\C A_n := \C O(S_q^{2r+1}) \cap A_n$ for all $n \in \B Z$, we get the description
\begin{equation}\label{eq:specsubK}
\C A_n = \big\{ a \in \C O(S_q^{2r + 1}) \mid d_{K_r}(a) = q^{-\frac{n}{2}} a \big\} .
\end{equation}
Furthermore, for $r \neq 1$ and $s \in \{1,\ldots,r-1\}$, we notice the relations
\[
d_{K_s}(x) = x \, \, \T{ and } \, \, \, d_{E_s}(x) = 0 = d_{F_s}(x) \, \, \T{for all } x \in \C O(S_q^{2r + 1}) . 
\]

For each $j \in \{1,\ldots,r\}$, define the invertible element $L_j := K_j K_{j+1} \clc K_r \in \C U_q(\G{su}(r+1))$. Moreover, we introduce the element $M_j \in \C U_q(\G{su}(r + 1))$ recursively by putting $M_r := E_r$ and $M_j := E_j M_{j+1} - q^{-1} M_{j+1} E_j$ for $j < r$. These elements yield the following endomorphisms of the coordinate algebra
\begin{equation}\label{eq:twistjay}
d_j := d_{L_j M_j^*} \, \, \T{ and } \, \, \, d_j^\da := d_{M_jL_j} : \C O(SU_q(r+1)) \to \C O(SU_q(r + 1)) ,
\end{equation}
which can be combined with the $q$-deformed exterior and interior multiplication operator to yield the endomorphisms 
\begin{equation}\label{eq:overpar}
\ov{\pa} := \sum_{j = 1}^r d_j \ot \ep^q_j \, \, \T{ and } \, \, \, \ov{\pa}^\da := \sum_{j = 1}^r d_j^\da \ot (\ep^q_j)^*
\end{equation}
acting on the algebraic tensor product $\C O(SU_q(r + 1)) \ot \La(\B C^r)$. These two maps satisfy the important identities $\ov{\pa}^2 = 0 = (\ov{\pa}^\da)^2$, see \cite[Proposition 5.6]{DaDa:DOQ}.
%

We are going to upgrade the endomorphisms $\ov{\pa}$ and $\ov{\pa}^\da$ to unbounded operators acting on a Hilbert space. To this end, consider the Haar state $h : C(SU_q(r + 1)) \to \B C$ together with the separable Hilbert space $L^2(SU_q(r + 1))$ obtained by applying the GNS-construction to the Haar state. Since the Haar state is faithful, see \cite{Nag:HMQ}, we know that the associated map $\La : C(SU_q(r + 1)) \to L^2(SU_q(r + 1))$ is injective. We define $H := L^2(SU_q(r + 1)) \ot \La(\B C^r)$ as the Hilbert space tensor product and equip $H$ with the left action of the $C^*$-algebraic quantum sphere given by the injective unital $*$-homomorphism $\phi : C(S_q^{2r+1}) \to \B L(H)$ satisfying that
\[
\phi(x)( \La(y) \ot \xi) := \La(x \cd y) \ot \xi \q \T{for all } y \in C(SU_q(r + 1)) \, \, \T{ and } \, \, \, \xi \in \La(\B C^r).
\]
We moreover equip $H$ with the selfadjoint unitary operator $\ga_0$ which for every $k \in \{0,1,\ldots,r\}$ satisfies that
\[
\ga_0( \La(y) \ot \xi) = (-1)^k \cd \La(y) \ot \xi \q \T{for all } y \in C(SU_q(r + 1)) \, \, \T{ and } \, \, \, \xi \in \La^k(\B C^r) .
\]

For each $M \in \B Z$, one may introduce the \emph{twisted antiholomorphic forms} $\Om_M \su \C O(SU_q(r+1)) \ot \La(\B C^r)$ by applying the representation theory for the quantized enveloping algebra $\C U_q(\G{su}(r + 1))$ (or more precisely one of the quantum Levi subalgebras). Since the particular definition of this subspace is less relevant for the present text we refer the reader to \cite{DaDa:DOQ} or \cite{MiKa:SMQ} for more details. For the time being, it suffices to collect some properties in the next proposition, see \cite[Proposition 5.6]{DaDa:DOQ} and \cite[Proposition 3.4]{MiKa:SMQ}:

\begin{prop}\label{p:respect}
  Let $M \in \B Z$. For every $\xi, \eta \in \Om_M$ it holds that
  \begin{enumerate}
  \item $\ga_0(\xi) \in \Om_M$ and $\phi(x)(\xi) \in \Om_M$ for all $x \in \C O(\B CP_q^r)$;
  \item $\ov{\pa}(\xi)$ and $\ov{\pa}^\da(\eta)$ belong to $\Om_M$ and we have the identity $\inn{\ov{\pa} \xi, \eta} = \inn{\xi, \ov{\pa}^\da \eta}$. 
\end{enumerate} 
\end{prop}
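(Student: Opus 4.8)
The plan is to use the representation-theoretic description of $\Om_M$: it is a prescribed isotypic subspace for the action of the quantum Levi subalgebra $\C U_q(\G u(r)) \su \C U_q(\G{su}(r+1))$ — generated by $K_1^{\pm 1},\dots,K_r^{\pm 1}$ and $E_1,\dots,E_{r-1}$ together with their adjoints — on $\C O(SU_q(r+1)) \ot \La(\B C^r)$, where the first leg carries the action $d_{(-)}$ and the second leg the representation built from the $q$-deformed exterior and interior multiplications $\ep^q_j,(\ep^q_j)^*$; the integer $M$ records the $K_r$-weight, in the sense of \eqref{eq:specsubK}. Each of the three claims then amounts to checking that an operator either commutes with this defining action (hence preserves the isotype) or is a formal adjoint on the weight space $\Om_M$.

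First I would settle part $(1)$. Since $\ga_0$ acts as the scalar $(-1)^k$ on the form-degree-$k$ summand $\C O(SU_q(r+1)) \ot \La^k(\B C^r)$, and both the $\C U_q(\G u(r))$-action and the $K_r$-weight decomposition respect this $\B Z$-grading, the subspace $\Om_M$ inherits the grading and hence $\ga_0(\Om_M) \su \Om_M$. For $\phi(x)$ with $x \in \C O(\B CP_q^r) = \C A_0$, the point is that left multiplication by $x$ commutes, on the first leg, with the $d$-action of every generator of $\C U_q(\G u(r))$ and with $d_{K_r}$: using $d_\eta(x y) = d_{\eta_{(2)}}(x)\, d_{\eta_{(1)}}(y)$ from \eqref{eq:derprod}, the coproduct formulae $\De(K_i) = K_i \ot K_i$ and $\De(E_i) = E_i \ot K_i + K_i^{-1}\ot E_i$, and the relations $d_{K_s}(x) = x$, $d_{E_s}(x) = 0 = d_{F_s}(x)$ for $s < r$ and $x \in \C O(S_q^{2r+1})$, together with $d_{K_r}(x) = x$ for $x \in \C A_0$, one finds $d_\eta(xy) = x\,d_\eta(y)$ for all $y \in \C O(SU_q(r+1))$ and all these generators $\eta$. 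As $\phi(x)$ is the identity on the $\La(\B C^r)$-leg, it commutes with the whole defining action, so it preserves $\Om_M$.

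For part $(2)$, the statement that $\ov{\pa}$ and $\ov{\pa}^\da$ map $\Om_M$ into itself is the equivariance built into their construction in \cite[Proposition 5.6]{DaDa:DOQ}: the summands $d_{L_jM_j^*}\ot\ep^q_j$ and $d_{M_jL_j}\ot(\ep^q_j)^*$ assemble into operators intertwining the $\C U_q(\G u(r))$-action (the relation $\ep^q_i\ep^q_j = -q\,\ep^q_j\ep^q_i$ being exactly what is needed), and the $K_r$-weight $M$ is preserved because raising the form degree by one through $\ep^q_j$ compensates the weight shift produced by $d_{L_jM_j^*}$ on the first leg — with the mirror assertion for $\ov{\pa}^\da$. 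It then remains to prove $\inn{\ov{\pa}\xi,\eta} = \inn{\xi,\ov{\pa}^\da\eta}$ for $\xi,\eta\in\Om_M$. On the $\La(\B C^r)$-leg this is automatic, since $(\ep^q_j)^*$ is the Hilbert space adjoint of $\ep^q_j$. On the $L^2(SU_q(r+1))$-leg one must compute the adjoint of $d_{L_jM_j^*}$ for the GNS inner product $\inn{\La(a),\La(b)} = h(a^*b)$ of the Haar state $h$; using $d_\eta(b)^* = d_{S(\eta^*)}(b^*)$ from \eqref{eq:derprod} and the bi-invariance and KMS property of $h$ (whose modular automorphism is implemented by a group-like power product of the $K_i$), this adjoint equals $d_{M_jL_j}$ up to a scalar $q^{w}$ depending only on the relevant weight $w$, and the final step is to observe that $w$ is constant on $\Om_M$ — being fixed once the $K_r$-weight $M$ and the Levi isotype are fixed — and equal to the normalisation making the scalar $1$. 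Combining the two legs yields the asserted adjointness.

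The hard part will be this last step: the explicit adjoint computation on the $L^2(SU_q(r+1))$-leg, and in particular verifying that the modular correction factor produced by the Haar state is trivial precisely when one restricts to the single weight space $\Om_M$. This is the only place where the full representation-theoretic definition of $\Om_M$ is needed rather than merely its $\B Z$-grading and its equivariance; everything else reduces to the coproduct identities \eqref{eq:derprod}--\eqref{eq:dergen} and the commutation relation for the $\ep^q_j$. If the modular bookkeeping turns out to be unwieldy, the pragmatic alternative is to invoke \cite[Proposition 5.6]{DaDa:DOQ} and \cite[Proposition 3.4]{MiKa:SMQ}, where the statement is established in full.
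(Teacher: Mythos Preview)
The paper does not prove this proposition at all: it is stated with a pointer to \cite[Proposition 5.6]{DaDa:DOQ} and \cite[Proposition 3.4]{MiKa:SMQ}, and no argument is given. Your ``pragmatic alternative'' at the end --- simply invoking those references --- is therefore exactly what the paper does.

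Your sketch goes further than the paper and is broadly on the right track, but it is not a complete proof. The argument for part~$(1)$ is sound once one grants the representation-theoretic description of $\Om_M$ as a prescribed isotype for the Levi subalgebra (which the paper deliberately does not spell out). For part~$(2)$ you correctly identify the crux: the adjointness $\inn{\ov{\pa}\xi,\eta} = \inn{\xi,\ov{\pa}^\da\eta}$ on $\Om_M$ requires computing the GNS adjoint of $d_{L_jM_j^*}$ and showing that the modular correction from the Haar state is exactly absorbed by the weight normalisation on $\Om_M$. You flag this as the hard step and do not carry it out; this is a genuine gap, not just missing detail, since the computation depends on the precise definition of $\Om_M$ (the shifted weight condition on each $\La^k$-summand) that neither you nor the paper has written down. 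So as a self-contained proof your proposal is incomplete at precisely the point you anticipate, and the honest route --- which is also the paper's route --- is to cite the two references.
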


Let us fix the integer $M \in \B Z$ and let $L^2(\Om_M,h) \su H$ denote the closure of the twisted antiholomorphic forms $\Om_M$ viewed as a subspace of the Hilbert space $H = L^2(SU_q(r + 1)) \ot \La(\B C^r)$.

Because of Proposition \ref{p:respect}, we may define the symmetric unbounded operator
\[
\C D_q := \ov{\pa} + \ov{\pa}^{\da} : \Om_M \to L^2(\Om_M,h), 
\]
which is referred to as the \emph{twisted $q$-deformed Dolbeault operator}. The closure of this unbounded operator is denoted by $D_q := \ov{\C D_q}$. Moreover, we obtain a unital $*$-homomorphism $\rho : C(\B CP_q^r) \to \B L\big( L^2(\Om_M,h) \big)$ satisfying that $\rho(x)(\xi) = \phi(x)(\xi)$ for all $x \in C(\B CP_q^r)$ and $\xi \in L^2(\Om_M,h)$. It can be verified that $\rho$ is injective, see \cite{MiKa:SMQ} for a proof of this fact.

The theorem here below form an essential part of \cite[Theorem 6.2]{DaDa:DOQ}, but D'Andrea and D\k{a}browski also establish the equivariance properties and the reality condition. They moreover show that the spectral dimension is equal to zero (meaning that $(1 + D_q^2)^{-t}$ is of trace class for all $t > 0$).  

\begin{thm}\label{t:spectrip}
The triple $(\C O(\B CP_q^r), L^2(\Om_M,h), D_q)$ is a graded unital spectral triple with respect to the injective unital $*$-homomorphism $\rho : C(\B CP_q^r) \to \B L( L^2(\Om_M,h))$ and the grading operator $\ga_0 : L^2(\Om_M,h) \to L^2(\Om_M,h)$.
\end{thm}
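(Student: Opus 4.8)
This theorem is due to D'Andrea and D\k{a}browski, \cite[Theorem 6.2]{DaDa:DOQ}, and the plan is to recall the structure of their argument, taking the injectivity of $\rho$ (already noted before the statement) for granted. First I would dispose of the grading conditions. Since $\phi(x)$ acts only on the $L^2(SU_q(r+1))$-factor of $H = L^2(SU_q(r+1)) \ot \La(\B C^r)$, it preserves each subspace $L^2(SU_q(r+1)) \ot \La^k(\B C^r)$ and hence commutes with $\ga_0$; by Proposition \ref{p:respect} $(1)$ the restriction $\rho(x)$ on the invariant subspace $L^2(\Om_M,h)$ then commutes with $\ga_0$ as well, for all $x \in C(\B CP_q^r)$. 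On the other hand, the definition \eqref{eq:overpar} shows that $\ov{\pa}$ raises the exterior degree by one while $\ov{\pa}^\da$ lowers it by one, so both anti-commute with $\ga_0$; hence $\C D_q = \ov{\pa} + \ov{\pa}^\da$ and its closure $D_q$ anti-commute with $\ga_0$.

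Next I would establish the bounded commutator condition. By construction $\Om_M$ is a core for $D_q$, and by Proposition \ref{p:respect} $(1)$ the operators $\rho(x)$ preserve $\Om_M$ for all $x \in \C O(\B CP_q^r)$; it therefore suffices to show that $[\C D_q, \rho(x)] : \Om_M \to L^2(\Om_M,h)$ extends to a bounded operator on $L^2(\Om_M,h)$, after which a standard argument also yields $\rho(x)(\T{Dom}(D_q)) \su \T{Dom}(D_q)$. Since $[\C D_q, \rho(\cdot)]$ is a derivation it is enough to treat the generators $x = z_i z_j^*$ of $\C O(\B CP_q^r)$. Writing out $[\ov{\pa},\rho(x)]$ and $[\ov{\pa}^\da,\rho(x)]$ via \eqref{eq:overpar} and applying the twisted Leibniz rule \eqref{eq:derprod} together with the coproducts of $K_i$ and $E_i$ (and hence of $L_j M_j^*$ and $M_j L_j$), the a priori unbounded contributions are seen to cancel, and what remains is a finite sum of operators built from left multiplications by elements of $\C O(SU_q(r+1))$, the $q$-deformed Clifford operators $\ep^q_j, (\ep^q_j)^*$, and the group-like twist operators occurring in those coproducts. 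One then checks that this sum is bounded on $L^2(\Om_M,h)$; here the restriction to the fixed twisted antiholomorphic forms $\Om_M$ and Proposition \ref{p:respect} are essential, since the operators in play are unbounded on all of $H$ without this restriction. This furnishes the required bounded extension $\de_0(x)$ for the generators, and the general case follows from the derivation property.

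The remaining point, compactness of the resolvent $(i + D_q)^{-1}$, is the main obstacle. The plan is to use the Peter--Weyl decomposition of $L^2(SU_q(r+1))$ into matrix coefficients of the irreducible corepresentations, refined by the action of the quantized enveloping algebra $\C U_q(\G{su}(r+1))$ and of its quantum Levi subalgebra, to split $L^2(\Om_M,h)$ as an orthogonal direct sum of finite dimensional subspaces, each invariant under $\C D_q$. On each such block $\ov{\pa}$ and $\ov{\pa}^\da$ are mutually adjoint nilpotent operators, so $D_q^2 = \ov{\pa}\ov{\pa}^\da + \ov{\pa}^\da\ov{\pa}$ is block diagonal with non-negative finite dimensional blocks; one then estimates the smallest nonzero eigenvalue of each block in terms of the highest weight labelling it and checks that these eigenvalues grow without bound as the labels increase. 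This forces $(1 + D_q^2)^{-1}$, and hence $(i + D_q)^{-1}$, to be compact. Carrying out this spectral analysis is precisely where the representation theory of the Drinfeld--Jimbo deformation is needed; it is done in detail in \cite{DaDa:DOQ} and revisited in \cite{MiKa:SMQ}, and it in fact yields the sharper statement that the spectral dimension vanishes, i.e.\ that $(1 + D_q^2)^{-t}$ is of trace class for every $t > 0$.
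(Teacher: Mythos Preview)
The paper does not supply its own proof of this theorem; it simply records the statement and attributes it to \cite[Theorem 6.2]{DaDa:DOQ}, noting in the surrounding text that D'Andrea and D\k{a}browski in fact establish more (equivariance, reality, and spectral dimension zero). Your proposal is therefore not competing with any argument in the paper but is a sketch of the original D'Andrea--D\k{a}browski proof, and as such it is broadly on target: the grading discussion is correct, and the compact resolvent outline via the Peter--Weyl decomposition and block-wise eigenvalue estimates is exactly the mechanism used in \cite{DaDa:DOQ}.

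One point in your bounded commutator step deserves tightening. You write that ``the a priori unbounded contributions are seen to cancel'' and that the restriction to $\Om_M$ is essential for boundedness of the commutator. In fact no cancellation of unbounded terms occurs. For $x \in \C O(\B CP_q^r) \su \C A_0$ one has $d_{K_r}(x) = x$, and the twisted Leibniz rule for $d_j$ and $d_j^\da$ from \cite[Lemma 3.14]{DaDa:DOQ} (recorded later in the paper as Lemma \ref{l:twistsphere}) therefore degenerates to the ordinary Leibniz rule. Consequently $[\C D_q,\rho(x)]$ on $\Om_M$ is exactly $\sum_{j = 1}^r \big( d_j(x) \ot \ep_j^q + d_j^\da(x) \ot (\ep_j^q)^*\big)$, a finite sum of left multiplications by elements of $\C O(SU_q(r+1))$ tensored with finite-rank Clifford operators; this is bounded on all of $H$, not merely on $L^2(\Om_M,h)$. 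The restriction to $\Om_M$ is needed only so that $\ov{\pa}$ and $\ov{\pa}^\da$ are mutually adjoint and $D_q$ is well defined as a symmetric operator, not for boundedness of the commutator itself.
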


As explained in Subsection \ref{ss:twistedlip} (after Definition \ref{l:sliplower}) we may replace the coordinate algebra $\C O(\B CP_q^r)$ with the substantially larger Lipschitz algebra which we denote by $\Lip_{D_q}(\B CP_q^r)$. The following theorem is due to Max Holst Mikkelsen and the author of the present paper, see \cite[Theorem 5.11]{MiKa:SMQ}. The notion of a spectral metric space is recalled in Definition \ref{d:spemet}. 

\begin{thm}\label{t:quaprojcqms}
The graded unital spectral triple $\big( \T{Lip}_{D_q}(\B CP_q^r), L^2(\Om_M,h), D_q \big)$ is a spectral metric space.
\end{thm}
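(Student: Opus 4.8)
The plan is to verify Rieffel's totally-bounded criterion, Theorem~\ref{t:charac}: one must show that $\big\{ [a] \in \Lip_{D_q}(\B CP_q^r)/\B C \mid L_{D_q}(a) \leq 1 \big\}$ is totally bounded in the quotient normed space. The natural symmetry to exploit is \emph{not} a classical circle or torus action --- indeed, by the relations \eqref{eq:dergen} the classical maximal torus of $\C U_q(\G{su}(r+1))$ acts trivially on the fixed point algebra $\C O(\B CP_q^r)$, so neither Li's theorem (Theorem~\ref{t:licircle}) nor Rieffel's ergodic compact-group theorem applies directly here --- but rather the full $\C U_q(\G{su}(r+1))$-equivariance of the Dolbeault geometry established by D'Andrea and D\k{a}browski in \cite{DaDa:DOQ}. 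Concretely, I would decompose both $\C O(\B CP_q^r)$ and the Hilbert space $L^2(\Om_M,h)$ into finite-dimensional isotypic components for the (co)action of $\C U_q(\G{su}(r+1))$; since $D_q$ is equivariant and has compact resolvent (in fact spectral dimension zero), each eigenspace of $D_q^2$ is a sum of finitely many isotypic components, and the eigenvalues grow very rapidly along the isotypic filtration. Writing $a = \sum_\pi a_\pi$ for the isotypic decomposition of an element $a$ of the coordinate algebra, the multiplication operator $\rho(a_\pi)$ connects only those pairs of isotypic components of $L^2(\Om_M,h)$ allowed by the Clebsch--Gordan rules for $\pi$, so it is ``band-diagonal'' with respect to the grading of $L^2(\Om_M,h)$ by $D_q^2$-eigenvalue.

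The core of the argument is then a quantitative estimate of the form $\|a_\pi\| \leq \varphi(\pi)\, L_{D_q}(a)$ for every nontrivial $\pi$, with $\varphi(\pi) \to 0$ as $\pi$ runs to infinity through the isotypic filtration. I would obtain this in three moves: (i) Peter--Weyl/Schur orthogonality for the $q$-deformed matrix coefficients under the faithful Haar state $h$, which controls the Haar-$L^2$-norm of $a_\pi$ and relates it to the norms of the vectors $\La(a_\pi)\ot\xi \in L^2(\Om_M,h)$; (ii) a lower bound $\|[D_q,\rho(a_\pi)]\|_\infty \gtrsim g(\pi)\cdot\|\rho(a_\pi)\|_{L^2\to L^2}$ coming from the fact that the explicit operator $D_q = \ov\pa + \ov\pa^\da$ scales like the square root of a rapidly growing $D_q^2$-eigenvalue on each block while $\rho(a_\pi)$ shifts the block index by a bounded amount, so that the commutator reproduces $\rho(a_\pi)$ up to a factor $g(\pi)$ growing with $\pi$ --- here the structure of the $q$-Dolbeault complex and ultimately the coproduct rule $\De(E_i) = E_i\ot K_i + K_i^{-1}\ot E_i$ enter essentially; and (iii) conversion of the resulting Haar-$L^2$ bound on $a_\pi$ back to a $C^*$-norm bound. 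Given the estimate $\|a_\pi\| \leq \varphi(\pi) L_{D_q}(a)$, one finishes by approximating: for $N$ large, $a \mapsto \sum_{\pi \leq N} a_\pi$ maps the Lipschitz ball into a bounded subset of a fixed finite-dimensional space (hence a precompact set), while the tail $\sum_{\pi > N} a_\pi$ is uniformly small in norm; total boundedness of the Lipschitz ball modulo $\B C$ follows, first for $\C O(\B CP_q^r)$ and then, by lower semicontinuity of $L_{D_q}$ together with norm-density, for the full Lipschitz algebra $\Lip_{D_q}(\B CP_q^r)$ via Theorem~\ref{t:charac}.

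The step I expect to be the genuine obstacle is move~(iii): in the $q$-deformed setting the ratio between the $C^*$-norm $\|\rho(a_\pi)\|_\infty$ and the Haar-$L^2$-norm $\|\La(a_\pi)\|$ on an isotypic component is no longer polynomially bounded as it is at $q = 1$, because the relevant $q$-dimensions and the sup-norms of $q$-deformed matrix coefficients grow much faster. Consequently one needs sharp enough bounds on the $q$-representation matrix elements of $\C O(SU_q(r+1))$ to guarantee that the commutator growth $g(\pi)$ from move~(ii) outpaces this norm-ratio blow-up, i.e.\ that $\varphi(\pi) := (\text{norm ratio})/g(\pi)$ really tends to zero. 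Securing this balance --- equivalently, showing that the $q$-Casimir-type eigenvalues of $D_q^2$ dominate the growth of the $q$-dimensions --- is the technical heart of the proof, and it is where the specific structure of the Drinfeld--Jimbo deformation, rather than any general spectral-triple principle, does the work. Once this is in place, the conclusion of Theorem~\ref{t:quaprojcqms} follows, and combining it with Theorem~\ref{t:spectrip} shows that $\big(\Lip_{D_q}(\B CP_q^r), L^2(\Om_M,h), D_q\big)$ is a spectral metric space in the sense of Definition~\ref{d:spemet}.
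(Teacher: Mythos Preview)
The paper does not prove Theorem~\ref{t:quaprojcqms}; it merely quotes the result from \cite[Theorem~5.11]{MiKa:SMQ}. So there is no proof in the present paper to compare your proposal against --- your sketch should be read as an attempt to reconstruct the cited argument.

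Your high-level strategy --- exploit the full $\C U_q(\G{su}(r+1))$-equivariance of the D'Andrea--D\k{a}browski triple, decompose into isotypic pieces, and establish summable decay of the pieces against $L_{D_q}$ --- is a reasonable outline, and you correctly locate the genuine difficulty in step~(iii): for $q<1$ the ratio of $C^*$-norm to Haar-$L^2$-norm on an isotypic block grows exponentially in the highest weight, so one really must show that the growth of $D_q^2$-eigenvalues outpaces this. That balance is exactly where the hard work lies.

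Two corrections. First, your claim that ``the classical maximal torus acts trivially on $\C O(\B CP_q^r)$'' is true only for the \emph{left} action $d_{K_s}$ used to build $D_q$; the \emph{right} coaction of $\C O(SU_q(r+1))$ restricts to an honest classical $T^r$-action (from $z_i\mapsto\la_i z_i$ on $S_q^{2r+1}$) which descends nontrivially to $\B CP_q^r$ and commutes with $D_q$ since left and right actions commute. So Li-type reductions are in fact available and are used in \cite{MiKa:SMQ}; they do not finish the job by themselves, but they organize the problem. Second, your step~(ii) as written --- a lower bound $\|[D_q,\rho(a_\pi)]\|_\infty \gtrsim g(\pi)\|\rho(a_\pi)\|$ --- is not something one can prove as an abstract inequality; rather, one computes both $\rho(a_\pi)$ and $[D_q,\rho(a_\pi)]$ explicitly in terms of $q$-Clebsch--Gordan data and extracts the needed estimate directly, so that your steps~(ii) and~(iii) are in practice a single calculation rather than two separable moves.
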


The corollary here below now follows immediately from Theorem \ref{t:quaprojcqms} and Theorem \ref{t:charac}.

\begin{cor}
The graded unital spectral triple $\big(\C O(\B CP_q^r), L^2(\Om_M,h), D_q\big)$ is a spectral metric space. 
\end{cor}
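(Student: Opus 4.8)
The plan is to deduce the corollary from Theorem~\ref{t:quaprojcqms} by a soft restriction argument together with Rieffel's characterization in Theorem~\ref{t:charac}. Since the triple $\big(\C O(\B CP_q^r), L^2(\Om_M,h), D_q\big)$ is already known to be a graded unital spectral triple by Theorem~\ref{t:spectrip}, the only remaining task is to verify that the underlying Lipschitz triple is metric in the sense of Definition~\ref{d:spemet}, that is, that the pair $\big(\C O(\B CP_q^r), L_{D_q}\big)$ is a compact quantum metric space, where $L_{D_q}(a) = \| [D_q,\rho(a)] \|_\infty$.

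First I would observe that $\C O(\B CP_q^r)$ sits inside the Lipschitz algebra $\Lip_{D_q}(\B CP_q^r)$ --- this is exactly the content of the bounded commutator condition in Theorem~\ref{t:spectrip} --- and that the slip-norm $L_{D_q}$ on $\C O(\B CP_q^r)$ is literally the restriction of the slip-norm $L_{D_q}$ on $\Lip_{D_q}(\B CP_q^r)$. Both $\C O(\B CP_q^r)$ and $\Lip_{D_q}(\B CP_q^r)$ are operator systems inside the unital $C^*$-algebra $C(\B CP_q^r)$, and since the quotient norm on any operator system $\C E \su C(\B CP_q^r)$ by the unit line $\B C$ is computed using the ambient $C^*$-norm, the inclusion $\C O(\B CP_q^r) \su \Lip_{D_q}(\B CP_q^r)$ induces an \emph{isometric} inclusion of quotient spaces $\C O(\B CP_q^r)/\B C \hookrightarrow \Lip_{D_q}(\B CP_q^r)/\B C$.

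Next I would invoke Theorem~\ref{t:quaprojcqms}, which tells us that $\big(\Lip_{D_q}(\B CP_q^r), L_{D_q}\big)$ is a compact quantum metric space, so that by Theorem~\ref{t:charac} the subset $\big\{ [x] \in \Lip_{D_q}(\B CP_q^r)/\B C \mid L_{D_q}(x) \leq 1 \big\}$ is totally bounded. Under the isometric inclusion of quotient spaces from the previous step, the subset $\big\{ [x] \in \C O(\B CP_q^r)/\B C \mid L_{D_q}(x) \leq 1 \big\}$ is carried into this totally bounded set, and a subset of a totally bounded set in a metric space is again totally bounded. Applying Theorem~\ref{t:charac} in the reverse direction then yields that $\big(\C O(\B CP_q^r), L_{D_q}\big)$ is a compact quantum metric space, which together with Theorem~\ref{t:spectrip} and Definition~\ref{d:spemet} is precisely the assertion of the corollary.

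There is essentially no serious obstacle here; the whole argument is the general principle that restricting a compact quantum metric space to a norm-dense operator subsystem equipped with the restricted slip-norm again produces a compact quantum metric space. The only point demanding a moment's care is the verification that passing from $\Lip_{D_q}(\B CP_q^r)$ down to $\C O(\B CP_q^r)$ does not distort the quotient norm, so that total boundedness is genuinely inherited --- but this is immediate once one recalls that the relevant quotient is by the common subspace $\B C \cd 1$ with the norm inherited from the single $C^*$-norm on $C(\B CP_q^r)$.
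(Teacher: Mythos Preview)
Your proof is correct and follows exactly the approach indicated in the paper, which simply states that the corollary follows immediately from Theorem~\ref{t:quaprojcqms} and Theorem~\ref{t:charac}. You have spelled out precisely the restriction argument the paper leaves implicit: the unit ball for $L_{D_q}$ on $\C O(\B CP_q^r)/\B C$ sits isometrically inside the corresponding unit ball for $\Lip_{D_q}(\B CP_q^r)/\B C$, which is totally bounded by Theorem~\ref{t:quaprojcqms} and Theorem~\ref{t:charac}, so the smaller set inherits total boundedness.
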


\subsection{Noncommutative metrics on the quantum spheres}
We are now going to show that Assumption \ref{a:exponential} is satisfied in the context of the higher Vaksman-Soibelman quantum spheres. This leads us straight to the last main result of this paper where we show that each of the quantum spheres can be endowed with the structure of a compact quantum metric space arising from noncommutative geometric data. In particular, we generalize one of the main results from \cite{KaKy:SUq2} regarding quantum metrics on quantum $SU(2)$. More precisely, our methods yield a new proof of \cite[Theorem 5.23]{KaKy:SUq2} in the special case where the parameter $t$ (appearing in the statement of \cite[Theorem 5.23]{KaKy:SUq2}) is equal to one.

It is relevant to clarify that our approach to the spectral geometry of the higher quantum spheres is different from the approach in \cite{ChPa:CES} for a multitude of reasons. First of all, the approach in \cite{ChPa:CES} does not have much to do with the pairing between the quantized enveloping algebra and the coordinate algebra for quantum $SU(r + 1)$ in the sense that it seems difficult to describe the selfadjoint unbounded operators analyzed in \cite{ChPa:CES} in terms of this pairing. Another important difference is that in \cite{ChPa:CES} the authors construct equivariant spectral triples over the higher quantum spheres. As explained in the introduction, our faithfulness to the underlying $q$-geometry, as witnessed by the above mentioned pairing between the quantized enveloping algebra and the coordinate algebra for quantum $SU(r + 1)$, prevents us from obtaining unital spectral triples for the higher quantum spheres. Instead, we are forced to work with a different kind of spectral geometric data resulting in two (twisted) $*$-derivations instead of a single one. These two (twisted) $*$-derivations are in some sense incompatible because they are twisted with two different twists (where one of the twists is in fact trivial). This has to do with the way the corresponding selfadjoint unbounded operators interact with the coordinate algebra for the quantum sphere. The non-twisted $*$-derivation is related to the vertical geometric data as witnessed by the circle action and the twisted $*$-derivation is related to the horizontal geometric data stemming from the unital spectral triple over quantum projective space as reviewed in Theorem \ref{t:spectrip}. 

Let us provide the relevant details. Our starting point is the $C^*$-algebraic quantum sphere $A := C(S_q^{2r + 1})$ which is equipped with the strongly continuous circle action $\si$ satisfying that $\si_\la(z_i) = \la z_i$ for all $i \in \{1,\ldots,r+1\}$. We are moreover considering the coordinate algebra for the quantum sphere, $\C A := \C O(S_q^{2r+1})$, which is a norm-dense unital $*$-subalgebra of $C(S_q^{2r+1})$. On top of this data, we fix an $M \in \B Z$ and consider the graded unital spectral triple $\big( \C O(\B CP_q^r), L^2(\Om_M,h), D_q\big)$ described in Subsection \ref{ss:specmet}.


Let us put $H_0 := L^2(\Om_M,h)$. The injective unital $*$-homomorphism and the $*$-derivation associated with our graded unital spectral triple are denoted by $\rho : C(\B CP_q^r) \to \B L(H_0)$ and $\de_0 : \C O(\B CP_q^r) \to \B L(H_0) $. We view $H_0$ as a closed subspace of the separable Hilbert space $H = L^2(SU_q(r + 1)) \ot \La(\B C^r)$. In Subsection \ref{ss:specmet} we already introduced the injective unital $*$-homomorphism $\phi : C(S_q^{2r+1}) \to \B L(H)$ satisfying that $\phi(x)\xi = \rho(x) \xi$ for all $x \in C(\B CP_q^r)$ and $\xi \in H_0$. Our aim is now to construct the twisted derivation $\de : \C O(S_q^{2r+1}) \to \B L(H)$ which we need to establish the conditions in Assumption \ref{a:exponential}. As we shall see, the constant $\mu > 0$ agrees with the inverse of the deformation parameter so that $\mu = q^{-1}$.

Let us define the algebra automorphism
\[
\be_i := d_{K_r^{-1}} : \C O(S_q^{2r+1}) \to \C O(S_q^{2r+1})
\]
and record that $\be_i(x) = q^{\frac{n}{2}} x$ as soon as $x \in \C A_n := \C O(S_q^{2r+1}) \cap A_n$ for some $n \in \B Z$. 

Recall from \eqref{eq:twistjay} that $d_j$ and $d_j^\da$ are endomorphisms of $\C O(SU_q(r+1))$ defined by putting $d_j := d_{L_j M_j^*}$ and $d_j^\da := d_{M_j L_j}$ for all $j \in \{1,\ldots,r\}$. Regarding the elements $M_j L_j$ and $L_j M_j^*$ in $\C U_q(\G{su}(r+1))$, it follows from \cite[Lemma 3.14]{DaDa:DOQ} that they operate as twisted derivations in the sense of the next lemma: 

\begin{lemma}\label{l:twistsphere}
  Let $j \in \{1,\ldots,r\}$. We have the formulae
  \[
  d_j(xy) = \be_i^{-2}(x) d_j(y) + d_j(x) y \, \, \mbox{ and } \, \, \, 
  d_j^\da(xy) = \be_i^{-2}(x) d_j^\da(y) + d_j^\da(x) y
  \]
  for all $x \in \C O(S_q^{2r+1})$ and $y \in \C O(SU_q(r+1))$.
\end{lemma}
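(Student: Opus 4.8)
The plan is to derive both identities from the module algebra rule \eqref{eq:derprod}, which in Sweedler notation reads $d_\eta(xy) = d_{\eta_{(2)}}(x)\, d_{\eta_{(1)}}(y)$, applied to $\eta = M_j L_j$ and to $\eta = L_j M_j^*$. Since each side is additive in $y$ and, for $x \in \C O(S_q^{2r+1})$, additive in $x$, and since $\C O(S_q^{2r+1}) = \bop_{n \in \B Z} \C A_n$ as a vector space, it suffices to treat $x \in \C A_n$ for a fixed $n \in \B Z$. Everything then comes down to understanding the coproducts $\De(M_j L_j)$ and $\De(L_j M_j^*)$ in $\C U_q(\G{su}(r+1))$.

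First I would compute $\De(M_j)$ by downward induction on $j$, starting from $M_r = E_r$ and using the recursion $M_j = E_j M_{j+1} - q^{-1} M_{j+1} E_j$ together with $\De(E_s) = E_s \ot K_s + K_s^{-1}\ot E_s$ and the $K$–$E$ commutation relations. Tracking which generators land in which tensor leg, the contribution where all of $E_j,\dots,E_r$ go to the left leg assembles (with no commutation corrections) to $M_j \ot L_j$, the contribution where all go to the right leg assembles to $L_j^{-1}\ot M_j$, and the remaining mixed contributions produce lower-order corrections, so that
\[
\De(M_j) = M_j \ot L_j + L_j^{-1}\ot M_j + \sum_\al p_\al \ot c_\al ,
\]
where — and this is the substantive point, see below — each correction $c_\al$ contains at least one factor $E_s$ with $s \in \{j,\dots,r-1\}$. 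Multiplying by $\De(L_j) = L_j \ot L_j$ and using $L_j L_j^{-1} = 1$ gives $\De(M_j L_j) = M_j L_j \ot L_j^2 + 1 \ot M_j L_j + \sum_\al p_\al L_j \ot c_\al L_j$, and applying the $*$-operation (for which $\De(\eta^*) = \eta_{(1)}^*\ot\eta_{(2)}^*$ and each $K_i$ is fixed) yields the analogous formula for $\De(L_j M_j^*)$ with $c_\al L_j$ replaced by $c_\al^* L_j$, where $c_\al^*$ now contains a factor $F_s = E_s^*$ with $s < r$. Feeding these into \eqref{eq:derprod} produces three kinds of summands: the leading term $d_{L_j^2}(x)\, d_{M_j L_j}(y)$, which on $\C O(S_q^{2r+1})$ equals $\be_i^{-2}(x)\, d_j^\da(y)$ since $L_j = K_j \clc K_r$ and, by \eqref{eq:specsubK} together with $d_{K_s}(x)=x$ for $s<r$, one has $d_{L_j^2} = d_{K_r^2} = \be_i^{-2}$ on $\C O(S_q^{2r+1})$; the primitive term, which (as $1$ acts as the identity) equals $d_j^\da(x)\, y$; and the correction terms $d_{c_\al L_j}(x)\, d_{p_\al L_j}(y)$, each of which vanishes once $d_{c_\al L_j}(x) = 0$. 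The identical computation with $F_s$ in place of $E_s$ handles $d_j = d_{L_j M_j^*}$.

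The only non-routine input is the structural statement about the corrections, and this is exactly where \cite[Lemma 3.14]{DaDa:DOQ} enters: one must know that each surviving $c_\al$ (respectively $c_\al^*$) is built so that, when the actions of its factors are composed in the order dictated by the module structure, some $E_s$ or $F_s$ with $s \in \{1,\dots,r-1\}$ is applied to an element still lying in $\C O(S_q^{2r+1})$ — which kills it, since $d_{E_s}$ and $d_{F_s}$ annihilate $\C O(S_q^{2r+1})$ for $s<r$ while the $K_i$ preserve it. The subtle point, and the main obstacle to a fully self-contained argument, is ruling out corrections whose $x$-leg involves only $E_r$ (or $F_r$), for which $d_{E_r}$ does \emph{not} annihilate $\C O(S_q^{2r+1})$; it is the $q$-commutator form of $M_j$ that forces such terms to cancel inside $\De(M_j)$ (one already sees this cancellation in the rank-one computation), and this is precisely the content of \cite[Lemma 3.14]{DaDa:DOQ}. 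We therefore take that computation as the starting point, our remaining task being only to match the twisting element occurring there with $\be_i^{-2}$, as above.
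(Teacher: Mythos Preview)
Your proposal is correct and follows essentially the same approach as the paper: both invoke \cite[Lemma 3.14]{DaDa:DOQ} for the structure of $\De(M_j)$ and use that $d_{E_s}$ and $d_{F_s}$ annihilate $\C O(S_q^{2r+1})$ for $s < r$, together with $d_{L_j} = d_{K_r} = \be_i^{-1}$ on the sphere. The only organizational difference is that the paper first derives the twisted Leibniz rule for $d_{M_j}$ and $d_{M_j^*}$ and then composes with the automorphism $d_{L_j}$, whereas you compute $\De(M_jL_j)$ and $\De(L_jM_j^*)$ directly; this is cosmetic.
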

\begin{proof}
  Let $x \in \C O(S_q^{2r+1})$ and $y \in \C O(SU_q(r+1))$ be given. In order to compute how $d_j$ and $d_j^\da$ act on the product $x \cd y$, it suffices to have a formula for the coproduct $\De(M_j)$ and this is provided in the proof of \cite[Lemma 3.14]{DaDa:DOQ}. Observe now that the endomorphisms $d_{E_k}$ and $d_{F_k}$ both vanish on $\C O(S_q^{2r+1})$ as soon as $1 \leq k \leq r-1$. We therefore obtain from \cite[Lemma 3.14]{DaDa:DOQ} that
  \[
  \begin{split}
  d_{M_j}(x y) & = d_{L_j}(x) d_{M_j}(y) + d_{M_j}(x) d_{L_j^{-1}}(y) \q \T{and} \\
  d_{M_j^*}(xy) & = d_{L_j}(x) d_{M_j^*}(y) + d_{M_j^*}(x) d_{L_j^{-1}}(y) .
  \end{split}
  \]
The result of the present lemma now follows by observing that $d_{L_j}$ is an automorphism of $\C O(SU_q(r+1))$ (with inverse $d_{L_j^{-1}}$) satisfying that $d_{L_j}(x) = \be_i^{-1}(x)$.
\end{proof}

Let us define the $\cc$-linear map $\de : \C O(S_q^{2r+1}) \to \B L(H)$ by the formula
\begin{equation}\label{eq:twistsphere}
\de(x) := \sum_{j = 1}^r \big( d_j(x) \ot \ep_j^q  + d_j^\da(x) \ot (\ep_i^q)^* \big) 
\end{equation}
where the factors $d_j(x)$ and $d_j^\da(x)$ in $\C O(SU_q(r+1))$ act via left multiplication on $L^2(SU_q(r + 1))$. 

Recall from Section \ref{s:quasph} that the elements $\ze_i^L := q^{r + 1 - i} z_i$ and $\ze_i^R := z_i$ for $i \in \{1,\ldots,r+1\}$ satisfy condition $(2)$ from Assumption \ref{a:specsub}, see \eqref{eq:framesphere}. We also know that the coordinate algebra $\C O(S_q^{2r+1})$ is generated as a $*$-algebra by the subspace $\C A_1 := \C O(S_q^{2r+1}) \cap A_1$.

\begin{prop}\label{p:expsphere}
The Hilbert space $H = L^2(SU_q(r + 1)) \ot \La(\B C^r)$, the unital $*$-homomorphism $\phi : C(S_q^{2r+1}) \to \B L(H)$, and the $\cc$-linear map $\de : \C O(S_q^{2r+1}) \to \B L(H)$ satisfy the conditions of Assumption \ref{a:exponential} with respect to the constant $\mu := q^{-1}$. 
\end{prop}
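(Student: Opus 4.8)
The plan is to verify the three numbered conditions of Assumption \ref{a:exponential} in turn, using the representation-theoretic facts about the action of $\C U_q(\G{su}(r+1))$ that have been recalled just above, together with Lemma \ref{l:twistsphere}. Throughout I would identify $\C A_n = \C O(S_q^{2r+1}) \cap A_n$ with the eigenspace description \eqref{eq:specsubK}, namely $\C A_n = \{a \mid d_{K_r}(a) = q^{-n/2} a\}$, so that $\be_i = d_{K_r^{-1}}$ acts on $\C A_n$ by the scalar $q^{n/2}$, consistent with \eqref{eq:beta} for $\mu = q^{-1}$ (note $\mu^{i z n/2}$ with $z = i$ gives $\mu^{-n/2} = q^{n/2}$).

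First I would treat condition $(1)$: for $a \in \C O(\B CP_q^r) = \C A_0$ and $\xi \in H_0 = L^2(\Om_M,h)$ we need $\de(a)\xi = \de_0(a)\xi$ and $\phi(a)\xi = \rho(a)\xi$. The second identity is immediate from the definition of $\rho$ in Subsection \ref{ss:specmet}. For the first, observe that the D'Andrea--D\k{a}browski Dolbeault operator is $D_q = \ov\pa + \ov\pa^\da$ with $\ov\pa = \sum_j d_j \ot \ep^q_j$ and $\ov\pa^\da = \sum_j d_j^\da \ot (\ep^q_j)^*$, exactly the maps appearing in \eqref{eq:twistsphere}; since $\de_0(a) = [D_q, \rho(a)]$ (restricted to $L^2(\Om_M,h)$, which is invariant by Proposition \ref{p:respect}), and since $d_j, d_j^\da$ restricted to $\C A_0$ are genuine derivations (the case $x \in \C A_0$ of Lemma \ref{l:twistsphere}, where $\be_i^{-2}(x) = x$), one computes $[D_q,\rho(a)] = \sum_j (d_j(a) \ot \ep^q_j + d_j^\da(a)\ot(\ep^q_j)^*) = \de(a)$ on $H_0$. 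This is essentially a bookkeeping check that the ambient twisted derivation $\de$ restricts correctly.

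Next, condition $(2)$ — the twisted Leibniz rule $\de(ab) = \de(a)\phi(b) + \mu^n \phi(a)\de(b)$ for $a \in \C A_n$ — is the technical heart, and I expect it to be the main obstacle in the sense of requiring the most care. It follows from Lemma \ref{l:twistsphere}: for $a \in \C A_n$ and $b \in \C O(S_q^{2r+1}) \subseteq \C O(SU_q(r+1))$,
\[
d_j(ab) = \be_i^{-2}(a)\, d_j(b) + d_j(a)\, b = q^{-n} a\, d_j(b) + d_j(a)\, b,
\]
and likewise for $d_j^\da$; tensoring with the (interior/exterior) multiplication operators and summing over $j$, and using that left multiplication by $a$ on $L^2(SU_q(r+1))$ is $\phi(a)$, gives $\de(ab) = \de(a)\phi(b) + q^{-n}\phi(a)\de(b) = \de(a)\phi(b) + \mu^n \phi(a)\de(b)$ since $\mu = q^{-1}$. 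The one point demanding attention is that $\be_i^{-2}(a) = q^{-n} a = \mu^n a$ for $a \in \C A_n$, which is exactly the compatibility between the spectral-subspace grading \eqref{eq:specsubK} and the automorphism $d_{L_j}$; here one uses that $d_{L_j}(a) = \be_i^{-1}(a)$ (established in the proof of Lemma \ref{l:twistsphere}) and that $\be_i$ acts by $q^{n/2}$ on $\C A_n$. Also $\de(a) \in \B L(H)$ genuinely: $d_j(a), d_j^\da(a) \in \C O(SU_q(r+1))$ act by bounded left multiplication since $\C O(SU_q(r+1)) \subseteq C(SU_q(r+1))$, and $\La(\B C^r)$ is finite dimensional.

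Finally, condition $(3)$: I would take $\ze_j^R := z_j$ and $\ze_j^L := q^{r+1-j} z_j$ for $j \in \{1,\dots,r+1\}$ (so $k = m = r+1$), which by \eqref{eq:framesphere} satisfy $\sum_j \ze_j^R (\ze_j^R)^* = 1 = \sum_j (\ze_j^L)^* \ze_j^L$. It remains to check the two vanishing identities $\sum_j \phi(\ze_j^R)\de((\ze_j^R)^*) = 0$ and $\sum_j \phi(\ze_j^L)^* \de(\ze_j^L) = 0$. For the first, $\sum_j \phi(z_j)\de(z_j^*) = \sum_{i=1}^r \big( (\sum_j z_j d_i(z_j^*)) \ot \ep^q_i + (\sum_j z_j d_i^\da(z_j^*)) \ot (\ep^q_i)^*\big)$ as operators on $H$, and I would show each inner sum $\sum_j z_j d_i(z_j^*)$ and $\sum_j z_j d_i^\da(z_j^*)$ vanishes in $\C O(SU_q(r+1))$; this reduces, via \eqref{eq:derprod} and the explicit formula $z_j = u_{r+1,j}$ together with the action of $M_i L_i$ and $L_i M_i^*$ on the matrix coefficients, to a finite $q$-combinatorial identity among the $u_{ij}$ — the natural source being that $\ov\pa$ and $\ov\pa^\da$ annihilate the constant function $1 = \sum_j z_j z_j^*$ in a suitably graded sense, or directly the commutation relations \eqref{eq:qsphere}. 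The second identity $\sum_j \phi(z_j)^* \de(z_j) = 0$ is the adjoint/conjugate of a similar computation using $q^{2(r+1-j)}$-weights and \eqref{eq:framesphere}. I would also record at the outset that these conditions imply Assumption \ref{a:specsub} (as the Proposition's ambient remarks note), so nothing extra is needed there. The expected difficulty is concentrated in (i) getting the twist exponent in Lemma \ref{l:twistsphere} to line up with $\mu^n$ on $\C A_n$, and (ii) the explicit verification of the frame-vanishing identities, which is where the $q$-deformed combinatorics of the $u_{ij}$ and the operators $M_j, L_j$ genuinely enters.
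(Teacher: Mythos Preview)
Your plan is correct and matches the paper's proof for conditions $(1)$ and $(2)$ essentially verbatim: condition $(1)$ is exactly the commutator computation $[\ov\pa + \ov\pa^\da,\phi(a)] = \de(a)$ on $\Om_M$ for $a \in \C A_0$, and condition $(2)$ is immediate from Lemma~\ref{l:twistsphere} together with $\be_i^{-2}(a) = q^{-n}a$ on $\C A_n$.

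For condition $(3)$ your approach would succeed but is more work than necessary, and you have not quite isolated the key fact. The paper does not verify the sums $\sum_j z_j d_i(z_j^*)$ and $\sum_j z_j d_i^\da(z_j^*)$ by brute-force $q$-combinatorics. Instead it observes directly from \eqref{eq:dergen} and \eqref{eq:derprod} that $d_{F_i}(z_j) = 0 = d_{E_i}(z_j^*)$ for all $i \in \{1,\dots,r\}$ and $j \in \{1,\dots,r+1\}$ (since $z_j = u_{r+1,j}$ lies in the ``wrong'' row), and since $M_i^*$ is a polynomial in the $F_s$'s without constant term this forces $d_i(z_j) = 0$ and $d_i^\da(z_j^*) = 0$. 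The latter kills the $(\ep_i^q)^*$-terms in $\de(z_j^*)$ outright; for the $\ep_i^q$-terms the twisted Leibniz rule then gives $z_j d_i(z_j^*) = q\, d_i(z_j z_j^*)$, and summing over $j$ yields $q\, d_i(1) = 0$. The second frame identity $\sum_j q^{2(r+1-j)}\phi(z_j)^*\de(z_j) = 0$ is handled symmetrically using $d_i(z_j) = 0$. Your allusion to ``$\ov\pa$ annihilates $1$'' is the right endpoint, but the step that makes it work is the pointwise vanishing $d_i(z_j) = 0$, which you should state explicitly rather than fold into an unspecified combinatorial check.
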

\begin{proof}
We start out with the first condition in Assumption \ref{a:exponential}. It follows by the definition of $\rho : C(\B CP_q^r) \to \B L(H_0)$ that $\rho(x)(\eta) = \phi(x)(\eta)$ for all $x \in C(\B CP_q^r)$ and $\eta \in H_0 = L^2(\Om_M,h)$, see the discussion after Proposition \ref{p:respect}. Consider now an element $x \in \C O(\B CP_q^r)$. For each $y \in \C O(SU_q(r + 1))$ and $\xi \in \La(\B C^r)$ we obtain from Lemma \ref{l:twistsphere} and the definition of the endomorphism $\ov{\pa} + \ov{\pa}^\da$ from \eqref{eq:overpar} that
\begin{equation}\label{eq:compar}
  \begin{split}
    & \big[ \ov{\pa} + \ov{\pa}^\da, \phi(x) \big] ( \La(y) \ot \xi) \\
    & \q = \sum_{j = 1}^r \big( \La( d_j(x y)  - x d_j(y) ) \ot \ep_j^q(\xi) + \La( d_j^\da(x y) - x d_j^\da(y) ) \ot (\ep_j^q)^*(\xi) \big) \\
    & \q = \de(x) ( \La(y) \ot \xi) .
    \end{split}
    \end{equation}
    Since the abstract Dirac operator $D_q : \T{Dom}(D_q) \to L^2(\Om_M,h)$ satisfies that $D_q(\eta) = (\ov{\pa} + \ov{\pa}^\da)(\eta)$ for all $\eta \in \Om_M$ we obtain from \eqref{eq:compar} that
    \[
\de_0(x)(\eta) = [\ov{\pa} + \ov{\pa}^\da, \rho(x)](\eta) = \de(x)(\eta) .
\]
Since $\Om_M \su L^2(\Om_M,h)$ is norm-dense we conclude that the first condition in Assumption \ref{a:exponential} is satisfied by our data.

  The second condition in Assumption \ref{a:exponential} follows immediately from Lemma \ref{l:twistsphere} by observing that $\be_i^{-2}(x) = d_{K_r^2}(x) = q^{-n} x$ whenever $x \in \C A_n = \C O(S_q^{2r+1}) \cap A_n$ for some $n \in \B Z$.

  The third condition in Assumption \ref{a:exponential} can be verified by noting that $d_{F_i}(z_j) = 0 = d_{E_i}(z_j^*)$ for all $i \in \{1,\ldots,r\}$ and $j \in \{1,\ldots,r + 1\}$, see \eqref{eq:derprod} and \eqref{eq:dergen}. Indeed, we obtain from this that $d_i(z_j) = 0 = d_i^\da(z_j^*)$ and hence, upon applying the twisted Leibniz rule from Lemma \ref{l:twistsphere}, we get that 
  \[
    \sum_{j = 1}^{r + 1} \phi(z_j) \de(z_j^*)
   = \sum_{j = 1}^{r+1} \sum_{i = 1}^r z_j d_i(z_j^*) \ot \ep_i^q 
  = \sum_{j = 1}^{r+1} \sum_{i = 1}^r q \cd d_i(z_j z_j^*) \ot \ep_i^q = 0 .
  \]
  A similar computation shows that the sum $\sum_{j = 1}^{r + 1} q^{2(r + 1 - j)} \phi(z_j^*) \de(z_j)$ is equal to zero as well. 
 \end{proof}

Since we now know that Assumption \ref{a:exponential} is satisfied in the context of the higher quantum spheres we finish this paper by restating our main theorems in this particular case.

Before doing so, let us however spend a bit more time computing the ingredients in the graded twisted Lipschitz triple $\big( \C O(S_q^{2r+1}), G, D_\Ga \big)$ which we obtain by applying Theorem \ref{t:twistlip} in the present setting.

The Hilbert space $G = X \hot_\rho L^2(\Om_M,h)$ is identified with the Hilbert space direct sum $K := \widehat{\bop}_{n \in \B Z} H_n$ where each summand $H_n \su H$ identifies with the closure of $\phi(A_n) L^2(\Om_M,h)$ inside $H = L^2(SU_q(r+1)) \ot \La(\B C^r)$. It therefore follows from \cite[Section 5]{DaDa:DOQ} that $H_n = L^2(\Om_{M + n},h)$ and that the norm-dense subspace $\phi(\C A_n) \Om_M$ agrees with $\Om_{M + n} \su L^2(\Om_{M + n},h)$. In this picture, the grading operator $1 \hot \ga_0$ is simply given by the selfadjoint unitary operator $\ga_0$ (see Proposition \ref{p:respect}) on each of the summands $L^2(\Om_{M + n},h) \su K$ for $n \in \B Z$. 

We specify that our one-parameter group of unitaries is determined by the formula $V_s(q^{-1})(\eta) = q^{-is \frac{n}{2}} \cd \eta$ whenever $\eta$ belongs to $H_n$ for some $n \in \B Z$ and $s \in \B R$. For every $z \in \B C$, the corresponding algebra automorphism $\be_z : \C O(S_q^{2r+1}) \to \C O(S_q^{2r+1})$ satisfies that $\be_z(a) = q^{-iz \frac{n}{2}} a$ whenever $a \in \C A_n$, see \eqref{eq:beta}. 

\begin{lemma}
  The modular lift $D_\Ga : \T{Dom}(D_\Ga) \to K$ has the algebraic direct sum $\bop_{n \in \B Z} \Om_{M + n}$ as a core and for each $n \in \B Z$ and $\xi \in \Om_{M + n}$ we have the explicit expression $D_\Ga(\xi) = (\ov{\pa} + \ov{\pa}^\da)(\xi)$. 
\end{lemma}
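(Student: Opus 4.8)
The plan is to combine Lemma~\ref{l:modular}, which gives an explicit formula for the modular lift, with the twisted Leibniz rule of Lemma~\ref{l:twistsphere}, while keeping track of the identifications $H_n = L^2(\Om_{M+n},h)$ and $\phi(\C A_n)\Om_M = \Om_{M+n}$ recorded above. Concretely, suppressing the unitary $U\colon G \to K$ and inserting $\mu = q^{-1}$, $D_0 = D_q$ from Proposition~\ref{p:expsphere}, Lemma~\ref{l:modular} reads $D_\Ga(\phi(a)\xi) = \de(a)\xi + q^{-n}\phi(a)D_q\xi$ for $a \in \C A_n$ and $\xi \in \T{Dom}(D_q)$.

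For the explicit formula I would fix $n \in \B Z$ and $\xi \in \Om_{M+n}$, write $\xi = \sum_l \phi(a_l)\eta_l$ with $a_l \in \C A_n$ and $\eta_l \in \Om_M$ (possible since $\phi(\C A_n)\Om_M = \Om_{M+n}$), and observe that $D_q\eta_l = (\ov{\pa}+\ov{\pa}^\da)\eta_l$ because $\eta_l \in \Om_M$. It then suffices to verify the identity $(\ov{\pa}+\ov{\pa}^\da)(\phi(a)\eta) = \de(a)\eta + q^{-n}\phi(a)(\ov{\pa}+\ov{\pa}^\da)\eta$ for $a \in \C A_n$ and $\eta \in \Om_M$. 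Expanding $\ov{\pa}+\ov{\pa}^\da = \sum_{j=1}^r(d_j \ot \ep^q_j + d_j^\da \ot (\ep^q_j)^*)$, applying Lemma~\ref{l:twistsphere} in the form $d_j(ay) = \be_i^{-2}(a)d_j(y) + d_j(a)y$ and its analogue for $d_j^\da$, and inserting $\be_i^{-2}(a) = q^{-n}a$ for $a \in \C A_n$ (as in the proof of Proposition~\ref{p:expsphere}) together with the definition \eqref{eq:twistsphere} of $\de$, the identity follows by a direct computation. Summing over $l$ and comparing with the displayed formula for $D_\Ga$ then yields $D_\Ga\xi = (\ov{\pa}+\ov{\pa}^\da)\xi$.

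For the core statement I would argue summand by summand. Since $\Ga^2$ acts as the scalar $q^{-n}$ on $H_n$ and the direct-sum connection $\Na$ restricts to the Grassmann connection $\Na_n$ on $\C A_n$, the modular lift decomposes as $D_\Ga = \widehat{\bop}_{n \in \B Z}(D_\Ga)_n$ with $(D_\Ga)_n = q^{-n}(1 \ot_{\Na_n}D_q)$, and by Theorem~\ref{t:esself} applied to the finite frame of $\C A_n \su A_n$ this operator is selfadjoint with domain exactly $\phi(\C A_n)\T{Dom}(D_q)$. Because $\Om_M$ is a core for $D_q$ and the map $\xi \mapsto \T{ev}_\xi(\Na_n(a))$ is bounded with norm at most $\|\Na_n(a)\|$, approximating $\xi \in \T{Dom}(D_q)$ in the graph norm of $D_q$ by elements of $\Om_M$ shows that $\Om_{M+n} = \phi(\C A_n)\Om_M$ is dense in $\phi(\C A_n)\T{Dom}(D_q)$ for the graph norm of $(D_\Ga)_n$, hence a core for $(D_\Ga)_n$. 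Passing to the algebraic direct sum over $n \in \B Z$ then exhibits $\bop_{n \in \B Z}\Om_{M+n}$ as a core for $D_\Ga$.

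I expect the genuine work to be in the core statement rather than the formula: one must check carefully that the domain of each $(D_\Ga)_n$ is precisely $\phi(\C A_n)\T{Dom}(D_q)$ (so that Theorem~\ref{t:esself} applies on each summand without a further closure) and that graph-density on the individual summands really yields a core for the infinite direct sum $D_\Ga$. The twisted Leibniz computation behind the explicit formula is entirely mechanical once Lemma~\ref{l:twistsphere} and the eigenvalue relation $\be_i^{-2}(a) = q^{-n}a$ are available.
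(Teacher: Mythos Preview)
Your proposal is correct and follows essentially the same approach as the paper. For the explicit formula, both you and the paper reduce to simple tensors $\phi(a)\eta$ with $a \in \C A_n$, $\eta \in \Om_M$, apply Lemma~\ref{l:modular}, and then use the twisted Leibniz rule of Lemma~\ref{l:twistsphere} together with $\be_i^{-2}(a)=q^{-n}a$ to recognise the result as $(\ov\pa+\ov\pa^\da)(\phi(a)\eta)$; the only cosmetic difference is that the paper writes $\eta=\sum_i x_i\ot\om_i$ explicitly before invoking Lemma~\ref{l:twistsphere}. For the core statement, the paper's argument is terser than yours: it simply recalls that $\bop_{n\in\B Z}\phi(\C A_n)\T{Dom}(D_q)$ is a core for $D_\Ga$ by construction (Definition~\ref{d:modular}) and that $\Om_M$ is a core for $D_q$, leaving the passage from $\T{Dom}(D_q)$ to $\Om_M$ implicit. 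Your summand-by-summand argument via Theorem~\ref{t:esself} spells out exactly this step and the direct-sum reassembly, so it is a more detailed version of the same reasoning rather than a different route.
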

\begin{proof}
  The fact that $\bop_{n \in \B Z} \Om_{M + n} \su \bop L^2(\Om_{M+n},h)$ is a core for the modular lift follows from Lemma \ref{l:modular} by recalling that $\Om_M \su L^2(\Om_M,h)$ is a core for the abstract Dirac operator $D_q$ (see the discussion after Proposition \ref{p:respect}) and that the algebraic direct sum $\bop_{n \in \B Z} \phi(\C A_n) \T{Dom}(D_q)$ is a core for the modular lift (see Definition \ref{d:modular}). Regarding the explicit description for the modular lift, we let $n \in \B Z$ and $\xi \in \Om_{M + n}$ be given. Assume without loss of generality that $\xi$ has the form $\phi(a) \eta$ for some $a \in \C A_n$ and $\eta \in \Om_M \su \C O(SU_q(r+1)) \ot \La(\B C^r)$. Let us choose $x_1,\ldots,x_m \in \C O(SU_q(r+1))$ and $\om_1,\ldots,\om_m \in \La(\B C^r)$ such that $\eta = \sum_{i = 1}^m x_i \ot \om_i$. Using Lemma \ref{l:modular} and Lemma \ref{l:twistsphere}, the computation here below ends the proof of the lemma: 
  \[
  \begin{split}
  D_\Ga( \phi(a) \eta) & = \de(a) \eta + q^{-n} \phi(a) (\ov{\pa} + \ov{\pa}^\da)(\eta) \\
  & = \sum_{i = 1}^m \sum_{j = 1}^r ( d_j(a) x_i + q^{-n} a d_j(x_i) ) \ot \ep_j^q \om_i \\
  & \q + \sum_{i = 1}^m \sum_{j = 1}^r ( d_j^\da(a) x_i + q^{-n} a d_j^\da(x_i) ) \ot (\ep_j^q)^* \om_i \\
  & = \sum_{i = 1}^m \sum_{j = 1}^r \big( d_j(a x_i) \ot \ep_j^q \om_i + d_j^\da(a x_i) \ot (\ep_j^q)^* \om_i \big) \\
  & = (\ov{\pa} + \ov{\pa}^\da)( \phi(a) \eta ) . \qedhere
  \end{split}
  \]
\end{proof}

The twisted Lipschitz algebra associated with the twisted Lipschitz triple $(\C O(S_q^{2r+1}),G,D_\Ga)$ is denoted by $\Lip_{\T{hor}}(S_q^{2r+1})$ and referred to as the horizontal Lipschitz algebra. As explained in Subsection \ref{ss:twistedlip}, the horizontal Lipschitz algebra is equipped with the twisted $*$-derivation
\[
\de_{\T{hor}} := \de_\Ga : \Lip_{\T{hor}}(S_q^{2r+1}) \to \B L(G) . 
\]
Remark that Theorem \ref{t:twistlip} provides a computation of $\de_{\T{hor}}$ on the coordinate algebra in terms of the twisted derivation $\de : \C O(S_q^{2r+1}) \to \B L(H)$ which in the current setting is spelled out in \eqref{eq:twistsphere}. The horizontal Lipschitz algebra $\Lip_{\T{hor}}(S_q^{2r+1})$ is however substantially larger than the coordinate algebra $\C O(S_q^{2r+1})$. 

Let us also consider the ungraded Lipschitz triple $\big( \C O(S_q^{2r+1}), G, N \hot 1\big)$ which we obtain from the considerations in Section \ref{s:vertical}. The associated Lipschitz algebra is denoted by $\Lip_{\T{ver}}(S_q^{2r+1})$ and we refer to it as the vertical Lipschitz algebra. The vertical Lipschitz algebra comes equipped with the $*$-derivation
\[
\de_{\T{ver}} := \de_{N \hot 1} : \Lip_{\T{ver}}(S_q^{2r+1}) \to \B L(G) ,
\]
which is given by the expression $\de_{N \hot 1}(x) = n \cd x$ whenever $x \in A_n$ for some $n \in \B Z$.

We equip the total Lipschitz algebra $\Lip(S_q^{2r+1}) := \Lip_{\T{hor}}(S_q^{2r+1}) \cap \Lip_{\T{ver}}(S_q^{2r+1})$ with the $\cc$-linear map
\[
\de_{\T{tot}} := (1 \hot \ga_0)\de_{\T{ver}} + \de_{\T{hor}} : \Lip(S_q^{2r+1}) \to \B L(G) .
\]
The corresponding slip-norm on $\Lip(S_q^{2r+1})$ is denoted by $L_{\T{tot}}$ meaning that $L_{\T{tot}}(x) := \| \de_{\T{tot}}(x) \|_\infty$ for all $x \in \Lip(S_q^{2r+1})$.

The following main theorem is now a consequence of Proposition \ref{p:expsphere}, Theorem \ref{t:quaprojcqms} and Corollary \ref{c:lipcqms}.

\begin{thm}
The pair $\big( \Lip(S_q^{2r+1}), L_{\T{tot}} \big)$ is a compact quantum metric space for the $C^*$-algebraic quantum sphere $C(S_q^{2r+1})$. 
\end{thm}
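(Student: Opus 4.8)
The plan is to apply Corollary \ref{c:lipcqms} directly, so the work consists of checking that its hypotheses are met in the setting of the higher quantum spheres. Corollary \ref{c:lipcqms} requires two things: first, that the conditions of Assumption \ref{a:exponential} hold for the data $(C(S_q^{2r+1}), \C O(S_q^{2r+1}), (\C O(\B CP_q^r), L^2(\Om_M,h), D_q))$; and second, that the Lipschitz triple $(\Lip_{D_q}(\B CP_q^r), L^2(\Om_M,h), D_q)$ is metric. The first point is exactly the content of Proposition \ref{p:expsphere}, which establishes Assumption \ref{a:exponential} with $\mu := q^{-1}$, using the twisted Leibniz rule of Lemma \ref{l:twistsphere} for the endomorphisms $d_j, d_j^\da$ and the vanishing $d_i(z_j) = 0 = d_i^\da(z_j^*)$ coming from $d_{F_i}(z_j) = 0 = d_{E_i}(z_j^*)$. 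The second point is exactly Theorem \ref{t:quaprojcqms}, due to Mikkelsen and the author, asserting that $(\Lip_{D_q}(\B CP_q^r), L^2(\Om_M,h), D_q)$ is a spectral metric space, hence in particular a metric Lipschitz triple.

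Concretely, first I would invoke Proposition \ref{p:expsphere} to see that all of the structural hypotheses required to run the machinery of Sections \ref{s:horizontal}--\ref{s:second} are in place: the coordinate algebra $\C O(S_q^{2r+1})$ is generated by $\C A_1$, there are finite frames $\{z_j\}$ and $\{q^{r+1-j} z_j\}$ on $\C A_1$ with the vanishing property from Assumption \ref{a:exponential}(3), and the twisted Leibniz rule $\de(ab) = \de(a)\phi(b) + q^{-n} \phi(a)\de(b)$ holds for $a \in \C A_n$. Then the modular lift $D_\Ga$ yields, via Theorem \ref{t:twistlip}, the graded twisted Lipschitz triple $(\C O(S_q^{2r+1}), G, D_\Ga)$ with horizontal Lipschitz algebra $\Lip_{\T{hor}}(S_q^{2r+1})$, and Section \ref{s:vertical} supplies the vertical Lipschitz triple $(\C O(S_q^{2r+1}), G, N\hot 1)$ with vertical Lipschitz algebra $\Lip_{\T{ver}}(S_q^{2r+1})$. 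The total Lipschitz algebra $\Lip(S_q^{2r+1})$ and the slip-norm $L_{\T{tot}}$ are then defined exactly as in \eqref{eq:totgra}, using the grading operator $1 \hot \ga_0 = \ga_0$ inherited from the spectral triple on quantum projective space described in Theorem \ref{t:spectrip}.

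Second, I would feed Theorem \ref{t:quaprojcqms} into Corollary \ref{c:lipcqms}: since $(\Lip_{D_q}(\B CP_q^r), L^2(\Om_M,h), D_q)$ is a spectral metric space, it is in particular metric in the sense of Definition \ref{d:spemet}, so the hypothesis ``$(\Lip_{D_0}(A_0), H_0, D_0)$ is metric'' of Corollary \ref{c:lipcqms} is satisfied with $A_0 = C(\B CP_q^r)$, $H_0 = L^2(\Om_M,h)$, $D_0 = D_q$. The conclusion of Corollary \ref{c:lipcqms} then reads precisely that $(\Lip_{\T{tot}}(S_q^{2r+1}), L_{\T{tot}})$ is a compact quantum metric space for $C(S_q^{2r+1})$, which is the statement to be proved. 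The genuine mathematical content here has already been discharged upstream — in the verification of Assumption \ref{a:exponential} (Proposition \ref{p:expsphere}), in the general combination theorem (Theorem \ref{t:mainII} and Corollary \ref{c:lipcqms}), and in the metric property of quantum projective space (Theorem \ref{t:quaprojcqms}) — so the proof at this stage is simply the assembly of these three ingredients.

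The main obstacle, were it not already resolved, would be Theorem \ref{t:quaprojcqms}, i.e.\ establishing that quantum projective space is a compact quantum metric space for the D'Andrea--D\k{a}browski spectral triple; this is the deepest analytic input and relies on the separate work \cite{MiKa:SMQ}. The only point requiring minor care in the present assembly is bookkeeping the grading: one must use that $(\C O(\B CP_q^r), L^2(\Om_M,h), D_q)$ is \emph{graded}, so that the graded form \eqref{eq:totgra} of $L_{\T{tot}}$ is the one that applies, and that the grading operator $1\hot\ga_0$ commutes with $\de_{\T{ver}}$ and anti-commutes with $\de_{\T{hor}}$ as recorded in Theorem \ref{t:twistlip} — but this is automatic from the construction and needs no new argument.
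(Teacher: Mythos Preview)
Your proposal is correct and follows exactly the paper's own proof: the theorem is stated as a direct consequence of Proposition \ref{p:expsphere}, Theorem \ref{t:quaprojcqms}, and Corollary \ref{c:lipcqms}, which is precisely the assembly you describe.
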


As usual, this implies the corresponding weaker result regarding the coordinate algebra by observing that $\C O(S_q^{2r+1}) \su \Lip(S_q^{2r+1})$ (see Theorem \ref{t:charac}):

\begin{cor}
The pair $\big( \C O(S_q^{2r+1}), L_{\T{tot}} \big)$ is a compact quantum metric space for the $C^*$-algebraic quantum sphere $C(S_q^{2r+1})$. 
\end{cor}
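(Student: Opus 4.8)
The plan is to deduce this corollary from the theorem just stated, which asserts that $\big( \Lip(S_q^{2r+1}), L_{\T{tot}} \big)$ is a compact quantum metric space, together with Rieffel's characterization recorded in Theorem \ref{t:charac}. The key structural input is the inclusion of operator systems $\C O(S_q^{2r+1}) \su \Lip(S_q^{2r+1}) \su C(S_q^{2r+1})$, which I would first record: the coordinate algebra is the linear span of the spectral subspaces $\C A_n = \C O(S_q^{2r+1}) \cap A_n$, hence is contained in $\Lip_{S^1}(C(S_q^{2r+1})) = \Lip_{\T{ver}}(S_q^{2r+1})$ by Theorem \ref{t:vertical}, and it is contained in $\Lip_{\T{hor}}(S_q^{2r+1})$ by Theorem \ref{t:twistlip}, which exhibits each element of $\C O(S_q^{2r+1})$ as a twisted Lipschitz operator for the modular lift $D_\Ga$. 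Therefore $\C O(S_q^{2r+1}) \su \Lip(S_q^{2r+1})$, and the restriction of the slip-norm $L_{\T{tot}}$ to $\C O(S_q^{2r+1})$ is again a slip-norm on the operator system $\C O(S_q^{2r+1}) \su C(S_q^{2r+1})$.

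Next I would invoke Theorem \ref{t:charac} in both directions. Since $\big( \Lip(S_q^{2r+1}), L_{\T{tot}} \big)$ is a compact quantum metric space, the set $\big\{ [x] \in \Lip(S_q^{2r+1})/\B C \mid L_{\T{tot}}(x) \leq 1 \big\}$ is totally bounded in the quotient norm. Now the quotient norm on $\C O(S_q^{2r+1})/\B C$ agrees with the one inherited from $\Lip(S_q^{2r+1})/\B C$: both are given by $\| [x] \| = \inf_{\la \in \B C} \| x - \la \cd 1 \|$ computed with the ambient $C^*$-norm on $C(S_q^{2r+1})$, so the natural map $\C O(S_q^{2r+1})/\B C \to \Lip(S_q^{2r+1})/\B C$ is an isometric inclusion. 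Under this inclusion $\big\{ [x] \in \C O(S_q^{2r+1})/\B C \mid L_{\T{tot}}(x) \leq 1 \big\}$ is a subset of the totally bounded set above, and a subset of a totally bounded set is totally bounded. Applying the converse implication of Theorem \ref{t:charac} to the pair $\big( \C O(S_q^{2r+1}), L_{\T{tot}} \big)$ then gives the claim.

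An entirely parallel route, which I would mention, is to bypass the $\Lip$-version and apply Corollary \ref{c:coordcqms} directly: Proposition \ref{p:expsphere} verifies Assumption \ref{a:exponential} for $A = C(S_q^{2r+1})$ with $\mu = q^{-1}$, the corollary following Theorem \ref{t:quaprojcqms} shows the Lipschitz triple $\big( \C O(\B CP_q^r), L^2(\Om_M,h), D_q \big)$ is metric, and $\C O(\B CP_q^r) = \C O(S_q^{2r+1}) \cap A_0$, so Corollary \ref{c:coordcqms} applies. I do not expect any serious obstacle here; this is a soft consequence of the main theorem, and the only point deserving a word of care is the observation that passing from $\Lip(S_q^{2r+1})$ to its subalgebra $\C O(S_q^{2r+1})$ does not change the quotient norm, so that total boundedness of the Lipschitz unit ball is genuinely inherited.
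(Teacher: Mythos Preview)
Your proposal is correct and matches the paper's approach exactly: the paper deduces the corollary in one line from the preceding theorem by observing the inclusion $\C O(S_q^{2r+1}) \su \Lip(S_q^{2r+1})$ and invoking Theorem \ref{t:charac}, which is precisely the argument you spell out in your first two paragraphs. Your alternative route via Corollary \ref{c:coordcqms} is also valid and indeed that corollary was recorded for exactly this purpose.
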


\bibliographystyle{amsalpha-lmp}

\newcommand{\etalchar}[1]{$^{#1}$}
\providecommand{\bysame}{\leavevmode\hbox to3em{\hrulefill}\thinspace}
\providecommand{\MR}{\relax\ifhmode\unskip\space\fi MR }
\providecommand{\MRhref}[2]{%
  \href{http://www.ams.org/mathscinet-getitem?mr=#1}{#2}
}
\providecommand{\href}[2]{#2}

\end{document}